\documentclass [a4paper,abstracton]{scrartcl}

\usepackage[latin1]{inputenc}
\usepackage[T1]{fontenc}

\usepackage{lmodern} 

\usepackage{lscape}
\usepackage[ngerman,english]{babel}
\RequirePackage{amsmath}                       
\RequirePackage{mathtools}
\RequirePackage{amssymb}                       
\RequirePackage{latexsym}                      
\RequirePackage{stmaryrd}                      

\RequirePackage[all]{xy}                       
\UseTips                                      

\newdir^{c}{{}*!/-3pt/\dir^{(}}
\newdir_{c}{{}*!/-3pt/\dir_{(}}
\newdir^{ (}{{}*!/-3pt/\dir^{(}}
\newdir_{ (}{{}*!/-3pt/\dir_{(}}

\RequirePackage{xspace}                         
\RequirePackage{calc}                         
\RequirePackage{ifthen}                      

\RequirePackage{mathrsfs}

\RequirePackage[dvipsnames,usenames]{color}  

\RequirePackage{hyperref}
\hypersetup{
bookmarks,
bookmarksdepth=3,
bookmarksopen,
bookmarksnumbered,
pdfstartview=FitH,
colorlinks,backref,hyperindex,
linkcolor=RawSienna,
anchorcolor=BurntOrange,
citecolor=OliveGreen,
filecolor=BlueViolet,
menucolor=Yellow,
urlcolor=OliveGreen
}

\NeedsTeXFormat{LaTeX2e}
\ProvidesPackage{mabliautoref}
   [2009/11/06 v0.1 finetuning autoref and theorem setup]

\RequirePackage{hyperref}

\makeatletter
\@ifdefinable\equationname{\let\equationname\equationautorefname}
\def\equationautorefname~#1\@empty\@empty\null{(#1\@empty\@empty\null)}%
\@ifdefinable\AMSname{\let\AMSname\AMSautorefname}
\def\AMSautorefname~#1\@empty\@empty\null{(#1\@empty\@empty\null)}%

\@ifdefinable\itemname{\let\itemname\itemautorefname}
\def\itemautorefname~#1\@empty\@empty\null{(#1\@empty\@empty\null)%
}
\makeatother

\makeatletter
\renewcommand{\theenumi}{\alph{enumi}}

\renewcommand{\theenumii}{\roman{enumii}}

\renewcommand{\p@enumii}{\theenumi$\m@th\vert$}

\renewcommand{\p@enumiii}{\theenumi.\theenumii.}

\renewcommand{\labelitemi}{$\m@th\circ$}
\renewcommand{\labelitemii}{$\m@th\diamond$}
\renewcommand{\labelitemiii}{$\m@th\star$}
\renewcommand{\labelitemiv}{$\m@th\cdot$}
\makeatother

\RequirePackage{amsthm}
\RequirePackage{aliascnt}
\newcommand{\basetheorem}[3]{
    \newtheorem{#1}{#2}[#3]
    \newtheorem*{#1*}{#2}
    \expandafter\def\csname #1autorefname\endcsname{#2}
}%
\newcommand{\maketheorem}[3]{
    \newaliascnt{#1}{#3}
    \newtheorem{#1}[#1]{#2}
    \aliascntresetthe{#1}
    \expandafter\def\csname #1autorefname\endcsname{#2}
    \newtheorem*{#1*}{#2}
}
\theoremstyle{plain}  

\basetheorem{theorem}{Theorem}{section}

\maketheorem{proposition}{Proposition}{theorem}
\maketheorem{corollary}{Corollary}{theorem}
\maketheorem{lemma}{Lemma}{theorem}
\maketheorem{conjecture}{Conjecture}{theorem}

\theoremstyle{definition}   

\maketheorem{definition}{Definition}{theorem}
\maketheorem{defprop}{Definition-Proposition}{theorem}

\theoremstyle{remark}   

\maketheorem{remark}{Remark}{theorem}
\maketheorem{remarks}{Remarks}{theorem}
\maketheorem{example}{Example}{theorem}
\maketheorem{examples}{Examples}{theorem}
\maketheorem{question}{Question}{theorem}

\usepackage{enumerate}

\newcommand{\Perv}{\operatorname{Perv}}

\newcommand{\Spec}{\operatorname{Spec}}

\newcommand{\Coh}{\operatorname{Coh}}

\newcommand{\RSHom}{\underline{\operatorname{RHom}}}
\newcommand{\SHom}{\underline{\operatorname{Hom}}}
\newcommand{\RHom}{\operatorname{RHom}}

\newcommand{\Hom}{\operatorname{Hom}}
\newcommand{\Ext}{\operatorname{Ext}}
\newcommand{\ExtS}{\underline{\operatorname{Ext}}}
\newcommand{\G}{\operatorname{G}}
\newcommand{\Sol}{\operatorname{Sol_{\kappa}}}
\newcommand{\Solu}{\operatorname{Sol}}
\newcommand{\M}{\operatorname{M}}
\newcommand{\derotimes}{\overset{\LL}{\otimes}}

\newcommand{\CA}{\mathcal{A}}
\newcommand{\CB}{\mathcal{B}}

\newcommand{\CO}{\mathcal{O}}
\newcommand{\CM}{\mathcal{M}}
\newcommand{\CN}{\mathcal{N}}
\newcommand{\CE}{\mathcal{E}}
\newcommand{\CF}{\mathcal{F}}
\newcommand{\FF}{\mathbb{F}}
\newcommand{\CG}{\mathcal{G}}

\newcommand{\CI}{\mathcal{I}}
\newcommand{\CJ}{\mathcal{J}}
\newcommand{\CK}{\mathcal{K}}

\newcommand{\CL}{\mathcal{L}}
\newcommand{\LL}{\mathbb{L}}

\newcommand{\et}{\text{\'et}}

\newcommand{\ZZ}{\mathbb{Z}}

\newcommand{\crys}{\operatorname{crys}}
\newcommand{\id}{\operatorname{id}}

\newcommand{\CohC}{\operatorname{Coh}_{\kappa}}

\newcommand{\QCohC}{\operatorname{QCoh}_{\kappa}}
\newcommand{\Crys}{\operatorname{Crys}}
\newcommand{\CrysC}{\operatorname{Crys}_{\kappa}}
\newcommand{\QCrys}{\operatorname{QCrys}}
\newcommand{\QCrysC}{\operatorname{QCrys}_{\kappa}}

\newcommand{\CohG}{\operatorname{Coh}_{\gamma}}
\newcommand{\QCohG}{\operatorname{QCoh}_{\gamma}}
\newcommand{\CrysG}{\operatorname{Crys}_{\gamma}}
\newcommand{\QCrysG}{\operatorname{QCrys}_{\gamma}}

\newcommand{\LNilCohC}{\operatorname{LNilCoh}_\kappa}
\newcommand{\LNilCrysC}{\operatorname{{LNilCrys}}_\kappa}
\newcommand{\LNilC}{\operatorname{LNil}_\kappa}
\newcommand{\NilC}{\operatorname{Nil}_\kappa}

\newcommand{\Gen}{\operatorname{Gen}}
\newcommand{\Neg}{\operatorname{Neg}}

\newcommand{\lfgu}{\operatorname{lfgu }}

\newcommand{\ad}{\operatorname{ad}}

\newcommand{\tr}{\operatorname{tr}}
\newcommand{\ctr}{\operatorname{ctr}}
\newcommand{\can}{\operatorname{can}}
\newcommand{\ev}{\operatorname{ev}}
\newcommand{\qc}{\text{qc}}
\newcommand{\pr}{\operatorname{pr}}
\newcommand{\proj}{\operatorname{proj}}
\newcommand{\bc}{\operatorname{bc}}

\newcommand{\usc}[1][m]{\underline{\phantom{#1}}}

\renewcommand{\phi}{\varphi}

\renewcommand{\theta}{\vartheta}

\renewcommand{\epsilon}{\varepsilon}

\renewcommand{\to}[1][]{\xrightarrow{\ #1\ }}

\newcommand{\into}[1][]{\xhookrightarrow{\ #1\ }}

    \reversemarginpar
    \makeatletter
    \advance\marginparwidth by \ta@bcor
    \makeatother
    \newcounter{themargin}
    \setcounter{themargin}{1}
    \def\q?#1{\textcolor{Mahogany}{\textbf{???$^{\text{\arabic{themargin}}}$}}{\marginpar{\footnotesize\color{Mahogany}\fbox{\parbox{\marginparwidth}{\textbf{? --- \arabic{themargin} ---}\addtocounter{themargin}{1}\\ #1}}} \immediate\write16{}%
    \immediate\write16{Warning: There was still a question mark . . . }%
    \immediate\write16{}}}

\title{A Riemann-Hilbert correspondence for Cartier crystals}
\author{Tobias Schedlmeier}
\date{\vspace{-5ex}}
\begin{document}

\maketitle

\noindent
\begin{abstract}
\noindent
For a variety $X$ separated over a perfect field of characteristic $p>0$ which admits an embedding into a smooth variety, we establish an anti-equivalence between the bounded derived categories of Cartier crystals on $X$ and constructible $\ZZ/p\ZZ$-sheaves on the \'etale site $X_{\et}$. The key intermediate step is to extend the category of locally finitely generated unit $\CO_{F,X}$-modules for smooth schemes introduced by Emerton and Kisin to embeddable schemes. On the one hand, this category is equivalent to Cartier crystals. On the other hand, by using Emerton-Kisin's Riemann-Hilbert correspondence, we show that it is equivalent to Gabber's category of perverse sheaves in $D_c^b(X_{\et},\ZZ/p\ZZ)$.
\end{abstract}

\addsec{Introduction}

The Riemann-Hilbert correspondence gave a general answer to Hilbert's 21st problem from a modern point of view by connecting $D$-modules to constructible $\mathbb C_X$-sheaves on a smooth complex variety $X$. For a variety $X$ over a field of positive characteristic, Emerton and Kisin established an analogue to this correspondence in \cite{EmKis.Fcrys}. However, the smoothness of $X$ is essential for this approach because it ensures that the objects Emerton and Kisin consider instead of $D$-modules behave nicely.

In this paper we relax the smoothness assumption. We establish a Riemann-Hilbert type correspondence for a variety $X$ over a perfect field of positive characteristic which is possibly singular but we require that $X$ admits an embedding into a smooth variety $Y$. Thereby we suggest the category of so-called Cartier crystals as a replacement for $D$-modules because Cartier crystals seem to be more suitable for generalizations of the correspondence to singular varieties or even more general schemes. In particular, Cartier crystals are indeed closely related to perverse constructible \'etale sheaves on $X$, as conjectured by Blickle and B{\"o}ckle in \cite{BliBoe.CartierFiniteness}.

Let us take a brief look at the development of the Riemann-Hilbert correspondence throughout history. A fundamental step to the modern version of the Riemann-Hilbert correspondence was the result by Deligne in 1970 (\cite{Deligne}), which states that for a smooth variety $X$ over the complex numbers, there is an equivalence 
\[
	\operatorname{Conn}^{\operatorname{reg}} \to \operatorname{Loc}(X^{\operatorname{an}})
\]
between the categories of regular integrable connections on $X$ and local systems on $X^{\operatorname{an}}$, i.e.\ $\mathbb C_X$-modules which are locally free of finite rank for the analytic topology. Here an integrable connection is a $D_X$-module $M$ which is locally free of finite rank as an $\CO_X$-module. This is nothing but a locally free $\CO_X$-module of finite rank together with a $\mathbb C$-linear map $\nabla\colon M \to \Omega_X^1 \otimes_{\CO_X} M$. For the above equivalence, we have to pass to a certain subcategory, namely the regular integrable connections, and the underlying functor of the equivalence is given by taking the kernel of $\nabla$. Note that if $d_X$ denotes the dimension of $X$, this is the cohomology in degree $-d_X$ of the de Rham complex 
\[
	0 \longrightarrow \Omega_X^1 \otimes_{\CO_X} M \longrightarrow \Omega_X^2 \otimes_{\CO_X} M \longrightarrow \cdots \longrightarrow \Omega_X^{d_X} \otimes_{\CO_X} M \longrightarrow 0
\]
located between the degrees $-d_X$ and $0$ and whose differentials are induced by $\nabla$. Let $DR_X(M)$ denote this complex.

Both categories $\operatorname{Conn}^{\operatorname{reg}}$ and $\operatorname{Loc}(X^{\operatorname{an}})$ are not closed under push-forwards. For instance, the push-forward of a local system on the origin to the affine line is obviously not a local system. The correct extensions are (regular holonomic) $D_X$-modules on the left and constructible $\mathbb C_X$-sheaves on the right. However, the functor $H^{-d_X}(DR_X(\usc))$ does not yield an equivalence between these larger categories. Again considering the example of the inclusion $i\colon \{0\} \to \mathbb A_{\mathbb C}^1$, we see that $H^{-1}(DR_{\mathbb A^1}(i_*\mathbb C)) = 0$. This is due to the fact that we lose to much information by only taking into account the $-d_X$-th cohomology of $DR_X(\usc)$. To avoid this problem, one considers the derived functor $DR_X(\usc)=\Omega_X \derotimes_{\CO_X} \usc$ between the derived categories of $D_X$-modules and constructible $\mathbb C_X$-sheaves. In the context of complex manifolds, Kashiwara (\cite{Kashiwara1} and \cite{Kashiwara2}) passed to a suitable subcategory called regular holonomic $D$-modules -- more precisely the full subcategory of the bounded derived category $D^b(D_X)$ consisting of complexes whose cohomology sheaves are regular holonomic -- and proved that the de Rham functor is an equivalence 
\[
	D_{rh}^b(D_X) \to D_c^b(\mathbb C_X),
\]  
which is compatible with the six operations $f_*$, $f_!$, $f^*$, $f^!$, $\RSHom^{\bullet}$ and $\derotimes$. Here $D_c^b(\mathbb C_X)$ denotes the full subcategory of $D(\mathbb C_X)$ of bounded complexes with constructible cohomology sheaves. This result from 1980 and 1984 is known as the Riemann-Hilbert correspondence. Around the same time, Mebkhout (\cite{Mebkhout1} and \cite{Mebkhout2}) gave a proof, which is independent of Kashiwara's work. Later on, Beilinson and Bernstein developed the Riemann-Hilbert correspondence for algebraic $D$-modules on complex algebraic varieties. Their work is explained in the unpublished notes (\cite{Bernstein}). 

Deligne's result, which is a special case of the Riemann-Hilbert correspondence, applied to $X = \mathbb P^1(\mathbb C) \backslash S$, the Riemann sphere without a finite set $S$ of points, gives an answer to Hilbert's 21st problem. For this recall that the sheaf of solutions of a system of linear differential equations is a local system. Via analytically continuing of local solutions along closed paths in $X$, we obtain a transition matrix and therefore a representation of the fundamental group of the Riemann sphere without $S$. The group of such matrices is called the \emph{monodromy group} of the system of differential equations. Conversely, Hilbert's 21st problem asks for the existence of a system of linear differential equations on the Riemann sphere with Fuchsian singularities in $S$ and with a given monodromy.

The functor $DR_X(\usc)$ is closely related to the so-called solution functor $\Solu_X = \RSHom_{D_X}^{\bullet}(\usc,\CO_X)$: for every bounded complex $M^{\bullet}$ of $D_X$-modules, we have  
\[
	DR_X(M^{\bullet}) \cong \Solu_X(\mathbb D_X M^{\bullet})[d_X],
\]
where $\mathbb D_X$ is a certain duality. For a coherent $D_X$-module $M$, the sheaf $\SHom_{D_X}(M,\CO_X)$ can be identified with the solutions of the system of differential equations corresponding to $M$. Furthermore, there is an equivalence between representations of the fundamental group of $X$ and locally constant $\mathbb C_X$-sheaves. The Riemann-Hilbert correspondence in turn is a far reaching generalization of Deligne's result.    

Of course the essential image of the abelian category of regular holonomic $D_X$-modules under the equivalence $DR_X$ is an abelian category inside $D_c^b(\mathbb C_X)$, but it turns out that this category differs from the category of constructible $\mathbb C_X$-sheaves. The example of the immersion of the origin into the affine line from above already is a first sign of this phenomenon. The abelian subcategory of $D_c^b(\mathbb C_X)$ given by the essential image of regular holonomic $D_X$-modules under the de Rham functor is called \emph{perverse sheaves}. There is a general tool for describing abelian subcategories of triangulated categories: the theory of $t$-structures. A $t$-structure on a triangulated category $D$ consists of two subcategories $D^{\leq 0}$ and $D^{\geq 0}$ with certain properties. The intersection $D^{\leq 0} \cap D^{\geq 0}$ is called the \emph{heart of the $t$-structure}. It is an abelian category. For example, the so-called canonical $t$-structure of $D_{rh}^b(D_X)$ is given by the two subcategories $D_{rh}^{\leq 0}(D_X)$ and $D_{rh}^{\geq 0}(D_X)$ of complexes whose cohomology is zero in positive or negative degrees. In the same way, the category of perverse sheaves on $X$ is obtained as the heart of a $t$-structure on $ D_c^b(\mathbb C_X)$ which is called the perverse $t$-structure. Indeed, the development of the theory of perverse sheaves by Beilinson, Bernstein, Deligne and Gabber was motivated by the Riemann-Hilbert correspondence. A standard reference for this is \cite{BBD}.          

At the beginning of the 21st century, the time was right for a positive characteristic version of the Riemann-Hilbert correspondence. The de Rham theory for varieties over a field of positive characteristic $p$ differs strongly from the one on complex varieties. Instead of the Poincar\'e lemma, we have the Cartier isomorphism and as a consequence, for a smooth variety $X$, the kernel of the map $\CO_X \to \Omega_X^1$ is not a locally constant $\ZZ/p\ZZ$-sheaf but given by the $p$-th powers $(\CO_X)^p$. Therefore, one has to find a different approach. The Frobenius endomorphism $F$ is a major tool in characteristic $p$. Especially sheaves with an action of the Frobenius turned out to be very useful. The starting point of these objects is the sheaf $\CO_{F,X} = \CO_X[F]$ of non-commutative rings given on an affine open subset $U \subseteq X$ by the polynomial ring $\CO_X(U)[F]$ with the relation $Fr=r^pF$ for local sections $r \in \CO_X(U)$. A simple calculation shows that left $\CO_X[F]$-modules are identified with $\CO_X$-modules $\CF$ together with a morphism $F^*\CF \to \CF$. In \cite[Proposition 4.1.1]{Katz}, Katz proved that there is an equivalence between the category of locally free \'etale $\FF_p$-sheaves and the category of coherent, locally free $\CO_X$-modules $\CE$ together with an isomorphism $F_\CE^* \to \CE$ of $\CO_X$-modules. This may be considered as an analogue of Deligne's result that there is a natural equivalence $\operatorname{Conn}^{\operatorname{reg}} \to \operatorname{Loc}(X^{\operatorname{an}})$.  

It is this result of Katz that motivated Emerton and Kisin to consider left $\CO_{F,X}$-modules for establishing an analogue of the Riemann-Hilbert correspondence for smooth varieties over a field $k$ of positive characteristic $p$. As Katz' work already suggested, certain unit left $\CO_{F,X}$-modules, i.e.\ $\CO_{F,X}$-modules $\CF$ whose structural morphism $F^*\CF \to \CF$ is an isomorphism together with some finiteness condition, is the subcategory to look at. In 2004, Emerton and Kisin published \cite{EmKis.Fcrys}, where they proved that the functor $\Solu= \RSHom_{\CO_{F,X_{\et}}}^{\bullet}(\usc_{\et},\CO_{X_{\et}})[d_X]$ yields an anti-equivalence
\[
	D_{\lfgu}^b(\CO_{F,X}) \to D_c^b(X_{\et},\ZZ/p\ZZ) 
\]
between the bounded derived categories of locally finitely generated unit (lfgu for short) left $\CO_{F,X}$-modules on the one hand, and the bounded derived category of constructible $\ZZ/p\ZZ$-sheaves on the \'etale site $X_{\et}$ of $X$ on the other hand. Their correspondence is shown to be compatible with half of the six cohomological operations, namely $f^!$, $f_+$ and $\derotimes$. They also prove that under the correspondence the abelian category $\mu_{\lfgu}(X)$ of locally finitely generated unit modules corresponds to the category of perverse sheaves $\Perv(X_{\et},\ZZ/p\ZZ)$ defined by Gabber in \cite{Gabber.tStruc} on $D_c^b(X_{\et},\ZZ/p\ZZ)$. In this Riemann-Hilbert type correspondence, the sheaf of partial differential operators is substituted by the sheaf $\CO_{F,X}$. Every $\CO_{F,X}$-module naturally has the structure of a $D_X$-module. The crucial point is that the ring $D_X$ of arithmetic differential operators introduced by Berthelot equals the union $\bigcup \operatorname{End}_{{\CO_X}^{p^e}}(\CO_X)$ (\cite{Berthelot1}, \cite{Berthelot2}). The details of the $D_X$-module structure of an $\CO_{F,X}$-module are explained in \cite{BliDMod}. It follows that the category considered by Emerton and Kisin is a subcategory of the category of left modules over the sheaf of rings of differential operators.

The sequence 
\[
	0 \longrightarrow \CO_{X_{\et}} \xrightarrow{1-F} \CO_{X_{\et}} \longrightarrow 0
\]
in some sense plays the role of the de Rham complex for varieties over $\mathbb C$. For instance, we can compute $\Solu(\CO_X) = \RSHom_{\CO_{F,X_{\et}}}^{\bullet}(\CO_{X_{\et}},\CO_{X_{\et}})[d_X]$ using the resolution
\[
	0 \longrightarrow \CO_{F,X_{\et}} \xrightarrow{1-F} \CO_{F,X_{\et}}
\]
of $\CO_{X_{\et}}$ by free left $\CO_{F,X_{\et}}$-modules. As a consequence of Artin-Schreier theory, the sequence
\[
	0 \longrightarrow (\ZZ/p\ZZ)_X \longrightarrow \CO_{X_{\et}} \xrightarrow{1-F} \CO_{X_{\et}} \longrightarrow 0
\]
is exact and therefore $\Sol(\CO_X) \cong (\ZZ/p\ZZ)_X[d_X]$. This observation is fundamental in the proof of Emerton and Kisin's Riemann-Hilbert correspondence.  

In \cite{BliBoe.CartierFiniteness}, Blickle and B\"ockle show that if $X$ is smooth and $F$-finite (i.e.\ the Frobenius morphism is a finite map), then Emerton-Kisin's category $\mu_{\lfgu}(X)$ is equivalent to their category $\CrysC(X)$ of Cartier crystals on $X$. This category is obtained by localizing the category of coherent sheaves $M$ on $X$ equipped with a right action by Frobenius, i.e.\ a map $F_*M \to M$, at the Serre subcategory consisting of those $M$ where the structural map is nilpotent. 

The category of Cartier crystals is also defined on singular schemes, and a Kashiwara type equivalence holds in this context \cite[Theorem 4.1.2]{BliBoe.Cartier}, showing that Cartier crystals on a closed subscheme $Z \subseteq X$ are ``the same'' as Cartier crystals on $X$ supported in $Z$. This suggests that for singular schemes, the category of Cartier crystals should be a reasonable replacement for Emerton-Kisin's theory, which was only developed for $X$ smooth. Hence one expects a natural equivalence of categories
\[
	\CrysC(X) \to \Perv(X_{\et},\ZZ/p\ZZ)
\]
for any $F$-finite scheme $X$. In this paper we show this result under the assumption that $X$ is a variety over a perfect field $k$, embeddable into a smooth variety. Note that a variety over a perfect field is $F$-finite. The closed immersion of $X$ into a smooth variety $Y$ enables us to employ the Kashiwara equivalence to show that the category of Cartier crystals on $X$ is equivalent to the category of lfgu modules on $Y$ supported in $X$. This equivalence on the level of abelian categories then extends to a derived equivalence 
\[
	D_{\crys}^b(\QCrysC(X)) \cong D_{\lfgu}^b(\CO_{F,Y})_X,
\]
where $D_{\lfgu}^b(\CO_{F,Y})_X$ denotes the full subcategory of $D_{\lfgu}^b(\CO_{F,Y})$ consisting of complexes whose cohomology sheaves are supported in $X$. The details of this equivalence are worked out in Section 2 and involve showing that the equivalence sketched by Blickle and B\"ockle between Cartier crystals and $\mu_{\lfgu}(X)$ alluded to above is compatible with pull-back functors for immersions of smooth, $F$-finite schemes and push-forward functors for arbitrary morphisms between smooth, $F$-finite schemes. 

In Section 3 we give an intrinsic proof of the fact that for a variety $X$ over a perfect field $k$ the category $D_{\lfgu}^b(\CO_{F,X}) := D_{\lfgu}^b(\CO_{F,Y})_X$ is well-defined, i.e.\ independent of the embedding of $X$ into a smooth scheme $Y$. If one had resolution of singularities in characteristic $p$, one would have natural isomorphisms of functors $\Solu f_+ \cong f_! \Solu$ for every morphism $f$ between smooth $k$-schemes \cite[Theorem 9.7.1]{EmKis.Fcrys}. This would enable us to work with derived categories of constructible \'etale sheaves, which are defined on singular schemes as well, turning the independence of a chosen embedding into an easy exercise. As resolution of singularities is an open problem in higher dimensions, we are required to extend the adjunction between the functors $f^!$ and $f_+$ for proper $f$ from Emerton-Kisin to the case that $f$ is proper over some closed subset, which is somewhat technical. The source of this is a general adjunction statement for quasi-coherent sheaves provided in \cite{Sched.Adj}. It says that for a separated morphism of finite type $f\colon X \to Y$ of Noetherian schemes, closed immersions $i\colon Z \to Y$ and $i'\colon Z' \to X$ and a proper morphism $f'\colon Z' \to Z$ making the diagram
\[
	\xymatrix{
		Z' \ar[r]^-{i'} \ar[d]^-{f'} & X \ar[d]^-f \\
		Z \ar[r]^-i & Y
	}
\] 
commutative, there exists a morphism $\tr_f\colon Rf_*R\Gamma_{Z'}f^! \to \id$ which acts as the counit of an adjunction between $Rf_*$ and $R\Gamma_{Z'}f^!$ regarded as functors between certain derived categories with cohomology sheaves supported on $Z$ and $Z'$. Here $R\Gamma_{Z'}$ is the local cohomology functor.

Combining these steps, the following theorem summarizes the main results in this paper:
\begin{theorem*} 
Let $X$ be a variety over a perfect field and assume that $X$ is embeddable into a smooth variety $Y$. Then there are natural equivalences of categories
\[
	D_{\crys}^b(\QCrysC(X)) \overset{\sim}{\longrightarrow} D_{\lfgu}^b(\CO_{F,Y})_X \overset{\sim}{\longrightarrow} D_c^b(X_{\et},\ZZ/p\ZZ).
\]
Here the middle category is independent of the embedding. These equivalences are compatible with the respectively defined push-forward and pull-back functors for immersions. Furthermore, the standard $t$-structure on the left corresponds to Gabber's perverse $t$-structure on the right.
\end{theorem*}
\begin{corollary*}
The abelian category $\CrysC(X)$ of Cartier crystals on a variety $X$ embeddable into a smooth variety is naturally equivalent to the category $\Perv(X_{\et},\ZZ/p\ZZ)$ of perverse constructible \'etale $p$-torsion sheaves. 
\end{corollary*}
While in the final stages of writing up these results, the preprint \cite{Ohkawa} appeared. Therein the author shows that Emerton-Kisin's Riemann-Hilbert correspondence can be extended to the case that $X$ is embeddable into a proper smooth $W_n$-scheme. The case $n=1$ hence also implies the right half of the just stated theorem in the case that $X$ is embeddable into a proper smooth scheme.

\section*{Acknowledgments} 

I cordially thank the supervisor of my thesis, Manuel Blickle, for his excellent guidance and various inspiring conversations. Moreover, I thank Gebhard B\"ockle for many useful comments, Axel St\"abler for advancing discussions and Sachio Ohkawa for a careful reading and commenting on an earlier draft of parts of this thesis. The author was partially supported by SFB / Transregio 45 Bonn-Essen-Mainz financed by Deutsche Forschungsgemeinschaft.

\section*{Notation and conventions}

Unless otherwise stated, all schemes are locally Noetherian and separated over the field $\FF_p$ for some fixed prime number $p > 0$. For such a scheme $X$, we let $F_X$ or $F$, if no ambiguity is possible, denote the Frobenius endomorphism $X \to X$ which is the identity on the underlying topological space and which is given by $r \mapsto r^p$ on local sections. Often we will deal with $F$-finite schemes, i.e.\ $F$ is a finite morphism. For instance, a variety over a perfect field is $F$-finite.

Working with Emerton and Kisin's category of locally finitely generated unit modules forces us at some points to restrict to varieties, i.e.\ to schemes which are of finite type over a field $k$ containing $\FF_p$. With ``schemes over $k$'' or ``$k$-scheme'' we always mean schemes which are separated and of finite type over $k$. For a smooth scheme $X$ over a perfect field $k$, the sheaf of top differential forms $\omega_X$ is an invertible sheaf with a canonical morphism $\omega_X \to F^! \omega_X$ of $\CO_X$-modules given by the Cartier operator, see \autoref{ex.StandardCartierOnOmega} for the affine space. One can check that it is an isomorphism. In general, if $X$ is regular and $F$-finite, we will assume that there is a dualizing sheaf $\omega_X$ with an isomorphism $\kappa_X\colon \omega_X \to F^!\omega_X$. For example, this assumption holds if $X$ is a scheme over a local Gorenstein scheme $S = \Spec R$ (\cite[Proposition 2.20]{BliBoe.CartierFiniteness}). Moreover, we assume that $\omega_X$ is invertible. 

As in \cite{EmKis.Fcrys}, for a smooth $k$-scheme $X$, we let $d_X$ denote the function
\[
	x \mapsto \text{dimension of the component of } X \text{ containing } x.
\]
If $f\colon X \to Y$ is a morphism of smooth $k$-schemes, the relative dimension $d_{X/Y}$ is given by $d_{X/Y} = d_X - d_Y \circ f$. 

\setcounter{section}{0}
\section{Review of Cartier crystals and locally finitely generated unit modules}

We begin by reviewing the definitions and results from the theory of Cartier crystals as developed by Blickle and B\"ockle in \cite{BliBoe.CartierFiniteness} and \cite{BliBoe.Cartier}. In short, a coherent Cartier module $M$ on $X$ is a coherent $\CO_X$-module together with a \emph{right action} of the Frobenius $F$. These form an abelian category and the category of Cartier crystals is obtained by localizing at the full Serre subcategory of those $M$ on which $F$ acts nilpotently. The resulting localized category is an abelian category, which has been shown in \cite{BliBoe.CartierFiniteness} to enjoy strong finiteness properties: All objects have finite length and all endomorphism sets are finite dimensional $\FF_p$-vector spaces.

\subsection{Cartier modules and Cartier crystals}

\begin{definition}
A \emph{Cartier module} on $X$ is a quasi-coherent $\CO_X$-module $M$ together with a morphism of $\CO_X$-modules
 \[
	\kappa\colon F_*M \to M.
\]		
\end{definition}
Equivalently, a Cartier module $M$ is a sheaf of right $\CO_{F,X}$-modules whose underlying sheaf of $\CO_X$-modules is quasi-coherent. Here $\CO_{F,X}$ is the sheaf of (non-commutative) rings $\CO_X[F]$, defined affine locally on $\Spec R$ as the ring
\[
	R[F]:=R \{ F \} / \langle Fr - r^pF \ | \ r \in R\rangle.
\]
On the level of abelian sheaves, $M$ and $F_*M$ are equal, hence we may view the structural map $\kappa$ of a Cartier module $M$ as an additive map $\kappa \colon M \to M$ which satisfies $\kappa(r^p \cdot m) = r\kappa(m)$ for all local sections $r \in \CO_X$ and $m \in M$. In this way it is clear that defining the right action of $F$ on $M$ via $\kappa$ defines a right action of $\CO_X[F]$ on $M$, and vice versa. Iterations of $\kappa$ are defined inductively: $\kappa^n:=\kappa \circ F_*\kappa^{n-1}$. Considering $\kappa$ as an additive map of abelian sheaves, $\kappa^n$ is the usual $n$-th iteration.

For a finite morphism $f\colon X \to Y$ of schemes, the functor $f_*$ is left adjoint to the functor $f^{\flat} := \overline{f}^*\SHom_{\CO_Y}(f_*\CO_X,\usc)$, where $\overline{f}$ is the flat morphism of ringed spaces $(X,\CO_X) \to (Y,f_*\CO_X)$, see \cite[III. 6]{HartshorneRD}. Hence the structural morphism of a Cartier module $M$ on an $F$-finite scheme may also be given in the form $\tilde{\kappa}\colon M \to F^{\flat}M$. 
\begin{example}\label{ex.StandardCartierOnOmega}
The prototypical example of a Cartier module is the sheaf $\omega_X$ of top differential forms on a smooth variety over a perfect field $k$. If $X = \Spec k[x_1,\ldots,x_n]$, then $\omega_X$ is the free $k[x_1,\dots,x_n]$-module of rank $1$ generated by $dx_1 \wedge \dots \wedge dx_n$. This module has a natural homomorphism $\kappa\colon F_*\omega_X \to \omega_X$ called the \emph{Cartier operator} given by the formula
\[
	x_1^{i_1}\cdot \dots \cdot x_n^{i_n} dx_1 \wedge \dots \wedge dx_n \mapsto x_1^{\frac{(i_1+1)}{p}-1}\cdot \dots \cdot x_n^{\frac{(i_n+1)}{p}-1} dx_1 \wedge \dots \wedge dx_n
\]
where a non-integral exponent anywhere renders the whole expression zero. 
\end{example}

A morphism $\phi\colon M \to N$ of Cartier modules is a morphism of the underlying quasi-coherent sheaves making the following diagram commutative:
\[
	\xymatrix{
		F_*M \ar[r]^{F_* \phi} \ar[d]_{\kappa_M} & F_*N \ar[d]^{\kappa_N} \\
		M \ar[r]^{\phi} & N.
	}
\]
As $F_*$ is exact, one immediately verifies that \emph{Cartier modules form an abelian category}, the kernels and cokernels being just the underlying kernels and cokernels in $\CO_X$-modules with the induced structural morphism. We denote the category of Cartier modules on $X$ by $\QCohC(X)$. The full subcategory of \emph{coherent Cartier modules} $\CohC(X)$ consists of those Cartier modules whose underlying $\CO_X$-module is coherent. A Cartier module $(M,\kappa)$ is called \emph{nilpotent} if some power of $\kappa$ is zero; $(M,\kappa)$ is called \emph{locally nilpotent} if it is the union of its nilpotent Cartier submodules. By $\LNilC(X)$ we denote the full subcategory of $\QCohC(X)$ consisting of locally nilpotent Cartier modules, and $\NilC(X)$ denotes the intersection $\CohC(X) \cap \LNilC(X)$. The full subcategory of $\QCohC(X)$ consisting of extensions of coherent and locally nilpotent Cartier modules (in either order) we denote by $\LNilCohC(X)$. One has the following inclusions
\begin{equation*}\label{CSh-CCatDiag}
{\parbox{0cm}{{
\xymatrix@C-12pt@R-30pt{
&\LNilC(X)\ar@{^{ (}->}[dr]&&\\
\NilC(X) \ar@{^{ (}->}[ur] \ar@{_{ (}->}[dr] &
&\LNilCohC(X) \ar@{^{ (}->}[r]
&\QCohC(X)\rlap{,}\\
&\CohC(X) \ar@{_{ (}->}[ur] &&\\}
}}}
\end{equation*}
and each of the full subcategories are Serre subcategories in their ambient category\footnote{A Serre subcategory is a full abelian subcategory which is closed under extensions.}. This leads us to our key construction.
\begin{definition} \label{defCartCrys}
The category of \emph{Cartier quasi-crystals} is the localization of the category of quasi-coherent Cartier modules $\QCohC(X)$ at its Serre subcategory $\LNilC(X)$. It is an abelian category, which we denote by $\QCrysC(X)$.
 
Similarly, the category of \emph{Cartier crystals} on $X$ is the localization of the category $\CohC(X)$ of coherent Cartier modules at its Serre subcategory $\NilC(X)$. It is an abelian category, which we denote by $\CrysC(X)$. Cartier crystals also can be obtained by localizing $\LNilCohC(X)$ at the subcategory $\LNilC(X)$.
\end{definition}
In order to define derived functors we have to make sure that the considered categories have enough injectives.
\begin{proposition} \label{enoughinjectiveskappa}
The category $\QCohC(X)$ is a Grothendieck category with enough injectives whose underlying $\CO_X$-module is injective. Its Serre subcategory $\LNilC(X)$ is localizing and hence $\QCrysC(X)$ has enough injectives.
\end{proposition}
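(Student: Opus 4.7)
The plan is to establish the three assertions in sequence: that $\QCohC(X)$ is Grothendieck, that it has enough injectives with injective underlying quasi-coherent sheaf, and that $\LNilC(X)$ satisfies Gabriel's axioms for being a localising Serre subcategory --- from which the enough-injectives claim for $\QCrysC(X)$ is a formal consequence of standard Gabriel--Popescu theory.

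For the Grothendieck property, I would regard $\QCohC(X)$ as the category of $F_*$-modules in $\QCoh(X)$. Since the absolute Frobenius is affine, $F_*$ is exact and commutes with all small colimits on $\QCoh(X)$; hence small colimits in $\QCohC(X)$ exist and can be computed on the underlying quasi-coherent level, and AB5 is inherited from $\QCoh(X)$. A small generator is then obtained by applying the ``free Cartier module'' functor $L(N) = N \otimes_{\CO_X} \CO_{F,X} \cong \bigoplus_{n \ge 0} F_*^n N$, which is left adjoint to the forgetful $U \colon \QCohC(X) \to \QCoh(X)$, to a small generator of $\QCoh(X)$.

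For the enough-injectives statement I would dualise this picture: the forgetful $U$ has a right adjoint $R$, given concretely by the cofree construction $R(N) = \SHom_{\CO_X}(\CO_{F,X}, N)$, whose underlying sheaf is $\prod_{n \ge 0} F_*^n N$ with the shift as structural map. The key point is that $\CO_{F,X}$ is free as a left $\CO_X$-module on the basis $\{F^n\}_{n \ge 0}$, so $R$ carries $\CO_X$-injectives to injectives of $\QCohC(X)$, and the triangle identities force the unit $M \to RU(M)$ to be monomorphic. Embedding $U(M)$ into a quasi-coherent injective $J$ and composing with the unit then yields a Cartier embedding $M \hookrightarrow R(J)$ into an injective object of $\QCohC(X)$, whose underlying $\CO_X$-module $\prod_n F_*^n J$ is injective --- products of injectives being injective, together with the fact that $F_*$ preserves injectivity in the setting of the paper.

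For the last part I would verify that $\LNilC(X)$ is a hereditary torsion class in $\QCohC(X)$. The Serre-subcategory axioms are essentially in the excerpt; what remains is closure under arbitrary coproducts (each element of $\bigoplus_i M_i$ is supported on finitely many indices and hence killed by a single power of $\kappa$) and the existence of a largest locally nilpotent sub-Cartier module of any $(M, \kappa)$, namely $\bigcup_{n \ge 0} \ker(\kappa^n)$ (one checks that $\ker(\kappa^n)$ is $\kappa$-stable, since $\kappa^n(\kappa(x)) = \kappa(\kappa^n(x))$ as additive maps). By Gabriel's theorem $\LNilC(X)$ is then localising, the quotient $\QCrysC(X)$ is itself Grothendieck, and the section functor right adjoint to the localisation transports injectives from $\QCohC(X)$ to $\QCrysC(X)$, yielding enough injectives in the latter. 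The most delicate technical step is the injectivity of the underlying $\CO_X$-module of $R(J)$: it ultimately rests on $F_*$ preserving $\CO_X$-injectives, which via Kunz's theorem reduces to flatness of Frobenius --- available in the $F$-finite, essentially regular settings that govern the applications of the paper.
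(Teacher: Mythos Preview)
Your overall architecture matches the paper's (which largely cites \cite{BliBoe.Cartier} and \cite{Gabrielthesis}): Grothendieck via the free Cartier module functor, localising via the existence of a maximal locally nilpotent submodule, and enough injectives in the quotient by Gabriel's results. These parts are fine and in fact more detailed than the paper's own proof.

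There is, however, a genuine gap in your argument that the underlying $\CO_X$-module of an injective Cartier module is injective. You establish this only for the specific objects $R(J)$, by writing $U R(J)\cong\prod_{n\ge 0}F_*^nJ$ and then invoking that $F_*$ preserves injectives. That last step requires $F^*$ to be exact, i.e.\ the Frobenius to be flat, i.e.\ $X$ to be regular (Kunz). The proposition, however, is stated under the paper's standing hypotheses (locally Noetherian, separated over $\FF_p$) with no regularity assumption, and you yourself flag this as ``the most delicate technical step'' resting on an assumption not actually in force. A secondary worry is that the infinite product $\prod_n F_*^nJ$ need not be quasi-coherent on a non-affine scheme, so identifying $UR(J)$ with this naive product is itself not innocent.

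The fix is simpler than your workaround and uses an observation you already made: $\CO_{F,X}$ is \emph{free} as a left $\CO_X$-module on $\{F^n\}_{n\ge 0}$, so the left adjoint $L(N)=N\otimes_{\CO_X}\CO_{F,X}$ of the forgetful functor $U$ is exact. Hence $U$ preserves injectives, and \emph{every} injective of $\QCohC(X)$ has injective underlying $\CO_X$-module, with no regularity needed. This is also how one should read the reference to \cite{BliBoe.Cartier}: the point is the exactness of the induction functor, not any property of $F_*$.
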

\begin{proof}
The first statements were shown in \cite[Theorem 2.0.9 and Proposition 3.3.17]{BliBoe.Cartier}. That $\LNilC(X)$ is localizing now follows from Corollaire 1 on p. 375 of \cite{Gabrielthesis} and from the fact that each $M \in \QCohC(X)$ has a maximal locally nilpotent $\kappa$-subsheaf $M_{nil}$, see \cite[Lemma 2.1.3]{BliBoe.Cartier}. Then Corollaire 2 of \cite{Gabrielthesis} shows that the associated quotient category $\QCrysC(X)$ has enough injectives.

Concretely, if $T \colon \QCohC(X) \to \QCrysC(X)$ denotes the exact localization functor, then the fact that $\LNilC(X)$ is localizing asserts the existence of a right adjoint $V \colon \QCrysC(X) \to \QCohC(X)$. If $M/M_{nil} \into I$ is an injective hull in $\QCohC(X)$, then it is shown in op.\ cit.\ that $TI$ is an injective hull of $T(M/M_{nil})$.
\end{proof}

The following finiteness statements are the main results of \cite{BliBoe.CartierFiniteness}:
\begin{theorem}[\protect{\cite[Corollary 4.7, Theorem 4.17]{BliBoe.CartierFiniteness}}] \label{crystalfinite}
Let $X$ be a locally Noetherian, $F$-finite scheme of positive characteristic $p$.
\begin{enumerate}
\item Every object in the category of Cartier crystals $\CrysC(X)$ satisfies the ascending \emph{and descending} chain condition on its subobjects.
\item The $\Hom$-sets in $\CrysC(X)$ are finite dimensional $\FF_p$-vector spaces.
\end{enumerate}
\end{theorem}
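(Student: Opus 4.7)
The plan is to reduce both statements to a ``reduced'' situation in which the Cartier structural map $\kappa$ is surjective, and then to exploit the rigidity of this situation via a Matlis-type duality with unit $\CO_{F,X}$-modules together with a semilinear algebra argument.

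The reduction rests on the Hartshorne--Speiser--Lyubeznik--Gabber stabilization: for every $(M,\kappa)\in\CohC(X)$ the descending chain
\[
	M \supseteq \kappa(F_*M) \supseteq \kappa^2(F_*^2 M) \supseteq \cdots
\]
stabilises to a coherent Cartier submodule $M_s \subseteq M$ on which $\kappa$ is surjective, with $M/M_s \in \NilC(X)$. Thus the inclusion $M_s \hookrightarrow M$ is an isomorphism in $\CrysC(X)$, and applying the same stabilisation to Cartier submodules of $M_s$ identifies sub-crystals of $M$ with the reduced Cartier submodules of $M_s$.

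The ascending chain condition in (a) is then immediate: any ascending chain of sub-crystals of $M$ lifts to a chain of $\kappa$-stable coherent $\CO_X$-submodules of $M_s$, which stabilises by coherence of $M_s$. For the descending chain condition, $F$-finiteness of $X$ makes a Matlis-type duality available. Locally, where a dualising sheaf $\omega_X$ equipped with the Cartier isomorphism $\omega_X \xrightarrow{\sim} F^!\omega_X$ exists, the functor $\SHom_{\CO_X}(\usc,\omega_X)$ converts a reduced coherent Cartier module into a locally finitely generated unit $\CO_{F,X}$-module; on the reduced subcategory this is a contravariant equivalence. Under it, descending chains of reduced Cartier submodules of $M_s$ correspond to ascending chains of lfgu quotients of the dual, and ACC on the unit side is immediate from Noetherianness. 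For general $F$-finite locally Noetherian $X$ one would patch the local dualities using dualising complexes.

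For (b), the same contravariant duality identifies $\Hom_{\CrysC(X)}(M,N)$ with the Hom between the corresponding lfgu modules, which is the $\FF_p$-vector space of Frobenius-equivariant $\CO_X$-linear morphisms. After a d\'evissage using (a) one reduces to the simple case, where this Hom sits, via global sections, inside a finite-dimensional $k$-vector space as the fixed locus of a $p$-linear operator; a standard descent argument (extending scalars to $\bar k$ and diagonalising the semilinear operator) bounds its $\FF_p$-dimension by the $k$-dimension of the ambient space. The main obstacle in this programme is the DCC step: while the HSLG stabilisation can be used as a black box, isolating and verifying the duality with enough care to control both the nil-structure and the relevant exactness properties requires serious work and is the technical core of \cite{BliBoe.CartierFiniteness}, whose strategy I would follow.
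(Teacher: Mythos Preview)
The paper does not prove this theorem; it simply cites \cite[Corollary 4.7, Theorem 4.17]{BliBoe.CartierFiniteness}. So the comparison is with the proof in that reference, and your approach diverges from it in a way that introduces a genuine gap.

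Your reduction via the Hartshorne--Speiser--Lyubeznik--Gabber stabilisation to $\kappa$-surjective Cartier modules is correct and is indeed the first step in \cite{BliBoe.CartierFiniteness}. The ascending chain condition then follows from coherence, as you say. The problem is your proposed route to the descending chain condition. The duality $\SHom_{\CO_X}(\usc,\omega_X)$ between Cartier modules and $\gamma$-sheaves (and hence lfgu $\CO_{F,X}$-modules) requires $F^*$ to be exact, i.e.\ the Frobenius to be flat, which by Kunz's theorem forces $X$ to be regular. For a singular $F$-finite scheme there is no abelian category of $\gamma$-sheaves or lfgu modules in the needed sense, and the obstruction is not the existence of a dualising complex but the failure of $F^*$ to be exact. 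Your sentence ``for general $F$-finite locally Noetherian $X$ one would patch the local dualities using dualising complexes'' therefore hides the entire difficulty rather than resolving it; this is exactly why the present paper has to \emph{define} $D^b_{\lfgu}(\CO_{F,X})$ for singular $X$ via an embedding into a smooth ambient scheme, and that definition is not available at the level of generality of the theorem.

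The actual argument in \cite{BliBoe.CartierFiniteness} avoids duality altogether. After the stabilisation step, it passes to the \emph{minimal} Cartier submodule (the unique smallest $\kappa$-surjective submodule nil-isomorphic to $M$), shows that its support is reduced and that generically it is locally constant in a precise sense, and then runs a Noetherian induction on the support to bound the length. The Hom-finiteness likewise uses these structural results directly rather than going through an lfgu dual. Your semilinear fixed-point argument for part (b) is in the right spirit for the endgame, but the d\'evissage feeding into it should come from the intrinsic finite-length result, not from a duality that is only available in the regular case.
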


These finiteness properties are precisely the ones one expects from a category of perverse constructible sheaves in the topological context. It is this result (and the related statement \cite[Theorem 11.5.4]{EmKis.Fcrys} in the smooth case) that prompted our investigation of a connection between Cartier crystals and Gabber's category of perverse constructible $\ZZ/p\ZZ$-sheaves on $X_{\et}$, which is the content of this article.

\subsection{Cartier crystals and morphisms of schemes}

Up to now, we studied different categories stemming from quasi-coherent sheaves on a single scheme $X$. In this subsection, we consider morphisms $f\colon X \to Y$ of schemes and construct functors between the categories of Cartier (quasi-)crystals on $X$ and on $Y$. With the notation $D^*(\CA)$ for an abelian category $\CA$ and $* \in \{+,-,b\}$, we mean the subcategory of the derived category $D(\CA)$ of bounded below, bounded above or bounded complexes.

The first result is concerned with the derived functor $Rf_*$ for quasi-coherent $\CO_X$-modules. For a large class of morphisms it behaves well with the additional structure of Cartier modules and with localization at nilpotent objects. In principle, to any quasi-coherent Cartier module $M$ with structural map $\kappa_M$, we assign the quasi-coherent $\CO_Y$-module $f_*M$ together with the composition
\[
	F_{Y*}f_*M \overset{\sim}{\longrightarrow} f_*F_{X*}M \xrightarrow{f_*\kappa_M} f_*M.
\]
This is the underived functor $f_*\colon \QCohC(X) \to \QCohC(Y)$. 
\begin{theorem}[\protect{\cite[Corollary 3.2.12]{BliBoe.Cartier}}]
Let $f \colon  X \to Y$ be a morphism of $F$-finite schemes. Suppose $* \in \{+,-,b \}$. The functor $Rf_*$ on quasi-coherent sheaves induces a functor
\[
    Rf_* \colon D^*(\QCohC(X)) \to D^*(\QCohC(Y)).
\]
It preserves local nilpotence and hence induces a functor
\[
    Rf_* \colon D^*(\QCrysC(X)) \to D^*(\QCrysC(Y)).
\]
If $f$ is of finite type (but not necessarily proper!) then it restricts to a functor
\[
    Rf_* \colon D^*_{\crys}(\QCrysC(X)) \to D^*_{\crys}(\QCrysC(Y))
\]
where the subscript $\crys$ indicates that the cohomology lies in $\LNilCrysC$. 
\end{theorem}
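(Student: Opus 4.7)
The plan is to address the three successive assertions in turn. \textbf{Constructing $Rf_*$:} The underived functor $f_*\colon \QCohC(X) \to \QCohC(Y)$ sends $(M,\kappa_M)$ to $f_*M$ with structural map $F_{Y*}f_*M \xrightarrow{\sim} f_*F_{X*}M \xrightarrow{f_*\kappa_M} f_*M$, the first arrow being the canonical isomorphism arising from $F_Y \circ f = f \circ F_X$. To derive this functor I will appeal to \autoref{enoughinjectiveskappa}: $\QCohC(X)$ has enough injectives whose underlying $\CO_X$-modules are injective, hence $f_*$-acyclic. For $M^\bullet \in D^*(\QCohC(X))$ I resolve by such injectives $M^\bullet \to I^\bullet$ and apply $f_*$ termwise; the resulting complex represents $Rf_*M^\bullet$ in $D^*(\QCoh(Y))$ and inherits a natural Cartier-module structure, functorial in $M^\bullet$ and independent of the chosen resolution by the usual homotopy-lifting argument.

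\textbf{Preservation of local nilpotence:} If $\kappa_M^n = 0$, then the $n$-th iterate of the Cartier structure on the chosen model of $Rf_*M$ equals $Rf_*(\kappa_M^n) = 0$, so nilpotence is preserved. For locally nilpotent $M = \varinjlim M_i$ with $M_i$ nilpotent, each $R^jf_*$ commutes with filtered colimits of quasi-coherent sheaves (since $f$ between Noetherian schemes is automatically quasi-compact and quasi-separated), so $R^jf_*M = \varinjlim R^jf_*M_i$ is a filtered union of nilpotent Cartier modules, hence locally nilpotent. This is exactly the condition needed for $Rf_*$ to descend to the Verdier quotients, yielding the induced functor $D^*(\QCrysC(X)) \to D^*(\QCrysC(Y))$.

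\textbf{Restriction to the crystal subcategory:} This is the substantive part, where the finite-type hypothesis enters. For $M \in \CohC(X)$ the sheaf $R^jf_*M$ need not be coherent, yet one must show it is crystal-equivalent to a coherent Cartier module, i.e.\ that there exists a coherent Cartier submodule $N \subseteq R^jf_*M$ with $R^jf_*M/N$ locally nilpotent. I plan to invoke the stabilization principle for coherent Cartier modules: the descending chain of images $\op{im}\kappa^n$ stabilizes to a coherent $\kappa$-stable subsheaf on which $\kappa$ acts surjectively, with nilpotent quotient. Working affine-locally on $Y$ and representing $R^jf_*M$ via a \v{C}ech complex on a finite affine cover of $X$, I will extract $N$ from $\op{im}\kappa^n$ on the \v{C}ech cocycles for $n \gg 0$.

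The main obstacle---the step I expect to absorb most of the work---is globalizing this construction: comparing stabilization levels across intersections of affines and patching to a genuinely global coherent submodule $N \subseteq R^jf_*M$ whose quotient is locally nilpotent on all of $Y$. Here I would exploit $F$-finiteness, so that $F_*$ preserves coherence and the stabilization of $\op{im}\kappa^n$ makes sense globally, together with quasi-compactness of $Y$, which allows one to choose a single sufficiently large exponent $n$ uniformly over a finite affine cover of both $X$ and $Y$.
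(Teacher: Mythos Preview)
The paper does not supply its own proof of this theorem: it is stated as a citation of \cite[Corollary 3.2.12]{BliBoe.Cartier} and left without argument. There is therefore no in-paper proof to compare your proposal against.

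That said, a few remarks on your outline. Your treatment of the first two assertions is standard and correct; the use of \autoref{enoughinjectiveskappa} to obtain injectives with injective underlying $\CO_X$-module is exactly the right device, and the filtered-colimit argument for local nilpotence is fine (noting that under the paper's standing separatedness and Noetherian hypotheses, $f$ is automatically quasi-compact and quasi-separated).

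For the third assertion your strategy is plausible but, as you yourself flag, the globalization step is where the real content lies, and your sketch does not yet close it. In the cited reference the argument is organized differently: rather than attempting to extract a coherent submodule of $R^jf_*M$ directly via \v{C}ech-level image stabilization, one factors $f$ (using that it is separated of finite type) through an open immersion followed by a proper morphism. For the proper part, $Rf_*$ already preserves coherence on the nose. The entire difficulty is thus concentrated in the case of an open immersion $j\colon U \hookrightarrow X$, where one shows that for a coherent Cartier module $M$ on $U$ the pushforward $j_*M$ is an extension of a coherent Cartier module by a locally nilpotent one; this uses the structure theory of Cartier modules (existence of a minimal $\kappa$-stable coherent submodule with surjective $\kappa$) rather than a patching argument over affine covers. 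Your image-stabilization idea is morally the same mechanism, but applying it after pushing forward and then trying to globalize is harder than applying it on $U$ first and then pushing forward.
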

For essentially \'etale morphisms and for closed immersions there are pull-back functors. 
\begin{theorem}
Let $f\colon X \to Y$ be a morphism of schemes.
\begin{enumerate}
\item Suppose $* \in \{+,-,b \}$. If f is essentially \'etale, the exact functor $f^*$ induces a functor 
\[
	f^!\colon D_{\crys}^*(\QCrysC(Y)) \to D_{\crys}^*(\QCrysC(X)),
\]
which is left adjoint to $Rf_*$. 
\item Suppose $* \in \{+,b \}$. If f is a closed immersion of $F$-finite schemes, the functor $f^{\flat} = \overline{f}^*\SHom_{\CO_Y}(f_*\CO_X,\usc)$, where $\overline{f}$ denotes the flat morphism $(X,\CO_X) \to (Y, f_*\CO_X)$ of ringed spaces, induces a functor 
\[
	f^!\colon D_{\crys}^*(\QCrysC(Y)) \to D_{\crys}^*(\QCrysC(X)), 
\]
which is right adjoint to $Rf_*$. 
\end{enumerate} 
\end{theorem}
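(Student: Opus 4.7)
My plan is to handle both parts by lifting constructions from the underlying quasi-coherent level to Cartier modules using a Frobenius base change (resp.\ its adjoint), then deriving via the injective resolutions provided by \autoref{enoughinjectiveskappa}, verifying preservation of local nilpotence (and coherence at the level of cohomology), and finally deducing the adjunction from the known adjunction between $f^*$ (resp.\ $f^\flat$) and $f_*$ on $\QCoh$.

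For part (a), the key point is that for an essentially \'etale morphism $f$ the Frobenius square with sides $F_X$, $F_Y$, $f$, $f$ is Cartesian, so the natural base change morphism $c_f\colon f^* F_{Y*} \to F_{X*} f^*$ is an isomorphism. I would put the Cartier structure on $f^* N$ as the composition $F_{X*} f^* N \overset{c_f^{-1}}{\longrightarrow} f^* F_{Y*} N \xrightarrow{f^* \kappa_N} f^* N$. Since $f^*$ is exact (flat pullback) and preserves coherence, the resulting functor on $\QCohC(Y)$ extends termwise to bounded derived categories and descends to $D^*_{\crys}(\QCrysC(-))$; preservation of local nilpotence is immediate because the iterated structural map on $f^*N$ is the pullback of $\kappa_N^n$. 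The adjunction $\Hom_{\CO_X}(f^*N, M) \cong \Hom_{\CO_Y}(N, f_*M)$ on quasi-coherent sheaves restricts to Cartier-module morphisms by naturality of $c_f$, and extends to derived categories since $f^*$ is exact and $Rf_*$ is already known to be well-defined on $\QCrysC$.

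For part (b), I would start from $f_* F_{X*} = F_{Y*} f_*$ (immediate from $f \circ F_X = F_Y \circ f$) and take right adjoints, obtaining a canonical isomorphism $f^\flat F_Y^\flat \cong F_X^\flat f^\flat$. Rephrasing a Cartier structure on $N$ as an adjoint map $\tilde\kappa_N\colon N \to F_Y^\flat N$, I define the structural morphism on $f^\flat N$ as $f^\flat N \xrightarrow{f^\flat \tilde\kappa_N} f^\flat F_Y^\flat N \cong F_X^\flat f^\flat N$. The resulting functor $f^\flat\colon \QCohC(Y) \to \QCohC(X)$ is left exact; to derive it I would invoke \autoref{enoughinjectiveskappa} and resolve a bounded-below Cartier complex by Cartier modules whose underlying $\CO_Y$-module is injective, so that applying $f^\flat$ termwise computes $Rf^\flat$ at the underlying quasi-coherent level. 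Preservation of local nilpotence follows from the adjoint formulation of iterated structural maps, and coherence of $\ExtS_{\CO_Y}^i(f_*\CO_X, -)$ for closed immersions $f$ yields the restriction to $D^*_{\crys}$ for $* \in \{+,b\}$. The adjunction of $f_*$ and $Rf^\flat$ on $D^*(\QCoh)$ then lifts to Cartier morphisms by naturality of the isomorphism $f^\flat F_Y^\flat \cong F_X^\flat f^\flat$.

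The hard part will be the derived adjunction in (b): one has to check that the $f_* \dashv Rf^\flat$ adjunction bijection on bounded derived categories of quasi-coherent sheaves restricts to morphisms of Cartier complexes (i.e.\ morphisms compatible with the structural maps) and that this restriction descends through localization at $\LNilC$. The first point is a coherence check on how $f^\flat F_Y^\flat \cong F_X^\flat f^\flat$ interacts with the units and counits of the various Frobenius and $f$-adjunctions, so that unwinding the adjunction interchanges the $\tilde\kappa$-compatibility on $f^\flat N$ with the $\kappa$-compatibility on $f_*M$. The second point is then formal, using that $Rf_*$ preserves local nilpotence and that $\LNilC$ is localizing by \autoref{enoughinjectiveskappa}.
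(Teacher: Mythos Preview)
Your proposal is correct and follows essentially the same approach as the paper: for (a) you use the Frobenius base-change isomorphism for essentially \'etale $f$ to equip $f^*M$ with a Cartier structure, and for (b) you pass to the adjoint form $\tilde\kappa\colon M\to F^\flat M$ and use $f^\flat F_Y^\flat \cong F_X^\flat f^\flat$ --- exactly as the paper does. The paper's proof is a brief sketch that defers the verification of the derived adjunctions and the descent to crystals to \cite[Proposition 3.3.19 and Corollary 3.3.24]{BliBoe.Cartier}, whereas you spell out the mechanism (injectives from \autoref{enoughinjectiveskappa}, preservation of nilpotence, coherence of $\ExtS^i_{\CO_Y}(f_*\CO_X,-)$); your identification of the ``hard part'' in (b) --- checking that the $f_*\dashv Rf^\flat$ adjunction bijection respects Cartier structures via the coherence of $f^\flat F_Y^\flat\cong F_X^\flat f^\flat$ with the various units and counits --- is precisely the content hidden in those references.
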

\begin{proof}
Let $M$ be a quasi-coherent Cartier module on $Y$. For essentially \'etale $f$, there is a canonical isomorphism $\bc\colon F_{X*}f^* \overset{\sim}{\longrightarrow} f^*F_{Y*}$. Hence we may equip $f^*M$ with the structural morphism given by the composition
\[
	F_{X*}f^*M \xrightarrow{\bc} f^*F_{Y*}M \xrightarrow{f^*\kappa_M} f^*M.
\]
As $f^*$ preserves coherence, we obtain a functor $\CohC(Y) \to \CohC(X)$. It is easy to see that $f^*$ preserves nilpotency. Therefore, and by exactness of $f^*$, we obtain the desired functor $D_{\crys}^b(\QCrysC(Y)) \to D_{\crys}^b(\QCrysC(Y))$. 

In the case of a closed immersion $f$, the composition
\[
	f^{\flat}M \xrightarrow{f^{\flat}\tilde{\kappa}} f^{\flat}F_Y^{\flat}M \overset{\sim}{\longrightarrow} F_X^{\flat}f^{\flat}M,
\]
where $\tilde{\kappa}$ is the adjoint of $\kappa$, is a natural Cartier structure for the $\CO_X$-module $f^{\flat}M$. Once again it remains to check that it gives rise to a functor $f^!D_{\crys}^b(\QCrysC(Y)) \to D_{\crys}^b(\QCrysC(X))$. The adjunctions of $Rf_*$ and $f^*$ or $f^!$ follow from the corresponding adjunctions for quasi-coherent sheaves. For more details see \cite[Proposition 3.3.19]{BliBoe.Cartier} and \cite[Corollary 3.3.24]{BliBoe.Cartier}.
\end{proof}
Now let $i\colon Z \to X$ be a closed immersion and $j\colon U \to X$ the open immersion of the complement $X \backslash Z$. Note that for a closed immersion $i$, the functor $i_*$ is exact and therefore we drop the $R$ indicating derived functors. The units and counits of the adjunctions between $i_*$ and $i^!$ and between $Rj_*$ and $j^*$ lead to a familiar distinguished triangle.
\begin{theorem}[\protect{\cite[Theorem 4.1.1]{BliBoe.Cartier}}] \label{Cartiertriangle}
In $D_{\crys}^+(\QCrysC(X))$ there is a distinguished triangle
\[
	i_*i^! \longrightarrow \id \longrightarrow Rj_*j^* \longrightarrow i_*i^![1]. 
\]
\end{theorem}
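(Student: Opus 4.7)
The plan is to construct the triangle first at the level of quasi-coherent Cartier modules and then descend to the crystal category. Since by \autoref{enoughinjectiveskappa} the category $\QCohC(X)$ has enough injectives whose underlying $\CO_X$-module is injective, every Cartier module $M$ admits a resolution $M \to I^\bullet$ by Cartier modules $I^n$ that are simultaneously injective as $\CO_X$-modules. For such an $\CO_X$-injective sheaf $I$, the local cohomology subsheaf $\underline{\Gamma}_Z I$ fits into a short exact sequence of $\CO_X$-modules
\[
    0 \longrightarrow \underline{\Gamma}_Z I \longrightarrow I \longrightarrow j_*j^*I \longrightarrow 0.
\]

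The first step is to upgrade this to an exact sequence of Cartier modules. Since $F$ is a finite (in particular proper) morphism with $F^{-1}(Z) = Z$, the structural map $\kappa\colon F_*I \to I$ restricts to a map $F_*\underline{\Gamma}_Z I \to \underline{\Gamma}_Z I$, giving $\underline{\Gamma}_Z I$ a natural Cartier structure; likewise $j^*I$ receives a Cartier structure via base change as in the treatment of essentially \'etale pull-back, and $j_*$ of a Cartier module is a Cartier module by functoriality of $Rf_*$. Applied term by term to $I^\bullet$, this produces a short exact sequence of complexes in $\QCohC(X)$, hence a distinguished triangle
\[
    \underline{\Gamma}_Z I^\bullet \longrightarrow I^\bullet \longrightarrow j_*j^*I^\bullet \longrightarrow \underline{\Gamma}_Z I^\bullet[1]
\]
in $D^+(\QCohC(X))$, which computes $R\underline{\Gamma}_Z M \to M \to Rj_*j^*M \to {+1}$.

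The next step is to identify $R\underline{\Gamma}_Z$ with $i_*i^!$ on $D^+(\QCohC(X))$. Underived, for a Cartier module $M$ one has a canonical isomorphism $i_*i^\flat M \cong \underline{\Gamma}_Z M$ respecting Cartier structures: indeed $i^\flat M = \overline{i}^*\SHom_{\CO_X}(i_*\CO_Z,M)$ is exactly the sections of $M$ annihilated by the ideal of $Z$, and the Cartier structure constructed in the preceding theorem is the one induced from $\kappa_M$. Since both $i_*$ and $i^\flat$ send $\CO_X$-injectives to objects computing the correct derived functors, applying this termwise to $I^\bullet$ yields $i_*i^! M \cong R\underline{\Gamma}_Z M$ in $D^+(\QCohC(X))$, and we obtain the desired triangle at the level of quasi-coherent Cartier modules.

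Finally, the exact localization functor $T\colon \QCohC(X) \to \QCrysC(X)$ preserves distinguished triangles, so the same triangle lives in $D^+(\QCrysC(X))$; to land in $D^+_{\crys}$ one uses that $i_*i^!$ and $Rj_*j^*$ preserve crystal cohomology by the previous two theorems. The main obstacle I anticipate is the identification $i_*i^! \cong R\underline{\Gamma}_Z$ at the derived level and with Cartier structures: one has to verify that $i^\flat$ applied to $\CO_X$-injective Cartier modules is $i^!$-acyclic in $\QCohC(X)$ and that the Cartier structures defined by the two sides (via the adjoint $\tilde{\kappa}$ on one hand, via the restriction of $\kappa$ on the other) coincide. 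The rest is formal bookkeeping from the injective resolution argument.
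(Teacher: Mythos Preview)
The paper does not prove this theorem; it simply cites \cite[Theorem 4.1.1]{BliBoe.Cartier}. The remark immediately following the statement, however, does indicate the underlying idea: the triangle is the fundamental triangle of local cohomology, transported via a natural isomorphism $i_*i^! \cong R\Gamma_Z$, with reference to \cite[Proposition 2.5]{BliBoe.CartierFiniteness} for the key comparison. Your overall strategy --- build the local cohomology triangle on $\CO_X$-injective Cartier modules, then identify $R\Gamma_Z$ with $i_*i^!$ --- is therefore exactly the intended one.

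There is one genuine error in your execution. You assert that, underived, $i_*i^\flat M \cong \underline{\Gamma}_Z M$ as Cartier modules, because ``$i^\flat M$ is exactly the sections of $M$ annihilated by the ideal of $Z$''. That description of $i^\flat$ is correct, but $\underline{\Gamma}_Z M$ consists of sections annihilated by \emph{some power} of the ideal $\CI$, not by $\CI$ itself. So there is only a natural inclusion $i_*i^\flat M \hookrightarrow \underline{\Gamma}_Z M$, and it is in general strict. What \cite[Proposition 2.5]{BliBoe.CartierFiniteness} actually shows is that the cokernel of this inclusion is a \emph{nilpotent} Cartier module; hence the map is a nil-isomorphism, i.e.\ an isomorphism only after passing to crystals. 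Consequently the identification $i_*i^! \cong R\Gamma_Z$ does not hold in $D^+(\QCohC(X))$ but only in $D^+(\QCrysC(X))$. Your argument should therefore first produce the local-cohomology triangle in $D^+(\QCohC(X))$, then localize, and only \emph{then} invoke the nil-isomorphism $i_*i^\flat \to \underline{\Gamma}_Z$ (propagated to the derived level via your injective resolutions) to rewrite the left-hand vertex as $i_*i^!$. You correctly flagged this identification as the delicate point; the fix is precisely that it is a crystal-level statement, not a module-level one.
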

This theorem shows the equivalence mentioned in the following definition. 
\begin{definition} \label{Cartiersupport}
A complex $\CM^{\bullet}$ of $D_{\crys}^b(\QCrysC(X))$ is \emph{supported on $Z$} if $j^!\CM^{\bullet}=0$ or, equivalently, if the natural morphism $i_*i^!\CM^{\bullet} \to \CM^{\bullet}$ is an isomorphism. We let $D^b_{\crys}(\QCrysC(X))_Z$ denote the full triangulated subcategory consisting of complexes supported in $Z$.
\end{definition}

For Cartier crystals, there is a natural isomorphism of functors $i_*i^! \cong R\Gamma_Z$ where $R\Gamma_Z$ is the local cohomology functor, see \cite[Proposition 2.5]{BliBoe.CartierFiniteness} for the basic result concerning the abelian categories of Cartier modules and the proof of \cite[Theorem 4.1.1]{BliBoe.Cartier}. This isomorphism identifies the distinguished triangle of \autoref{Cartiertriangle} with the fundamental triangle of local cohomology. The following theorem is a formal consequence of \autoref{Cartiertriangle}:   
\begin{theorem}[\protect{\cite[Theorem 4.1.2]{BliBoe.Cartier}}] \label{Kashiwara}
Let $i\colon Z \into X$ be a closed immersion. The functors $i_*$ and $i^!$ are a pair of inverse equivalences 
\[
  \xymatrix{
     D_{\crys}^b(\QCrysC(Z))) \ar@<.5ex>[r]^-{i_*} & D_{\crys}^b(\QCrysC(X))_Z \, . \ar@<.5ex>[l]^-{i^!}
   }
\]
\end{theorem}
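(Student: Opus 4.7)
The plan is to view this as a formal consequence of the distinguished triangle $i_*i^! \to \id \to Rj_*j^* \to i_*i^![1]$ of \autoref{Cartiertriangle} combined with the adjunction $(i_*, i^!)$ of the preceding theorem. What I need to establish is that the counit $i_*i^! \to \id$ is an isomorphism on $D_{\crys}^b(\QCrysC(X))_Z$ and that the unit $\id \to i^! i_*$ is an isomorphism on $D_{\crys}^b(\QCrysC(Z))$.

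First I would check that $i_*$ actually lands in the supported subcategory: because $U = X \setminus Z$ and $Z$ are disjoint, $j^* i_* = 0$ already on underlying quasi-coherent sheaves, and this vanishing propagates naturally to the Cartier-crystal categories. Hence, by \autoref{Cartiersupport}, every $i_*\CN^\bullet$ is supported on $Z$. For the counit, take $\CM^\bullet \in D_{\crys}^b(\QCrysC(X))_Z$: then $j^* \CM^\bullet = 0$, so $Rj_* j^* \CM^\bullet = 0$, and the triangle of \autoref{Cartiertriangle} collapses to show that $i_* i^! \CM^\bullet \to \CM^\bullet$ is an isomorphism.

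For the unit, apply the same triangle to the supported complex $i_* \CN^\bullet$, which yields an isomorphism $i_* i^! i_* \CN^\bullet \overset{\sim}{\to} i_* \CN^\bullet$ equal to the counit at $i_*\CN^\bullet$. By the triangle identity for the adjunction $(i_*,i^!)$, this counit equals $i_*$ applied to the unit $\CN^\bullet \to i^! i_* \CN^\bullet$. Since $i_*$ is exact and faithful on quasi-coherent sheaves, preserves and reflects local nilpotence, and therefore descends to a conservative functor on the derived quasi-crystal categories, we conclude that $\CN^\bullet \to i^! i_* \CN^\bullet$ is itself an isomorphism.

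The main subtlety, rather than a serious obstacle, is to verify that every identification is compatible with the Cartier structures on the relevant pushforward and pullback functors, and to confirm that $i^!$ restricted to $D_{\crys}^b(\QCrysC(X))_Z$ really lands in $D_{\crys}^b(\QCrysC(Z))$ (i.e.\ the cohomological finiteness constraint is preserved). Both points reduce to the underlying quasi-coherent identity $i^\flat i_* \cong \id$ for closed immersions, together with the construction of the Cartier structure on $i^\flat$ given in the preceding theorem.
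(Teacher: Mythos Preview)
Your proposal is correct and follows exactly the approach the paper indicates: it states explicitly that \autoref{Kashiwara} is a formal consequence of \autoref{Cartiertriangle}, and your argument unwinds precisely that formal consequence via the adjunction $(i_*,i^!)$, the vanishing $j^*i_*=0$, and conservativity of $i_*$. The only remark is that your final paragraph's appeal to the underived identity $i^\flat i_*\cong\id$ is not actually needed for the finiteness check---once you know $i_*i^!\CM^\bullet\cong\CM^\bullet$ and that $i_*$ is exact and reflects the $\crys$ condition, the fact that $i^!\CM^\bullet$ has cohomology in $\CrysC(Z)$ follows immediately.
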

We call this equivalence the \emph{Kashiwara equivalence}. If $Z$ is a singular scheme which is embeddable into a smooth scheme $X$, the Kashiwara equivalence enables us to work with objects in $D_{\crys}^b(\QCrys(X))$ instead of $D_{\crys}^b(\QCrys(Z))$.

\subsection{Review of locally finitely generated unit modules}

In \cite{EmKis.Fcrys}, Emerton and Kisin consider left $\CO_{F,X}$-modules, i.e.\ $\CO_X$-modules $\CM$ with a structural morphism $F^*\CM \to \CM$. Instead of localizing, they pass to a certain subcategory. If we speak of $\CO_{F,X}$-modules we mean \emph{left} $\CO_{F,X}$-modules. In this subsection, all schemes are separated and of finite type over a field $k$ containing $\FF_p$.  
\begin{definition}
Let $X$ be a variety over $k$. A quasi-coherent $\CO_{F,X}$-module is an $\CO_{F,X}$-module whose underlying $\CO_X$-module is quasi-coherent. If the structural morphism $F^*\CM \to \CM$ of a quasi-coherent $\CO_{F,X}$-module $\CM$ is an isomorphism, then $\CM$ is called \emph{unit}. We let $\mu(X)$ and $\mu_{\operatorname{u}}(X)$ denote the abelian categories of quasi-coherent and quasi-coherent unit $\CO_{F,X}$-modules.  
\end{definition}
The term ``locally finitely generated'' for an $\CO_{F,X}$-module $\CM$ means that $\CM$ is locally finitely generated as a left $\CO_{F,X}$-module. Emerton and Kisin's focus is on locally finitely generated unit modules, lfgu for short, on smooth schemes, where they form an abelian category.
\begin{definition}
We let $\mu_{\lfgu}(X)$ denote the abelian category of locally finitely generated unit $\CO_{F,X}$-modules. We let $D_{\lfgu}(\CO_{F,X})$ denote the derived category of complexes of $\CO_{F,X}$-modules whose cohomology sheaves are lfgu.
\end{definition}
\begin{proposition} \label{EmKisPullBack}
Let $f\colon X \to Y$ be a morphism of smooth $k$-schemes. The functor $f^!\colon D(\CO_{F,Y}) \to D(\CO_{F,X})$ defined by
\[
	f^!\CM^{\bullet} = \CO_{F,X \rightarrow Y} \derotimes_{f^{-1}\CO_{F,Y}} f^{-1}\CM^{\bullet}[d_{X/Y}]
\]
restricts to a functor 
\[
	f^!\colon D_{\lfgu}^b(\CO_{F,Y}) \to D_{\lfgu}^b(\CO_{F,X}).
\]
Here $\CO_{F,X \rightarrow Y}$ denotes $\CO_{F,X}$ with the natural $(\CO_{F,X},f^{-1}\CO_{F,Y})$-bimodule structure.
\end{proposition}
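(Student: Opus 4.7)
The plan is to factor $f$ as the composition $X \xrightarrow{\Gamma_f} X \times_k Y \xrightarrow{p_Y} Y$ of its graph embedding, which is a closed immersion of smooth $k$-schemes, with the smooth projection $p_Y$, and thereby reduce to the two cases of a smooth morphism and a closed immersion between smooth $k$-schemes. To justify this reduction I first need compatibility of $f^!$ with composition, i.e.\ $(g \circ f)^! \cong f^! \circ g^!$; this is a formal consequence of the associativity of the derived tensor product together with the additivity relation $d_{X/Z} = d_{X/Y} + d_{Y/Z} \circ f$ that governs the shift.

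In the smooth case, $f^*$ is exact and preserves $\CO$-coherence, and the base-change isomorphism $f^* F_Y^* \cong F_X^* f^*$ shows that the unit structural map of $\CM$ transports to one for $f^*\CM$; hence on underlying $\CO$-modules $f^!$ agrees with $f^*[d_{X/Y}]$. If $\CM \in \mu_{\lfgu}(Y)$ is presented locally as the Frobenius colimit $\varinjlim_n F_Y^{n*}\CM_0$ of a coherent generator $\CM_0$, then $f^*\CM \cong \varinjlim_n F_X^{n*}(f^*\CM_0)$ is lfgu with coherent generator $f^*\CM_0$, and boundedness is obvious.

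For a closed immersion $i\colon X \hookrightarrow Y$ of smooth $k$-schemes, $i$ is a regular embedding, so locally $\CO_X$ admits a finite Koszul resolution over $\CO_Y$ whose length equals the codimension. This bounds the Tor-dimension of the bimodule $\CO_{F, X \rightarrow Y}$ over $i^{-1}\CO_{F,Y}$, guaranteeing that $i^!$ preserves boundedness. To verify that each cohomology sheaf is lfgu, I would combine this Koszul computation with the Frobenius-colimit presentation $\CM = \varinjlim_n F_Y^{n*}\CM_0$, which, because Frobenius pullback commutes with taking Tor, produces coherent generators on each $\CH^j(i^!\CM)$ together with a compatible isomorphism $F_X^* \CH^j(i^!\CM) \xrightarrow{\sim} \CH^j(i^!\CM)$.

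The hard part is precisely this last step: a priori the cohomology of a derived tensor product need not respect local finite generation. What rescues the situation is the interplay between the Koszul resolution computing $i^!$ and the Frobenius colimit presentation of $\CM$; the unit structure commutes past the Koszul differential, so the cohomology sheaves inherit a canonical unit structure whose generators are coherent by construction. Once these two compatibilities are verified locally, the conclusion that $f^!$ restricts to the claimed functor on $D_{\lfgu}^b$ follows.
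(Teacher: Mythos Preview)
The paper does not give a proof at all: it simply cites \cite[Lemma 2.3.2 and Proposition 6.7]{EmKis.Fcrys}. So there is no argument in the paper to compare your proposal against; what you have written is a sketch of the content of those cited results.

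Your outline is the standard one and is correct in its architecture. The graph factorization, the compatibility $(g\circ f)^! \cong f^! g^!$, and the treatment of the smooth case via exactness of $f^*$ and transport of the Frobenius-colimit presentation are all fine. For the closed immersion, your identification of the ``hard part'' is accurate: boundedness follows readily from the Koszul resolution, but showing that each cohomology sheaf is again lfgu requires an argument. One point worth tightening is the passage from the Koszul resolution of $\CO_X$ over $i^{-1}\CO_Y$ to a statement about $\CO_{F,X\to Y}$ over $i^{-1}\CO_{F,Y}$: this works because $\CO_{F,Y}$ is free as a left $\CO_Y$-module, so the derived tensor product over $i^{-1}\CO_{F,Y}$ reduces to $\CO_X \derotimes_{i^{-1}\CO_Y} i^{-1}\CM^\bullet$ on underlying $\CO$-modules. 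With that in hand, your commutation-of-Frobenius-with-Tor argument goes through and yields the unit structure and coherent generators on each $\CH^j$. This is essentially what Emerton and Kisin do in the cited results, so your sketch is faithful to the source even if the present paper declines to reproduce it.
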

\begin{proof}
This is Lemma 2.3.2 and Proposition 6.7 of \cite{EmKis.Fcrys}.
\end{proof}
\begin{example}
Let $f\colon U \to X$ be an open immersion of smooth $k$-schemes. Then we have $d_{U/X} = 0$ and the inverse image of $\CO_{F,X}$ is the restriction to $U$: 
\[
	f^{-1}\CO_{F,X} = \CO_{F,X}|_U = \CO_{F,U}. 
\]
Hence we regard $\CO_{F,U \rightarrow X}$ as $\CO_{F,U}$ with the usual $(\CO_{F,U},\CO_{F,U})$-bimodule structure. It follows that $f^!\CM=f^*\CM$ with the natural structure as a left $\CO_{F,U}$-module for every left $\CO_{F,X}$-module $\CM$. 
\end{example}
The construction of the push-forward is more involved. Emerton and Kisin first show that $\CO_{F,Y \leftarrow X} = f^{-1}\CO_{F,Y} \otimes_{f^{-1}\CO_Y} \omega_{X/Y}$ is naturally an $(f^{-1}\CO_{F,Y},\CO_{F,X})$-bimodule. We summarize the construction of the right $\CO_{F,X}$-module structure from Proposition-Definition 1.10.1, Proposition-Definition 3.3.1 and Appendix A.2 of \cite{EmKis.Fcrys}: The relative Frobenius diagram is the diagram
\begin{align} \label{relativeFrobenius}
	\xymatrix{
		X \ar[r]^{F_{X/Y}} \ar[dr]_f & X' \ar[r]^{F_Y'} \ar[d]^-{f'} & X \ar[d]^-f \\
		& Y \ar[r]^{F_Y} & Y.
	}
\end{align}
Here $X'$ is the fiber product of $X$ and $Y$ considered as a $Y$-scheme via the Frobenius and $F_{X/Y}$ is the map obtained from the Frobenius $F_X\colon X \to X$ and the morphism $f$. We call $F_{X/Y}$ the relative Frobenius. Unlike $F_X$, it is a morphism of $Y$-schemes. Locally, for $X=\Spec S$ and $Y=\Spec R$, the structure sheaf of $X'$ is given by the tensor product $R \otimes_R S$, where $R$ is viewed as an $R$-module via the Frobenius $F_R$. Globally we have an isomorphism $\CO_{X'} \cong f^{-1}\CO_X F \otimes_{f^{-1}\CO_Y} \CO_Y$ where $\CO_X F$ denotes the submodule of $\CO_X[F]=\CO_{F,X}$ generated as a left $\CO_X$-module by $F$. Consequently, for any $\CO_X$-module $M$, $F_Y'^*M$ may be viewed as $f^{-1}\CO_X F \otimes_{f^{-1}\CO_Y} M$.  

Let $\gamma\colon \CO_Y \to F_Y^*\CO_Y$ be the canonical isomorphism. The adjoint of the composition 
\[
	f^!\CO_Y \overset{\sim}{\longrightarrow} F_{X/Y}^!f'^! \CO_Y \xrightarrow{F_{X/Y}^!f'^!\gamma} F_{X/Y}^!f'^!F_Y^*\CO_Y \overset{\sim}{\longrightarrow} F_{X/Y}^!F_Y'^*f^!\CO_Y 
\]
yields a morphism $C_{X/Y}\colon F_{X/Y*}\omega_{X/Y} \to F_Y'^*\omega_{X/Y}$ called the \emph{relative Cartier operator}. Note that $F_{X/Y}$ is the identity on the underlying topological spaces of $X$ and $X'$. Therefore $C_{X/Y}$ defines a map of abelian sheaves $\omega_{X/Y} \to F_Y'^*\omega_{X/Y}$. Together with the identification $F_Y'^*\omega_{X/Y} \cong f^{-1}\CO_X F \otimes_{f^{-1}\CO_Y} \omega_{X/Y}$ and the inclusion $\CO_X F \subset \CO_{F,X}$ the relative Cartier defines a map $\omega_{X/Y} \to f^{-1}\CO_{F,Y} \otimes_{f^{-1}\CO_Y} \omega_{X/Y}$. Now we can state the structure of $f^{-1}\CO_{F,Y} \otimes_{f^{-1}\CO_Y} \omega_{X/Y}$ as a right $\CO_{F,X}$-module. The endomorphism on $f^{-1}\CO_{F,Y} \otimes_{f^{-1}\CO_Y} \omega_{X/Y}$ induced by multiplication with $F \in \CO_{F,X}$ on the right is given by the composition
\begin{align*}
	f^{-1}\CO_{F,Y} \otimes_{f^{-1}\CO_Y} \omega_{X/Y} &\xrightarrow{C_{X/Y}} f^{-1}\CO_{F,Y} \otimes_{f^{-1}\CO_Y} f^{-1}\CO_{F,Y} \otimes_{f^{-1}\CO_Y} \omega_{X/Y} \\
	&\overset{m}{\longrightarrow} f^{-1}\CO_{F,Y} \otimes_{f^{-1}\CO_Y} \omega_{X/Y},
\end{align*}  
where $m$ is the the multiplication $a \otimes b \mapsto ab$ in the sheaf of rings $f^{-1}\CO_{F,Y}$.
The functor $f_+\colon D(\CO_{F,X}) \to D(\CO_{F,Y})$ is then defined by
\[
	f_+\CM^{\bullet}=Rf_*(\CO_{F,Y \leftarrow X} \derotimes_{\CO_{F,X}} \CM^{\bullet}).
\]
\begin{proposition}
The functor $f_+\colon D(\CO_{F,X}) \to D(\CO_{F,Y})$ restricts to a functor 
\[
	f_+\colon D_{\lfgu}^b(\CO_{F,X}) \to D_{\lfgu}^b(\CO_{F,Y}).
\]
\end{proposition}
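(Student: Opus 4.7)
The plan is to reduce, by functoriality of $f_+$ under composition, to the two basic cases of a closed immersion and a smooth morphism, and in each case appeal to the corresponding result in Emerton--Kisin \cite{EmKis.Fcrys} (this is essentially their Theorem 4.3 / Proposition 6.9). Concretely, I would first factor $f\colon X \to Y$ through its graph as $f = \pi \circ i$, with $i\colon X \hookrightarrow X \times_k Y$ a closed immersion and $\pi\colon X \times_k Y \to Y$ the smooth projection. Because the transfer bimodule $\CO_{F,Y\leftarrow X}$ is compatible with composition (Proposition-Definition 1.10.1 of \cite{EmKis.Fcrys}), there is a natural isomorphism $f_+ \cong \pi_+ \circ i_+$, so it suffices to treat $i$ and $\pi$ separately.

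For the closed immersion $i$, the underived pushforward $i_* = Ri_*$ is exact on quasi-coherent sheaves and preserves coherence. Using the explicit local description of the transfer bimodule, one checks that the unit property survives, and local generators for $\CM$ as an lfgu $\CO_{F,X}$-module give, via the structure maps in the definition of $i_+$, local generators for $i_+\CM$ as an lfgu $\CO_{F,X\times_k Y}$-module. In this case the bounded derived category is manifestly preserved.

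For the smooth projection $\pi$, I would reduce further to the case $\pi\colon \mathbb{A}_Y^n \to Y$ of projection from affine space, where the relative Cartier operator and the transfer bimodule $\CO_{F,Y\leftarrow \mathbb{A}_Y^n}$ admit explicit Koszul-type descriptions. Boundedness of $\pi_+$ then follows from a finite locally free resolution of the transfer bimodule of length $n$ together with vanishing of $R^k\pi_*$ above the relative dimension. The preservation of the lfgu property is again the substantive part: one must produce local generators for the cohomology sheaves of $\pi_+\CM^{\bullet}$ over the non-noetherian ring $\CO_{F,Y}$, and this is done in \cite{EmKis.Fcrys} by an explicit construction adapted to the relative Frobenius diagram \eqref{relativeFrobenius}.

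The main obstacle, and the reason this is a nontrivial theorem rather than a formal verification, is precisely the preservation of \emph{locally finite generation as a unit module} under $\pi_+$: because $\CO_{F,Y}$ is non-noetherian, one cannot simply invoke standard coherence arguments, and the unit structure must be used essentially to promote coherence of a single generating image to a finite set of unit generators. Since this is a review section and the result is quoted verbatim from \cite{EmKis.Fcrys}, I would ultimately conclude the proof by citing the relevant statements there.
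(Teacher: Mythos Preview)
Your proposal is correct and arrives at the same endpoint as the paper, which simply cites Emerton--Kisin directly (specifically Theorem~3.5.3 and Proposition~6.8.2, not the numbers you give). Your added sketch of the graph factorization and the separate treatment of closed immersions and smooth projections is an accurate outline of how the cited results are proved, but the paper itself does not reproduce any of this and just records the references.
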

\begin{proof}
This is Theorem 3.5.3 and Proposition 6.8.2 of \cite{EmKis.Fcrys}.
\end{proof}
\begin{example} \label{openforward}
Once again, let $f\colon U \to X$ be an open immersion. Then $F_{U/X}$ is an isomorphism, identifying $X'$ with the open subset $U$ of $X$, and $\omega_{U/X} = f^!\CO_X = f^*\CO_X = \CO_U$. Therefore we have 
\[
	\CO_{F,X \leftarrow U} = f^{-1}\CO_{F,X} \otimes_{f^{-1}\CO_X} \CO_U = \CO_{F,U} \otimes_{\CO_U} \CO_U.
\]
The left $\CO_{F,U}$-module structures of $\CO_{F,U} \otimes_{\CO_U} \CO_U$ and $\CO_{F,U}$ are obviously compatible with the natural isomorphism $\CO_{F,U} \otimes_{\CO_U} \CO_U \cong \CO_{F,U}$. One verifies that this isomorphism identifies the right $\CO_{F,U}$-module structure on $\CO_{F,U} \otimes_{\CO_U} \CO_U$ with the natural one on $\CO_{F,U}$. 
\end{example}
Depending on $f$, there are adjunction relations between $f^!$ and $f_+$. If $f$ is a closed immersion, a Kashiwara-type equivalence for unit modules holds.
\begin{lemma}[\protect{\cite[Lemma 4.3.1.]{EmKis.Fcrys}}]
If $f\colon X \to Y$ is an open immersion of smooth $k$-schemes, then, for any $\CM^{\bullet} \in D^-(\CO_{F,Y})$ and any $\CN^{\bullet} \in D^+(\CO_{F,X})$, there is a natural isomorphism
\[
	\RSHom_{\CO_{F,Y}}^{\bullet}(\CM^{\bullet},f_+\CN^{\bullet}) \overset{\sim}{\longrightarrow} Rf_*\RSHom_{\CO_{F,X}}^{\bullet}(f^!\CM^{\bullet},\CN^{\bullet})
\]
in $D^+(X,\ZZ/p\ZZ)$.
\end{lemma}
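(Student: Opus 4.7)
Given that $f$ is an open immersion, the two functors appearing in the statement collapse to familiar ones. By \autoref{openforward} we have $\CO_{F,Y\leftarrow X}\cong \CO_{F,X}$ as $(\CO_{F,Y},\CO_{F,X})$-bimodules once we identify $f^{-1}\CO_{F,Y}=\CO_{F,X}$, so $f_{+}\CN^{\bullet}=Rf_{*}\CN^{\bullet}$. Similarly $d_{X/Y}=0$ and $\CO_{F,X\to Y}=\CO_{F,X}$, hence $f^{!}\CM^{\bullet}=f^{*}\CM^{\bullet}$ as a left $\CO_{F,X}$-module. The claim therefore reduces to producing a natural derived $(f^{*},Rf_{*})$-adjunction in sheaf-Hom form:
\[
\RSHom^{\bullet}_{\CO_{F,Y}}(\CM^{\bullet}, Rf_{*}\CN^{\bullet})\overset{\sim}{\longrightarrow} Rf_{*}\RSHom^{\bullet}_{\CO_{F,X}}(f^{*}\CM^{\bullet}, \CN^{\bullet}).
\]

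At the underived level, for any $\CO_{F,Y}$-module $\CM$ and any $\CO_{F,X}$-module $\CN$ the usual $(f^{-1},f_{*})$-adjunction, refined to respect the Frobenius action, yields a natural isomorphism
\[
\SHom_{\CO_{F,Y}}(\CM, f_{*}\CN)\cong f_{*}\SHom_{\CO_{F,X}}(f^{*}\CM, \CN);
\]
this is immediate sectionwise, since over any open $V\subseteq Y$ both sides parametrise homomorphisms $\CM|_{V\cap X}\to\CN|_{V\cap X}$ compatible with the structural map $F^{*}(\usc)\to(\usc)$, a condition intrinsic to the restriction. To derive the identity, choose an injective resolution $\CN^{\bullet}\to I^{\bullet}$ in $\CO_{F,X}$-modules (possible by \autoref{enoughinjectiveskappa}-style arguments applied to $\mu(X)$). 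Because $f^{*}=f^{-1}$ is exact on $\CO_{F,*}$-modules, its right adjoint $f_{*}$ preserves injectives, so $f_{*}I^{\bullet}$ is a bounded-below complex of injectives representing $Rf_{*}\CN^{\bullet}$. Hence the left-hand side is computed by $\SHom^{\bullet}_{\CO_{F,Y}}(\CM^{\bullet},f_{*}I^{\bullet})$; applying the underived adjunction termwise gives $f_{*}\SHom^{\bullet}_{\CO_{F,X}}(f^{*}\CM^{\bullet}, I^{\bullet})$. Finally, each $\SHom_{\CO_{F,X}}(f^{*}\CM^{p}, I^{q})$ is flabby (the $I^{q}$ being injective, hence flabby), so these sheaves are $f_{*}$-acyclic, and the complex $f_{*}\SHom^{\bullet}_{\CO_{F,X}}(f^{*}\CM^{\bullet}, I^{\bullet})$ represents $Rf_{*}\RSHom^{\bullet}_{\CO_{F,X}}(f^{*}\CM^{\bullet}, \CN^{\bullet})$.

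The main technical delicacy is ensuring that the underived sheaf-Hom adjunction lifts cleanly to the derived level with the non-commutative ring $\CO_{F,*}$ in place of $\CO_{*}$, in particular that injective $\CO_{F,X}$-modules are preserved by $f_{*}$ and that the intermediate $\SHom$ sheaves are $f_{*}$-acyclic. For an open immersion this is comparatively painless because no twisting by $\omega_{X/Y}$ or by the relative Frobenius enters: $f^{*}$ is genuinely exact on $\CO_{F,*}$-modules, reducing the preservation of injectives and acyclicity assertions to their well-known analogues for sheaves of abelian groups. Naturality in both arguments is then inherited from the naturality of the sheaf-level adjunction and the functoriality of injective resolutions.
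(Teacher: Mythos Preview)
The paper does not give its own proof of this lemma; it merely cites \cite[Lemma 4.3.1]{EmKis.Fcrys}. Your argument is the standard one and is essentially correct: for an open immersion the bimodules $\CO_{F,Y\leftarrow X}$ and $\CO_{F,X\to Y}$ both reduce to $\CO_{F,X}$, so the statement becomes the derived sheaf-$\SHom$ form of the $(f^*,Rf_*)$ adjunction, which you derive by resolving $\CN^{\bullet}$ injectively and using that $f_*$ preserves injectives (as $f^{-1}$ is an exact left adjoint) together with the flabbiness of $\SHom(\usc,I)$ for injective $I$ (this is precisely \cite[Lemme II.7.3.2]{Godement}, invoked elsewhere in the paper).

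Two minor remarks. First, your appeal to ``\autoref{enoughinjectiveskappa}-style arguments'' is misplaced: that proposition concerns $\QCohC$, whereas here you need enough injectives in the category of all (left) $\CO_{F,X}$-modules, which is automatic for sheaves of modules over any sheaf of rings. Second, you should be slightly more careful in saying that $f_*I^{\bullet}$ computes the left-hand side: what you need is that $f_*I^{\bullet}$ is a bounded-below complex of injective $\CO_{F,Y}$-modules quasi-isomorphic to $Rf_*\CN^{\bullet}$, which follows because injectives are flabby (hence $f_*$-acyclic) and $f_*$ preserves injectivity. With these clarifications the argument goes through.
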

\begin{theorem}[\protect{\cite[Theorem 4.4.1]{EmKis.Fcrys}}] \label{EmKisadj}
Let $f\colon X \to Y$ be a proper morphism of smooth $k$-schemes. For every $\CM^{\bullet}$ in $D_{\qc}^b(\CO_{F,X})$ and every $\CN^{\bullet}$ in $D_{\qc}^b(\CO_{F,Y})$, there is a natural isomorphism in $D^+(X,\ZZ/p\ZZ)$:
\[
	\RSHom_{\CO_{F,Y}}^{\bullet}(f_+\CM^{\bullet},\CN^{\bullet}) \overset{\sim}{\longrightarrow} Rf_*\RSHom_{\CO_{F,X}}^{\bullet}(\CM^{\bullet},f^!\CN^{\bullet}).
\]
Here $D_{\qc}^b(\CO_{F,X})$ denotes the subcategory of $D^b(\CO_{F,X})$ of complexes whose cohomology sheaves are quasi-coherent and analogously for $D_{\qc}^b(\CO_{F,Y})$.  
\end{theorem}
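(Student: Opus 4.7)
The plan is to follow the classical Grothendieck-duality strategy: use a projective factorization to reduce to two elementary cases, and handle each case with specific tools. By Chow's lemma together with the pseudo-functoriality of $f^!$ and $f_+$ in $f$ (i.e.\ their compatibility with composition, as in \cite[Proposition 2.3.2, Theorem 3.5.2]{EmKis.Fcrys}), it suffices to establish the adjunction isomorphism in the two basic cases: (i) $f$ a closed immersion, and (ii) $f$ the canonical smooth projection $\pi\colon Y \times \mathbb{P}^n \to Y$. The isomorphism for the original proper $f$ then follows by composing the two case-specific isomorphisms.

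For a closed immersion $i\colon X \hookrightarrow Z$, the statement reduces to Grothendieck duality for finite morphisms: the underlying $\CO$-module adjunction $\RSHom_{\CO_Z}(i_*\CF^\bullet, \CG^\bullet) \cong i_*\RSHom_{\CO_X}(\CF^\bullet, i^\flat \CG^\bullet)$ is classical. In this case $d_{X/Z} = 0$ and $\omega_{X/Z} = \CO_X$, so the definitions of $\CO_{F,Z \leftarrow X}$ and $\CO_{F,X \to Z}$ simplify substantially, and the right $F$-actions on the transfer bimodules are constructed precisely so that the classical $\CO$-isomorphism is automatically $F$-equivariant; this checking is essentially formal.

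The substantive case is the smooth projection $\pi$. Here one unfolds the definitions $\pi_+ \CM^\bullet = R\pi_*(\CO_{F,Y \leftarrow X} \derotimes_{\CO_{F,X}} \CM^\bullet)$ and $\pi^!\CN^\bullet = \CO_{F,X \to Y}\derotimes_{\pi^{-1}\CO_{F,Y}} \pi^{-1}\CN^\bullet [d_{X/Y}]$, and then combines three classical ingredients at the level of $\CO$-modules: Grothendieck duality for the proper morphism $\pi$ (which produces the twist by $\omega_{X/Y}[d_{X/Y}]$), the projection formula for $R\pi_*$, and tensor-hom adjunction across the (non-commutative) ring extension $\pi^{-1}\CO_{F,Y} \to \CO_{F,X}$. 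Chasing these formally produces a canonical isomorphism of the two sides as sheaves of abelian groups on $Y$.

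The main obstacle — and the real content of the proof — is showing that this underlying $\CO$-isomorphism actually intertwines the right $F$-actions, i.e., that it is a morphism of $\CO_{F,Y}$-modules. This hinges on the defining property of the relative Cartier operator $C_{X/Y}$: by its very construction (as the adjoint of $F_{X/Y}^! f'^! \gamma$ under the canonical identification $f^! \cong F_{X/Y}^! f'^!$, where $\gamma\colon \CO_Y \to F_Y^* \CO_Y$), $C_{X/Y}$ is precisely the datum reconciling the Frobenius on $\CO_{F,Y \leftarrow X}$ with the Grothendieck trace appearing in the duality isomorphism. Verifying $F$-equivariance therefore becomes a sequence of diagram chases using the relative Frobenius square \eqref{relativeFrobenius} and the construction of $C_{X/Y}$ summarized in the excerpt; I expect this step to occupy the bulk of the argument, essentially reproducing Emerton-Kisin's own verification in \cite[Theorem 4.4.1]{EmKis.Fcrys}.
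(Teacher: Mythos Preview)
Your approach differs substantially from the one the paper sketches (which is Emerton--Kisin's own), and it contains a concrete error together with a gap in the reduction step.

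First, the error. For a closed immersion $i\colon X \hookrightarrow Z$ of smooth $k$-schemes of codimension $c$, one has $d_{X/Z} = -c$, not $0$, and $\omega_{X/Z} = i^!\CO_Z$ is the determinant of the conormal bundle shifted appropriately, not $\CO_X$. So the transfer bimodules $\CO_{F,Z \leftarrow X}$ and $\CO_{F,X \to Z}$ do \emph{not} simplify in the way you claim, and the $F$-equivariance check for the closed-immersion case is not ``essentially formal'': the right $\CO_{F,X}$-action on $\CO_{F,Z\leftarrow X}$ genuinely involves the relative Cartier operator for $i$ already here.

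Second, the gap. Chow's lemma does not give a factorization of $f$ through $Y\times\mathbb{P}^n$; it produces a projective birational $g\colon X'\to X$ with $f\circ g$ projective. Pseudo-functoriality of $f_+$ and $f^!$ in $f$ lets you compose adjunctions along a genuine factorization, but it does not let you descend the adjunction isomorphism from $f\circ g$ to $f$ without further input (one would need to know, roughly, that $g_+g^! \cong \id$ on the relevant category). So the reduction to the two ``basic cases'' is not justified as written.

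By contrast, the paper's account of Emerton--Kisin's proof avoids factorization entirely. One works directly with the residual complex $E^\bullet$ of $\CO_Y$ and the Grothendieck trace $f_*f^{\Delta}E^\bullet \to E^\bullet$, shows this trace is compatible with the canonical map $E^\bullet \to F_Y^*E^\bullet$ (this is where the relative Cartier operator enters, globally and once), and thereby obtains a morphism $\tr_{F,f}\colon f_+\CO_{F,X}[d_{X/Y}] \to \CO_{F,Y}$ of $(\CO_{F,Y},\CO_{F,Y})$-bimodules. A projection formula \cite[Lemma 4.4.7]{EmKis.Fcrys} then yields $\tr_f\colon f_+f^!\CN^\bullet \to \CN^\bullet$, and the adjunction map is the composite
\[
Rf_*\RSHom_{\CO_{F,X}}^\bullet(\CM^\bullet,f^!\CN^\bullet)\longrightarrow \RSHom_{\CO_{F,Y}}^\bullet(f_+\CM^\bullet,f_+f^!\CN^\bullet)\xrightarrow{\ \tr_f\ }\RSHom_{\CO_{F,Y}}^\bullet(f_+\CM^\bullet,\CN^\bullet),
\]
the first arrow coming from \cite[Proposition 4.4.2]{EmKis.Fcrys}. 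That this is an isomorphism is then checked by reducing $\CM^\bullet$ to an induced module $\CO_{F,X}\otimes_{\CO_X} M$ (via the Lemma on Way-out Functors) and comparing with the underlying quasi-coherent Grothendieck duality, exactly as in the paper's proof of \autoref{Theorem}.
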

For the proof, Emerton and Kisin show that the trace map $f_*f^{\Delta}E^{\bullet} \to E^{\bullet}$ for the residual complex $E^{\bullet}$ of $\CO_X$ is compatible with the natural map $E^{\bullet} \to F_X^*E^{\bullet}$. Here $f^{\Delta}$ denotes the functor $f^!$ for residual complexes, see \cite[VI.3]{HartshorneRD}. Thus it induces a morphism $f_+\CO_{F,X}[d_{X/Y}] \to \CO_{F,Y}$, and with the isomorphisms
\[
	f_+f^!\CF^{\bullet} \to f_+(\CO_{F,X} \otimes_{\CO_{F,X}} f^!\CF^{\bullet}) \to f_+\CO_{F,X}[d_{X/Y}] \derotimes_{\CO_{F,Y}} \CF^{\bullet},
\]
the second one being a projection formula (\cite[Lemma 4.4.7]{EmKis.Fcrys}), we obtain a trace map $\tr\colon f_+f^!\CF^{\bullet} \to \CF^{\bullet}$ for every $\CF^{\bullet} \in D_{\lfgu}^b(\CO_{F,Y})$. Similarly, as in the case of the adjunction between $Rf_*$ and $f^!$ in Grothendieck-Serre duality, the natural transformation of the theorem is obtained by the composition
\[
	\xymatrix{
		Rf_*\RSHom_{\CO_{F,X}}^{\bullet}(\CM^{\bullet},f^!\CN^{\bullet}) \ar[r] & \RSHom_{\CO_{F,Y}}^{\bullet}(f_+\CM^{\bullet},f_+f^!\CN^{\bullet}) \ar[d]^{\tr} \\
		& \RSHom_{\CO_{F,Y}}^{\bullet}(f_+\CM^{\bullet},\CN^{\bullet}),
	}
\]
where the horizontal arrow is a natural transformation constructed in \cite[Proposition 4.4.2]{EmKis.Fcrys}. It is this adjunction between $f_+$ and $f^!$ that we want to extend to morphisms which are only proper over the support of the considered complexes. This will be done in section 4.

Finally, for a closed immersion of smooth varieties, we have a Kashiwara type equivalence.
\begin{theorem}[\protect{\cite[Theorem 5.10.1]{EmKis.Fcrys}}]
If $f\colon X \to Y$ is a closed immersion of smooth $k$-schemes, then the adjunction of \autoref{EmKisadj} provides an equivalence between the category of unit $\CO_{F,X}$-modules and the category of unit $\CO_{F,Y}$-modules supported on $X$. The fact that the natural map $f_+f^!\CM \to \CM$ is an isomorphism implies that $H^0(f^!)\CM \cong f^!\CM$.  
\end{theorem}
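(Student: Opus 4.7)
The plan is the standard Kashiwara-type argument adapted to unit $\CO_{F,X}$-modules: exhibit $f_+$ and $H^0 f^!$ as mutually quasi-inverse functors between $\mu_{\operatorname{u}}(X)$ and the full subcategory $\mu_{\operatorname{u}}(Y)_X$ of unit $\CO_{F,Y}$-modules supported on $X$, with the natural transformations supplied by the adjunction of \autoref{EmKisadj}. First I would verify that $f_+$ sends $\mu_{\operatorname{u}}(X)$ into $\mu_{\operatorname{u}}(Y)_X$. Since $f$ is a closed immersion, the functor $Rf_* = f_*$ is exact, so the question reduces to whether $\CO_{F,Y\leftarrow X}\derotimes_{\CO_{F,X}}\CM$ is concentrated in degree $0$ when $\CM$ is unit. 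Working locally with a regular sequence $t_1,\ldots,t_c$ cutting out $X$ in $Y$, a Koszul resolution of $\CO_X$ over $\CO_Y$ furnishes a length-$c$ flat resolution, and the shift $[d_{X/Y}] = [-c]$ in the definition of $f^!$ exactly cancels the Koszul Tors so that the outcome sits in degree $0$. That $f_+\CM$ is unit is built into the construction of the right $\CO_{F,X}$-action on $\CO_{F,Y\leftarrow X}$ through the relative Cartier operator $C_{X/Y}$ of \autoref{relativeFrobenius}, and the support condition is immediate.

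Next I would check that the adjunction morphisms are isomorphisms. Locally on $Y$, every $\CN \in \mu_{\operatorname{u}}(Y)_X$ is the filtered colimit of the subsheaves $\CN[I^n]$ of sections annihilated by the $n$-th power of the ideal sheaf $I$ of $X$, and the unit structure makes these sub-sheaves compatible under Frobenius and forces them to saturate $\CN$. A Koszul computation then identifies $H^0 f^!\CN$ with (a twist by $\omega_{X/Y}^{-1}$ of) the algebraic local cohomology $\varinjlim_n \ExtS^c_{\CO_Y}(\CO_Y/I^n,\CN)$, so that the counit $f_+ H^0 f^!\CN \to \CN$ becomes the tautology $R\Gamma_X\CN \cong \CN$ for sheaves supported on $X$. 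Symmetrically, for $\CM \in \mu_{\operatorname{u}}(X)$, the unit $\CM \to H^0 f^! f_+\CM$ reduces in local coordinates to the canonical identification $\omega_{X/Y}^{-1}\otimes_{\CO_X}\omega_{X/Y}\otimes_{\CO_X}\CM \cong \CM$, once the derived tensor products have been recognised as concentrated in a single degree by the Koszul argument of the previous step.

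The final assertion is then formal. Once $f_+$ restricted to $\mu_{\operatorname{u}}(X)$ is established as an equivalence with $\mu_{\operatorname{u}}(Y)_X$, it extends to an equivalence of the corresponding bounded derived categories of unit modules that is $t$-exact; combined with the hypothesis that $f_+ f^!\CM \cong \CM$ is concentrated in degree $0$, the conservativity of this equivalence forces $f^!\CM$ itself to lie in degree $0$, i.e., $f^!\CM \cong H^0(f^!)\CM$. The main obstacle I anticipate is the bookkeeping around the right $\CO_{F,X}$-action on $\CO_{F,Y\leftarrow X}$: one must verify that the relative Cartier operator interacts with the unit structures on both sides so that the adjunction of \autoref{EmKisadj} restricts from arbitrary $\CO_F$-modules to \emph{unit} $\CO_F$-modules. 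This compatibility is the hard core of the argument and rests on the careful analysis of the relative Frobenius diagram \autoref{relativeFrobenius}; once it is in place, the rest is standard Koszul algebra and adjunction formalism.
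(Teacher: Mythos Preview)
The paper does not supply its own proof of this statement: both here and later at \autoref{Kashiwaraunit} it is simply cited as \cite[Theorem 5.10.1]{EmKis.Fcrys}, so there is nothing in the paper to compare your argument against.

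That said, your outline is essentially the strategy Emerton and Kisin themselves follow. The reduction to a local Koszul computation with a regular sequence cutting out $X$, the identification of $f^!\CN$ for $\CN$ supported on $X$ with local cohomology, and the use of the unit condition to ensure that the filtration by $I^n$-torsion exhausts $\CN$ are all correct ingredients. One point you may be underestimating: the main technical work in \cite{EmKis.Fcrys} is not the compatibility of the relative Cartier operator with unit structures (which is largely formal once $\CO_{F,Y\leftarrow X}$ is set up), but rather the verification that the higher cohomologies $H^i(f^!\CN)$ vanish for $i\neq 0$ when $\CN$ is unit and supported on $X$. Your sketch derives this \emph{a posteriori} from the equivalence, but in practice one needs some direct input---either a spectral sequence argument or an explicit local computation showing that the Frobenius structure forces the relevant Ext groups to collapse---before the adjunction maps can be shown to be isomorphisms. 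Otherwise the argument is sound as a proof plan.
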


\section{From Cartier crystals to locally finitely generated unit modules}

In order to construct an equivalence between Cartier crystals and locally finitely generated unit modules, one uses an equivalence between Cartier modules and so-called $\gamma$-sheaves. It will induce an equivalence between Cartier crystals and $\gamma$-crystals. The latter in turn are known to be equivalent to lfgu modules.

\subsection{Cartier modules and $\gamma$-sheaves}

We note that for a regular scheme $X$, the Frobenius $F_X\colon X \to X$ is a flat morphism and hence $F_X^*$ is exact (\cite[Theorem 2.1]{Kunz}). 
\begin{definition}
A \emph{$\gamma$-sheaf} on a regular, $F$-finite scheme $X$ is a quasi-coherent $\CO_X$-module $N$ together with a morphism $\gamma_N\colon N \to F^*N$.
\end{definition} 
The theory of $\gamma$-sheaves is very similar to that of Cartier modules. With the obvious morphisms, $\gamma$-sheaves form an abelian category with the nilpotent $\gamma$-sheaves being a Serre subcategory. We obtain $\gamma$-crystals in the same way as we obtained Cartier crystals and so forth. In this section we revisit the connection between Cartier modules and $\gamma$-sheaves as explained in section 5.2.1 of \cite{BliBoe.CartierFiniteness} and give some details of the proof.

\begin{definition}
For any isomorphism $\phi\colon \CE_1 \to \CE_2$ of invertible $\CO_X$-modules, while $\phi^{-1}\colon \CE_2 \to \CE_1$ denotes the inverse, let $\phi^{\vee}$ denote the induced isomorphism $\CE_2^{-1} \to \CE_1^{-1}$ between the duals.   

If we speak of the $\gamma$-sheaf $\CO_X$ we mean the structure sheaf of $X$ together with the natural isomorphism $\gamma_X\colon \CO_X \to F^*\CO_X$. By abuse of notation we call this isomorphism the \emph{Frobenius}.

For a regular, $F$-finite scheme $X$, let $\kappa_X$, or $\kappa_R$ if $X = \Spec R$ is affine, denote the natural isomorphism $\omega_X \overset{\sim}{\longrightarrow} F^{\flat}\omega_X$, which is the adjoint of the Cartier operator if $X$ is a smooth variety. 
\end{definition}  
The next lemma makes explicit a fundamental isomorphism, which will be used repeatedly. 
\begin{lemma}[\protect{\cite[Lemma 5.7]{BliBoe.CartierFiniteness}}] \label{flatdecompose}
Let $f\colon X \to Y$ be a finite and flat morphism of schemes. For every quasi-coherent $\CO_Y$-module	$\CF$, there is a natural isomorphism
\[
	\can\colon f^{\flat}\CO_X \otimes_{\CO_X} f^*\CG \overset{\sim}{\longrightarrow} f^{\flat} \CG.
\]
\end{lemma}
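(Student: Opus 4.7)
The plan is to construct a natural candidate for $\can$ and then verify that it is an isomorphism by reducing to a standard statement about finitely generated projective modules.

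First I would construct the map. For any $\CO_X$-module $\CM$ there is an evaluation pairing
$f^{\flat}\CO_X \otimes_{\CO_X} \CM \to \SHom_{\CO_Y}(f_*\CO_X,\usc)$ transported through $\overline{f}^*$: given a local section $\phi \in f^{\flat}\CO_X$ (viewed after passage to $X$) and a section $m \in \CM$, one sends the pair to the map $a \mapsto \phi(a) \cdot m$. Applying this with $\CM = f^*\CG$ and noting that $f^*\CG$ carries a canonical $\CO_X$-linear evaluation at elements of $f^{-1}\CG$, one gets a morphism of $\CO_X$-modules $\can\colon f^{\flat}\CO_X \otimes_{\CO_X} f^*\CG \to f^{\flat}\CG$. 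Its naturality in $\CG$ is immediate from the construction.

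To check that $\can$ is an isomorphism, the statement is local on $Y$, so I would pass to the affine situation. Write $Y = \Spec R$, $X = \Spec S$, with $S$ a finite flat $R$-algebra, and let $G$ be the $R$-module corresponding to $\CG$. Under the usual dictionary (using that $\overline{f}^*$ is the equivalence between quasi-coherent $f_*\CO_X$-modules on $Y$ and quasi-coherent $\CO_X$-modules on $X$), the map $\can$ becomes the natural $S$-linear map
\[
    \Hom_R(S,R) \otimes_R G \longrightarrow \Hom_R(S,G), \qquad \phi \otimes g \longmapsto \bigl(s \mapsto \phi(s)\, g\bigr),
\]
after identifying $\Hom_R(S,R) \otimes_S (S \otimes_R G) \cong \Hom_R(S,R) \otimes_R G$.

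Now the key observation is that since $f$ is finite and flat, $S$ is a finitely presented flat $R$-module, hence a finitely generated projective $R$-module. For any finitely generated projective $P$ the natural transformation $\Hom_R(P,R) \otimes_R \usc \to \Hom_R(P,\usc)$ is an isomorphism of functors: it is tautological for $P = R^n$, and both sides commute with direct summands, so it passes to any retract of a free module of finite rank. Applying this with $P = S$ yields that the displayed map is an isomorphism, and hence $\can$ is an isomorphism.

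The only genuinely delicate step is bookkeeping with the $S$-module (equivalently $\CO_X$-module) structures when transporting through $\overline{f}^*$ and through the projection formula identification $\Hom_R(S,R) \otimes_S (S\otimes_R G) \cong \Hom_R(S,R) \otimes_R G$; once these are set up correctly, the remainder is the standard duality isomorphism for finitely generated projective modules.
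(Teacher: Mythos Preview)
Your proof is correct and follows essentially the same route as the paper: reduce to the affine case and use that $S$ is a finite projective $R$-module. The only difference is that the paper localizes one step further to where $S$ is \emph{free} over $R$ and writes down the inverse explicitly as $\phi \mapsto \sum_i \phi_i \otimes \phi(s_i)$ using a dual basis $\phi_1,\dots,\phi_n$; this explicit formula for $\can^{-1}$ is not cosmetic, since it is invoked in the subsequent remark and in the proof of \autoref{localGammapullback} to compute the induced Cartier structure on $N \otimes \omega_X$ concretely.
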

\begin{proof}
It suffices to construct a natural isomorphism locally and therefore we can identify $f$ with a ring homomorphism $R \to S$ and $\CF$ with an $R$-module $M$. Define the homomorphism
\[
	\can\colon \Hom_R(S,R) \otimes_S (M \otimes_R S) \to \Hom_R(S,M)
\]
of $S$-modules by mapping $\alpha \otimes (m \otimes t)$ to the homomorphism $s \mapsto \alpha(st)m$. Since $f$ is finite flat, we can assume that $S$ is a free $R$-module and choose a basis $s_1,s_2, \dots,s_n$. Let $\phi_1, \dots,\phi_n$ be the dual basis, i.e.\ $\phi_i \in \Hom_R(S,R)$ and $\phi_i(j) = \delta_{ij}$. One easily checks that the map
\begin{align*}
	\Hom_R(S,M) &\to \Hom_R(S,R) \otimes_S (M \otimes_R S) \\
	\phi &\mapsto \sum_{i=1}^n \phi_i \otimes \phi(s_i).
\end{align*}
is inverse to $\can$. 
\end{proof}
The following definition is extracted from \cite[Theorem 5.9]{BliBoe.CartierFiniteness}.
\begin{definition} 
\label{CartierGammaDef}
Let $X$ be a regular, $F$-finite scheme. 
\begin{enumerate} 
	\item For every Cartier module $M$ with structural morphism $\kappa$, the sheaf $M \otimes \omega_X^{-1}$ has a natural $\gamma$-structure given by the composition
	\[
		\xymatrix@C70pt{
			M \otimes \omega_X^{-1} \ar[d]^{\kappa_M \otimes (\kappa_X^{\vee})^{-1}} & \\
			F^{\flat}M \otimes (F^{\flat}\omega_X)^{-1} \ar[r]^-{\can^{-1} \otimes \can^{\vee}} \ar[d]^{\sim} & F^{\flat}\CO_X \otimes F^*M \otimes (F^{\flat}\CO_X)^{-1} \otimes F^*\omega_X^{-1} \ar[d]^{\sim} \\
			F^*(M \otimes \omega_X^{-1}) & F^{\flat}\CO_X \otimes (F^{\flat}\CO_X)^{-1} \otimes F^*M \otimes F^*\omega_X^{-1}, \ar[l]^-{\ev}
			}
	\]
	where the vertical arrow on the right is the permutation and $\ev_{\CL}\colon \CL \otimes_{\CO_X} \CL^{-1} \overset{\sim}{\longrightarrow} \CO_X$ is the evaluation map $l \otimes \phi \mapsto \phi(l)$. This morphism is called the $\gamma$-structure of $M \otimes \omega_X^{-1}$ induced by $\kappa_M$.
	\item For every $\gamma$-module $N$ with structural morphism $\gamma_N$, the sheaf $N \otimes \omega_X$ has a natural Cartier structure given by the composition
	\[
		\xymatrix@C70pt{
			N \otimes \omega_X \ar[d]^{\gamma_N \otimes \kappa_X} & \\
			F^*N \otimes F^{\flat}\omega_X \ar[r]^-{id \otimes \can^{-1}} \ar[d]^{\sim} & F^*N \otimes F^{\flat}\CO_X \otimes F^*\omega_X \ar[d]^{\sim} \\
			F^{\flat}(N \otimes \omega_X) & F^{\flat}\CO_X \otimes F^*N \otimes F^*\omega_X, \ar[l]^-{\can}
		}
	\]
where the vertical arrow on the right is the permutation. This morphism is called the Cartier structure of $N \otimes \omega_X$ induced by $\gamma_N$.
\end{enumerate}
\end{definition}
\begin{remark}
Thanks to the fact that the proof of \autoref{flatdecompose} contains explicit formulas for the isomorphism $\can$ and its inverse, we can concretely describe the induced Cartier structure of $N \otimes \omega_R$ for a $\gamma$-module $N$ over a regular ring $R$ such that $F_*R$ is free with basis $s_1,\dots ,s_r$. For $m \in \omega_R$ set $\phi_m := \kappa_R(m)$ and let $\phi_1, \dots , \phi_r \in \Hom_R(F_*R,R)$ be the dual basis of $s_1, \dots , s_r$, this means $\phi_i(s_j)=\delta_{ij}$. Following the arrows of \autoref{CartierGammaDef}, we see that the Cartier structure $N \otimes \omega_X \to F^{\flat}(N \otimes \omega_X)$ is given by
\begin{align*}
	n \otimes m &\mapsto \gamma(n) \otimes \phi_m \\
	&\mapsto \sum_i \gamma(n) \otimes \phi_i \otimes \phi_m(s_i) \\
	&\mapsto (s \mapsto \sum_i \gamma(n) \otimes \phi_i(s) \otimes \phi_m(s_i)).
\end{align*}
We will need this concrete version later on to prove that, for affine schemes, assigning a Cartier module to a $\gamma$-sheaf commutes with certain pullbacks, see \autoref{localGammapullback}. The use of the isomorphism $F^{\flat}\CO_X \otimes (F^{\flat}{\CO_X})^{-1} \cong \CO_X$ involves the concrete formula for the structural morphism of the $\gamma$-sheaf associated to a Cartier module.
\end{remark}
\begin{lemma}
Let $(N,\gamma_N)$ be a $\gamma$-sheaf on a regular, $F$-finite scheme $X$. The adjoint $F_*(N \otimes \omega_X) \to N \otimes \omega_X$ of the structural morphism of the Cartier module $N \otimes \omega_X$ is given by the composition
\[
	F_*(N \otimes \omega_X) \xrightarrow{\gamma_N} F_*(F^*N \otimes \omega_X) \overset{\sim}{\longrightarrow} N \otimes F_*\omega_X \xrightarrow{\tilde{\kappa}_X} N \otimes \omega_X,
\]
\end{lemma}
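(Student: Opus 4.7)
My plan is to pass through the adjunction $(F_*, F^{\flat})$ and reduce the claim to a naturality/compatibility statement about the projection formula.

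First, I would factor the Cartier structure $\kappa \colon N \otimes \omega_X \to F^{\flat}(N \otimes \omega_X)$ from \autoref{CartierGammaDef}(ii). Combining the three maps after $\gamma_N \otimes \kappa_X$ produces the natural projection-formula isomorphism
\[
   p \colon F^*N \otimes F^{\flat}\omega_X \overset{\sim}{\longrightarrow} F^{\flat}(N \otimes \omega_X),
\]
built out of $\can$, a permutation of tensor factors, and $\can^{-1}$. Thus $\kappa = p \circ (\gamma_N \otimes \kappa_X) = p \circ (\id \otimes \kappa_X) \circ (\gamma_N \otimes \id)$.

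Next, applying $F_*$ and composing with the counit $\epsilon \colon F_*F^{\flat}\to \id$ of the $(F_*,F^{\flat})$-adjunction produces the adjoint:
\[
  \tilde\kappa \;=\; \epsilon_{N \otimes \omega_X} \circ F_*(p) \circ F_*(\id \otimes \kappa_X) \circ F_*(\gamma_N \otimes \id).
\]
The last factor $F_*(\gamma_N \otimes \id)$ is exactly the first arrow in the stated composition. Invoking the identity $\tilde\kappa_X = \epsilon_{\omega_X} \circ F_*(\kappa_X)$, everything then reduces to proving commutativity of the square
\[
  \xymatrix{
    F_*(F^*N \otimes F^{\flat}\omega_X) \ar[r]^-{F_*(p)} \ar[d]_-{\pi} & F_*F^{\flat}(N \otimes \omega_X) \ar[d]^-{\epsilon_{N \otimes \omega_X}} \\
    N \otimes F_*F^{\flat}\omega_X \ar[r]^-{\id \otimes \epsilon_{\omega_X}} & N \otimes \omega_X,
  }
\]
where $\pi \colon F_*(F^*N \otimes \usc) \overset{\sim}{\to} N \otimes F_*(\usc)$ is the usual projection-formula isomorphism. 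Granted this square, pre-composing by $F_*(\id \otimes \kappa_X) \circ F_*(\gamma_N \otimes \id)$ rewrites $\tilde\kappa$ as $(\id_N \otimes \tilde\kappa_X) \circ \pi \circ F_*(\gamma_N \otimes \id)$, which is the composition in the lemma.

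I expect the commutativity of that square to be the main obstacle, because it is precisely the compatibility of the ``$F^{\flat}$-flavour'' and the ``$F_*$-flavour'' of the projection formula relative to the adjunction counit. The cleanest verification is local: cover $X$ by affine opens $\Spec R$ on which $F_*R$ is a free $R$-module with basis $s_1,\ldots,s_r$ and dual basis $\phi_1,\ldots,\phi_r$ in $F^{\flat}R = \Hom_R(F_*R,R)$. Using the explicit formula for $\can$ and for the counit (given by $\sum \phi_i \otimes m_i \mapsto (s \mapsto \sum \phi_i(s) m_i)$), one checks both paths in the square send a section $\xi \otimes \alpha$ with $\xi \in F^*N$, $\alpha \in F^{\flat}\omega_R$ to $\sum_i (\xi \cdot \phi_i(\,\cdot\,)) \otimes \alpha(s_i)$ after reindexing the $F_*$ on the left-hand side; this matches the explicit formula recorded in the remark preceding the lemma. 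Alternatively, one may argue abstractly: both $p$ and $\pi$ descend from the single canonical evaluation $F^{\flat}\CO_X \otimes F^*(\usc) \to F^{\flat}(\usc)$, and the diagram is the defining naturality of the adjunction counit against this evaluation. Either way, once the square commutes the lemma follows by the substitution indicated above.
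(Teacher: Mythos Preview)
Your argument is correct and mirrors the paper's proof across the $(F_*,F^{\flat})$-adjunction: the paper works on the $F^{\flat}$ side, comparing the structural map $N\otimes\omega_X\to F^{\flat}(N\otimes\omega_X)$ with the unit composed with $F^{\flat}$ of the claimed composition, and reduces to a middle square (relating the projection formula with the adjunction unit) that is verified locally on affines, while you work on the $F_*$ side with the counit and obtain the adjoint of exactly that square. Both proofs dispatch the $\gamma_N$ and $\kappa_X$ factors by naturality and isolate the same local projection-formula compatibility, so the approaches are essentially the same.
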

where the isomorphism in the middle is given by the projection formula.
\begin{proof}
By construction, the structural morphism of $N \otimes \omega_X$ is the composition of the upper horizontal and the rightmost vertical arrow of the following diagram: 
\[
	\xymatrix{
		N \otimes \omega_X \ar[r]^-{\gamma_N} \ar[d]^{\text{adj}} & F^*N \otimes \omega_X \ar[r]^-{\text{adj}} \ar[d]^{\text{adj}} & F^*N \otimes F^{\flat}F_*\omega_X \ar[d]^{\sim} \ar[r]^-{\tilde{\kappa}_X} & F^*N \otimes F^{\flat}\omega_X \ar[d]^{\sim} \\
		F^{\flat}F_*(N \otimes \omega_X) \ar[r]^-{\gamma_N} & F^{\flat}F_*(F^*N \otimes \omega_X) \ar[r]^-{\text{proj}^{-1}} & F^{\flat}(N \otimes F_*\omega_X) \ar[r]^-{\tilde{\kappa}_X} & F^{\flat}(N \otimes \omega_X).
	}
\]
Here $\text{adj}$ denotes the respective adjunction morphism and $\text{proj}$ is the isomorphism from the projection formula. The third and the fourth vertical morphism are isomorphisms stemming from $\can$. For example, the morphism $F^*N \otimes F^{\flat}\omega_X \to F^{\flat}(N \otimes F_*\omega_X)$ is the composition  
\[
	F^*N \otimes F^{\flat}\omega_X \xrightarrow{\id \otimes \can^{-1}} F^*N \otimes F^*\omega_X \otimes F^{\flat} \CO_X \xrightarrow{\can} F^{\flat}(N \otimes \omega_X).
\]
Following the leftmost vertical and the lower horizontal arrows we obtain the adjoint of the morphism which is claimed to be the adjoint of the Cartier structure of $N \otimes \omega_X$. Hence it suffices to show that the diagram above is commutative. 

The first and the last square commute by functoriality. The commutativity of the square in the middle can be checked locally on affine open subsets of $X$ because $F$ is an affine morphism.
\end{proof} 
For the proof of \autoref{CartierGamma}, we need the isomorphism $\omega_X \otimes \omega_X^{-1} \cong \CO_X$ of $\gamma$-sheaves, which is a consequence of the following general lemma.  
\begin{lemma} \label{ExampleBasic}
Let $f\colon X \to Y$ be a morphism of schemes and $\CL$ an invertible $\CO_Y$-module.
\begin{enumerate}
	\item If $\rho\colon \CL \overset{\sim}{\longrightarrow} \CL_1 \otimes_{\CO_X} \CL_2$ is an isomorphism with invertible $\CO_X$-modules $\CL_1$ and $\CL_2$, then the diagram
	\[
		\xymatrix@C50pt@R30pt{
			\CL \otimes \CL^{-1} \ar[r]^-{\rho \otimes (\rho^{\vee})^{-1}} \ar[d]_{\ev_{\CL}} & \CL_1 \otimes \CL_2 \otimes \CL_1^{-1} \otimes \CL_2^{-1} \ar[r]^-{\id \otimes \ev_{\CL_1}} & \CL_2 \otimes \CL_2^{-1} \ar[dll]^{\ev_{\CL_2}} \\
			\CO_X & &
		}
	\]
	commutes.
	\item The diagram of canonical isomorphisms
	\[
		\xymatrix{
			 f^*(\CL \otimes_{\CO_Y} \CL^{-1}) \ar[d] \ar[r]& f^*\CL \otimes_{\CO_X}(f^*\CL)^{-1}  \ar[d] \\
			f^*\CO_X \ar[r] & \CO_Y
		}
	\]
	commutes.
\end{enumerate} 		
\end{lemma}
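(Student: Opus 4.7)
The plan is to prove both parts by reducing to the case where all invertible sheaves are free of rank one on a suitable affine open. Every morphism appearing in both diagrams — evaluation, the isomorphism $\rho$, pullback, and the canonical identifications of duals and pullbacks of duals — is natural with respect to restriction, so the statements are local on $X$ (for part~(a)) and on $X$ and $Y$ (for part~(b)). Invertibility supplies the required local frames.

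For part~(a), I will work on an affine open where $\CL_1$ and $\CL_2$ are free on chosen generators $\ell_1$ and $\ell_2$, set $\ell := \rho^{-1}(\ell_1 \otimes \ell_2)$ as generator of $\CL$, and write $\ell^*, \ell_1^*, \ell_2^*$ for the corresponding dual generators, characterized by $\ev_\CL(\ell \otimes \ell^*) = 1$ and similarly for the $\CL_i$. Chasing $\ell \otimes \ell^*$ around the diagram, the vertical arrow $\ev_\CL$ returns $1 \in \CO_X$. Along the upper route, $\rho \otimes (\rho^\vee)^{-1}$ sends $\ell \otimes \ell^*$ to $\ell_1 \otimes \ell_2 \otimes \ell_1^* \otimes \ell_2^*$, once one unravels the canonical identification $(\CL_1 \otimes \CL_2)^{-1} \cong \CL_1^{-1} \otimes \CL_2^{-1}$ built into $(\rho^\vee)^{-1}$. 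The map $\id \otimes \ev_{\CL_1}$, with the implicit tensor permutation bringing $\ell_1^*$ next to $\ell_1$, then yields $\ell_2 \otimes \ell_2^*$, and $\ev_{\CL_2}$ returns $1$.

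For part~(b), the canonical isomorphism $f^*\CL^{-1} \to (f^*\CL)^{-1}$ is defined precisely as the unique isomorphism compatible with the evaluation pairing; equivalently, $f^*$ is a symmetric monoidal functor and so preserves duality data on the nose. Concretely, choosing a local frame $\ell$ of $\CL$ with dual frame $\ell^*$, both compositions of the square send $f^*(\ell \otimes \ell^*)$ to $1$, so they agree. No further calculation is required.

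The only real bookkeeping task is in part~(a): the diagram silently invokes the canonical identification $(\CL_1 \otimes \CL_2)^{-1} \cong \CL_1^{-1} \otimes \CL_2^{-1}$ together with a tensor permutation that allows $\id \otimes \ev_{\CL_1}$ to contract $\ell_1$ against $\ell_1^*$. I would pin down these two conventions once at the outset so as not to confuse the order of factors during the chase. Beyond that, both statements are standard identities about duality in a symmetric monoidal category, and I do not expect any substantive obstacle.
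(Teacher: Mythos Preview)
Your proposal is correct and takes essentially the same approach as the paper: reduce to the affine (local) situation where the invertible sheaves are free and verify both diagrams by a direct element chase. In fact you supply more detail than the paper, which simply states that in the affine situation the claims follow from straightforward calculations.
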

\begin{proof}
It suffices to verify the claims for an affine scheme $X = \Spec R$ and, for (b), for a morphism of affine schemes $\Spec S \to \Spec R$ and an $R$-module $M$. In this affine situation the claims follow from straightforward calculations. 
\end{proof}   
\begin{example} \label{GenGamma}
The $\gamma$-sheaf $\omega := \omega_X \otimes \omega_X^{-1}$ on a regular, $F$-finite scheme $X$ is canonically isomorphic to the structure sheaf $\CO_X$ equipped with the natural morphism $\CO_X \to F^*\CO_X$. We have to show that the diagram
\[
	\xymatrix{
		\omega_X \otimes \omega_X^{-1} \ar[d]_{(\can^{-1} \circ \kappa_X) \otimes (\can^{-1} \circ (\kappa_X^{\vee})^{-1})} \ar[r]^-{\ev_{\omega_X}} & \CO_X \ar[dd]^{\id} \\
		F^{\flat} \CO_X \otimes F^* \omega_X \otimes (F^{\flat}\CO_X)^{-1} \otimes F^*\omega_X^{-1} \ar[d]_{\ev_{F^{\flat}\CO_X} \otimes \id}^{\sim} & \\
		F^*\omega_X \otimes F^*\omega_X^{-1} \ar[r]_-{\ev_{F^*\omega_X}}^-{\sim} \ar[d]^{\sim}  & \CO_X \ar[d]^{\gamma_X}  \\
		F^*(\omega_X \otimes \omega_X^{-1}) \ar[r]_-{F^*\ev_{\omega_X}}^-{\sim} & F^*\CO_X  
	}
\]
commutes. The commutativity of both the top and the bottom rectangle follows from \autoref{ExampleBasic}. Note that the horizontal isomorphisms of the lower square are the inverses of the natural isomorphisms of part (b) of \autoref{ExampleBasic}. By definition, the $\gamma$-structure of $\omega_X \otimes \omega_X^{-1}$ is given by the composition of the vertical arrows on the right. Hence the $\gamma$-structure of $\CO_X$ with respect to the natural isomorphism $\CO_X \xrightarrow{\ev_{\omega_X}^{-1}} \omega_X \otimes \omega_X^{-1}$ is the Frobenius.

Similarly, starting with the $\gamma$-module $\CO_X$, the induced Cartier structure of $\CO_X \otimes \omega_X$ is compatible with $\kappa_X$ with respect to the isomorphism $\CO_X \otimes \omega_X \cong \omega_X$.       
\end{example}
\begin{definition}
Let $\QCohG(X)$ denote the category of $\gamma$-sheaves and let $\CohG(X)$ denote the category of $\gamma$-sheaves whose underlying $\CO_X$-module is coherent. We let $\QCrysG(X)$ and $\CrysG(X)$ denote the corresponding categories of crystals, see \autoref{defCartCrys}.
\end{definition}
\begin{proposition} \label{CartierGamma}
If $X$ is a regular, $F$-finite scheme, then tensoring with $\omega_X$ and its inverse induces inverse equivalences of categories between Cartier modules and $\gamma$-sheaves on $X$:
\[
	\xymatrix@C70pt{
	\QCohC(X) \ar@<0.1cm>[r]^-{\usc \otimes_{\CO_X}\omega_X^{-1}} & \QCohG(X) \ar@<0.1cm>[l]^-{\usc \otimes_{\CO_X}\omega_X}
	}
\]
and
\[
	\xymatrix@C70pt{
	\CohC(X) \ar@<0.1cm>[r]^-{\usc \otimes_{\CO_X}\omega_X^{-1}} & \CohG(X) \ar@<0.1cm>[l]^-{\usc \otimes_{\CO_X}\omega_X}.
	}
\]
In terms of this equivalence, the Cartier module $(\omega_X,\kappa_X)$ corresponds to the $\gamma$-sheaf $(\CO_X,\gamma_X)$.
\end{proposition}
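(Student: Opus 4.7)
The plan is to reduce this to checking three things: that the two functors land in the stated categories, that they are inverse equivalences on underlying $\CO_X$-modules, and that the round-trip compositions identify the structural morphisms with the original ones under the canonical isomorphism $M \otimes \omega_X^{-1} \otimes \omega_X \cong M$. The second is immediate from invertibility of $\omega_X$, and functoriality of each construction (both are variants of tensor product followed by applying the functorial $\gamma$- or Cartier-structure recipe of \autoref{CartierGammaDef}) is routine, so essentially all content lies in the third.

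First I would verify that $\usc \otimes \omega_X^{-1}$ and $\usc \otimes \omega_X$ send quasi-coherent Cartier modules to $\gamma$-sheaves and vice versa, which is clear since the structural morphisms of \autoref{CartierGammaDef} are manifestly morphisms of quasi-coherent $\CO_X$-modules, and tensoring with an invertible sheaf preserves coherence, which handles the coherent case. Morphisms correspond naturally because the diagrams defining $\phi \otimes \id_{\omega_X^{\pm 1}}$ being compatible with the respective structures reduce to the compatibility of $\phi$ itself with $\kappa$ or $\gamma$.

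The core verification is that the composition
\[
    (M,\kappa_M) \longmapsto (M \otimes \omega_X^{-1}, \gamma_{\text{ind}}) \longmapsto (M \otimes \omega_X^{-1} \otimes \omega_X, \kappa_{\text{ind}})
\]
is isomorphic to $(M, \kappa_M)$ via the canonical isomorphism $M \otimes \omega_X^{-1} \otimes \omega_X \xrightarrow{\sim} M$ coming from $\ev_{\omega_X}$. I would trace through the two big diagrams of \autoref{CartierGammaDef} stacked on top of each other: the outer composition collects two copies of $\kappa_X \otimes (\kappa_X^{\vee})^{-1}$, the associated $\can^{\pm 1}$ factors, and two pairs of evaluation morphisms $\ev_{F^{\flat}\CO_X}$ and $\ev_{F^*\omega_X}$. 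The key cancellations are supplied by \autoref{ExampleBasic}(a), which allows one to rewrite any $\ev_{\CL_1 \otimes \CL_2}$ as the composition of $\ev_{\CL_1}$ and $\ev_{\CL_2}$, and by \autoref{ExampleBasic}(b), which guarantees compatibility of the evaluation maps with $F^*$. After these collapses the double-dual isomorphism (viewing $(\kappa_X^{\vee})^{-1}$ on $\omega_X^{-1}$ composed with $\kappa_X$ on $\omega_X$) simplifies via $\ev_{\omega_X}$ to the identity on the $\omega_X$-factors, leaving exactly $\kappa_M$ on the $M$-factor. The opposite round-trip $(N,\gamma_N) \leadsto (N \otimes \omega_X \otimes \omega_X^{-1}, \gamma_{\text{ind}}) \cong (N, \gamma_N)$ is entirely symmetric, with the same lemma providing the cancellations.

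Finally, the last assertion that $(\omega_X, \kappa_X)$ and $(\CO_X, \gamma_X)$ correspond is precisely the content of \autoref{GenGamma}, where both directions of the computation were carried out. I expect the main obstacle to be purely notational: the bookkeeping of the simultaneous interactions of $F^*$, $F^{\flat}$, the isomorphism $\can$ of \autoref{flatdecompose}, the two evaluation maps, and the duality $\phi \mapsto \phi^{\vee}$ is tedious, but once one commits to working locally on an affine open where $F_*\CO_X$ is free with a chosen dual basis, each square reduces to the kind of explicit formula displayed in the remark following \autoref{CartierGammaDef}, and the cancellations become direct verifications.
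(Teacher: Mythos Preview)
Your proposal is correct and follows essentially the same approach as the paper: reduce to checking that the round-trip structural morphisms agree with the originals under the canonical isomorphism $M \otimes \omega_X^{-1} \otimes \omega_X \cong M$, and use \autoref{ExampleBasic} together with \autoref{GenGamma} to carry out the cancellations. The paper organizes the verification via two large explicit commutative diagrams (labeled (\ref{Car}) and (\ref{Gam})) rather than arguing the cancellations informally, and it makes explicit the intermediate identification of the round-trip $\gamma$-structure as $\gamma_N \otimes \gamma_\omega$ before invoking \autoref{GenGamma}, but the substance is the same.
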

\begin{proof}
Let $\omega$ denote the $\gamma$-sheaf $\omega_X^{-1} \otimes \omega_X$ and $\gamma_{\omega}$ its structural morphism. (Note that there is no considerable difference between $\omega_X \otimes \omega_X^{-1}$ and $\omega_X^{-1} \otimes \omega_X$.) We start with a Cartier module $(M,\kappa_M)$. Consider the diagram (\ref{Car}) on page \pageref{Car}. Passing through the top arrow we follow the construction of the structural morphism of $M \otimes \omega_X^{-1} \otimes \omega_X$ while the structural morphism of $\kappa_M$ is given by the composition of the horizontal arrows on the bottom: $\kappa_M = \can \circ (\can^{-1} \circ \kappa_M)$. Hence we have to show that (\ref{Car}) is commutative. Here $\mu$ denotes a permutation of the tensor product followed by the evaluation map, similar to the top most horizontal arrow. More precisely, it is the composition
\[
	\xymatrix{
		F^{\flat}\CO_X \otimes F^*M \otimes (F^{\flat}\CO_X)^{-1} \otimes F^*\omega_X^{-1} \otimes F^{\flat}\CO_X \otimes F^*\omega_X \ar[d]^{\sim} \\
		F^{\flat}\CO_X \otimes F^*M \otimes F^*\omega_X^{-1} \otimes ((F^{\flat}\CO_X)^{-1} \otimes F^{\flat}\CO_X) \otimes F^*\omega_X \ar[d]^{\ev} \\
		F^{\flat}\CO_X \otimes F^*M \otimes F^*\omega_X^{-1} \otimes F^*\omega_X \ar[d]^{\sim} \\
		F^{\flat}\CO_X \otimes F^*(M \otimes \omega_X^{-1} \otimes \omega_X)
	}
\]
of natural isomorphisms and $\ev$. For simplicity, we will not distinguish between $F^*(M \otimes \omega_X^{-1} \otimes \omega_X)$ and $F^*M \otimes F^*\omega_X^{-1} \otimes F^*\omega_X$. The commutativity of the upper square is an easy computation. 

In the lower left square the map from $M \otimes \omega_X \otimes \omega_X^{-1}$ to $ F^{\flat}\CO_X \otimes F^*(M \otimes \omega_X^{-1} \otimes \omega_X)$ is the composition 
\[
	M \otimes \omega_X \otimes \omega_X^{-1} \xrightarrow{\can^{-1} \circ \kappa_M} F^{\flat}\CO_X \otimes F^*M \otimes \omega_X^{-1} \otimes \omega_X \xrightarrow{\id \otimes \gamma_{\omega}} F^{\flat}\CO_X \otimes F^*(M \otimes \omega_X^{-1} \otimes \omega_X).
\]
Hence it suffices to show that 
\begin{align} \label{Carout}
	\xymatrix@C60pt{
		F^*M \otimes \omega_X^{-1} \otimes \omega_X \ar[d]^{\id \otimes \gamma_{\omega}} & F^*M \otimes \CO_X \ar[l]_-{\id \otimes \delta}^-{\sim} \ar[r]_-{\sim} \ar[d]_{\id \otimes F} & F^*M \ar[d]_{\sim} \\
		F^*M \otimes F^*(\omega_X^{-1} \otimes \omega_X) & F^*M \otimes F^*\CO_X \ar[l]_-{\id \otimes F^*\delta}^-{\sim} \ar[r]_-{\sim} & F^*(M \otimes \CO_X)
	}
\end{align}
is commutative. The commutativity of the left square is \autoref{GenGamma} tensored with $F^*M$. That the right square commutes can easily be checked by hand: For an arbitrary commutative ring $R$, an $R$-algebra $S$ and an $R$-module $M$, the diagram
\[
	\xymatrix{
		(M \otimes_R S) \otimes_S S \ar[r] \ar[d] & M \otimes_R S \ar[d] \\
		(M \otimes_R S) \otimes_S (R \otimes_R S) \ar[r] & (M \otimes_R R) \otimes_R S
	}
\]
of natural homomorphisms is commutative. Along both ways an element $(m \otimes s_1) \otimes s_2$ is mapped to $(m \otimes 1) \otimes s_1s_2$. Locally the right square of (\ref{Carout}) is just a special case of this diagram. 

Now let $(N,\gamma_N)$ be a $\gamma$-sheaf. By definition, the structural morphism $\gamma_N'$ of $N \otimes \omega_X \otimes \omega_X^{-1}$ is the line in the middle of the diagram (\ref{Gam}). The upper squares of this diagram commute by construction. Here the horizontal morphism to the top right corner is given by $\id \otimes (\can^{-1} \circ \tilde{\kappa}_X)$ and the horizontal morphism below is the unique morphism making the upper right square commute. Therefore we see that $\gamma_N'$ is the tensor product of $\gamma_N$ and $\gamma_{\omega}$, i.e.\ the bottom rectangle of (\ref{Gam}) is commutative. 

Now consider the diagram
\[
	\xymatrix@C50pt@R30pt{
		N \otimes \omega \ar[r]^-{\id \otimes \gamma_{\omega}} & N \otimes F^* \omega \ar[r]^-{\gamma_N \otimes \id} & F^*N \otimes F^* \omega \ar[r]^-{\sim} & F^*(N \otimes \omega) \\
		N \otimes \CO_X \ar[u]^{\id \otimes \delta} \ar[r]^-{\id \otimes \gamma_X} & N \otimes F^*\CO_X \ar[u]^{\id \otimes F^*\delta} \ar[r]^-{\gamma_N \otimes \id} & F^*N \otimes F^*\CO_X \ar[u]^{\id \otimes F^*\delta} \ar[r]^-{\sim} & F^*(N \otimes \CO_X) \ar[u]^{F^*(\id \otimes \delta)} \\
		N \ar[u]^{\sim} \ar[rr]^-{\gamma_N} & & F^*N \otimes \CO_X \ar[u]^{\id \otimes \gamma_X} \ar[r]^-{\sim} & F^*N. \ar[u]^{\sim} 
	}
\]
The upper left square is the commutative diagram of \autoref{GenGamma} tensored with $N$. The upper square in the middle and the bottom left rectangle are clearly commutative. The square to the left of it commutes because of the naturality of the isomorphism $F^*N \otimes F^*(\usc) \overset{\sim}{\longrightarrow} F^*(N \otimes \usc)$. We already have seen that the bottom right square commutes: It is the same square as the left one of diagram (\ref{Carout}) with $M$ replaced by $N$. Moreover, the composition of the leftmost arrow is $N \otimes \delta$ and the composition of the rightmost arrow is $F^*(N \otimes \delta)$.

Hence the structural morphism of $N$ is compatible with $\gamma_N \otimes \gamma_{\omega}$, which turned out to be compatible with the structural morphism induced from the Cartier module $N \otimes \omega_X$. Thus we can extract the commutative diagram
\[
	\xymatrix@C50pt{
		N \otimes \omega_X \otimes \omega_X^{-1} \ar[r]^-{\gamma_N'} & F^*(N \otimes \omega_X \otimes \omega_X^{-1}) \\
		N \ar[u]^{\id \otimes \delta} \ar[r]^-{\gamma_N} & F^*N. \ar[u]^{F^*(\id \otimes \delta)}
	}
\]
It follows that the functors $\usc \otimes \omega_X^{-1}$ and $\usc \otimes \omega_X$ are inverse equivalences. From \autoref{GenGamma} we know that $\usc \otimes \omega_X^{-1}$ maps $\omega_X$ with the structural morphism $\kappa_X$ to $\CO_X$ with the structural morphism $\gamma_X$. Consequently, $\usc \otimes \omega_X$ maps the $\gamma$-sheaf $(\CO_X, \gamma_X)$ to the Cartier module $(\omega_X, \kappa_X)$. 
\end{proof}
\begin{corollary} \label{QCrysCartierGamma}
Tensoring with $\omega_X$ and with $\omega_X^{-1}$ induces equivalences of categories
\[
	\xymatrix@C70pt{
	\QCrysC(X) \ar@<0.1cm>[r]^-{\usc \otimes_{\CO_X}\omega_X^{-1}} & \QCrysG(X) \ar@<0.1cm>[l]^-{\usc \otimes_{\CO_X}\omega_X}
	}
\]
and
\[
	\xymatrix@C70pt{
	\CrysC(X) \ar@<0.1cm>[r]^-{\usc \otimes_{\CO_X}\omega_X^{-1}} & \CrysG(X) \ar@<0.1cm>[l]^-{\usc \otimes_{\CO_X}\omega_X}
	}
\]
for every regular, $F$-finite scheme $X$. 
\end{corollary}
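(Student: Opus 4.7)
The plan is to descend the two equivalences of abelian categories from \autoref{CartierGamma} to their respective Serre localizations. The general mechanism is that an exact equivalence of abelian categories which matches a Serre subcategory on one side with a Serre subcategory on the other induces an equivalence of the corresponding Serre quotients. Since tensoring with the invertible sheaf $\omega_X^{\pm 1}$ is exact with quasi-inverse given by tensoring with $\omega_X^{\mp 1}$, the only substantive point is to show that the equivalence identifies (locally) nilpotent Cartier modules with (locally) nilpotent $\gamma$-sheaves.

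For nilpotency, I would unwind the construction of \autoref{CartierGammaDef}(a) and compute the $n$-fold iterate $\gamma_M^n\colon M \otimes \omega_X^{-1} \to F^{n*}(M \otimes \omega_X^{-1})$ of the induced $\gamma$-structure. The construction of $\gamma$ factors $\kappa_M \otimes (\kappa_X^{\vee})^{-1}$ through canonical isomorphisms ($\can$, $\ev$ and symmetries of the tensor product), and iterating shows that $\gamma_M^n$ splits, up to the same type of canonical isomorphisms, as $\kappa_M^n$ tensored with an $n$-fold iterate built from $(\kappa_X^{\vee})^{-1}$. Since $\kappa_X$ is an isomorphism and $\omega_X$ is invertible, every factor other than $\kappa_M^n$ is an isomorphism, so $\gamma_M^n = 0$ if and only if $\kappa_M^n = 0$. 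The symmetric argument via \autoref{CartierGammaDef}(b) applied to a $\gamma$-sheaf $(N, \gamma_N)$ shows that the Cartier structure on $N \otimes \omega_X$ is nilpotent if and only if $\gamma_N$ is.

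Local nilpotency then follows formally: since $\usc \otimes_{\CO_X} \omega_X^{-1}$ is an exact auto-equivalence of $\QCoh(X)$ that carries the Cartier structure to the $\gamma$-structure functorially, it induces a bijection between quasi-coherent $\kappa$-stable subsheaves of $M$ and quasi-coherent $\gamma$-stable subsheaves of $M \otimes \omega_X^{-1}$ which, by the previous paragraph, restricts to a bijection between the nilpotent ones. Hence the equivalence identifies $\LNilC(X)$ with the analogous $\LNilG(X)$, and the universal property of Serre quotients (together with \autoref{enoughinjectiveskappa} and its evident $\gamma$-analogue) gives $\QCrysC(X) \cong \QCrysG(X)$. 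For the coherent case, one notes additionally that tensoring with the invertible sheaf $\omega_X^{\pm 1}$ preserves coherence, so the equivalence $\CohC(X) \cong \CohG(X)$ sends $\NilC(X)$ to $\NilG(X)$ and descends to $\CrysC(X) \cong \CrysG(X)$. The only slightly awkward point is the bookkeeping of the iterates of the structural morphism in \autoref{CartierGammaDef}, but since every auxiliary morphism appearing there is an isomorphism independent of $M$, the iterate depends on $M$ only through $\kappa_M^n$, and the nilpotency comparison reduces to this observation.
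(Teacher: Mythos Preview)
Your argument is correct and supplies exactly the detail one would expect: the paper states this corollary without proof, treating it as immediate from \autoref{CartierGamma}, and the implicit point is precisely that the equivalence carries (locally) nilpotent objects to (locally) nilpotent objects because the induced structural map differs from the original only by isomorphisms built out of $\kappa_X$, $\can$, and $\ev$. Your explicit verification via the iterates and the passage to Serre quotients is the natural way to make this precise.
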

\begin{corollary}
If $X$ is regular and $F$-finite, the categories $\QCohG(X)$ and $\QCrysG(X)$ have enough injectives.
\end{corollary}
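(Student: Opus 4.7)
The plan is to deduce this corollary directly from \autoref{enoughinjectiveskappa} by transporting the existence of enough injectives across the equivalences of abelian categories established in \autoref{QCrysCartierGamma}. Specifically, the functor $\usc \otimes_{\CO_X} \omega_X^{-1}$ is an equivalence $\QCohC(X) \to \QCohG(X)$ with quasi-inverse $\usc \otimes_{\CO_X} \omega_X$, and similarly on the level of crystals.

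First I would recall the general categorical fact: an equivalence of abelian categories $F \colon \CA \to \CB$ (with quasi-inverse $G$) preserves injectives. Indeed, $F$ is exact, so $\Hom_{\CB}(\usc, F(I)) \cong \Hom_{\CA}(G(\usc), I)$ is exact whenever $\Hom_{\CA}(\usc, I)$ is. Consequently, if $\CA$ has enough injectives and $N \in \CB$, choose a monomorphism $G(N) \into I$ with $I$ injective in $\CA$; applying the exact functor $F$ and using the natural isomorphism $FG(N) \cong N$ yields a monomorphism $N \into F(I)$ with $F(I)$ injective in $\CB$.

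Applying this with $\CA = \QCohC(X)$, $\CB = \QCohG(X)$, $F = \usc \otimes_{\CO_X} \omega_X^{-1}$ and $G = \usc \otimes_{\CO_X} \omega_X$ gives the first statement: starting from $N \in \QCohG(X)$, pick an embedding $N \otimes \omega_X \into I$ into an injective $I$ of $\QCohC(X)$ (which exists by \autoref{enoughinjectiveskappa}); tensoring with $\omega_X^{-1}$ and using the canonical isomorphism $(N \otimes \omega_X) \otimes \omega_X^{-1} \cong N$ produces an embedding $N \into I \otimes \omega_X^{-1}$ with target injective in $\QCohG(X)$. The identical argument, now applied to the equivalence $\CrysC(X) \simeq \CrysG(X)$ of \autoref{QCrysCartierGamma} together with the second half of \autoref{enoughinjectiveskappa}, gives the statement for $\QCrysG(X)$. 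There is essentially no obstacle here; the only thing to keep in mind is the invertibility of $\omega_X$ (assumed throughout for regular, $F$-finite $X$), which ensures that $\usc \otimes_{\CO_X} \omega_X^{-1}$ is genuinely exact and hence preserves monomorphisms.
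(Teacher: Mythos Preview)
Your proposal is correct and follows exactly the paper's approach: the paper's proof is the one-liner ``This follows from \autoref{enoughinjectiveskappa} and \autoref{QCrysCartierGamma},'' and you have simply unpacked the standard fact that an equivalence of abelian categories transports enough injectives. One small slip: in your last paragraph you invoke the equivalence $\CrysC(X) \simeq \CrysG(X)$, but for $\QCrysG(X)$ you need the equivalence $\QCrysC(X) \simeq \QCrysG(X)$ (also contained in \autoref{QCrysCartierGamma}), since \autoref{enoughinjectiveskappa} gives enough injectives for $\QCrysC(X)$, not $\CrysC(X)$.
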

\begin{proof}
This follows from \autoref{enoughinjectiveskappa} and \autoref{QCrysCartierGamma}.
\end{proof}
\begin{landscape}
\begin{align} \label{Car} 
	\xymatrix{
		(F^{\flat}\CO_X \otimes (F^{\flat}\CO_X)^{-1}) \otimes F^*M \otimes F^*\omega_X^{-1} \otimes F^{\flat}\CO_X \otimes F^*\omega_X \ar[r]^-{\ev} & F^*M \otimes F^*\omega_X^{-1} \otimes F^{\flat}\CO_X \otimes F^*\omega_X \ar[d]_{\sim} & \\
		F^{\flat}\CO_X \otimes F^*M \otimes (F^{\flat}\CO_X)^{-1} \otimes F^*\omega_X^{-1} \otimes F^{\flat}\CO_X \otimes F^*\omega_X \ar[u]_{\sim} \ar[r]^-{\mu} & F^{\flat}\CO_X \otimes F^*(M \otimes \omega_X^{-1} \otimes \omega_X) \ar[r]^-{\can} & F^{\flat}(M \otimes \omega_X^{-1} \otimes \omega_X) \\
		M \otimes \omega_X^{-1} \otimes \omega_X \ar[u]^{\can^{-1}(\kappa_M \otimes ({\kappa}_X^{\vee})^{-1} \otimes \kappa_X)} & & \\
		M \ar[u]_{\sim}^{\id \otimes \delta} \ar[r]^-{\can^{-1} \circ \kappa_M} & F^{\flat}\CO_X \otimes F^*M \ar[uu]^{\id \otimes F^*(\id \otimes \delta)} \ar[r]^-{\can} & F^{\flat}M \ar[uu]_{F^{\flat}\phi}
	}
\end{align}
\vspace{2cm}
\begin{align} \label{Gam}
	\xymatrix{
		F^*N \otimes F^{\flat}\CO_X \otimes F^*\omega_X \otimes \omega_X^{-1} \ar[r]^-{\sim} & F^{\flat}\CO_X \otimes F^*N \otimes F^*\omega_X^{-1} \otimes \omega_X^{-1} \ar[r] \ar[d]_{\can}^{\sim} & F^{\flat}\CO_X \otimes F^*N \otimes F^*\omega_X^{-1} \otimes (F^{\flat}\CO_X)^{-1} \otimes F^*\omega_X^{-1} \ar[d]_{\sim}^{\ev_{F^{\flat}\CO_X} \otimes \id} \\
		N \otimes \omega_X \otimes \omega_X^{-1} \ar[u]^{\gamma_N \otimes (\can^{-1} \circ \kappa_X) \otimes \id} \ar[r]^-{\kappa_{N \otimes \omega_X}} \ar@{=}[d]& F^{\flat}(N \otimes \omega_X) \otimes \omega_X^{-1} \ar[r] & F^*(N \otimes \omega_X \otimes \omega_X^{-1}) \ar@{=}[d] \\
		N \otimes \omega_X \otimes \omega_X^{-1} \ar[r]^-{\gamma_N \otimes \gamma_{\omega}} & F^*N \otimes F^*(\omega_X \otimes \omega_X^{-1}) \ar[r]^-{\sim} & F^*(N \otimes \omega_X \otimes \omega_X^{-1})
	}
\end{align}
\end{landscape}

\subsection{Compatibility with pull-back}

The pull-back of quasi-coherent sheaves defines a \emph{pull-back functor on $\gamma$-sheaves}:
\begin{definition}
Let $f\colon X \to Y$ be a morphism of regular schemes and $N$ a $\gamma$-sheaf on $Y$ with structural morphism $\gamma_N$. The $\gamma$-structure for $f^*N$ is defined as the composition 
\[
	f^*N \xrightarrow{f^*\gamma_N} f^*F_Y^*N \overset{\sim}{\longrightarrow} F_Y^*f^* N.
\]
\end{definition} 
First we consider a closed immersion $i\colon X \to Y$ of regular $F$-finite schemes. The aim is to prove the following theorem:
\begin{theorem} \label{pullbackcomp}
Let $i\colon X \to Y$ be a closed immersion of regular, $F$-finite schemes with codimension $n$. Then there is a canonical isomorphism of functors
\[
	\usc \otimes \omega_X^{-1} \circ R^ni^{\flat} \cong i^* \circ \usc \otimes \omega_Y^{-1} 
\]
inducing a corresponding isomorphism of functors of crystals, i.e.\ the diagram
\[
	\xymatrix@R30pt@C70pt{
		\Crys_{\kappa}(Y) \ar[d]^{R^ni^{\flat}} \ar@<1mm>[r]^-{\usc \otimes_{\CO_Y} \omega_Y^{-1}} &  \Crys_{\gamma}(Y) \ar@<1mm>[l]^-{\usc \otimes_{\CO_Y} \omega_Y} \ar[d]^{i^*} \\
		\Crys_{\kappa}(X) \ar@<1mm>[r]^-{\usc \otimes_{\CO_X} \omega_X^{-1}} & \Crys_{\gamma}(X) \ar@<1mm>[l]^-{\usc \otimes_{\CO_X} \omega_X}
	}
\]
is commutative. 
\end{theorem}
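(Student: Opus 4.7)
The plan is to construct the desired natural isomorphism first on the underlying quasi\nobreakdash-coherent sheaves, then verify compatibility of the induced $\gamma$\nobreakdash-structures, and finally descend to crystals.

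\textbf{Step 1: Isomorphism on underlying $\CO_X$\nobreakdash-modules.} For a closed immersion $i\colon X\to Y$ of regular, $F$\nobreakdash-finite schemes of codimension $n$, Grothendieck duality gives a canonical isomorphism $R^n i^{\flat}\CF \cong i^{*}\CF\otimes_{\CO_X}\omega_{X/Y}$, natural in the quasi\nobreakdash-coherent $\CO_Y$\nobreakdash-module $\CF$, where $\omega_{X/Y}=\omega_X\otimes_{\CO_X} i^{*}\omega_Y^{-1}$ (this is the relative version of \autoref{flatdecompose} for the regular immersion $i$). Applying this to a Cartier module $M$ and tensoring with $\omega_X^{-1}$ yields
\[
	R^{n}i^{\flat}M\otimes_{\CO_X}\omega_X^{-1}\;\cong\; i^{*}M\otimes_{\CO_X} i^{*}\omega_Y^{-1}\otimes_{\CO_X}\omega_X\otimes_{\CO_X}\omega_X^{-1}\;\cong\; i^{*}(M\otimes_{\CO_Y}\omega_Y^{-1}),
\]
the last isomorphism using $\omega_X\otimes\omega_X^{-1}\cong\CO_X$ and the fact that $i^{*}$ commutes with tensor products. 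This gives the candidate isomorphism of underlying sheaves.

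\textbf{Step 2: Compatibility with the $\gamma$\nobreakdash-structure.} This is the main obstacle. On the left side of the desired isomorphism the $\gamma$\nobreakdash-structure on $R^{n}i^{\flat}M\otimes\omega_X^{-1}$ is obtained by first taking the natural Cartier structure on $R^{n}i^{\flat}M$ induced from $\kappa_M$ (i.e.\ the closed immersion pull-back $i^{\flat}$ for Cartier modules recalled above, composed with the base change $i^{\flat}F_{Y}^{\flat}\cong F_{X}^{\flat}i^{\flat}$) and then applying the construction of \autoref{CartierGammaDef}(a) using $\kappa_X$ and $\can$. On the right side the $\gamma$\nobreakdash-structure on $i^{*}(M\otimes\omega_Y^{-1})$ is obtained from the pull-back of the $\gamma$\nobreakdash-sheaf $M\otimes\omega_Y^{-1}$, whose structure in turn comes from $\kappa_M$ and $\kappa_Y$ via \autoref{CartierGammaDef}(a). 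I will show equality of these two structural morphisms by exhibiting a large commutative diagram in the spirit of \eqref{Car} and \eqref{Gam}, whose vertices are the various tensor products of $F^{\flat}\CO$, $F^{*}$ and $\omega$'s together with the evaluation maps and $\can$. The crucial fact is the compatibility between $\can$ for $i$ and $\can$ for $F$, which in turn comes from the flat base change isomorphism $i^{\flat}F_{Y}^{\flat}\cong F_{X}^{\flat}i^{\flat}$ applied to $\CO_Y$; this allows one to split the hexagon involving $\kappa_Y$, $\kappa_X$ and the duality identification $\omega_{X/Y}\cong R^{n}i^{\flat}\CO_Y$. Reducing to the affine case (as in the proof of \autoref{CartierGamma}) and using the explicit formulas of \autoref{flatdecompose} for the dual basis, the verification becomes a direct calculation: an element $m\otimes\ell^{-1}$ on either side is sent to the same tensor, once one identifies $\omega_{X/Y}\otimes i^{*}\omega_Y^{-1}$ with $\omega_X\otimes\omega_X^{-1}$ via evaluation.

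\textbf{Step 3: Naturality and functoriality.} Both constructions are natural in $M$ because all the building blocks ($\can$, $\ev$, base change, $i^{*}$, and the tensor products) are. Hence the isomorphism of Step~1 upgrades to an isomorphism of functors $\QCohC(Y)\to\QCohG(X)$.

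\textbf{Step 4: Descent to crystals.} Both $i^{*}\colon\QCohG(Y)\to\QCohG(X)$ and $R^{n}i^{\flat}\colon\QCohC(Y)\to\QCohC(X)$ preserve local nilpotence (pulling back the structural map preserves the property that some power vanishes), so each descends to the localized (crystal) categories. Since the equivalences $\usc\otimes\omega_Y^{-1}$ and $\usc\otimes\omega_X^{-1}$ of \autoref{QCrysCartierGamma} are induced from the corresponding equivalences on Cartier modules, the square of functors for crystals commutes by applying the localization functor $T$ to the commuting square on the abelian level. This proves the theorem.
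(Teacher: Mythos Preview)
Your overall strategy is sound, and it is essentially dual to the paper's: the paper proves the companion isomorphism
\[
R^{n}i^{\flat}(N\otimes_{\CO_Y}\omega_Y)\;\cong\; i^{*}N\otimes_{\CO_X}\omega_X
\]
of \emph{Cartier} modules for a $\gamma$-sheaf $N$ (this is \autoref{Gammapullback}), and then concludes via the equivalences $\usc\otimes\omega$. Working on the Cartier side buys something concrete: the Koszul resolution computes $R^{n}i^{\flat}M$ as $M/IM$ with an explicit structural map $m\mapsto \kappa_M((f_1\cdots f_n)^{p-1}m)$ (\autoref{Koszul}), and the induced Cartier structure on $N/IN\otimes\omega_{R/I}$ has a clean closed formula (the Remark after \autoref{CartierGammaDef}); matching the two is then a genuine one-line comparison (\autoref{localGammapullback}). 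By contrast, the $\gamma$-structure of $R^{n}i^{\flat}M\otimes\omega_X^{-1}$ involves $\can^{-1}$ and the evaluation $F^{\flat}\CO_X\otimes(F^{\flat}\CO_X)^{-1}\cong\CO_X$, so your ``direct calculation'' in Step~2 would be noticeably messier. This is exactly why the paper swaps sides.

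There are two points where your write-up needs tightening. First, Step~2 is where all the content lies, and as written it is a promise rather than a proof: you assert that a large diagram commutes and that an affine calculation goes through, but you do not exhibit either. The paper's experience (diagrams \eqref{Car}, \eqref{Gam}) shows that these verifications are not automatic; you should at minimum carry out the affine case explicitly, tracking the Frobenius lift on the Koszul complex. Second, your invocation of \autoref{flatdecompose} for the closed immersion $i$ is misplaced: that lemma is about finite \emph{flat} morphisms, whereas $i$ is not flat. The identification $R^{n}i^{\flat}\CF\cong i^{*}\CF\otimes\omega_{X/Y}$ you want comes instead from the fundamental local isomorphism for a regular immersion (Koszul again), and one must check---as the paper does in \autoref{RegSequenceChange} and the end of \autoref{localGammapullback}---that the dependence on the chosen regular sequence cancels between the two $\phi_{\underline{f}}$ factors, so that the composite is genuinely canonical and hence glues.
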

We begin with the affine case. Noting that any closed immersion of regular schemes is a local complete intersection morphism, it suffices to consider the case of a complete intersection, where the pull-back of a Cartier module can be computed by using the Koszul complex. 
\begin{lemma} \label{Koszul}
Let $\underline{f}=f_1,f_2, \dots ,f_n$ be a regular sequence of elements of a commutative ring $R$. Let $I$ be the ideal generated by the $f_i$. Then for every Cartier module $M$ with structural map $\kappa$, there is an isomorphism
\[
	\phi_{\underline{f}}\colon \Ext_R^n(R/I,M) \overset{\sim}{\longrightarrow} M/IM
\]
where $M/IM$ is viewed as an $R/I$-module with the Cartier structure
\begin{align*}
	\kappa_{M/IM}\colon F_*M/IM & \to M/IM \\
	m+IM & \mapsto \kappa_M((f_1\cdot f_2 \cdots f_n)^{p-1}m) + IM.
\end{align*}
\end{lemma}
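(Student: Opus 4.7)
The plan is to compute $\Ext_R^n(R/I, M)$ via the Koszul resolution $K_\bullet(\underline f) \to R/I$, which is a free $R$-resolution because $\underline f$ is a regular sequence. Applying $\Hom_R(\usc, M)$ yields a cochain complex whose top cohomology is the cokernel of the differential $\Hom_R(K_{n-1}, M) \to \Hom_R(K_n, M)$, which, after identifying $K_n = R$ and $K_{n-1} = R^n$, is the map $M^n \to M$, $(m_1, \ldots, m_n) \mapsto \sum_i (-1)^{i-1} f_i m_i$. Its cokernel is $M/IM$, and evaluation at $1 \in K_n = R$ defines $\phi_{\underline f}$ as the induced isomorphism.

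To compute the Cartier structure on $R^n i^\flat M$ under $\phi_{\underline f}$, the strategy is to lift the Frobenius to the Koszul resolution. Pulling back along the Frobenius $F \colon R \to R$ gives $F^* K_\bullet(\underline f) = K_\bullet(\underline f^{[p]})$ with $\underline f^{[p]} = (f_1^p, \ldots, f_n^p)$, again a regular sequence resolving $F^*(R/I) = R/I^{[p]}$. A direct calculation shows that the $R$-linear map $\psi \colon K_\bullet(\underline f^{[p]}) \to K_\bullet(\underline f)$ sending the basis vector $e_{i_1} \wedge \cdots \wedge e_{i_k}$ to $(f_{i_1} \cdots f_{i_k})^{p-1} e_{i_1} \wedge \cdots \wedge e_{i_k}$ is a chain map lifting the natural surjection $R/I^{[p]} \twoheadrightarrow R/I$; in top degree, $\psi_n \colon R \to R$ is multiplication by $(f_1 \cdots f_n)^{p-1}$.

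Applying $\Hom_R(\usc, M)$ to $\psi$ and invoking the Frobenius adjunction $\Hom_R(F^* K, M) \cong \Hom_R(K, F_* M)$, one obtains an endomorphism of the complex computing $\Ext^n_R(R/I, M)$:
\[
    \Hom_R(K_\bullet(\underline f), M) \xrightarrow{\psi^*} \Hom_R(K_\bullet(\underline f), F_*M) \xrightarrow{\kappa_M \circ -} \Hom_R(K_\bullet(\underline f), M).
\]
In top cohomological degree, using $\Hom_R(K_n, M) = M$ via evaluation at $1$, this composition sends $m$ to $\kappa_M((f_1 \cdots f_n)^{p-1} m)$. Passing to the cokernel yields exactly the formula $\overline{m} \mapsto \overline{\kappa_M((f_1 \cdots f_n)^{p-1} m)}$ on $M/IM$, and a short calculation using $\kappa_M(r^p x) = r \kappa_M(x)$ verifies well-definedness modulo $IM$ together with the Cartier property.

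The main obstacle is to match this concrete chain-level construction with the intrinsic Cartier structure on $Ri^\flat M$, which by definition is $Ri^\flat \tilde\kappa_M$ composed with the commutation $Ri^\flat F_Y^\flat \cong F_X^\flat Ri^\flat$ of right adjoints. The key observations are that any two chain lifts of $R/I^{[p]} \twoheadrightarrow R/I$ along $F^*K_\bullet(\underline f) \to K_\bullet(\underline f)$ are homotopic and hence induce the same endomorphism on $\Ext^n$, and that the explicit presentation $F_Y^\flat M = \Hom_R(F_{Y*}R, M)$ together with the identification $R/I \otimes_R F_{Y*}R \cong F_*(R/I^{[p]})$ allows one to match the Koszul-level composition above with $H^n(Ri^\flat \tilde\kappa_M)$ under the commutation isomorphism.
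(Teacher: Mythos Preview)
Your proposal is correct and follows essentially the same approach as the paper: compute $\Ext^n_R(R/I,M)$ via the Koszul resolution and transport the Cartier structure by lifting the Frobenius to a chain map on the Koszul complex, observing that in top degree this lift is multiplication by $(f_1\cdots f_n)^{p-1}$. The only cosmetic difference is that the paper phrases the lift as an $R$-linear map $\CK(\underline{f})\to F_*\CK(\underline{f})$ covering $R/I\to F_*(R/I)$, whereas you pass to the adjoint $R$-linear map $F^*K_\bullet(\underline{f})=K_\bullet(\underline{f}^{[p]})\to K_\bullet(\underline{f})$ covering $R/I^{[p]}\twoheadrightarrow R/I$; these are equivalent via the $(F^*,F_*)$-adjunction you invoke. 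Your final paragraph, arguing via homotopy uniqueness of lifts that the explicit chain-level construction agrees with the intrinsic Cartier structure on $R^n i^\flat M$, is a point the paper leaves implicit (it simply declares the composition $F_*\RHom_R(R/I,M)\to\RHom_R(F_*R/I,F_*M)\to\RHom_R(R/I,M)$ to be the structural map and computes from there), so your treatment is if anything slightly more careful.
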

\begin{proof}
By definition we have to compute $\RHom_R(R/I,M)$. The structural morphism $\kappa_{i^{\flat}M}\colon F_*i^{\flat}M \to i^{\flat}M$ equals the composition 
\[
	F_*\RHom_R(R/I,M) \to \RHom_R(F_*R/I,F_*M) \to \RHom_R(R/I,M),
\]
where the first morphism is the canonical one and the second is induced by $F\colon R/I \to F_*R/I$ in the first and $\kappa\colon F_*M \to M$ in the second argument. A free resolution of the $R$-module $R/I$ is given by the Koszul chain complex $\CK(\underline{f})$. It is the total tensor product complex in the sense of \cite[2.7.1]{Weibel} of the following complexes $\CK(f_i)$ 
\[
	0 \longrightarrow R \overset{f_i}{\longrightarrow} R \longrightarrow 0
\]
concentrated in degrees $-1$ and $0$. Each complex $\CK(f_i)$ admits a lift of the Frobenius $F\colon R \to F_*R$ in degree $0$ by mapping $r$ to $r^pf_i^{p-1}$. This means the diagrams
\[
	\xymatrix{
		0 \ar[r] & R \ar[d]^{f_i^{p-1}F} \ar[r]^-{f_i} & R \ar[d]^F \ar[r] & 0 \\
		0 \ar[r] & F_*R \ar[r]^-{f_i} & F_*R \ar[r] & 0
	}
\]
commute. The maps $R \xrightarrow{f_i^{p-1}} F_*R$ give rise to a map of complexes $F\colon \CK(\underline{f}) \to F_* \CK(\underline{f})$ lifting the Frobenius in degree zero: In general, if $\phi\colon M \to F_*M$ and $\psi\colon N \to F_*N$ are $R$-linear maps, it is easy to check that the map
\begin{align*}
	M \otimes_R N &\to M \otimes_R N \\
	m \otimes n &\mapsto \phi(n) \otimes \psi(n)
\end{align*}
of abelian groups is $p$-linear, i.e.\ it is an $R$-linear map $M \otimes N \to F_*(M \otimes N)$. Thus we inductively obtain an $R$-linear morphism of complexes $F\colon \CK(\underline{f}) \to F_* \CK(\underline{f})$. Unwinding the definition of the tensor product of complexes, we see that the left end of this map is the square
\[
	\xymatrix@R30pt@C60pt{
		0 \ar[r] & R \ar[d]_{\prod_{i=1}^nf_i^{p-1}} \ar[r]^-{((-1)^{i+1}f_i)_i} & R^n \ar[d]^{(\prod_{j \neq i}f_j^{p-1})_i} \ar[r] & \cdots \\
		0 \ar[r] & F_*R \ar[r]^-{((-1)^{i+1}f_i)_i} & F_*R^n \ar[r] & \cdots 
  }     
\]
Consequently, the $n$-th degree of the composition 
\[
	F_*\Hom_R(\CK(\underline{f}),M) \xrightarrow{\text{canonical}} \Hom_R(F_*\CK(\underline{f}),F_*M) \xrightarrow{F \circ \usc \circ \kappa} Hom_R(\CK(\underline{f}),M)
\]
maps $m$ to $\kappa((\prod f_i^{p-1})m)$ by the identification $\Hom_R(R,M) \cong M$ via $\phi \mapsto \phi(1)$. The differential $\Hom_R(\CK_{n-1}(\underline{f}),M) \to \Hom_R(\CK_n(\underline{f}),M)$ corresponds to the map 
\begin{align*} 
	M^n &\to M \\
	(m_i)_i &\mapsto \sum f_im_i.
\end{align*}
The image is the submodule $IM$ and thus the n-th cohomology of $\Hom_R(\CK(\underline{f}),M)$ is isomorphic to $M/IM$, equipped with the claimed Cartier structure.   
\end{proof}
\begin{remark} \label{RegSequenceChange}
The isomorphism $\Ext_R^n(R/I,M) \to M/IM$ of the underlying sheaves is not canonical. It depends on the choice of the regular sequence $\underline{f}$. From the construction of this isomorphism we see that if $\underline{g}=g_1, \dots ,g_n$ is another regular sequence of $R$ generating $I$ and $g_i=\sum c_{ij}f_j$, the automorphism $\tau$ on $M/IM$ making the diagram
\[
	\xymatrix{ 
		& M/IM \ar[dd]^{\tau} \\
		\Ext_R^n(R/I,M) \ar[ur]^{\phi_{\underline{f}}} \ar[dr]_{\phi_{\underline{g}}} & \\
		& M/IM
	}
\]
commutative is given by multiplication with $\det(c_{ij})$.
\end{remark}
Nevertheless, interpreting the top-$\Ext$-groups as quotient modules in the case we are interested in, namely $\Ext_R^n(R/I,M) \otimes \Ext_R^n(R/I,\omega_R)^{-1}$, leads to isomorphisms, which are independent of the regular sequence generating $I$, since the correcting factors from both terms cancel.   
\begin{lemma} \label{localGammapullback}
Let $R$ be a commutative ring such that $F_*R$ is finite free, $N$ a $\gamma$-module over $R$ and $I\subseteq R$ an ideal which is generated by a regular sequence of length $n$. There is a canonical isomorphism between Cartier modules 
\[
	\Ext_R^n(R/I,N \otimes \omega_R) \cong N/IN \otimes \omega_{R/I}.
\]
\end{lemma}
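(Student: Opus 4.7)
The plan is to prove the claim in two stages: first identify the underlying $R/I$-modules, and then verify that the Cartier structures match.

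For the module identification, we apply \autoref{Koszul} to the Cartier module $N\otimes_R \omega_R$ (whose structural map $\kappa_{N\otimes\omega_R}$ is the one coming from the equivalence in \autoref{CartierGammaDef}), obtaining a canonical isomorphism
\[
	\Ext_R^n(R/I, N\otimes_R\omega_R) \overset{\sim}{\longrightarrow} (N\otimes_R\omega_R)/I(N\otimes_R\omega_R)
\]
under which the Cartier structure on the left becomes $[x]\mapsto [\kappa_{N\otimes\omega_R}((f_1\cdots f_n)^{p-1}x)]$. Applied to $\omega_R$ itself, the same lemma identifies the Cartier module $\omega_{R/I}$ with $\omega_R/I\omega_R$ equipped with the structure $[m]\mapsto[\kappa_R((f_1\cdots f_n)^{p-1}m)]$. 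Combining these identifications with the evident $R/I$-linear isomorphism
\[
	(N\otimes_R\omega_R)/I(N\otimes_R\omega_R) \cong N/IN \otimes_{R/I} \omega_R/I\omega_R
\]
produces the required identification of the underlying $R/I$-modules on both sides.

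To check Cartier compatibility, we use the explicit basis-level formula recorded in the remark following \autoref{CartierGammaDef}. Fix a basis $s_1,\ldots,s_r$ of $F_*R$ over $R$ with dual basis $\phi_1,\ldots,\phi_r\in\Hom_R(F_*R,R)$. Since $F_*(R/I)=F_*R\otimes_R R/I$, the reductions $\bar s_i$ form a basis of $F_*(R/I)$ over $R/I$ with dual basis $\bar\phi_i=\phi_i\bmod I$, so the same formula applies verbatim over $R/I$. On the right-hand side the Cartier structure on $N/IN\otimes_{R/I}\omega_{R/I}$ is built from the pulled-back $\gamma$-structure $\gamma_{N/IN}$ (which is simply $\gamma_N$ reduced modulo $I$) together with the Cartier operator on $\omega_{R/I}$. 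The key observation is that, under the identification $\omega_{R/I}\cong \omega_R/I\omega_R$, the functional attached to a class $[m]$ by the Cartier operator of $\omega_{R/I}$ is precisely the reduction modulo $I$ of the functional obtained from $(f_1\cdots f_n)^{p-1}m\in\omega_R$; this is exactly what the Koszul computation of \autoref{Koszul} applied to $\omega_R$ records.

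The main obstacle is the explicit term-by-term comparison of the two resulting formulas. Substituting the above identification of functionals into the basis formula of the remark yields the assignment
\[
	[n]\otimes[m]\longmapsto \Bigl(\bar s\mapsto \sum_i\overline{\gamma_N(n)}\otimes\bar\phi_i(\bar s)\cdot\overline{\phi_m((f_1\cdots f_n)^{p-1}s_i)}\Bigr),
\]
which is the reduction modulo $I$ of the formula for $\kappa_{N\otimes\omega_R}$ applied to $n\otimes (f_1\cdots f_n)^{p-1}m$, matching the Koszul description on the left-hand side. To see that the resulting isomorphism is truly canonical, independent of the chosen regular sequence generating $I$, one invokes \autoref{RegSequenceChange}: the determinant correcting factors arising from changes of regular sequence in $\Ext_R^n(R/I,N\otimes_R\omega_R)$ and in $\omega_{R/I}=\Ext_R^n(R/I,\omega_R)$ cancel upon passing to the tensor product $N/IN\otimes_{R/I}\omega_{R/I}$, as explained in the paragraph preceding the lemma.
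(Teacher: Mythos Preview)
Your overall approach mirrors the paper's exactly: apply \autoref{Koszul} to $N\otimes_R\omega_R$ and to $\omega_R$ separately, compose with the obvious isomorphism $(N\otimes_R\omega_R)/I(N\otimes_R\omega_R)\cong N/IN\otimes_{R/I}\omega_R/I\omega_R$, compare the two induced Cartier maps term by term, and finally invoke \autoref{RegSequenceChange} so that the determinant factors from a change of regular sequence cancel.

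There is, however, a genuine slip in your Cartier-compatibility step. The identity $F_*(R/I)=F_*R\otimes_R R/I$ is false: the right-hand side is $R/I^{[p]}$, not $R/I$. For instance with $R=k[x]$ and $I=(x)$ one has $F_*(R/I)=k$ while $F_*R\otimes_R R/I=k[x]/(x^p)$. Consequently the reductions $\bar s_i$ do \emph{not} form a basis of $F_*(R/I)$ over $R/I$, and the $\phi_i$ need not descend to maps $F_*(R/I)\to R/I$ at all (in the example above, $\phi_2(x)=1\notin I$), so your displayed formula with $\bar\phi_i(\bar s)$ is not well-defined. The paper circumvents this by working throughout in the adjoint form $F_*(\,\cdot\,)\to(\,\cdot\,)$: both $\kappa'$ and $\tilde\kappa'$ are written as explicit maps $\overline{n\otimes m}\mapsto\sum_i\phi_i(1)\gamma_N(n)\otimes\kappa_R(r_if^{p-1}m)+I(\cdots)$, expressions that only involve evaluating the $\phi_i$ at $1\in F_*R$ and hence make sense and visibly agree modulo $I$ without any reference to a basis of $F_*(R/I)$. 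Recasting your computation in adjoint form (using the description of the adjoint Cartier structure via $\gamma_N$, the projection formula, and $\tilde\kappa_R$) repairs the argument.
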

\begin{proof}
Choose a regular sequence $\underline{f}=f_1, \dots ,f_n$ such that $I$ is generated by the $f_i$. Also choose a basis $r_1,\dots,r_t$ of $R$ viewed as a free $R$-module via the Frobenius. The dual basis $\phi_1, \dots, \phi_t$ is given by $\phi_i(r_j) = \delta_{ij}$. Let $\kappa$ denote the intrinsic Cartier structure of $\Ext_R^n(R/I,N \otimes \omega_R)$ and $\tilde{\kappa}$ denote the Cartier structure of $N/IN \otimes \omega_{R/I}$ as explained in \autoref{CartierGamma}. Identifying $\Ext_R^n(R/I,N \otimes \omega_R)$ with $(N \otimes \omega_R) / I(N \otimes \omega_R)$ via the isomorphism $\phi_{\underline{f}}$ from \autoref{Koszul}, we obtain the map 
\begin{align*}
	\kappa'\colon F_*((N \otimes \omega_R) / I(N \otimes \omega_R)) &\to (N \otimes \omega_R) / I(N \otimes \omega_R) \\
	n \otimes m + I(N \otimes \omega_R) &\mapsto \sum_{i=1}^t\phi_i(1)\gamma_N(n) \otimes \kappa_R(r_if^{p-1}m) + I(N \otimes \omega_R)
\end{align*}
as the induced Cartier structure on $(N \otimes \omega_R) / I(N \otimes \omega_R)$. Here $\gamma_N$ is the $\gamma$-structure of $N$. 

Also identifying $\omega_{R/I} \cong \Ext_R^n(R/I, \omega_R)$ with $\omega_R/I\omega_R$ via $\underline{f}$, its Cartier structure $\kappa_{R/I}$ is given by $m + I\omega_Y \mapsto \kappa_R(f^{p-1}m) + I(\omega_Y)$. From this perspective, the Cartier structure of $\tilde{\kappa}$ of $N/IN \otimes \omega_{R/I}$ induces the structural morphism 
\begin{align*}
	\tilde{\kappa}'\colon F_*(N/IN \otimes \omega_R / I\omega_R) &\to (N/IN \otimes \omega_R / I\omega_R) \\
	\overline{n} \otimes \overline{m} &\mapsto \sum_{i=1}^n\overline{\phi_i(1)} \cdot \overline{\gamma_N(n)} \otimes \overline{\kappa_R(r_if^{p-1}m)} \\
	&= \sum_{i=1}^t\phi_i(1)\gamma_N(n) \otimes \kappa_R(r_if^{p-1}m) + I(N \otimes \omega_R)
\end{align*}
on $N/IN \otimes \omega_R / I\omega_R$. Finally, there is a natural isomorphism 
\[
	\tau\colon (N \otimes_R \omega_R) / I(N \otimes_R \omega_R) \overset{\sim}{\longrightarrow} N/IN \otimes_{R/I} \omega_R/I \omega_R
\]
of $R$-modules, mapping $\overline{n \otimes m}$ to $\overline{n} \otimes \overline{m}$. The explicit formulas for $\kappa'$ and $\tilde{\kappa}'$ show that $\tau$ makes the square in the middle of the diagram
\[
	\xymatrix{
		\Ext_R^n(R/I,N \otimes_R \omega_R) \ar[d]^{\phi_{\underline{f}}} \ar[r]^-{\kappa} & F^{\flat} \Ext_R^n(R/I,N \otimes_R \omega_R) \ar[d]^{F^{\flat}\phi_{\underline{f}}} \\
		(N \otimes \omega_R)/I(N \otimes \omega_R) \ar[r]^-{\kappa'} \ar[d]^{\tau} & F^{\flat}((N \otimes \omega_R)/I(N \otimes \omega_R)) \ar[d]^{F^{\flat} \tau} \\
		N/IN \otimes \omega_R /I \omega_R \ar[r]^-{\tilde{\kappa}'} & F^{\flat}(N/IN \otimes \omega_R /I \omega_R) \\
		i^*N \otimes_R \omega_{R/I} \ar[r]^-{\tilde{\kappa}} \ar[u]_{\id \otimes \phi_{\underline{f}}}&  F^{\flat}(i^*N \otimes_R \omega_{R/I}) \ar[u]_{F^{\flat}(\id \otimes \phi_{\underline{f}})}
	}
\]
commutative. The squares above and below commute by construction. Let $\Phi$ be the composition $(\id \otimes \phi_{\underline{f}}^{\omega_R})^{-1} \circ \tau \circ \phi_{\underline{f}}^{N \otimes \omega_R}$. We have just seen that the diagram
\[
	\xymatrix{
		 \Ext_R^n(R/I,N \otimes_R \omega_R) \ar[d]^{\Phi} \ar[r]^-{\kappa} & F^{\flat} \Ext_R^n(R/I,N \otimes_R \omega_R) \ar[d]^{F^{\flat}\Phi} \\
		i^*N \otimes_R \omega_{R/I} \ar[r]^-{\tilde{\kappa}} &  F^{\flat}(i^*N \otimes_R \omega_{R/I})
	}
\]	
commutes. Furthermore, $\Phi$ is natural: Let $\underline{g} = g_1, \dots ,g_n$ be another regular sequence generating $I$ with $g_i = \sum c_{ij}f_j$. Then, by \autoref{RegSequenceChange}, 
\begin{align*}
	(\id \otimes \phi_{\underline{g}}^{\omega_R})^{-1} \circ \tau \circ \phi_{\underline{g}}^{N \otimes \omega_R} &= \det(c_{ij})^{-1}(\id \otimes  \phi_{\underline{f}}^{\omega_R})^{-1} \circ \tau \circ \det(c_{ij}) \phi_{\underline{f}}^{N \otimes \omega_R} \\
	&= (\id \otimes \phi_{\underline{f}}^{\omega_R})^{-1} \circ \tau \circ \phi_{\underline{f}}^{N \otimes \omega_R}.
\end{align*}
\end{proof}
\begin{proposition} \label{Gammapullback}
Let $i\colon X \into Y$ be a closed immersion of regular, $F$-finite schemes and let $N$ be a $\gamma$-sheaf on $Y$. There is a canonical isomorphism
\[
	\Phi\colon \overline{i}^*\ExtS_{\CO_Y}^n(i_*\CO_X,N \otimes_{\CO_Y} \omega_Y) \overset{\sim}{\longrightarrow} i^*N \otimes_{\CO_X} \omega_X
\]
of Cartier modules, which is functorial in $N$. Here $\overline{i}$ denotes the flat morphism of ringed spaces $(X,\CO_X) \to (Y,i_*\CO_X)$.  
\end{proposition}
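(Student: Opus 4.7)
The plan is to reduce this global statement to \autoref{localGammapullback} by a patching argument. Since $X$ and $Y$ are regular and $i$ has codimension $n$, the closed immersion is locally a complete intersection of codimension $n$, and regularity together with $F$-finiteness ensures that $F_*\CO_Y$ is locally free. Cover $Y$ by affine opens $V_\alpha = \Spec R_\alpha$ small enough that the ideal of $X \cap V_\alpha$ in $R_\alpha$ is generated by a regular sequence $\underline{f}^{(\alpha)}$ of length $n$ and that $F_*R_\alpha$ is a free $R_\alpha$-module. On each $V_\alpha$, \autoref{localGammapullback} provides an isomorphism
\[
	\Phi_\alpha\colon \overline{i}^*\ExtS_{\CO_Y}^n(i_*\CO_X,N \otimes_{\CO_Y}\omega_Y)|_{V_\alpha \cap X} \overset{\sim}{\longrightarrow} i^*N|_{V_\alpha \cap X} \otimes_{\CO_X}\omega_X|_{V_\alpha \cap X}
\]
of Cartier modules, functorial in the restriction of $N$ to $V_\alpha$.

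The key step is to show that these local isomorphisms agree on overlaps and thus glue to a global isomorphism $\Phi$. For this, \autoref{RegSequenceChange} is decisive: replacing the regular sequence $\underline{f}$ by another regular sequence $\underline{g}$ generating the same ideal, with $g_i = \sum c_{ij} f_j$, the Koszul identification $\phi_{\underline{g}}$ differs from $\phi_{\underline{f}}$ precisely by multiplication with $\det(c_{ij})$. Inspecting the explicit formula
\[
	\Phi_\alpha = (\id \otimes \phi_{\underline{f}}^{\omega_R})^{-1} \circ \tau \circ \phi_{\underline{f}}^{N \otimes \omega_R}
\]
recorded in the proof of \autoref{localGammapullback}, we see that the factor $\det(c_{ij})$ enters twice --- once through $\phi_{\underline{f}}^{N \otimes \omega_R}$ and once through $(\id \otimes \phi_{\underline{f}}^{\omega_R})^{-1}$ --- and the two contributions cancel. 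Hence $\Phi_\alpha$ is independent of the chosen regular sequence. On an overlap $V_\alpha \cap V_\beta$, after passing to a common affine refinement on which a single regular sequence can be used, $\Phi_\alpha$ and $\Phi_\beta$ therefore restrict to the same map, and so the local isomorphisms patch to a well-defined global isomorphism $\Phi$.

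Functoriality in $N$ is inherited from each $\Phi_\alpha$: a morphism $N \to N'$ of $\gamma$-sheaves induces compatible maps on source and target of each $\Phi_\alpha$, and compatibility on the patches implies compatibility of $\Phi$. The main obstacle is precisely the independence-of-choice verification, which is exactly what the determinant cancellation of \autoref{RegSequenceChange} delivers; once that is in hand, the remainder is a routine gluing argument over the chosen affine cover.
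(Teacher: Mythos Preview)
Your proposal is correct and follows essentially the same approach as the paper: cover $Y$ by affines on which the immersion is a complete intersection and $F_*R$ is free, apply \autoref{localGammapullback} locally, and glue. The only difference is one of detail---you spell out the determinant cancellation that makes the local isomorphisms agree on overlaps, whereas the paper simply invokes the naturality already established at the end of the proof of \autoref{localGammapullback} and glues without further comment.
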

\begin{proof}
Choose an affine open covering $\{U_k\}_k = \Spec R_k$ of $Y$ such that $i_{U_k}\colon i^{-1}(U_k) \into U_k$ is a complete intersection. By refining the covering we can assume that $F_*R_k$ is free. Let $I_k \subseteq R_k$ be the ideal such that $i|_{U_k}$ corresponds to the ring homomorphism $R_k \to R_k/I_k$. By \autoref{localGammapullback}, we have an isomorphism $\Ext_R^n(R_k/I_k, N|_{U_k}\otimes \omega_{R_k})  \cong i^*N|_{U_k} \otimes \omega_{R_k/I_k}$, which is natural and therefore, we can glue the local isomorphisms to the desired global map $\Phi$. 
\end{proof}
\autoref{Gammapullback} shows that there is a natural isomorphism of functors $R^ni^{\flat} \circ \usc \otimes_{\CO_Y} \omega_Y \cong \usc \otimes_{\CO_X} \omega_X \circ i^*$. This enables us to prove \autoref{pullbackcomp} because $\usc \otimes_{\CO_Y} \omega_Y$ and $\usc \otimes_{\CO_X} \omega_X$ are equivalences of categories.
\begin{proof}[Proof of \autoref{pullbackcomp}]
For every $\gamma$-sheaf $N$ on $Y$, there is an isomorphism
\begin{align*}
	R^ni^{\flat}(N \otimes_{\CO_Y} \omega_Y) &\cong i^{-1}\ExtS_{\CO_Y}^n(i_*\CO_X,N \otimes_{\CO_Y} \omega_Y) \\
	&\cong i^*N \otimes_{\CO_X} \omega_X
\end{align*}
by \autoref{Gammapullback}, which is functorial in $N$. As $\usc \otimes_{\CO_Y} \omega_Y$ and $\usc \otimes_{\CO_X} \omega_X$ are equivalences of categories, even the diagram
\[
	\xymatrix@R30pt@C70pt{
		\Coh_{\kappa}(Y) \ar[d]^{R^ni^!} \ar@<1mm>[r]^-{\usc \otimes_{\CO_Y} \omega_Y^{-1}} &  \Coh_{\gamma}(Y) \ar@<1mm>[l]^-{\usc \otimes_{\CO_Y} \omega_Y} \ar[d]^{i^*} \\
		\Coh_{\kappa}(X) \ar@<1mm>[r]^-{\usc \otimes_{\CO_X} \omega_X^{-1}} & \Coh_{\gamma}(X) \ar@<1mm>[l]^-{\usc \otimes_{\CO_X} \omega_X}
	}
\]
commutes. Passing to crystals finishes the proof.
\end{proof}
Now we turn to open immersions. 
\begin{proposition}\label{Gopencomp}
Let $j\colon U \to X$ be an open immersion of regular, $F$-finite schemes and $M$ a Cartier module on $X$. Then there is a natural isomorphism of $\gamma$-sheaves 
\[
	j^!M \otimes_{\CO_U} \omega_U^{-1} \cong j^*(M \otimes \omega_X^{-1}).
\]
\end{proposition}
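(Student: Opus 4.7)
The plan is to identify both sides concretely and to show that the evident candidate isomorphism respects all structures. Since $j$ is an open immersion, $j^!M$ is literally $j^*M$ as an $\CO_U$-module, with Cartier structure $\kappa_{j^!M}$ obtained from $j^*\kappa_M$ via the base change isomorphism $\bc\colon F_{U*}j^* \overset{\sim}{\longrightarrow} j^*F_{X*}$ (as for essentially étale morphisms in the earlier theorem). Similarly, our conventions give a canonical isomorphism of Cartier modules $\omega_U \cong j^*\omega_X$, since $\omega_U = j^!\omega_X$ with the Cartier structure $\kappa_U$ obtained by pulling $\kappa_X$ back along $j$. Combining these with the standard $\CO_U$-linear isomorphism $j^*(M \otimes_{\CO_X} \omega_X^{-1}) \cong j^*M \otimes_{\CO_U} j^*\omega_X^{-1}$ yields a candidate isomorphism
\[
    \Phi\colon j^!M \otimes_{\CO_U} \omega_U^{-1} \overset{\sim}{\longrightarrow} j^*(M \otimes_{\CO_X} \omega_X^{-1}),
\]
which is clearly natural in $M$.

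It remains to verify that $\Phi$ intertwines the two $\gamma$-structures. On the left, the $\gamma$-structure is produced by the recipe of \autoref{CartierGammaDef}(a) applied to the Cartier structures $\kappa_{j^!M}$ on $j^!M$ and $\kappa_U$ on $\omega_U$. On the right, one first applies the same recipe on $X$ to obtain a $\gamma$-structure on $M \otimes \omega_X^{-1}$, and then takes its pull-back along $j$. So the task is reduced to checking that the defining diagram of \autoref{CartierGammaDef}(a) commutes with $j^*$. This in turn comes down to a handful of naturality statements: the isomorphism $\can$ of \autoref{flatdecompose} is compatible with $j^*$ in the sense that $j^*\can_X$ is identified with $\can_U$, the evaluation map $\ev_{\CL} \colon \CL \otimes \CL^{-1} \to \CO_X$ satisfies $j^*\ev_{\CL} = \ev_{j^*\CL}$, and the permutation isomorphism of tensor factors is natural. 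The first of these follows from flat base change for the finite morphism $F_X$ along the flat (in fact étale) immersion $j$, which yields canonical isomorphisms $j^*F_X^* \overset{\sim}{\longrightarrow} F_U^*j^*$ and $j^*F_X^{\flat} \overset{\sim}{\longrightarrow} F_U^{\flat}j^*$ compatible with $\can$; the remaining two are obvious.

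I expect the main nuisance to be bookkeeping: the diagram defining the $\gamma$-structure in \autoref{CartierGammaDef}(a) already has six arrows, and chasing it through $j^*$ doubles the number of compatibilities that have to be recorded. No new idea is needed — once one has set up the canonical identifications $j^*F_X^* \cong F_U^*j^*$ and $j^*F_X^{\flat} \cong F_U^{\flat}j^*$ and checked their compatibility with $\can$ and $\ev$, the commutativity of the $j^*$-pulled-back diagram is formal. This gives an isomorphism of $\gamma$-sheaves, which on passing to crystals is also functorial in $M$, as desired.
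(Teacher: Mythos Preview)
Your proposal is correct and matches the paper's own argument essentially verbatim: the paper also uses $j^!M=j^*M$ with Cartier structure $j^*M\xrightarrow{j^*\tilde\kappa} j^*F_X^{\flat}M\cong F_U^{\flat}j^*M$, the identification $\omega_U=j^*\omega_X$, and then checks that the resulting $\gamma$-structures on $j^*M\otimes j^*\omega_X^{-1}$ and $j^*(M\otimes\omega_X^{-1})$ agree via a single commutative diagram encoding exactly the naturalities (of $\can$, $\ev$, and the permutation) you list. The only cosmetic difference is that the paper compresses your bookkeeping into one diagram rather than spelling out the individual compatibilities.
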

\begin{proof}
One easily checks that, for a Cartier module $M$ on $X$ with structural morphism $\tilde{\kappa}\colon M \to F_X^{\flat}M$, the Cartier structure on $j^*M$ is the composition
\[
	j^*M \xrightarrow{j^*\tilde{\kappa}} j^*F^{\flat}M \overset{\sim}{\longrightarrow} F_U^{\flat}j^*M.
\]
The dualizing sheaf $\omega_U$ of $U$ is given by $j^*\omega_X$. Therefore we have $j^!M \otimes \omega_U^{-1} \cong j^*(M \otimes \omega_X^{-1})$ and the diagram
\[
	\xymatrix{
		j^*M \otimes j^*\omega_X^{-1} \ar[d]^{\sim} \ar[r] & F^{\flat}j^*M \otimes F^{\flat}j^*\omega_X^{-1} \ar[d]^{\sim} \ar[r]^-{\sim} & F^*(j^*M \otimes j^*\omega_X^{-1}) \ar[d]^{\sim} \\
		j^*(M \otimes \omega_X^{-1}) \ar[r] & j^*(F^{\flat}M \otimes F^{\flat} \omega_X^{-1}) \ar[r]^-{\sim} & j^*F^*(M \otimes \omega_X^{-1})
	}
\]
commutes. Here the horizontal arrows are the $\gamma$-structures of $j^*M \otimes_{\CO_U} \omega_U^{-1}$ and $j^*(M \otimes \omega_X^{-1})$. 
\end{proof}

\subsection{Compatibility with push-forward}

For the construction of a push-forward for $\gamma$-sheaves, we follow the construction given in subsection 6.3 of \cite{BliBoe.CartierEmKis}. Then we show that the equivalence between Cartier modules and $\gamma$-sheaves given by tensoring with the dualizing sheaf is compatible with push-forward for morphisms of regular schemes. This proof is also mainly the one given in ibid. By abuse of notation, let $\kappa_X\colon F_{X*}\omega_X \to \omega_X$ be the adjoint of the Cartier structure of $\omega_X$.   
\begin{definition}[\protect{\cite[Definition 6.3.1]{BliBoe.CartierEmKis}}]
Let $f\colon X \to Y$ be a morphism of smooth, $F$-finite $k$-schemes. Let $N$ be a $\gamma$-sheaf on $X$. Then we define the \emph{push-forward $f_+N$} as the twist of the push-forward $f_*$ of Cartier modules, i.e.
\[
	f_+N = f_*(N \otimes_{\CO_X} \omega_X) \otimes_{\CO_Y} \omega_Y^{-1}.
\]
The push-forward for $\gamma$-crystals is the one induced by the just given push-forward of $\gamma$-sheaves.
\end{definition}
By construction, the push-forward for $\gamma$-sheaves is compatible with the push-forward for Cartier modules. In order to show that $f_+$ is compatible with the equivalence between $\gamma$-crystals and lfgu modules, we need a different description of $f_+$ for $\gamma$-sheaves based on the relative Cartier operator. 

We recall two general constructions, which are repeatedly used in this subsection. For this we consider a morphism $f\colon X \to Y$ of arbitrary schemes over $\Spec \ZZ$. Let $\CF$ be a quasi-coherent $\CO_X$-module and $\CE$ a quasi-coherent $\CO_Y$-module. The adjoint of the composition
\[
	f^*(f_*\CF \otimes_{\CO_X} \CE) \cong f^*f_*\CF \otimes f^*\CE \xrightarrow{\widetilde{\ad}_f \otimes \id} \CF \otimes f^*\CE
\]
where $\widetilde{\ad}_f\colon f^*f_* \to \id$ is the counit of the adjunction, yields a natural morphism 
\[
	\operatorname{proj}\colon f_*\CF \otimes_{\CO_X} \CE \to f_*(\CF \otimes f^*\CE).
\]
As a consequence of the projection formula (\cite[Exercise III.8.3]{Hartshorne}), it is an isomorphism if $f$ is quasi-compact and separated and if $\CE$ is locally free. 

For two morphisms $f\colon X \to S$ and $g\colon Y \to S$, let $f'\colon X \times_S Y \to Y$ and $g'\colon X \times_S Y \to X$ be the projections such that the square 
\[
	\xymatrix{
		X \times_S Y \ar[r]^-{f'} \ar[d]^{g'} & Y \ar[d]^g \\
		X \ar[r]^f & S
	}
\]
is cartesian. There is a canonical morphism of functors 
\[
	\bc\colon f^*g_* \to g'_*f'^*
\]
of quasi-coherent sheaves given by the adjoint of the composition
\[
	g_* \xrightarrow{g_* \ad_{f'}} g_*f'_*f'^* \cong f_*g'_*f'^*,
\]
where $\ad_{f'}\colon \id \to f'_*f'^*$ is the unit of the adjunction. If $g$ is affine, $\bc$ is an isomorphism. To see this, we can assume that $S$, $X$ and $Y$ are affine, because $g$ and therefore $g'$ is an affine morphism. Then the claim is a well known property of the tensor product. The morphism $\bc$ is also an isomorphism if $X$ and $Y$ are Noetherian, $f$ is flat and $g$ is separated of finite type (\cite[Proposition III.9.3]{Hartshorne}).   
The next lemma relates these two isomorphisms. 
\begin{lemma} \label{twoprojection}
Let $f\colon X \to S$ and $g\colon Y \to S$ be morphisms of schemes and let $f'\colon X \times_S X \to Y$ and $g'\colon X \times_S Y \to X$ be the projections. Then, for every quasi-coherent $\CO_X$-module $\CF$ and every quasi-coherent $\CO_S$-module $\CE$, the diagram
\[
	\xymatrix{
		f_*\CF \otimes g_*\mathcal E \ar@{=}[r] \ar[d]^{\proj} & f_*\CF \otimes g_*\mathcal E \ar[d]^{\proj} \\
		f_*(\CF \otimes f^*g_*\mathcal E) \ar[d]^{\bc} & g_*(g^*f_*\CF \otimes \mathcal E) \ar[d]^{\bc} \\
		f_*(\CF \otimes g'_*f'^*\mathcal E) \ar[d]^{\proj} & g_*(f'_*g'^*\CF \otimes \mathcal E) \ar[d]^{\proj} \\
		f_*g'_*(g'^*\CF \otimes f'^*\mathcal E) \ar[r]^-{\sim} & g_*f'_*(g'^*\CF \otimes f'^*\mathcal E)
	}
\]
commutes.
\end{lemma}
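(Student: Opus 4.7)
The strategy is to check commutativity after passing to the adjoint maps. Setting $p = f \circ g' = g \circ f'$, the target $f_*g'_*(g'^*\CF \otimes f'^*\CE)$ coincides with $g_*f'_*(g'^*\CF \otimes f'^*\CE) = p_*(g'^*\CF \otimes f'^*\CE)$, and a morphism
\[
    f_*\CF \otimes_{\CO_S} g_*\CE \to p_*(g'^*\CF \otimes f'^*\CE)
\]
is the same datum, via the $(p^*, p_*)$-adjunction together with the canonical compatibility of pullback with tensor products, as a morphism
\[
    g'^*f^*f_*\CF \otimes f'^*g^*g_*\CE \to g'^*\CF \otimes f'^*\CE.
\]
I claim that under this correspondence, both the left and the right composition of the diagram coincide with the tensor product of pulled-back counits $g'^*(\epsilon_f) \otimes f'^*(\epsilon_g)$, where $\epsilon_f\colon f^*f_* \to \id$ and $\epsilon_g\colon g^*g_* \to \id$ denote the counits of adjunction.

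To verify this for the left column I unpack each of its three arrows. By construction, the first $\proj$ is adjoint to $\epsilon_f \otimes \id$; the middle map $f_*(\id \otimes \bc)$ merely inserts $\id \otimes \bc$ inside the adjoint expression; and the second $\proj$ is adjoint to $\epsilon_{g'} \otimes \id$ under the $(g'^*, g'_*)$-adjunction. Combining these, the full adjoint of the left composition becomes
\[
    g'^*(\epsilon_f) \otimes \bigl( \epsilon_{g'} \circ g'^*(\bc) \bigr).
\]
The defining description of $\bc$ as the $(f^*,f_*)$-adjoint of $g_* \xrightarrow{g_*\ad_{f'}} g_*f'_*f'^* \cong f_*g'_*f'^*$ is equivalent to the identity $\epsilon_{g'} \circ g'^*(\bc) = f'^*(\epsilon_g)$, once the canonical identification $g'^*f^* = f'^*g^*$ is made. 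Hence the adjoint of the left composition is $g'^*(\epsilon_f) \otimes f'^*(\epsilon_g)$, and the analogous calculation for the right column (obtained by swapping the roles of $(f,g')$ and $(g,f')$) yields the same morphism.

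Since both compositions share the same adjoint, they are equal, and the diagram commutes. The only obstacle is bookkeeping: keeping track of which adjunction is used at each step and of the commutations $f^*(A \otimes B) \cong f^*A \otimes f^*B$ and $g'^*f^* \cong f'^*g^*$. As a purely computational sanity check yielding the same result, one may reduce to the affine case $S = \Spec A$, $X = \Spec B$, $Y = \Spec C$, with $M = \Gamma(X,\CF)$ and $N = \Gamma(Y,\CE)$: both paths then send $m \otimes_A n$ to the class of $(1_B \otimes_A m) \otimes_{B \otimes_A C} (1_C \otimes_A n)$ in the common target $(B \otimes_A M) \otimes_{B \otimes_A C} (C \otimes_A N)$.
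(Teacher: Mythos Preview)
Your proof is correct and follows essentially the same approach as the paper: both pass to the adjoint under the $(p^*,p_*)$-adjunction (with $p=f\circ g'=g\circ f'$) and reduce the claim to the counit compatibility $\epsilon_{g'}\circ g'^*(\bc)=f'^*(\epsilon_g)$ (and its symmetric counterpart), which is exactly what the paper invokes when it says the two halves of the adjoint diagram commute ``by construction of the morphism $\bc$''. Your presentation is a bit more streamlined in that you identify the common adjoint $g'^*(\epsilon_f)\otimes f'^*(\epsilon_g)$ directly rather than drawing the intermediate diagram, and the affine sanity check is a nice addition the paper omits.
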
 
\begin{proof}
For a morphism $h$ of schemes, let $\widetilde{\ad}_h$ denote the counit of adjunction $h^*h_* \to \id $. The diagram of which we want to prove the commutativity is obtained by adjunction from the diagram
\[
	\xymatrix{
		g'^*f^*f_*\CF \otimes g'^*f^*g_*\CE \ar[r]^-{\sim} \ar[dd]_{\widetilde{\ad}_f \otimes \id} & f'^*g^*f_*\CF \otimes f'^*g^*g_*\CE \ar[d]^{\bc \otimes \id} \\
		& f'^*f'_*g'^*\CF \otimes f'^*g^*g_*\CE \ar[d]^{\widetilde{\ad}_{f'} \otimes \id} \\
		g'^*\CF \otimes g'^*f^*g_*\CE \ar[r]^-{\sim} \ar[d]_{\id \otimes \bc} & g'^*\CF \otimes f'^*g^*g_*\CE \ar[dd]^{\id \otimes \widetilde{\ad}_g} \\
		g'^*\CF \otimes g'^*g'_*f'^*\CE \ar[d]_{\id \otimes \widetilde{\ad}_{g'}} & \\
		g'^*\CF \otimes f'^*\CE \ar@{=}[r] & g'^*\CF \otimes f'^*\CE.
	}
\]
Both parts of the diagram are commutative by construction of the morphism $\bc$. 
\end{proof}		
We turn back to the situation of a morphism $f\colon X \to Y$ of smooth schemes over a field $k$ containing $\FF_p$. For simplicity, let $\omega_f$ denote the relative dualizing sheaf $\omega_{X/Y}=f^!\CO_Y$.
\begin{lemma} \label{alterforward}
Let $f\colon X \to Y$ be a morphism of smooth, $F$-finite schemes over $k$. For every $\gamma$-sheaf $N$ on $X$, there is a natural isomorphism
\[
	f_*(N \otimes_{\CO_X} \omega_X) \otimes_{\CO_Y} \omega_Y^{-1} \to f_*(N \otimes_{\CO_X} \omega_f)
\]
of quasi-coherent sheaves.
\end{lemma}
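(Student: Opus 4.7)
The approach is to use the standard fact that for a morphism $f\colon X \to Y$ of smooth $k$-schemes the top differential forms decompose as
\[
\omega_X \cong \omega_f \otimes_{\CO_X} f^*\omega_Y,
\]
which is nothing but the canonical identification $f^!\omega_Y \cong f^!\CO_Y \otimes f^*\omega_Y$ for $f$ smooth (and since $\omega_Y$ is invertible, this is a well-defined isomorphism of invertible $\CO_X$-modules). Tensoring this with $N$ produces a natural isomorphism
\[
N \otimes_{\CO_X} \omega_X \;\cong\; N \otimes_{\CO_X} \omega_f \otimes_{\CO_X} f^*\omega_Y
\]
of quasi-coherent $\CO_X$-modules.

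The second ingredient is the projection formula $\proj\colon f_*\CF \otimes_{\CO_Y} \CE \overset{\sim}{\longrightarrow} f_*(\CF \otimes_{\CO_X} f^*\CE)$, which is an isomorphism because $f$ is separated of finite type (in fact quasi-compact and separated is enough) and $\CE = \omega_Y$ is invertible. Applied with $\CF = N \otimes_{\CO_X} \omega_f$ and $\CE = \omega_Y$, this yields
\[
f_*(N \otimes_{\CO_X} \omega_f) \otimes_{\CO_Y} \omega_Y \;\overset{\sim}{\longrightarrow}\; f_*(N \otimes_{\CO_X} \omega_f \otimes_{\CO_X} f^*\omega_Y).
\]

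Combining these, I would first apply $f_*$ to the decomposition of $\omega_X$, then invert the projection formula isomorphism, and finally tensor on the right with $\omega_Y^{-1}$ and cancel $\omega_Y \otimes \omega_Y^{-1} \cong \CO_Y$ via the evaluation map $\ev_{\omega_Y}$. Concretely, the desired natural isomorphism is the composition
\[
f_*(N \otimes_{\CO_X} \omega_X) \otimes_{\CO_Y} \omega_Y^{-1}
\;\overset{\sim}{\longrightarrow}\; f_*\bigl(N \otimes_{\CO_X} \omega_f \otimes_{\CO_X} f^*\omega_Y\bigr) \otimes_{\CO_Y} \omega_Y^{-1}
\;\overset{\proj^{-1}}{\longrightarrow}\; f_*(N \otimes_{\CO_X} \omega_f) \otimes_{\CO_Y} \omega_Y \otimes_{\CO_Y} \omega_Y^{-1}
\;\overset{\ev}{\longrightarrow}\; f_*(N \otimes_{\CO_X} \omega_f).
\]
Naturality in $N$ follows because each of the three arrows is natural in its $\CO_X$-module input and the whole construction is functorial in $N$.

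The only mild subtlety — and what I would flag as the main thing to double-check — is ensuring the identification $\omega_X \cong \omega_f \otimes f^*\omega_Y$ is the ``right'' one, i.e.\ the canonical one coming from the compatibility of Grothendieck duality with composition $(\operatorname{id})^!\, = \,f^!\,\circ\,(f^{-1})^!$ rather than some sign- or unit-twisted version; this matters later when one wants to compare this isomorphism with the Cartier/relative-Cartier operators. Since here the statement is only about underlying quasi-coherent sheaves (no Cartier or $\gamma$ structure is asserted to be matched), the construction goes through without any such compatibility check, so the proof is essentially a formal manipulation using the decomposition of $\omega_X$ and the projection formula.
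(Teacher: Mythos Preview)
Your proposal is correct and follows essentially the same approach as the paper: establish the decomposition $\omega_X \cong \omega_f \otimes_{\CO_X} f^*\omega_Y$ and then apply the projection formula. One small slip worth noting: you write this identification holds ``for $f$ smooth,'' but $f$ is only a morphism between smooth schemes, not itself smooth; the paper handles this by factoring $f$ as a regular closed immersion followed by a smooth morphism (the graph factorization) and invoking the appropriate compatibility for each piece, whereas your parenthetical about $\omega_Y$ being invertible is the real reason the formula $f^!\omega_Y \cong f^!\CO_Y \otimes Lf^*\omega_Y$ goes through in general.
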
 
\begin{proof}
Since $X$ and $Y$ are smooth, any morphism $X \to Y$ is regular, i.e.\ it is a composition of a closed immersion $X \to W$ such that $X$ is a local complete intersection in $W$, followed by a smooth morphism $W \to Y$. (For a smooth morphism $f$, the graph factorization
\[
	X \xrightarrow{(\id,f)} X \times_k Y \xrightarrow{\pr_Y} Y,
\]
where $\pr_Y$ denotes the projection, satisfies this requirement.) For a closed immersion $i$ we have the isomorphism 
\[
	i^{\flat}(\omega_Y) \cong Li^*\omega_Y \otimes \omega_f[-n] 
\]
of \cite[Corollary III.7.3]{HartshorneRD} and a smooth morphism is quasi-perfect, see \cite[Definition 3.3]{Sched.Adj}. Overall, we see that there are natural isomorphisms  
\[
	\omega_X \cong f^!\omega_Y \overset{\sim}{\longrightarrow} f^!\CO_Y \otimes Lf^*\omega_Y \overset{\sim}{\longrightarrow} \omega_f \otimes f^*\omega_Y.
\]
Now we obtain the desired isomorphism as the composition
\[
	f_*(N \otimes \omega_X) \otimes \omega_Y^{-1} \xrightarrow{\proj} f_*(N \otimes \omega_X \otimes f^*\omega_Y^{-1}) \cong f_*(N \otimes \omega_f).
\]
\end{proof}
With the relative Frobenius diagram
\[
	\xymatrix@C50pt{
		X \ar[r]^-{F_{X/Y}} \ar[dr]_f \ar@/^23pt/[rr]^{F_X} & X' \ar[r]^-{F_Y'} \ar[d]^-{f'} & X \ar[d]^-f \\
		& Y \ar[r]^{F_Y} & Y,
	}
\]
see diagram (\ref{relativeFrobenius}) and below for the notation, we can define a $\gamma$-structure $\gamma_{N,f}$ for $f_*(N \otimes \omega_f)$ by the composition
\begin{align*}
	f_*(N \otimes \omega_f) &\overset{\sim}{\longrightarrow} f'_*F_{X/Y*}(N \otimes \omega_f) \\
	&\overset{\gamma_N}{\longrightarrow} f'_*F_{X/Y*}(F_{X/Y}^*F_Y'^*N \otimes \omega_f) \\
	&\xrightarrow{\proj^{-1}} f'_*(F_Y'^*N \otimes F_{X/Y*}\omega_f) \\
	&\xrightarrow{C_{X/Y}} f'_*(F_Y'^*N \otimes F_Y'^*\omega_f) \\
	&\overset{\bc}{\longrightarrow} F_Y^*f_*(N \otimes \omega_f).
\end{align*}
We will show that $\gamma_{N,f}$ is the structural morphism of $f_+N$ via the isomorphism of \autoref{alterforward}. But first, we clarify how the relative Cartier operator is related to $\kappa_X$ and $\kappa_Y$.
\begin{lemma}
With the notation of the preceding lemma, the composition
\begin{align*}
	F_{Y*}(\omega_f \otimes f^*\omega_Y) &\overset{\sim}{\longrightarrow} F_{Y*}'F_{X/Y*}(\omega_f \otimes F_{X/Y}^*f'^*\omega_Y) \\
	&\xrightarrow{\proj^{-1}} F_{Y*}'(F_{X/Y*}\omega_f \otimes f'^*\omega_Y) \\
	&\xrightarrow{C_{X/Y}} F_{Y*}'(F_Y'^*\omega_f \otimes f'^*\omega_Y) \\
	&\xrightarrow{\proj} \omega_f \otimes F_{Y*}'f'^*\omega_Y \\
	&\overset{\bc}{\longrightarrow} \omega_f \otimes f^*F_{Y*}\omega_Y \\
	&\overset{\kappa_Y}{\longrightarrow} \omega_f \otimes f^*\omega_Y
\end{align*}
is compatible with the Cartier structure of $\omega_X$ under the canonical isomorphism $\omega_X \cong \omega_f \otimes f^*\omega_Y$.
\end{lemma}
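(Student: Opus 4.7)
The plan is to pass to adjoint morphisms and exploit the factorization $F_X = F_Y' \circ F_{X/Y}$. Since $F_{X*}$ has right adjoint $F_X^!$, establishing the stated identity between two maps $F_{X*}(\omega_f \otimes f^*\omega_Y) \to \omega_f \otimes f^*\omega_Y$ is equivalent to verifying the corresponding identity between their adjoints $\omega_X \to F_X^!\omega_X$. Functoriality of $(-)^!$ applied to $F_X = F_Y' \circ F_{X/Y}$ and to the equality $f F_Y' = F_Y f'$ (commutativity of the right-hand square of the relative Frobenius diagram), together with the Cartier isomorphism $\kappa_Y\colon \omega_Y \overset{\sim}{\longrightarrow} F_Y^!\omega_Y$, yields the chain of canonical isomorphisms
\[
F_X^!\omega_X \;=\; F_X^! f^!\omega_Y \;\cong\; F_{X/Y}^! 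F_Y'^! f^!\omega_Y \;\cong\; F_{X/Y}^! f'^! F_Y^!\omega_Y \;\cong\; F_{X/Y}^! f'^!\omega_Y.
\]

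Under the splitting $f'^!\omega_Y \cong \omega_{f'} \otimes F_Y'^* f^*\omega_Y$ (the relative analogue for $f'$ of the decomposition of $\omega_X$ recalled in \autoref{alterforward}), the resulting map $\omega_f \otimes f^*\omega_Y \to F_{X/Y}^!(\omega_{f'} \otimes F_Y'^* f^*\omega_Y)$ splits as the tensor product of the adjoint of the relative Cartier operator $C_{X/Y}$ on the $\omega_f$-factor --- which, by its very definition in the text, \emph{is} the map $\omega_f \to F_{X/Y}^! F_Y'^*\omega_f$ --- and the canonical isomorphism $f^*\omega_Y \cong F_{X/Y}^! F_Y'^* f^*\omega_Y$ on the second factor. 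Translating back through the adjunctions, the projection isomorphisms for $F_{X/Y*}$ and $F_{Y*}'$, and the base-change isomorphism $\bc\colon f^*F_{Y*} \overset{\sim}{\longrightarrow} F_{Y*}'f'^*$ around the cartesian square of the relative Frobenius diagram, I recover exactly the composition stated in the lemma; the compatibility between projection and base change around this square is precisely the content of \autoref{twoprojection}.

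The main obstacle I expect is the sheer bookkeeping: the verification strings together many naturality squares for $(-)^!$, the projection formula, base change, and the explicit construction of $C_{X/Y}$ via Grothendieck duality. A pragmatic way to organize the argument is to reduce to an affine open $\Spec R \subseteq Y$ over which $F_{R*}R$ is free --- in the spirit of \autoref{localGammapullback} --- so that every arrow becomes an explicit map of free $R$-modules and each required commutativity collapses to a direct calculation on generators.
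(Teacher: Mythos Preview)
Your approach is correct in spirit but unnecessarily laborious compared to the paper's one-line proof. The paper simply observes that the statement is the special case $Z=\Spec k$ of the general transitivity law for relative Cartier operators under a composition $X\xrightarrow{f}Y\xrightarrow{g}Z$, which is recorded in \cite[Appendix A.2.3(iii)]{EmKis.Fcrys}: that result expresses $C_{X/Z}$ in terms of $C_{X/Y}$ and $C_{Y/Z}$ via exactly the pattern of projection and base-change isomorphisms appearing in the displayed composition. Taking $g$ to be the structural morphism identifies $C_{X/Z}$ with $\kappa_X$ and $C_{Y/Z}$ with $\kappa_Y$, and the lemma follows immediately.

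What you are doing is essentially re-deriving this transitivity law from the definition of $C_{X/Y}$ via Grothendieck duality. That is a legitimate route, and your idea of passing to adjoints and exploiting $F_X=F_Y'\circ F_{X/Y}$ is the right organizing principle. However, your claimed ``tensor splitting'' of the adjoint map into a $C_{X/Y}$-factor and a ``canonical isomorphism $f^*\omega_Y\cong F_{X/Y}^!F_Y'^*f^*\omega_Y$'' on the second factor is not quite correct as written: $F_{X/Y}^!$ does not commute with tensor products na\"ively, and one must invoke the projection-type identity $F_{X/Y}^!(A\otimes F_{X/Y}^*B)\cong F_{X/Y}^!A\otimes B$ and keep careful track of which factor is pulled back. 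Once you do this properly you will find yourself reproducing precisely the argument behind \cite[A.2.3(iii)]{EmKis.Fcrys}. So there is no genuine gap, but you should either cite that result directly (as the paper does) or be explicit that the bookkeeping you allude to \emph{is} the proof of the transitivity of the relative Cartier, rather than a separate verification on top of it.
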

\begin{proof}
In the appendix A.2.3. (iii) of \cite{EmKis.Fcrys}, Emerton and Kisin explain how the relative Cartier operators $C_{X/Y}$, $C_{Y/Z}$ and $C_{X/Z}$ are related for a composition $X \overset{f}{\longrightarrow} Y \overset{g}{\longrightarrow} Z$ of morphisms. Our lemma is the special case where $Z=\Spec k$ and $g$ is the structural morphism of the $k$-scheme $Y$.   
\end{proof}
\begin{proposition}
Let $f\colon X \to Y$ be a morphism of smooth, $F$-finite schemes over $k$. Let $N$ be a $\gamma$-sheaf on $X$. The canonical isomorphism 
\[
	f_*(N \otimes \omega_f) \overset{\sim}{\longrightarrow} f_*(N \otimes \omega_X) \otimes \omega_Y^{-1}
\]
of quasi-coherent $\CO_Y$-modules from \autoref{alterforward} is an isomorphism of $\gamma$-sheaves.
\end{proposition}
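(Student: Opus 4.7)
The plan is to unravel both $\gamma$-structures explicitly and identify them via the decomposition of $\kappa_X$ provided by the previous lemma. On the one side, the $\gamma$-structure $\gamma_{N,f}$ on $f_*(N \otimes \omega_f)$ is the composition written out just before the proposition, using the factorization $F_X = F_Y' \circ F_{X/Y}$ from the relative Frobenius diagram, the relative Cartier operator $C_{X/Y}$, the projection formula, and the base change isomorphism $\bc \colon F_Y^* f_* \to f'_* F_Y'^*$. On the other side, the $\gamma$-structure on $f_*(N \otimes \omega_X) \otimes \omega_Y^{-1}$ comes from unwinding \autoref{CartierGammaDef}: one first equips $N \otimes \omega_X$ with its induced Cartier structure (using $\gamma_N$ and $\kappa_X$), pushes forward by $f_*$, and then tensors with $\omega_Y^{-1}$ using $\kappa_Y^{-1}$.

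The crucial bridge is the preceding lemma, which under the canonical isomorphism $\omega_X \cong \omega_f \otimes f^*\omega_Y$ rewrites $\kappa_X$ as the composition $\kappa_Y \circ \bc \circ \proj \circ C_{X/Y} \circ \proj^{-1}$. Substituting this decomposition of $\kappa_X$ into the definition of the Cartier structure of $N \otimes \omega_X$, pushing forward, and then tensoring with $\omega_Y^{-1}$, the terminal $\kappa_Y$ in the expansion of $\kappa_X$ is cancelled by the $\kappa_Y^{-1}$ produced by the $\gamma$-sheaf construction of \autoref{CartierGammaDef} applied to $\omega_Y$. What remains, once we use the projection formula of \autoref{alterforward} to absorb the $f^*\omega_Y$ and $\omega_Y^{-1}$ factors into each other, is a composition built from $\gamma_N$, $C_{X/Y}$, $\proj$, and $\bc$ which is manifestly $\gamma_{N,f}$.

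Concretely, I would arrange the verification as one large commutative diagram whose outer rectangle has the asserted isomorphism of \autoref{alterforward} on the top and bottom and the two $\gamma$-structures on the left and right. The diagram is subdivided into sub-squares, each of which commutes for one of a small number of reasons: naturality of $\proj$, naturality of $\bc$, the compatibility \autoref{twoprojection} between $\proj$ and $\bc$ when applied to a tensor product of push-forwards, the evaluation identities of \autoref{ExampleBasic} that handle the cancellation of $\omega_Y$ against $\omega_Y^{-1}$, and the preceding lemma that substitutes the $C_{X/Y}$-description of $\kappa_X$. The cancellation of $\kappa_Y$ with $\kappa_Y^{-1}$ is of the same flavor as the one already verified in the proof of \autoref{CartierGamma}, where the $\gamma$-structure $\gamma_N \otimes \gamma_\omega$ was shown to collapse to $\gamma_N$ after the identifications of \autoref{GenGamma}.

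The main obstacle is purely notational bookkeeping: the definition of $\gamma_{N,f}$ places the relative Cartier operator, the projection formula, and the base change morphism in a specific order, while the construction coming from the Cartier-module side produces them in a different order and interleaved with the $\kappa_Y^{\pm 1}$. \autoref{twoprojection} is the key technical tool for exchanging a projection followed by a base change with a base change followed by a projection, and once these reorderings are made the diagram decomposes into commutative squares of the types listed above.
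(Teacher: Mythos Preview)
Your proposal is correct and follows essentially the same strategy as the paper: one large commutative diagram whose sub-squares are handled by naturality of $\proj$ and $\bc$, by \autoref{twoprojection}, and by the preceding lemma decomposing $\kappa_X$ in terms of $C_{X/Y}$ and $\kappa_Y$.

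There is one simplification in the paper worth noting. Rather than comparing the two $\gamma$-structures directly, the paper first observes that since $\usc \otimes \omega_X$ and $\usc \otimes \omega_Y^{-1}$ are equivalences, it is equivalent to check that the induced map $f_*(N \otimes \omega_f) \otimes \omega_Y \to f_*(N \otimes \omega_X)$ is a morphism of \emph{Cartier modules} on $Y$. This passage to the Cartier side replaces the $\kappa_Y^{-1}$ coming from \autoref{CartierGammaDef} by a $\kappa_Y$ appearing on both columns of the diagram, so the cancellation you flag as the main bookkeeping obstacle --- and the appeal to \autoref{ExampleBasic} and the arguments around \autoref{GenGamma} --- becomes unnecessary: the terminal $\kappa_Y$ square commutes by plain functoriality. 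The remaining squares are then exactly the ones you describe, handled by functoriality and compatibility of the projection formula with compositions and by \autoref{twoprojection}. Your route works, but the paper's small reduction trims the diagram and sidesteps the evaluation-map identities entirely.
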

\begin{proof}
As $\usc \otimes \omega_X$ and $\usc \otimes \omega_Y^{-1}$ are equivalences between the categories of $\gamma$-sheaves and Cartier modules on $X$ and on $Y$, it suffices to show that the canonical isomorphism $f_*(N \otimes \omega_f) \otimes \omega_Y \overset{\sim}{\longrightarrow} f_*(N \otimes \omega_X)$ is an isomorphism of Cartier modules on $Y$. The left hand side of the diagram
\[
	\xymatrix@C13pt{
		F_{Y*}(f_*(N \otimes \omega_f) \otimes \omega_Y) \ar[r]^-{\proj} \ar[d]^{\gamma_N} & F_{Y*}f_*(N \otimes \omega_f \otimes f^*\omega_Y) \ar[d]^{\gamma_N} \\
		F_{Y*}(f_*(F_Y^*N \otimes \omega_f) \otimes \omega_Y) \ar[r]^-{\proj} \ar[d]^{\sim} &  F_{Y*}f_*(F_X^*N \otimes \omega_f \otimes f^*\omega_Y) \ar[d]^{\sim} \\
		F_{Y*}(f'_*F_{X/Y*}(F_{X/Y}^*F_Y'^*N \otimes \omega_f) \otimes \omega_Y) \ar@{=}[d] & F_{Y*}f'_*F_{X/Y*}(F_{X/Y}^*F_Y'^*N \otimes F_{X/Y}^*f'^*\omega_Y \otimes \omega_f) \ar[d]^{\sim} \\
		F_{Y*}(f'_*F_{X/Y*}(F_{X/Y}^*F_Y'^*N \otimes \omega_f) \otimes \omega_Y) \ar[d]^{\proj^{-1}} & F_{Y*}f'_*F_{X/Y*}(F_{X/Y}^*(F_Y'^*N \otimes f'^*\omega_Y) \otimes \omega_f) \ar[d]^{\proj^{-1}} \\
		F_{Y*}(f'_*(F_Y'^*N \otimes F_{X/Y*}\omega_f) \otimes \omega_Y) \ar[d]^{C_{X/Y}} \ar[r]^-{\proj} & F_{Y*}f'_*(F_Y'^*N \otimes f'^*\omega_Y \otimes F_{X/Y*}\omega_f) \ar[d]^{C_{X/Y}} \\
		F_{Y*}(f'_*F_Y'^*(N \otimes \omega_f) \otimes \omega_Y) \ar[d]^{\bc^{-1}} \ar[r]^-{\proj} & F_{Y*}f'_*(F_Y'^*(N \otimes \omega_f) \otimes f'^*\omega_Y) \ar[d]^{\sim} \\
		F_{Y*}(F_Y^*f_*(N \otimes \omega_f) \otimes \omega_Y) \ar[dd]^{\proj^{-1}} & f_*F_{Y*}'(F_Y'^*(N \otimes \omega_f) \otimes f'^*\omega_Y) \ar[d]^{\proj^{-1}} \\
		& f_*(N \otimes \omega_f \otimes F_{Y*}'f'^*\omega_Y) \ar[d]^{\bc^{-1}} \\
		f_*(N \otimes \omega_f) \otimes F_{Y*}\omega_Y \ar[d]^{\kappa_Y} \ar[r]^-{\proj} & f_*(N \otimes \omega_f \otimes f^*F_{Y*} \omega_Y) \ar[d]^{\kappa_Y} \\
		f_*(N \otimes \omega_f) \otimes \omega_Y \ar[r]^-{\proj} & f_*(N \otimes \omega_f \otimes f^*\omega_Y)  
	}
\]
is the structural morphism of the Cartier module $f_*(N \otimes \omega_f) \otimes \omega_Y$. It is easy to see that the right hand side is the structural morphism of the Cartier module $f_*(N \otimes \omega_X) \cong f_*(N \otimes \omega_F \otimes f^*\omega_Y)$. Hence we have to show that the diagram above commutes. The three upper squares and the bottom square commute by the functoriality and the compatibility of the projection formula with compositions of morphisms. The commutativity of the fourth square from above follows from \autoref{twoprojection}.    
\end{proof}

\subsection{Cartier crystals and locally finitely generated unit modules}

The category of $\gamma$-sheaves was just an intermediate step on the way to locally finitely generated unit modules. Recall that there is a functorial way of associating a unit $\CO_X[F]$-module to a $\gamma$-sheaf $N$ on $X$. 
\begin{definition}
Let $\mu_{\operatorname{u}}(X)$ denote the category of unit left $\CO_{F,X}$-modules whose underlying $\CO_X$-module is quasi-coherent. For a smooth $k$-scheme $X$, let $\Gen$ be the functor 
\[
	\QCohG(X) \to \mu_{\operatorname{u}}(X)
\]
which assigns to any quasi-coherent $\gamma$-sheaf $N$ with structural morphism $\gamma\colon N \to F^*N$ the direct limit $\CN$ of
\[
	N \overset{\gamma}{\longrightarrow} F^*N \xrightarrow{F^*\gamma} F^{2*}N \xrightarrow{F^{2*\gamma}} \cdots
\]
together with the inverse of the induced isomorphism $\CN \overset{\sim}{\longrightarrow} F^*\CN$. 
\end{definition}
\begin{lemma} \label{QCrysGammaunit}
Let $X$ be a smooth, $F$-finite $k$-scheme. The functor $\Gen$ is essentially surjective and induces an equivalence of categories
\[
	\QCrysG(X) \overset{\sim}{\longrightarrow} \mu_{\operatorname{u}}(X).
\]
\end{lemma}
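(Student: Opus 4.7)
The plan is to produce an explicit quasi-inverse to $\Gen$ via a forgetful functor. Let $U\colon \mu_{\operatorname{u}}(X)\to\QCohG(X)$ send a unit module $(\CM,\sigma\colon F^*\CM\xrightarrow{\sim}\CM)$ to the $\gamma$-sheaf $(\CM,\sigma^{-1})$, and set $\tilde U := T\circ U$, where $T\colon \QCohG(X)\to\QCrysG(X)$ denotes the localization. I first verify that $\Gen$ is exact -- since $F$ is flat on the regular scheme $X$, the functor $F^*$ is exact, and filtered colimits are exact in quasi-coherent sheaves -- and that $\Gen$ annihilates every locally nilpotent $\gamma$-sheaf $N$, because any local section $s$ with $\gamma^n(s)=0$ maps to zero in $\varinjlim_m F^{m*}N$. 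By the universal property of the Serre quotient, $\Gen$ then descends to a functor $\tilde\Gen\colon \QCrysG(X)\to\mu_{\operatorname{u}}(X)$.

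The isomorphism $\Gen\circ U\cong\id_{\mu_{\operatorname{u}}(X)}$ is immediate: for a unit module $(\CM,\sigma)$, every transition map in $\CM\xrightarrow{\sigma^{-1}}F^*\CM\xrightarrow{F^*\sigma^{-1}}F^{2*}\CM\to\cdots$ is an isomorphism, so its colimit recovers $\CM$ with its original unit structure. This in particular yields the essential surjectivity assertion.

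The substantive step is to check that the canonical comparison morphism $\eta_N\colon N\to U\Gen(N)$, given by the inclusion $\iota_0$ of the zeroth level of $\Gen(N)=\varinjlim_m F^{m*}N$, has both kernel and cokernel locally nilpotent, for then $T(\eta_N)$ is an isomorphism in $\QCrysG(X)$ and $\tilde U\circ\tilde\Gen\cong\id_{\QCrysG(X)}$. Filteredness of the colimit gives $\ker(\eta_N)=\{\,s\in N\mid\gamma^m(s)=0\text{ for some }m\,\}$, which is the maximal locally nilpotent subsheaf $N_{nil}$. For the cokernel, the key compatibility is that $\gamma_{\Gen(N)}$, being the inverse of the unit isomorphism $\sigma\colon F^*\Gen(N)\xrightarrow{\sim}\Gen(N)$, satisfies the identity $\gamma_{\Gen(N)}^{m}\circ\iota_m = F^{m*}\iota_0$ as morphisms $F^{m*}N\to F^{m*}\Gen(N)$; consequently any local section of $\Gen(N)/\iota_0(N)$ represented by some $s\in F^{m*}N$ is sent by $\gamma^m$ into $F^{m*}\iota_0(N)$ and thus vanishes in $F^{m*}(\Gen(N)/\iota_0(N))$, showing the cokernel is locally nilpotent.

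The principal technical obstacle is the identity $\gamma_{\Gen(N)}^{m}\circ\iota_m = F^{m*}\iota_0$. It will follow by induction on $m$ from the defining property $\iota_{m+1}=\sigma\circ F^*\iota_m$ of the structural isomorphism $\sigma$ on the universal cocone associated to the reindexed colimit $F^*\Gen(N)=\varinjlim_m F^{(m+1)*}N$, but it requires careful bookkeeping of the indexing shifts. Once this is in place, combining $\tilde U\circ\tilde\Gen\cong\id_{\QCrysG(X)}$ with $\tilde\Gen\circ\tilde U\cong\id_{\mu_{\operatorname{u}}(X)}$ shows that $\tilde\Gen$ is an equivalence of categories, establishing the lemma.
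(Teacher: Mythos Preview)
Your proof is correct and follows essentially the same approach as the paper: define the quasi-inverse $U$ (the paper calls it $\Neg$), observe $\Gen\circ U\cong\id$ directly, and show the comparison $N\to U\Gen(N)$ becomes an isomorphism after localization. The only difference is in the presentation of this last step: the paper compresses it to the single remark that the structural map $\gamma\colon N\to F^*N$, viewed as a morphism of $\gamma$-sheaves, has kernel and cokernel with zero structural map (hence is a nil-isomorphism), leaving implicit the passage to the colimit; you instead analyze the kernel and cokernel of $\iota_0\colon N\to\Gen(N)$ directly via the identity $\gamma_{\Gen(N)}^{m}\circ\iota_m=F^{m*}\iota_0$, which is precisely the computation that makes the paper's implicit step explicit.
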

\begin{proof}
Let $\Neg\colon \mu_{\operatorname{u}}(X) \to \QCohG(X)$ be the functor which assigns to a quasi-coherent unit $\CO_{F,X}$-module $\CM$ with structural morphism $u\colon F^*\CM \to \CM$ the quasi-coherent $\gamma$-sheaf $\CM$ whose structural morphism is given by the inverse of $u$. Obviously there is a natural isomorphism
\[
	\Gen \circ \Neg \overset{\sim}{\longrightarrow} \id,
\]
whence the surjectivity of $\Gen$.

For a quasi-coherent $\gamma$-sheaf $N$, the corresponding crystals $\CN$ is nil-isomorphic to the corresponding crystal of $\Neg \circ \Gen(N)$. The reason for this is the fact that the structural morphism $\gamma\colon N \to F^*N$ of a quasi-coherent $\gamma$-sheaf  $N$ is a nil-isomorphism: It is immediate that the structural map of the kernel and the cokernel of $\gamma$, interpreted as a morphism of $\gamma$-sheaves, is the zero map.     
\end{proof}
The image of $\Gen$ of the subcategory $\Coh_{\gamma}(X)$ of coherent $\gamma$-sheaves on $X$ is the category $\mu_{\lfgu}(X)$. Indeed, after localizing at nilpotent $\gamma$-sheaves and considering $\gamma$-crystals, $\Gen$ induces an equivalence of categories.
\begin{proposition}[\protect{\cite[Proposition 5.12]{BliBoe.CartierFiniteness}}] \label{Gammalfgu}
For a smooth, $F$-finite $k$-scheme $X$, the functor 
\[
	\Gen_X\colon \Coh_{\gamma}(X) \to \mu_{\lfgu}(X)
\]
factors through $\Crys_{\gamma}(X)$, inducing an equivalence of categories:
\[
	\xymatrix{\Coh_{\gamma}(X) \ar[dr]^{\Gen} \ar[d] & \\
	\Crys_{\gamma}(X) \ar[r]^-{\sim} & \mu_{\lfgu}(X).
	}	
\]
\end{proposition}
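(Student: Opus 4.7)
The plan is to check three things: (i) $\Gen$ sends coherent $\gamma$-sheaves to lfgu modules, (ii) $\Gen$ annihilates locally nilpotent $\gamma$-sheaves, so that it factors through the Serre quotient $\CrysG(X)$, and (iii) the induced functor $\overline{\Gen}\colon \CrysG(X) \to \mu_{\lfgu}(X)$ is an equivalence. For (i), if $N$ is coherent with local generators $n_1,\dots,n_r$ as an $\CO_X$-module, then by construction of $\Gen(N) = \varinjlim F^{n*}N$ and of the $\CO_{F,X}$-module structure on the colimit, the images of the $n_i$ generate $\Gen(N)$ as an $\CO_{F,X}$-module: every element in $F^{k*}N$ may be written as $\sum a_i F^k n_i$ with $a_i \in \CO_X$. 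Hence $\Gen(N)$ is lfgu. For (ii), if $\gamma_N^n = 0$ for some $n$, then the transition maps of the directed system $N \to F^*N \to F^{2*}N \to \cdots$ are eventually zero, whence $\Gen(N)=0$; this yields the factorization through $\CrysG(X)$.

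For the fully faithful part of (iii), I would invoke \autoref{QCrysGammaunit}: since $\Gen$ induces an equivalence $\QCrysG(X) \cong \mu_{\operatorname{u}}(X)$ and $\CrysG(X)$ is a full subcategory of $\QCrysG(X)$, restriction to the coherent subcategory is automatically fully faithful, with image in the full subcategory $\mu_{\lfgu}(X) \subset \mu_{\operatorname{u}}(X)$ by step~(i).

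The hard part is essential surjectivity. Given $\CM \in \mu_{\lfgu}(X)$ with structural isomorphism $u\colon F^*\CM \xrightarrow{\sim} \CM$, I need to exhibit a coherent $\gamma$-sheaf $N$ with $\Gen(N)\cong \CM$. The idea is a root construction: locally choose a coherent $\CO_X$-submodule $N_0 \subset \CM$ whose elements generate $\CM$ as an $\CO_{F,X}$-module, and consider $\gamma_\CM := u^{-1}\colon \CM \xrightarrow{\sim} F^*\CM$. In general $\gamma_\CM(N_0) \not\subseteq F^*N_0$, but the ascending chain of coherent subsheaves obtained by repeatedly adjoining $\gamma_\CM^{-1}(F^*(\usc))$-preimages stabilizes by Noetherianity on sufficiently small affines, producing a coherent $N \supseteq N_0$ with $\gamma_\CM(N) \subseteq F^*N$. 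Then $(N,\gamma_\CM|_N)$ is a coherent $\gamma$-subsheaf of $\CM$, and since $N$ generates $\CM$ over $\CO_{F,X}$, the induced map $\Gen(N) \to \CM$ is surjective; it is also injective because $\gamma_N$ is the restriction of the isomorphism $u^{-1}$. The main technical issue is to globalize this construction, for which one covers $X$ by affine opens and patches the roots together; alternatively one appeals to Emerton--Kisin's existence of roots for lfgu modules. Once surjectivity is established, combined with full faithfulness the functor $\overline{\Gen}$ is an equivalence, completing the proof.
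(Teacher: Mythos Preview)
The paper does not give its own proof of this proposition; it is quoted verbatim from \cite[Proposition 5.12]{BliBoe.CartierFiniteness}. So there is nothing in the paper to compare your argument against directly.

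That said, your outline is essentially the standard one and is correct in structure. Steps (i) and (ii) are fine, and using \autoref{QCrysGammaunit} to get full faithfulness for free is clean. The one place where your sketch is imprecise is the essential surjectivity step: the sentence ``the ascending chain \dots\ stabilizes by Noetherianity on sufficiently small affines'' is not justified as written. The ambient module $\CM$ is typically not coherent as an $\CO_X$-module, so ascending chains of coherent $\CO_X$-submodules of $\CM$ have no reason to stabilize; Noetherianity of $\CO_X$ alone does not help here. The actual content is precisely Emerton--Kisin's theorem on the existence of roots for lfgu modules (\cite[Theorem 6.1.3]{EmKis.Fcrys}), which you correctly name as the alternative. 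In a clean write-up you should simply invoke that result rather than gesture at a stabilization argument that does not quite work.
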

\begin{theorem}
Let $X$ be an $F$-finite, smooth $k$-scheme. Let $\G$ denote the composition of the exact functors $\usc \otimes \omega_X^{-1}$ and $\Gen$. It induces an equivalence of derived categories 
\[
	\G\colon D_{\crys}^b(\QCrysC(X)) \to D_{\lfgu}^b(\CO_{F,X}).
\]
\end{theorem}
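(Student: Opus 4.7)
The plan is to decompose $\G$ into two equivalences on the abelian level and then lift to derived categories. By \autoref{QCrysCartierGamma}, $\usc \otimes_{\CO_X} \omega_X^{-1}$ is an exact equivalence $\QCrysC(X) \overset{\sim}{\to} \QCrysG(X)$ restricting to $\CrysC(X) \overset{\sim}{\to} \CrysG(X)$. Combined with \autoref{QCrysGammaunit} and \autoref{Gammalfgu}, the functor $\Gen$ is an exact equivalence $\QCrysG(X) \overset{\sim}{\to} \mu_{\operatorname{u}}(X)$ restricting to $\CrysG(X) \overset{\sim}{\to} \mu_{\lfgu}(X)$. Composing, $\G$ yields an exact equivalence of abelian categories $\QCrysC(X) \overset{\sim}{\to} \mu_{\operatorname{u}}(X)$ which identifies $\CrysC(X)$ with $\mu_{\lfgu}(X)$.

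Any exact equivalence of abelian categories extends canonically to an equivalence of their bounded derived categories, since an exact functor preserves quasi-isomorphisms term by term. Because our abelian equivalence respects the coherent/lfgu subcategories on both sides, the induced derived equivalence restricts to
\[
	\G\colon D^b(\QCrysC(X)) \overset{\sim}{\longrightarrow} D^b(\mu_{\operatorname{u}}(X)),
\]
and further, by checking on cohomology, to
\[
	\G\colon D^b_{\crys}(\QCrysC(X)) \overset{\sim}{\longrightarrow} D^b_{\lfgu}(\mu_{\operatorname{u}}(X)),
\]
where the right-hand side denotes the full subcategory of $D^b(\mu_{\operatorname{u}}(X))$ with cohomology in $\mu_{\lfgu}(X)$.

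The remaining and principal technical point is to identify $D^b_{\lfgu}(\mu_{\operatorname{u}}(X))$ with Emerton--Kisin's category $D^b_{\lfgu}(\CO_{F,X})$, which is defined as the subcategory of $D^b(\CO_{F,X}\text{-Mod})$ whose cohomology sheaves are lfgu. One direction is straightforward via the inclusion $\mu_{\operatorname{u}}(X) \hookrightarrow \CO_{F,X}\text{-Mod}$; the nontrivial direction requires that every complex of $\CO_{F,X}$-modules with lfgu cohomology be quasi-isomorphic to a complex of unit objects, and that $\Ext$-groups computed in either ambient category agree on lfgu complexes. The required ingredients are supplied by \cite{EmKis.Fcrys}: $\mu_{\operatorname{u}}(X)$ is closed under kernels, cokernels and extensions in the larger category $\mu(X)$, the subcategory $\mu_{\lfgu}(X)$ is thick, and the generation functor $\Gen$ provides functorial unit resolutions of lfgu objects. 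Assembling these facts yields the final identification $D^b_{\lfgu}(\mu_{\operatorname{u}}(X)) \cong D^b_{\lfgu}(\CO_{F,X})$.

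The main obstacle lies in this last step, rather than in the formal lifting of the abelian equivalence. While the composition of \autoref{QCrysCartierGamma}, \autoref{QCrysGammaunit} and \autoref{Gammalfgu} cleanly produces an abelian equivalence $\QCrysC(X) \cong \mu_{\operatorname{u}}(X)$ with good restriction properties, reconciling complexes of unit modules with complexes of arbitrary $\CO_{F,X}$-modules demands the structural theorems from \cite{EmKis.Fcrys} concerning lfgu resolutions and the thickness of $\mu_{\lfgu}(X)$, without which the target categories on either side of $\G$ cannot be directly compared.
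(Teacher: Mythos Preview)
Your proposal is correct and follows essentially the same route as the paper: establish the abelian equivalence $\QCrysC(X)\simeq\mu_{\operatorname u}(X)$ via \autoref{QCrysCartierGamma} and \autoref{QCrysGammaunit}, pass to bounded derived categories, restrict to cohomology in $\CrysC(X)\simeq\mu_{\lfgu}(X)$, and finally identify $D^b_{\lfgu}(\mu_{\operatorname u}(X))$ with $D^b_{\lfgu}(\CO_{F,X})$. The only difference is in the execution of that last identification: where you sketch the structural ingredients (thickness of $\mu_{\lfgu}$, unit resolutions), the paper simply invokes \cite[\S11.6]{EmKis.Fcrys}, which asserts that the inclusion $\mu_{\lfgu}(X)\hookrightarrow\mu(X)$ induces an equivalence $D^b(\mu_{\lfgu}(X))\to D^b_{\lfgu}(\CO_{F,X})$, and then observes that this factors through $D^b_{\lfgu}(\mu_{\operatorname u}(X))$.
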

\begin{proof}
Combining \autoref{QCrysCartierGamma} and \autoref{QCrysGammaunit}, we see that $\G$ induces an equivalence of abelian categories $\QCrysC(X) \to \mu_{\operatorname{u}}(X)$ and therefore an equivalence of derived categories $D^b(\QCrys(X)) \to D^b(\mu_{\operatorname{u}}(X))$. Since $G$ is exact and restricts to an equivalence $\CrysC(X) \to \mu_{\lfgu}(X)$, we obtain an equivalence $D_{\crys}^b(\QCrysC(X)) \to D_{\lfgu}(\mu_{\operatorname{u}}(X))$. 

It remains to show that $D_{\lfgu}^b(\mu_{\operatorname{u}}(X))$ is naturally equivalent to $D_{\lfgu}^b(\CO_{F,X})$. The inclusion $\mu_{\lfgu}(X) \to \mu(X)$ induces an equivalence $D^b(\mu_{\lfgu}) \to D_{\lfgu}^b(\CO_{F,X})$ (\cite[11.6]{EmKis.Fcrys}). As the inclusion $\mu_{\lfgu}(X) \to \mu(X)$ factors through $\mu_{\operatorname{u}}(X)$, this implies an equivalence $D_{\lfgu}^b(\mu_{\operatorname{u}}(X)) \to D_{\lfgu}^b(\CO_{F,X})$.
\end{proof}     
Finally, we prove that the equivalence $\G$ of derived functors is compatible with pull-backs. Note that for a morphism $f\colon X \to Y$ of smooth schemes, the functor 
\[
	f^!\colon D_{\lfgu}^b(\CO_{F,Y}) \to D_{\lfgu}^b(\CO_{F,X})
\]
is obtained from a right-exact functor of abelian categories. 
\begin{definition}
Let $f\colon X \to Y$ be a morphism of smooth $k$-schemes. The \emph{(underived) pull-back} $f^*\CM$ of an $\CO_{F,Y}$-module $\CM$ is given by
\begin{align*}
	f^*\CM &= \CO_{F,X \rightarrow Y} \otimes_{f^{-1}\CO_{F,Y}} f^{-1}\CM \\
	&= \CO_{F,X} \otimes_{f^{-1}\CO_{F,Y}} f^{-1}\CM,	
\end{align*}
cf.\ \autoref{EmKisPullBack}. The pull-back $f^!$ for complexes $\CM^{\bullet}$ of $\CO_{F,Y}$-modules from Definition 2.3.1 of \cite{EmKis.Fcrys} is the left derived functor of $f^*$, shifted by $d_{X/Y}$:
\[
	f^!\CM^{\bullet} = \CO_{F,X \rightarrow Y} \derotimes_{f^{-1}\CO_{F,Y}} f^{-1}\CM^{\bullet}[d_{X/Y}].
\]
\end{definition}
\begin{corollary} \label{lfgupullbackcomp}
Let $f\colon X \to Y$ be a closed immersion of smooth, $F$-finite $k$-schemes of relative dimension $d_{X/Y}=n$. There is a natural equivalence of functors $\CrysC(Y) \to \mu_{\lfgu}(X)$:
\[
	f^* \circ \G_Y \cong \G_X \circ R^nf^{\flat}.
\]
\end{corollary}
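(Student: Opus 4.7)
The strategy is to factor the claimed isomorphism through the intermediate category of $\gamma$-crystals, using the two building blocks of $\G = \Gen \circ (\usc \otimes \omega^{-1})$ separately. The first half is already at our disposal: \autoref{pullbackcomp} (or rather its obvious restatement without the shift at the level of $R^nf^\flat$) yields a natural isomorphism
\[
    (\usc \otimes_{\CO_X} \omega_X^{-1}) \circ R^n f^{\flat} \;\cong\; f^* \circ (\usc \otimes_{\CO_Y} \omega_Y^{-1})
\]
of functors $\CrysC(Y) \to \Crys_{\gamma}(X)$, where the $f^*$ on the right is the pull-back of $\gamma$-crystals. So it only remains to establish a natural isomorphism
\[
    \Gen_X \circ f^* \;\cong\; f^* \circ \Gen_Y
\]
of functors $\Crys_{\gamma}(Y) \to \mu_{\lfgu}(X)$, where on the right the $f^*$ is the underived pull-back of $\CO_{F,\usc}$-modules.

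To produce this isomorphism, I would exploit the square $f \circ F_X = F_Y \circ f$ coming from functoriality of the Frobenius. This identity gives a canonical isomorphism of functors $f^* F_Y^* \overset{\sim}{\longrightarrow} F_X^* f^*$ on quasi-coherent sheaves (the base-change isomorphism for the commutative diagram of Frobenii), and one checks that it is compatible with the canonical $(\CO_{F,X}, f^{-1}\CO_{F,Y})$-bimodule structure on $\CO_{F,X \to Y}$, so that it extends to an isomorphism of the corresponding pull-back functors between categories of $F$-sheaves. Now given a $\gamma$-sheaf $N$ on $Y$ with structural map $\gamma_N\colon N \to F_Y^*N$, the module $\Gen_Y(N)$ is by definition the direct limit of the system $N \to F_Y^*N \to F_Y^{2*}N \to \cdots$. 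Because $f^*$ is a left adjoint it commutes with direct limits, so
\[
    f^*\Gen_Y(N) \;\cong\; \varinjlim_i f^* F_Y^{i*} N \;\cong\; \varinjlim_i F_X^{i*} f^* N \;=\; \Gen_X(f^*N),
\]
where in the middle step we iterate the base-change isomorphism above, and in the last step we use that the pulled-back $\gamma$-structure on $f^*N$ is precisely $f^*\gamma_N$ composed with the base-change isomorphism (this is the very definition of $f^*$ on $\gamma$-sheaves). The composite isomorphism respects the unit $\CO_{F,X}$-module structures, since on both sides the structural morphism is induced by shifting the direct limit by one step.

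Concatenating the two natural isomorphisms then gives
\[
    \G_X \circ R^n f^{\flat} \;=\; \Gen_X \circ (\usc \otimes \omega_X^{-1}) \circ R^n f^{\flat} \;\cong\; \Gen_X \circ f^* \circ (\usc \otimes \omega_Y^{-1}) \;\cong\; f^* \circ \Gen_Y \circ (\usc \otimes \omega_Y^{-1}) \;=\; f^* \circ \G_Y,
\]
which is the desired equivalence. The main technical point to verify carefully is the second step: that the intertwining of $\Gen$ with $f^*$ is compatible with the $\CO_{F,X}$-module structures on both sides, i.e.\ that the base-change isomorphism $f^* F_Y^{i*} \cong F_X^{i*} f^*$ is compatible with the transition maps of the directed systems defining $\Gen_Y(N)$ and $\Gen_X(f^*N)$. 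Everything else is either a formal consequence of adjointness and the already proved \autoref{pullbackcomp}, or a direct unwinding of definitions.
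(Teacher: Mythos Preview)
Your proposal is correct and follows essentially the same approach as the paper: both factor $\G$ as $\Gen \circ (\usc \otimes \omega^{-1})$, invoke \autoref{pullbackcomp} for the first factor, and then commute $\Gen$ with $f^*$ by using that $f^*$ preserves direct limits together with the base-change isomorphism $f^*F_Y^{i*} \cong F_X^{i*}f^*$. The paper is marginally more explicit about identifying the underived $\CO_{F,\usc}$-pullback $f^*\CM = \CO_{F,X} \otimes_{f^{-1}\CO_{F,Y}} f^{-1}\CM$ with the quasi-coherent pullback equipped with the induced structural map, which is exactly the point you flag as ``the main technical point to verify carefully.''
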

\begin{proof}
Consider the following diagram of functors:
\[
	\xymatrix@C50pt{
		\CrysC(Y) \ar[r]^-{\usc \otimes \omega_Y^{-1}} \ar[d]^{R^nf^{\flat}} & \Crys_{\gamma}(Y) \ar[r]^-{\Gen_Y} \ar[d]^{f^*} & \mu_{\lfgu}(Y) \ar[d]^{f^*} \\
		\CrysC(X) \ar[r]^-{\usc \otimes \omega_X^{-1}} & \Crys_{\gamma}(X) \ar[r]^-{\Gen_X} & \mu_{\lfgu}(X).
	}
\]
The left square commutes by \autoref{pullbackcomp}. The right square also commutes because there is a natural isomorphism $\Gen_X \circ f^* \cong f^* \circ \Gen_Y$. For a $\gamma$-sheaf $N$ on $Y$, let $\CN$ denote $\Gen_Y(N)$, which is the direct limit $\varinjlim F_Y^{i*}N$. As direct limits commute with pull-back of quasi-coherent sheaves, we have a natural isomorphism
\begin{align*}
	\Gen_X f^*(N) &= \varinjlim(\CO_X \otimes_{f^{-1}\CO_Y} f^{-1}F_Y^{i*}N) \\
	&\overset{\sim}{\longrightarrow} \CO_X \otimes_{f^{-1}\CO_Y} f^{-1}(\varinjlim F_Y^{i*}N) \\
	&= \CO_X \otimes_{f^{-1}\CO_Y} f^{-1}(\CN).
\end{align*}
One checks that for a left $\CO_{F,Y}$-module $\CM$, the underived pullback $f^*\CM = \CO_{F,X} \otimes_{f^{-1}\CO_{F,Y}} f^{-1}\CM$ is the quasi-coherent sheaf $f^*\CM = \CO_X \otimes_{f^{-1}\CO_Y} f^{-1}\CM$ with the natural morphism $F_X^*f^*\CM \to f^*\CM$ induced by the structural morphism $F_Y^*\CM \to \CM$. Hence $\CO_X \otimes_{f^{-1}\CO_Y} f^{-1}(\CN)$ is isomorphic to $f^*\Gen_Y(N)$.   
\end{proof}
\begin{lemma} \label{flatcon}
Let $i\colon X \to Y$ be a closed immersion of smooth, $F$-finite schemes over $k$. Let $P$ be a locally free left $\CO_{F,Y}$-module. Then
\[
	R^n((\usc \otimes \omega_X^{-1}) \circ i^! \circ (\usc \otimes \omega_Y))P = 0 \text{ for all } n \neq -d_{X/Y},
\]
where $(\usc \otimes \omega_X^{-1}) \circ i^! \circ (\usc \otimes \omega_Y)$ is understood as the composition of functors
\[
	D_{\lfgu}^b(\CO_{F,Y}) \xrightarrow{\usc \otimes \omega_Y} D_{\crys}^b(\QCrysC(Y)) \xrightarrow{i^!} D_{\crys}^b(\QCrysC(X)) \xrightarrow{\usc \otimes \omega_X^{-1}} D_{\lfgu}^b(\CO_{F,X}).
\]
\end{lemma}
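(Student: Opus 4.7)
The plan is to reduce the statement to a local Koszul-complex computation of Ext-sheaves. The two outer tensor functors $\usc \otimes \omega_Y$ and $\usc \otimes \omega_X^{-1}$ are exact equivalences (since $\omega_Y$ and $\omega_X$ are invertible), so they preserve concentration of cohomology in a single degree. Consequently, the claim is equivalent to the assertion that the derived functor $Ri^{\flat}$, applied to the quasi-coherent Cartier module $P \otimes \omega_Y$, has cohomology concentrated in degree $-d_{X/Y}$. Recalling $i^{\flat}(\usc) = \overline{i}^*\,\SHom_{\CO_Y}(i_*\CO_X,\usc)$ and using that $\overline{i}$ is flat, I would identify the cohomology sheaves of $Ri^{\flat}M$ with $\overline{i}^*\ExtS^n_{\CO_Y}(i_*\CO_X, M)$. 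Thus it suffices to prove
\[
\ExtS^n_{\CO_Y}(i_*\CO_X,\, P \otimes \omega_Y) = 0 \qquad \text{for } n \neq -d_{X/Y}.
\]

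Next I would argue locally. Since $\CO_{F,Y}$ is free as a left $\CO_Y$-module with basis $1, F, F^2, \ldots$, local freeness of $P$ over $\CO_{F,Y}$ implies local freeness (of possibly infinite rank) of $P$ over $\CO_Y$. Tensoring with the invertible sheaf $\omega_Y$ preserves freeness, so on a small enough affine open $\Spec R \subseteq Y$ the module $M := (P \otimes \omega_Y)|_{\Spec R}$ is a free $R$-module. Because $i$ is a closed immersion of smooth schemes, after possibly shrinking $\Spec R$ further, $i$ corresponds to a quotient $R \to R/(f_1,\ldots,f_c)$ by a regular sequence of length $c = -d_{X/Y}$. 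Resolving $R/(f_1,\ldots,f_c)$ by the Koszul complex of finitely generated free $R$-modules, as used in the proof of \autoref{Koszul}, and applying $\Hom_R(\usc, M)$, I obtain the Koszul cocomplex of $M$ with respect to the sequence $f_1,\ldots,f_c$. Since this sequence is regular on $R$ and $M$ is $R$-free, it is regular on $M$ as well, and hence the cohomology of this cocomplex vanishes outside the top degree $c$.

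Reassembling the local computations via the flat base change $\overline{i}^*$ and the identification $R^n i^{\flat}M \cong \overline{i}^*\ExtS^n_{\CO_Y}(i_*\CO_X, M)$ globalizes the vanishing statement. Applying the exact functor $\usc \otimes \omega_X^{-1}$ then produces the asserted vanishing of the cohomology of $((\usc \otimes \omega_X^{-1}) \circ i^! \circ (\usc \otimes \omega_Y))P$ in all degrees $n \neq -d_{X/Y}$. I do not anticipate a serious obstacle: the essence of the lemma is the well-known vanishing of $\ExtS^n(\CO_Z, \usc)$ outside of the codimension degree for regular immersions applied to locally free sheaves, and the Frobenius/Cartier structure plays no active role; the only point requiring a moment of care is the observation that local freeness over $\CO_{F,Y}$ entails $\CO_Y$-local freeness, which is immediate from the free $\CO_Y$-module structure on $\CO_{F,Y}$.
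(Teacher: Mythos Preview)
Your proposal is correct and follows essentially the same approach as the paper: both reduce to the observation that a locally free left $\CO_{F,Y}$-module is locally free over $\CO_Y$, and then use the Koszul resolution locally to see that $\ExtS^n_{\CO_Y}(i_*\CO_X,\usc)$ vanishes on free modules outside degree $-d_{X/Y}$. The paper's proof is terser---it simply writes $P$ locally as $\bigoplus_{j\in J}\CO_Y$ and invokes $R^n i^!(\bigoplus\omega_Y)\cong 0$ without spelling out the Koszul step---but your more detailed account is the same argument.
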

\begin{proof}
Locally free left $\CO_{F,Y}$-modules are in particular locally free as quasi-coherent $\CO_Y$-modules. Thus we have
\begin{align*}
	R^n((\usc \otimes \omega_X^{-1}) \circ i^! \circ (\usc \otimes \omega_Y))P &\cong ((\usc \otimes \omega_X^{-1}) \circ R^n i^! \circ (\usc \otimes \omega_Y)) \oplus_{j \in J}\CO_Y \\
	&\cong (\usc \otimes \omega_X^{-1}) \circ R^n i^! \oplus_{j \in J}\omega_Y \\
	&\cong 0
\end{align*}
locally for all $n \neq -d_{X/Y}$ on the underlying quasi-coherent sheaves.
\end{proof}
\begin{theorem}
For closed immersions $i\colon X \to Y$ of smooth, $F$-finite $k$-schemes, the equivalences $D_{\crys}^b(\QCrysC(X)) \to D_{\lfgu}^b(\CO_{F,X})$ and $D_{\crys}^b(\QCrysC(Y)) \to D_{\lfgu}^b(\CO_{F,Y})$ of derived categories induced by $G_X$ and $G_Y$ are compatible with the pull-backs $i^!$, i.e.\ we have a canonical isomorphism
\[
	\G_X \circ i^! \cong i^! \circ \G_Y 
\]
of functors from $D_{\crys}^b(\QCrysC(Y))$ to $D_{\lfgu}^b(\CO_{F,X})$.
\end{theorem}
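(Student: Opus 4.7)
The strategy is to reduce the comparison of these two triangulated functors to the abelian-level statement of \autoref{lfgupullbackcomp}, via locally free resolutions on the lfgu side. Since $\G_Y$ is an equivalence of triangulated categories, producing a natural isomorphism $\G_X \circ i^! \cong i^! \circ \G_Y$ of functors out of $D_{\crys}^b(\QCrysC(Y))$ is equivalent to producing a natural isomorphism
\[
	F \defeq \G_X \circ i^! \circ \G_Y^{-1} \;\cong\; i^!
\]
of triangulated functors $D_{\lfgu}^b(\CO_{F,Y}) \to D_{\lfgu}^b(\CO_{F,X})$, where on the right $i^!$ is the lfgu pull-back of \autoref{EmKisPullBack}.

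First I would observe that every complex $\CM^{\bullet} \in D_{\lfgu}^b(\CO_{F,Y})$ admits a quasi-isomorphism $\CP^{\bullet} \to \CM^{\bullet}$ from a bounded-above complex $\CP^{\bullet}$ of locally free $\CO_{F,Y}$-modules: lfgu modules are by definition locally finitely generated over the (coherent) sheaf of non-commutative rings $\CO_{F,Y}$, and so locally admit surjections from locally free $\CO_{F,Y}$-modules of finite rank; the standard term-by-term construction then yields the desired resolution. Because both $F$ and $i^!$ are triangulated of bounded cohomological amplitude, a way-out-left argument reduces the question to comparing $F$ and $i^!$ on a single locally free $\CO_{F,Y}$-module $P$ sitting in degree $0$, and functorially so.

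On such a $P$ the two sides can be computed explicitly. On the lfgu side, locally $P\cong \CO_{F,Y}^{\oplus r}$, so $\CO_{F,X\to Y} \derotimes_{i^{-1}\CO_{F,Y}} i^{-1} P$ coincides with the underived pull-back $i^{*}P$ and is concentrated in a single degree; after the shift $[d_{X/Y}]$ the complex $i^!P$ equals $i^{*}P$ placed in cohomological degree $-d_{X/Y}$. On the Cartier side, \autoref{flatcon} says precisely that $F(P)$ is also concentrated in degree $-d_{X/Y}$, and \autoref{lfgupullbackcomp} provides a natural isomorphism $H^{-d_{X/Y}}F(P)\cong i^{*}P$ in $\mu_{\lfgu}(X)$. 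Combining these identifications yields the desired comparison on locally free objects.

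The main technical obstacle is upgrading this degreewise agreement to a natural isomorphism of triangulated functors, not merely a term-by-term isomorphism of objects. I would handle this by constructing a comparison zig-zag through the ``intermediate'' complex $i^{*}\CP^{\bullet}[d_{X/Y}]$: the lfgu-side quasi-isomorphism $i^!\CM^{\bullet} \xleftarrow{\sim} i^{*}\CP^{\bullet}[d_{X/Y}]$ comes from the fact that the locally free resolution computes the derived tensor product, while the Cartier-side quasi-isomorphism $F(\CP^{\bullet}) \xrightarrow{\sim} i^{*}\CP^{\bullet}[d_{X/Y}]$ is obtained by applying the natural isomorphism of \autoref{lfgupullbackcomp} to each term of $\CP^{\bullet}$ and using the vanishing in \autoref{flatcon} to collapse the relevant hypercohomology spectral sequence. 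Independence of the chosen resolution and naturality in $\CM^{\bullet}$ then follow from the naturality of each ingredient, completing the construction of $\G_X \circ i^! \cong i^! \circ \G_Y$.
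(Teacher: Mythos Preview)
Your proposal is correct and follows essentially the same route as the paper. Both arguments rest on the same two inputs: \autoref{flatcon} (locally free $\CO_{F,Y}$-modules have all but one derived pull-back vanishing) and the abelian-level identification of that one nonvanishing degree with $i^{*}$ coming from \autoref{pullbackcomp}/\autoref{lfgupullbackcomp}; and both use that every $\CO_{F,Y}$-module is a quotient of a locally free one. The only difference is packaging: the paper invokes \cite[Proposition~I.7.4]{HartshorneRD}, which says exactly that if a left-exact $F_1$ has cohomological dimension $\leq n$ and a class $P$ of objects on which only $R^nF_1$ survives with every object a quotient from $P$, then $RF_1 \cong L(R^nF_1)[-n]$. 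Your zig-zag through $i^{*}\CP^{\bullet}[d_{X/Y}]$ is precisely the content of the proof of that proposition, spelled out by hand. One small point: the termwise isomorphism you invoke on a locally free $P$ really comes from the underlying sheaf-level statement \autoref{pullbackcomp} (or rather \autoref{Gammapullback}), since $P$ itself is not an lfgu module and so is not literally in the domain of \autoref{lfgupullbackcomp}; the paper is careful to cite \autoref{pullbackcomp} at this step.
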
 
\begin{proof}
This is an application of the following general result concerning derived functors: 
\begin{proposition}[\protect{\cite[Proposition I.7.4]{HartshorneRD}}]
Lat $\CA$ and $\CB$ be abelian categories, where $\CA$ has enough injectives, and let $F_1\colon\CA \to \CB$ be an additive functor which has cohomological dimension $\leq n$ on $\CA$. Let $P$ be the set of objects $X$ of $\CA$ such that $R^iF_1(X)=0$ for all $i \neq n$, and assume that every object of $\CA$ is a quotient of an element of $P$. Let $F_2 = R^nF_1$. Then $RF_1$ and $LF_2$ exist, and there is a functorial isomorphism
\[
	RF_1 \overset{\sim}{\longrightarrow} LF_2[-n].
\]
\end{proposition}
First we have to check the requirements. Let $F$ denote the functor $(\usc \otimes \omega_X^{-1}) \circ R^0i^{\flat} \circ (\usc \otimes \omega_Y)$. As done in the proof of \autoref{Koszul}, the derived functors of $R^0i^{\flat}=\overline{i}^*\SHom_{\CO_Y}(i_*\CO_X,\usc)$ may be computed locally by resolving $i_*\CO_X$ by the Koszul complex. Since this complex has length $-d_{X/Y}$, the cohomological dimension of $F$ is smaller or equal $-d_{X/Y}$. As $Y$ is smooth, every left $\CO_{F,Y}$-module is the quotient of a locally free left $\CO_{F,Y}$-module (\cite[Lemma 1.6.2]{EmKis.Fcrys}). Finally, for every locally free left $\CO_{F,Y}$-module $P$, \autoref{flatcon} states that $R^nF(P)=0$ for all $n \neq -d_{X/Y}$. 

It follows from \cite[Proposition I.7.4]{HartshorneRD} that $RF \cong Li^*[d_{X/Y}]=i^!$ because $i^* \cong R^{-d_{X/Y}}F$ (\autoref{pullbackcomp}). Thus 
\[
	(\usc \otimes \omega_X^{-1}) \circ i^! \circ (\usc \otimes \omega_Y) \cong i^!,
\]
i.e.\ the diagram
\begin{align*}
	\xymatrix@R30pt@C70pt{
		D_{\crys}^b(\QCrysC(Y)) \ar[d]^{i^!} \ar@<1mm>[r]^-{\usc \otimes \omega_Y^{-1}} &  D_{\lfgu}^b(\CO_{F,Y}) \ar@<1mm>[l]^-{\usc \otimes \omega_Y} \ar[d]^{i^!} \\
		D_{\crys}^b(\QCrysC(X)) \ar@<1mm>[r]^-{\usc \otimes \omega_X^{-1}} & D_{\lfgu}^b(\CO_{F,X}) \ar@<1mm>[l]^-{\usc \otimes \omega_X}
	}
\end{align*}
is commutative.		 
\end{proof}
\begin{corollary}
For every closed immersion of smooth, $F$-finite $k$-schemes $i\colon X \to Y$, there is a canonical isomorphism
\[
	\G_Y \circ i_* \cong i_* \circ \G_X.
\]
\end{corollary}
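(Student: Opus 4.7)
The plan is to deduce this from the previous theorem $\G_X \circ i^! \cong i^! \circ \G_Y$ using the two Kashiwara-type equivalences available on either side of the correspondence: on the Cartier-crystal side from \autoref{Kashiwara}, and on the lfgu side from Emerton--Kisin's closed-immersion equivalence (Theorem 5.10.1 of \cite{EmKis.Fcrys}). Both say that $i_*$ and $i^!$ are mutually inverse equivalences between the category on $X$ and the full subcategory of the category on $Y$ of objects supported on $X$.

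First I would verify that $\G_Y$ preserves the subcategory of complexes supported on $X$. This is clear because both constituent functors of $\G_Y = \Gen_Y \circ (\usc \otimes_{\CO_Y} \omega_Y^{-1})$ act on the underlying quasi-coherent sheaf via an invertible twist, respectively a filtered colimit along Frobenius pull-backs; neither operation enlarges the (set-theoretic) support. Hence $\G_Y$ restricts to an equivalence between $D_{\crys}^b(\QCrysC(Y))_X$ and the full subcategory $D_{\lfgu}^b(\CO_{F,Y})_X$ of $D_{\lfgu}^b(\CO_{F,Y})$ of complexes supported on $X$, and on the latter we again have $i_* i^! \cong \id$.

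Next, for $\CM^{\bullet} \in D_{\crys}^b(\QCrysC(X))$, I would apply the previous theorem to the object $i_*\CM^{\bullet} \in D_{\crys}^b(\QCrysC(Y))_X$ to obtain the natural isomorphism
\[
  \G_X(i^! i_*\CM^{\bullet}) \;\cong\; i^!\bigl(\G_Y(i_*\CM^{\bullet})\bigr).
\]
Using $i^! i_* \cong \id$ from \autoref{Kashiwara}, the left-hand side becomes $\G_X(\CM^{\bullet})$. Applying $i_*$ to both sides and using $i_* i^! \cong \id$ on $D_{\lfgu}^b(\CO_{F,Y})_X$ (valid because $\G_Y(i_*\CM^{\bullet})$ lies in this subcategory by the previous paragraph), the right-hand side becomes $\G_Y(i_*\CM^{\bullet})$. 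The resulting chain of natural isomorphisms is the desired
\[
  i_* \circ \G_X \;\cong\; \G_Y \circ i_*.
\]

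The only thing that requires attention, rather than genuine difficulty, is naturality: one must check that the unit/counit isomorphisms of the two Kashiwara equivalences assemble with the isomorphism from the previous theorem into a coherent natural transformation. This is formal and amounts to observing that the compatibility $\G_X \circ i^! \cong i^! \circ \G_Y$ is itself a natural isomorphism of equivalences of triangulated subcategories, so passing to the mutual quasi-inverses $i_*$ on each side yields a natural isomorphism automatically.
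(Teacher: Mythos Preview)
Your proof is correct and arrives at the same conclusion, but it takes a slightly different route from the paper's one-line argument. The paper simply observes that since $\G_X$ and $\G_Y$ are equivalences and since $i_*$ is \emph{uniquely determined} as the left adjoint of $i^!$ (on both the Cartier-crystal and lfgu sides), the isomorphism $\G_X \circ i^! \cong i^! \circ \G_Y$ automatically transports to the left adjoints. This uses only the adjunction, not the stronger Kashiwara equivalence. Your argument instead invokes the full Kashiwara equivalences $i^!i_* \cong \id$ and $i_*i^! \cong \id$ on the supported subcategories, which lets you write down the isomorphism explicitly. Both arguments are formal; the paper's is slightly cleaner since it avoids checking that $\G_Y$ preserves the supported subcategory (though your verification of this is fine), while yours has the advantage of making the isomorphism concrete rather than appealing to uniqueness of adjoints.
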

\begin{proof}
This follows formally as $\G_X$ and $\G_Y$ are equivalences of categories and since $i_*$ is uniquely determined as a left adjoint functor of $i^!$.
\end{proof}
\begin{proposition} \label{Gopenresult}
Let $j\colon X \to Y$ be an open immersion of smooth, $F$-finite schemes. Then there are natural isomorphisms
\[
	\G_X \circ j^* \cong j^! \circ \G_Y \text{ and } \G_Y \circ Rj_* \cong j_+ \circ \G_X.
\]
\end{proposition}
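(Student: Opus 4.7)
The plan is to derive both isomorphisms from three ingredients already in hand: the equivalence $\usc \otimes \omega^{\pm 1}$ between Cartier modules and $\gamma$-sheaves (\autoref{CartierGamma}), the functor $\Gen$ relating $\gamma$-sheaves to unit $\CO_F$-modules (\autoref{QCrysGammaunit}), and the fact that for an open immersion $j\colon X\to Y$ both $j^!$ and $j_+$ collapse to familiar quasi-coherent functors. Specifically, $d_{X/Y}=0$ and $\omega_X = j^*\omega_Y$, so the Cartier $j^!$ is the exact functor $j^*$, and \autoref{openforward} identifies $\CO_{F,Y\leftarrow X}$ with $\CO_{F,X}$ so that on $D_{\lfgu}^b(\CO_{F,X})$ the lfgu push-forward $j_+$ agrees with $Rj_*$.

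For the first isomorphism, I would invoke \autoref{Gopencomp}, which already provides a natural isomorphism of $\gamma$-sheaves $j^!M \otimes \omega_X^{-1} \cong j^*(M \otimes \omega_Y^{-1})$. It then suffices to show $\Gen_X \circ j^* \cong j^* \circ \Gen_Y$ on $\gamma$-sheaves, which is exactly the calculation carried out in the proof of \autoref{lfgupullbackcomp}: writing $\Gen(N) = \varinjlim F^{i*}N$, the claim reduces to exactness of $j^*$ together with its commutation with filtered colimits of quasi-coherent sheaves.

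For the second isomorphism I treat the two layers of $\G$ separately. On the Cartier/$\gamma$-layer, starting from the derived analogue $j_+^{\gamma}N = Rj_*(N\otimes \omega_X)\otimes \omega_Y^{-1}$ of the definition from Subsection~2.4, the projection formula (applicable since $j$ is quasi-compact and separated and $\omega_Y$ is locally free) combined with $\omega_X = j^*\omega_Y$ collapses $j_+^{\gamma}N$ to $Rj_*N$. Setting $N = M\otimes \omega_X^{-1}$ yields a natural isomorphism $Rj_*(M\otimes \omega_X^{-1}) \cong (Rj_*M)\otimes \omega_Y^{-1}$ of $\gamma$-sheaves, which is precisely the statement that $\usc\otimes \omega^{-1}$ intertwines $Rj_*$. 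On the $\gamma$/lfgu-layer, the cartesian square $F_Y\circ j = j\circ F_X$ with $F$ flat on regular schemes supplies flat base change $F_Y^*Rj_* \cong Rj_*F_X^*$; combined with the commutation of filtered colimits with $Rj_*$ for open immersions (for which $j_*$ is even exact on quasi-coherent sheaves), this gives
\[
\Gen_Y(Rj_*N) = \varinjlim F_Y^{i*}Rj_*N \cong \varinjlim Rj_*F_X^{i*}N \cong Rj_*\Gen_X(N).
\]
Chaining the two compatibilities with $j_+ \cong Rj_*$ on $\CO_F$-modules closes the argument.

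The main obstacle will be verifying that each of these natural isomorphisms — \autoref{Gopencomp}, the projection formula, flat base change, and the $\Gen$-commutation — is compatible with the respective structural morphisms (of $\gamma$-sheaves and unit $\CO_F$-modules), not merely of the underlying $\CO$-modules. For open immersions this is relatively painless because $\omega_X$, $F$, and $j^*$ all commute nicely, but it still demands some diagram chasing in the spirit of the proofs of \autoref{pullbackcomp} and \autoref{lfgupullbackcomp}.
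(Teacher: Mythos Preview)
Your treatment of the first isomorphism $\G_X \circ j^* \cong j^! \circ \G_Y$ is exactly what the paper does: combine \autoref{Gopencomp} with the $\Gen$-commutation from the proof of \autoref{lfgupullbackcomp}.

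For the second isomorphism your route diverges from the paper's. You attack $\G_Y \circ Rj_* \cong j_+ \circ \G_X$ directly, splitting it into a $\omega^{-1}$-layer (projection formula) and a $\Gen$-layer (flat base change plus colimit commutation), and you correctly flag that the real work lies in checking that each of these $\CO$-module isomorphisms respects the $\gamma$- and unit-structures. The paper bypasses all of this: since $\G_X$ and $\G_Y$ are equivalences and $(j^*, Rj_*)$ and $(j^!, j_+)$ are adjoint pairs, the second isomorphism follows \emph{formally} from the first by passing to right adjoints. This one-line adjunction argument eliminates precisely the ``main obstacle'' you identify, so there is no diagram chasing to do.

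One small error in your direct approach: the parenthetical claim that $j_*$ is exact on quasi-coherent sheaves for open immersions is false (e.g.\ $j\colon \mathbb{A}^2 \setminus \{0\} \hookrightarrow \mathbb{A}^2$ has $R^1j_*\CO \neq 0$). What you actually need, and what does hold, is that $Rj_*$ commutes with filtered colimits for quasi-compact quasi-separated $j$; this is enough for your $\Gen$-layer argument, so the slip is not fatal, but it is worth correcting.
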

\begin{proof}
We already have seen that $\usc \otimes {\omega_X}^{-1} \circ j^* \cong j^* \circ \usc \otimes \omega_Y^{-1}$ (\autoref{Gopencomp}) and that $\Gen_X \circ j^* \cong j^! \circ \Gen_Y$ (see the proof of \autoref{lfgupullbackcomp}, this part holds for an arbitrary flat morphism of smooth $k$-schemes). Therefore $\G_X \circ j^* \cong j^! \circ \G_Y$. The rest follows from the adjunction of $Rj_*$ or $j_+$ and $j^*$ or $j^!$. 
\end{proof}
Up to now, we have seen that the equivalence $G$ between Cartier crystals and lfgu modules is compatible with the (derived) push-forward for open and closed immersions by showing the compatibility for the adjoint pull-back functors. In fact, $G$ is compatible with push-forward for arbitrary morphisms of smooth schemes, but we can give a proof only up to the following theorem\footnote{We will not discuss this theorem here as its theoretical background, for example $\infty$-categories, goes beyond the scope of this work. We just note that the requirement that $f_+ \Gen$ lifts to a functor of the corresponding stable $\infty$-categories is satisfied, because $f_+$ is a composition of left and right derived functors, which have this property (\cite[Example 1.3.3.4]{Lurie}).}, which is a result of Lurie, see \cite[Theorem 1.3.3.2]{Lurie}. 
\begin{theorem} \label{metaresult}
Let $F\colon D(\CA) \to D(\CB)$ be a functor between derived categories of abelian categories $\CA$ and $\CB$, which is a morphism of triangulated categories. If $F$ lifts to an exact functor of the stable $\infty$-categories whose homotopy categories are the cohomologically bounded below derived categories $D^+(\CA)$ and $D^+(\CB)$, if $F$ is $t$-left exact for the canonical $t$-structure, i.e.\ $F$ maps $D^{\geq 0}(\CA)$ to $D^{\geq 0}(\CB)$, and if the cohomology of $F(I)$ is concentrated in degree $0$ for every injective object $I$ of $\CA$, then $F$ arises as a right derived functor between the abelian categories $\CA$ and $\CB$.   
\end{theorem}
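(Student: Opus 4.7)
The plan is to realize $F$ as the right derived functor of the underived left-exact functor $F_0 := H^0 \circ F|_{\CA}\colon \CA \to \CB$, invoking the universal property of right derived functors in the stable $\infty$-categorical setting supplied by hypothesis (i). A short exact sequence $0 \to A \to B \to C \to 0$ in $\CA$ becomes a distinguished triangle in $D^{\geq 0}(\CA)$ whose image under $F$ lies in $D^{\geq 0}(\CB)$ by $t$-left exactness; the long exact cohomology sequence then yields the left exactness of $F_0$. Assuming $\CA$ has enough injectives (implicit for hypothesis (iii) to be non-vacuous), the classical right derived functor $RF_0\colon D^+(\CA) \to D^+(\CB)$ exists and lifts to an exact functor of the appropriate stable $\infty$-categories.

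Next, I would construct a comparison transformation $\eta\colon RF_0 \to F$. For $X \in \CA$ there is a canonical morphism $F_0(X) = H^0 F(X) \to F(X)$ in $D^+(\CB)$, namely the truncation map $\tau^{\leq 0} F(X) \to F(X)$ (using $\tau^{\leq 0} F(X) = H^0 F(X)$ since $F(X) \in D^{\geq 0}(\CB)$). By Lurie's universal property of $RF_0$ as the universal $t$-left exact extension of $F_0$, this datum at the heart extends uniquely---up to contractible choice in the mapping space of $\infty$-functors---to the desired natural transformation $\eta$.

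To verify that $\eta$ is an equivalence I would argue objectwise. On an injective $I \in \CA$ hypothesis (iii) gives $F(I) \simeq H^0 F(I) = F_0(I) = RF_0(I)$, so $\eta_I$ is an equivalence. For a general $X^\bullet \in D^+(\CA)$, take an injective resolution $X^\bullet \overset{\sim}{\longrightarrow} I^\bullet$ with $I^\bullet$ a bounded-below complex of injectives; both $F$ and $RF_0$ send this quasi-isomorphism to an equivalence (the former via its $\infty$-exact lift from hypothesis (i)), and the two values on $I^\bullet$ are each built by totalisation from the termwise equivalences $F(I^n) \simeq RF_0(I^n)$, so $\eta_{I^\bullet}$ is an equivalence.

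The main obstacle is the uniqueness of the extension of $F_0$ from $\CA$ to $D^+(\CA)$: at the triangulated level, natural transformations of triangulated functors are famously not determined by their restriction to the heart, so the $\infty$-categorical lift required by hypothesis (i) is indispensable. Packaging this uniqueness rigorously is the entire content of Lurie's theorem in \cite{Lurie}, so the sketch above is really a reformulation of his universal characterisation of $RF_0$; any self-contained proof must develop the relevant $\infty$-categorical machinery, which is why the paper cites rather than proves the result.
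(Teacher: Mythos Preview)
The paper does not prove this theorem at all: it is stated with the attribution ``which is a result of Lurie, see \cite[Theorem 1.3.3.2]{Lurie}'' and the accompanying footnote explicitly says ``We will not discuss this theorem here as its theoretical background, for example $\infty$-categories, goes beyond the scope of this work.'' So there is no proof in the paper to compare against; the statement is simply imported as a black box.

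Your proposal goes further than the paper does, giving a coherent outline of why the result holds: define $F_0 = H^0 \circ F|_{\CA}$, check left exactness, build the comparison $\eta\colon RF_0 \to F$ via the universal property, and verify it is an equivalence on injectives and hence on bounded-below complexes. Your final paragraph is exactly right: the triangulated category alone does not give enough rigidity to make the extension-from-the-heart argument go through, and the $\infty$-categorical lift is what supplies the needed uniqueness. You have correctly diagnosed both why the hypothesis is there and why the paper defers to Lurie rather than arguing directly.
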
     
\begin{proposition} \label{GammalfguForward}
Let $f\colon X \to Y$ be a morphism of smooth, $F$-finite $k$-schemes. There is a natural isomorphism
\[
	\G_Y \circ Rf_* \to f_+ \circ \G_X  
\]
from $D_{\crys}^b(\QCrysC(X))$ to $D_{\lfgu}^b(\CO_{F,Y})$.
\end{proposition}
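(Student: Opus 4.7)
The plan is to identify both $\G_Y \circ Rf_*$ and $f_+ \circ \G_X$ as right derived functors of the same underived functor on abelian categories, and then appeal to the universality of right derived functors. Since $\G_Y$ is exact — being the composition of the exact functors $\usc \otimes \omega_Y^{-1}$ and $\Gen_Y$ — one has a tautological identification $\G_Y \circ Rf_* \cong R(\G_Y \circ f_*)$, where $f_* \colon \QCrysC(X) \to \QCrysC(Y)$ denotes the underived push-forward between abelian categories of quasi-crystals induced from quasi-coherent Cartier modules. Thus the task reduces to exhibiting $f_+ \circ \G_X$ as a right derived functor and identifying its zeroth cohomology with $\G_Y \circ f_*$ under the abelian equivalence $\G_X$.

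I would apply \autoref{metaresult} to $f_+ \circ \G_X$. The lifting to stable $\infty$-categories is guaranteed by the footnote attached to that theorem. For $t$-left exactness, $\G_X$ is exact on the abelian level, while $f_+$ preserves $D^{\geq 0}$ because, when applied to the unit $\CO_{F,X}$-modules in the essential image of $\G_X$, the left-derived tensor product with $\CO_{F,Y \leftarrow X}$ reduces to an underived tensor product, leaving only $Rf_*$, which is $t$-left exact on quasi-coherent sheaves. The concentration of cohomology in degree zero on injectives follows from \autoref{enoughinjectiveskappa}: an injective object of $\QCrysC(X)$ is represented by a Cartier module whose underlying $\CO_X$-module is injective, hence $f_*$-acyclic, and this property is preserved by tensoring with the invertible sheaf $\omega_X^{-1}$ and by the filtered colimit $\Gen_X$. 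Consequently the higher direct images vanish, and the cohomology of $f_+ \G_X(I)$ is concentrated in degree zero.

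The substantive step — and the main obstacle — is the underived identification $\G_Y \circ f_* \cong f_+ \circ \G_X$ of functors between abelian categories. Using the description of the $\gamma$-push-forward developed in the previous subsection, for a Cartier quasi-crystal $M$ on $X$ the $\gamma$-sheaf $f_*M \otimes \omega_Y^{-1}$ is canonically isomorphic to $f_*(M \otimes \omega_X^{-1} \otimes \omega_f)$ equipped with the $\gamma$-structure $\gamma_{M \otimes \omega_X^{-1}, f}$ assembled from the relative Cartier operator $C_{X/Y}$, and $\Gen_Y$ commutes with the relevant filtered colimit, yielding an explicit formula for $\G_Y(f_*M)$. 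On the other hand, the $\CO_{F,Y}$-module $f_+\G_X(M) = Rf_*(\CO_{F,Y \leftarrow X} \derotimes_{\CO_{F,X}} \G_X(M))$ encodes the same operator $C_{X/Y}$, this time through the right $\CO_{F,X}$-action on the transfer bimodule $\CO_{F,Y\leftarrow X}$ as defined by Emerton and Kisin. Matching these two packagings of $C_{X/Y}$ into a coherent natural isomorphism is a somewhat lengthy diagram chase that reconciles the definition of $\gamma_{N,f}$ with the right action on $\CO_{F,Y \leftarrow X}$; once it is in place, universality of right derived functors delivers the desired natural isomorphism $\G_Y \circ Rf_* \cong f_+ \circ \G_X$.
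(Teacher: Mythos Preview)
Your overall architecture matches the paper's: reduce to $\gamma$-sheaves via the exact twist by $\omega^{-1}$, then invoke \autoref{metaresult} to recognise both sides as right derived functors of the same underived functor. The difference is in how the hypotheses of \autoref{metaresult} are verified, and here your argument has a real gap.

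You assert that on unit modules the derived tensor $\CO_{F,Y\leftarrow X}\derotimes_{\CO_{F,X}}(-)$ reduces to the underived one, so that $f_+$ becomes simply $Rf_*$ and is therefore $t$-left exact. This is not justified: $\CO_{F,Y\leftarrow X}$ is locally free as a right $\CO_X$-module, not as a right $\CO_{F,X}$-module, and unit modules have no a priori acyclicity for this tensor. The same issue reappears in your injectives check: even granting that $\Gen_X(I\otimes\omega_X^{-1})$ is $f_*$-acyclic (which itself needs an argument, since $\Gen$ involves iterated $F^*$), you still owe control of the derived tensor before taking $Rf_*$.

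The paper circumvents this by never computing $f_+$ on a unit module directly. Instead it uses the two-term resolution $0\to\CO_{F,X}\otimes_{\CO_X} N\to\CO_{F,X}\otimes_{\CO_X} N\to\Gen N\to 0$ by \emph{induced} modules, on which $\CO_{F,Y\leftarrow X}\derotimes_{\CO_{F,X}}(-)$ genuinely collapses to $\CO_{F,Y\leftarrow X}\otimes_{\CO_X}(-)$. For an injective $\gamma$-sheaf $I$, the resulting sheaves are flasque, so $f_+(\CO_{F,X}\otimes_{\CO_X} I)\cong f_*(\CO_{F,Y\leftarrow X}\otimes_{\CO_X} I)$ sits in a single degree. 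The commutative square of \cite[Proposition 3.6.1]{EmKis.Fcrys} then compares the two triangles and simultaneously yields $t$-left exactness, concentration in degree $0$ on injectives, and the underived isomorphism $\Gen f_+\cong H^0(f_+)\Gen$. Your proposed ``lengthy diagram chase'' matching the two packagings of $C_{X/Y}$ is thus replaced by a single citation; but more importantly, the induced-resolution trick is what actually makes the hypotheses of \autoref{metaresult} checkable, and your sketch does not supply a substitute for it.
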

\begin{proof}
As $\usc \otimes \omega_Y^{-1} \circ Rf_* \cong Rf_+ \circ \usc \otimes \omega_X^{-1}$ by construction, it suffices to show that there is a natural isomorphism of functors $\Gen Rf_+ \to f_+ \Gen$ from $D_{\crys}^b(\QCrysG(X))$ to $D_{\lfgu}^b(\CO_{F,Y})$. For every complex $N^{\bullet}$ of $\gamma$-sheaves, the complex $\Gen N^{\bullet}$ of quasi-coherent unit $\CO_{F,X}$-modules has a two-term resolution by induced modules, namely the short exact sequence 
\[
	0 \longrightarrow \CO_{F,X} \otimes_{\CO_X} N^{\bullet} \xrightarrow{1 - \beta'} \CO_{F,X} \otimes_{\CO_X} N^{\bullet} \longrightarrow \Gen N^{\bullet} \longrightarrow 0
\]
of \cite[Proposition 5.3.3]{EmKis.Fcrys}. Here $\beta'\colon \CO_{F,X} \otimes_{\CO_X} N^{\bullet} \to \CO_{F,X} \otimes_{\CO_X} N^{\bullet}$ denotes the morphism corresponding to $\beta$ via the identification
\[
	\Hom_{\CO_{F,X}}(\CO_{F,X} \otimes_{\CO_X} A,\CO_{F,X} \otimes_{\CO_X} B) \overset{\sim}{\longrightarrow} \Hom_{\CO_X}(A,\oplus_{n=0}^\infty(F_X^r)^*B)
\]
for $\CO_X$-modules $A$ and $B$ described in 1.7.3 of ibid. 

First we verify that the requirements of \autoref{metaresult} are satisfied. Let $I^{\bullet}$ be a bounded below complex of injective $\gamma$-sheaves with $H^i(I^{\bullet}) = 0$ for $i < 0$. Let $\beta\colon I^{\bullet} \to F_X^*I^{\bullet}$ be the morphism of complexes induced by the structural morphisms of the $I^i$. 

The complex $f_+I^{\bullet}$ represents $Rf_+I^{\bullet}$ and, as explained above, we have a short exact sequence
\[
	0 \longrightarrow \CO_{F,X} \otimes_{\CO_X} f_+I^{\bullet} \xrightarrow{1 - f_+\beta'} \CO_{F,X} \otimes_{\CO_X} f_+I^{\bullet} \longrightarrow \Gen f_+I^{\bullet} \longrightarrow 0.
\]
Applying $f_+$ to the two-term resolution of $\Gen I^{\bullet}$ yields a distinguished triangle
\[
	f_+(\CO_{F,X} \otimes_{\CO_X} I^{\bullet}) \xrightarrow{f_+(1 - \beta')} f_+(\CO_{F,X} \otimes_{\CO_X} I^{\bullet}) \longrightarrow f_+(\Gen I^{\bullet}) \longrightarrow f_+(\CO_{F,X} \otimes_{\CO_X} I^{\bullet})[1].
\]
The sheaf $\CO_{F,Y \leftarrow X}$ is locally free as an $\CO_X$-module. It follows that locally $\CO_{F,Y \leftarrow X} \otimes_{\CO_X} I^{\bullet}$ is a direct sum of flasque sheaves and hence flasque. We have
\begin{align*}
	f_+(\CO_{F,X} \otimes_{\CO_X} I^{\bullet}) &= Rf_*(\CO_{F,Y \leftarrow X} \derotimes_{\CO_{F,X}} (\CO_{F,X} \otimes_{\CO_X} I^{\bullet})) \\
	&\overset{\sim}{\longrightarrow} Rf_*(\CO_{F,Y \leftarrow X} \otimes_{\CO_X} I^{\bullet}) \\
	&\overset{\sim}{\longrightarrow} f_*(\CO_{F,Y \leftarrow X} \otimes_{\CO_X} I^{\bullet}),
\end{align*}
see also \cite[Lemma 3.5.1]{EmKis.Fcrys} and its proof. In particular, the complex $f_+(\CO_{F,X} \otimes_{\CO_X} I^{\bullet})$ is represented by the complex whose $i$-th degree equals the sheaf $f_+(\CO_{F,X} \otimes_{\CO_X} I^i)$.

The canonical isomorphism $\CO_{F,X} \otimes_{\CO_X} f_+I^{\bullet} \overset{\sim}{\longrightarrow} f_+(\CO_{F,X} \otimes_{\CO_X} I^{\bullet})$ of the proof of \cite[Theorem 3.5.3]{EmKis.Fcrys} makes the left hand square of the diagram
\begin{align} \label{lowerplustriangle}
	\xymatrix@C40pt{
		\CO_{F,X} \otimes_{\CO_X} f_+I^{\bullet} \ar[r]^-{1 - f_+\beta'} \ar[d]^{\sim} & \CO_{F,X} \otimes_{\CO_X} f_+I^{\bullet} \ar[r] \ar[d]^{\sim} & \Gen f_+I^{\bullet} \ar@{.>}[d]^{\sim} \\
		f_+(\CO_{F,X} \otimes_{\CO_X} I^{\bullet}) \ar[r]^-{f_+(1 - \beta')} & f_+(\CO_{F,X} \otimes_{\CO_X} I^{\bullet}) \ar[r] & f_+(\Gen I^{\bullet})
	}
\end{align}
commutative (\cite[Proposition 3.6.1]{EmKis.Fcrys}). This shows that the cohomology sheaves of $f_+(\Gen I^{\bullet})$ vanish in negative degrees, i.e.\ $f_+ \Gen$ is left $t$-exact for the canonical $t$-structure of the bounded derived category of $\gamma$-sheaves on $X$. Furthermore, for a single injective $\gamma$-sheaf $I$ on $X$, the upper row of the commutative diagram 
\[
	\xymatrix@C40pt{
		\CO_{F,X} \otimes_{\CO_X} f_+I \ar[r]^-{1 - f_+\beta'} \ar[d]^{\sim} & \CO_{F,X} \otimes_{\CO_X} f_+I \ar[r] \ar[d]^{\sim} & \Gen f_+I \ar@{.>}[d]^{\sim} \\
		f_+(\CO_{F,X} \otimes_{\CO_X} I) \ar[r]^-{f_+(1 - \beta')} & f_+(\CO_{F,X} \otimes_{\CO_X} I) \ar[r] & f_+(\Gen I) 
	}
\]
is a short exact sequence when adding $0$ at the ends. Consequently, the cohomology of $f_+ \Gen I$ is concentrated in degree $0$. 

To see that there is an isomorphism of functors $\Gen f_+ \cong H^0(f_+) \Gen$, let $M$ be a $\gamma$-sheaf on $X$. Choose a resolution $I^{\bullet}$ of $M$ by injective $\gamma$-sheaves. The long exact cohomology sequences for the triangles of the diagram \autoref{lowerplustriangle} yield a unique isomorphism 
\[
	\Gen f_+ M \cong H^0(\Gen Rf_+ M) = H^0(\Gen f_+ I^{\bullet}) \overset{\sim}{\longrightarrow} H^0(f_+ \Gen I^{\bullet}) \cong H^0(f_+ \Gen M).
\]
By \autoref{metaresult}, the functor $f_+ \Gen$ is the right derived functor of $H^0(f_+) \Gen$. Furthermore, as $\Gen$ is exact, $\Gen Rf_+$ is the right derived functor of $\Gen f_+$. Thus, there is a natural equivalence $\Gen Rf_+ \cong f_+ \Gen$ of functors from the bounded derived category of $\gamma$-sheaves on $X$ to the bounded derived category of quasi-coherent unit left $\CO_{F,Y}$-modules. It induces an isomorphism of functors between $D_{\crys}^b(\QCrysG(X))$ and $D_{\lfgu}^b(\CO_{F,Y})$ because $D^b(\mu_{\operatorname{u}}(Y)) \overset{\sim}{\longrightarrow} D_{\lfgu}^b(\CO_{F,Y})$ (\cite[Corollary 17.2.5]{EmKis.Fcrys}). 
\end{proof}

\section{Locally finitely generated unit modules on singular schemes}

For a proper map $f\colon X \to Y$ of smooth $k$-schemes, Emerton and Kisin proved that there is a natural isomorphism
\[
	\RSHom_{\CO_{F,Y}}^{\bullet}(f_+\CM^{\bullet},\CN^{\bullet}) \overset{\sim}{\longrightarrow} Rf_*\RSHom_{\CO_{F,X}}^{\bullet}(\CM^{\bullet},f^!\CN^{\bullet})
\]
for $\CM^{\bullet} \in D_{\qc}^b(\CO_{F,X})$ and $\CN^{\bullet} \in D_{\qc}^b(\CO_{F,Y})$ (\cite[Theorem 4.4.1]{EmKis.Fcrys}) by constructing a trace map acting as the counit of adjunction. We generalize this trace map to separated and finite type morphisms $f\colon X \to Y$ between smooth $k$-schemes sitting in a commutative diagram
\[
	\xymatrix{
		Z' \ar[r]^{i'} \ar[d]^-{f'} & X \ar[d]^-f \\
		Z \ar[r]^i & Y,
	}
\]
where $i$ and $i'$ are closed immersions and $f'$ is proper. This generalized trace map induces an adjunction between $f_+$ and $R\Gamma_{Z'}f^!$ considered as functors between the derived categories $D_{\lfgu}^b(\CO_{F,X})_{Z'}$ and $D_{\lfgu}^b(\CO_{F,Y})_Z$ of complexes whose cohomology sheaves are supported in $Z'$ or $Z$.

The base for this more general trace for lfgu modules is a corresponding generalized trace map $\tr_{Z,f} = \tr_f\colon Rf_*R\Gamma_{Z'}f^! \to \id$ for quasi-coherent sheaves established in \cite{Sched.Adj} in the situation of the diagram above. First, let us fix some notation: Let $D_{\qc}^-(\CO_X)_Z$ denote the subcategory of the derived category $D_{\qc}(\CO_X)$ of quasi-coherent sheaves on $X$ whose objects have bounded above cohomology supported on $Z$ and similar for $D_{\qc}^+(\CO_Y)_Z$. The generalized trace has many compatibilities of the classical one, for example it behaves well with residually stable base change\footnote{Here a morphism $f$ is called \emph{residually stable} if it is flat, integral and the fibers of $f$ are Gorenstein.}. But most important, it gives rise to the following adjunction:
\begin{theorem}[\protect{\cite[Theorem 3.2]{Sched.Adj}}] \label{qcohadjunction} 
	Let $f\colon X \to Y$ be a separated and finite type morphism of Noetherian schemes and let $i\colon Z \to Y$ and $i'\colon Z' \to X$ be closed immersions with a proper morphism $f'\colon Z' \to Z$ such that the diagram 
	\[
	\xymatrix{
		Z' \ar[r]^-{i'} \ar[d]^-{f'} & X \ar[d]^-f \\
		Z \ar[r]^-i & Y
	}
	\]
	commutes. Then there is a natural transformation $\tr_f\colon Rf_*R\Gamma_{Z'}f^! \to \id$ such that, for all $\CF^{\bullet} \in D_{\qc}^-(\CO_X)_Z$ and $\CG^{\bullet} \in D_{\qc}^+(\CO_Y)_Z$, the composition 
	\begin{align*}
	\xymatrix{
		Rf_* \RSHom_{\CO_X}^{\bullet}(\CF^{\bullet},R\Gamma_{Z'}f^!\CG^{\bullet}) \ar[r] &  \RSHom_{\CO_Y}^{\bullet}(Rf_* \CF^{\bullet},Rf_*R\Gamma_{Z'}f^! \CG^{\bullet}) \ar[d]^{\tr_f} \\
		& \RSHom_{\CO_Y}^{\bullet}(Rf_* \CF^{\bullet}, \CG^{\bullet})}
	\end{align*}
	is an isomorphism. In particular, taking global sections, the functor $Rf_*$ is left adjoint to the functor $R\Gamma_{Z'}f^!$. 
\end{theorem}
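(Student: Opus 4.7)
My plan is to reduce to the classical Grothendieck-Serre duality for proper morphisms by means of Nagata's compactification theorem. Since $f$ is separated and of finite type between Noetherian schemes, it factors as $f = \bar{f} \circ j$ where $j\colon X \hookrightarrow \bar{X}$ is an open immersion and $\bar{f}\colon \bar{X} \to Y$ is proper. The composite $i \circ f' = f \circ i'$ is proper, and since $\bar{X} \to Y$ is separated, the induced map $Z' \to \bar{X}$ is proper. It is also a monomorphism (being the composition of the closed immersion $i'$ with the open immersion $j$), hence a closed immersion. So $Z'$ sits as a closed subscheme of $\bar{X}$ contained in the open subscheme $X$; this is the key geometric observation that makes the reduction work.

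To construct $\tr_f$, I exploit the identities $f^! = j^* \bar{f}^!$ and $R\Gamma_{Z'} \circ j^* \cong j^* \circ R\Gamma_{Z'}$ (locality of local cohomology), together with the fact that any complex supported on $Z' \subseteq X$ satisfies $Rj_* j^* \cong \id$ on it. This yields
\[
Rf_* R\Gamma_{Z'} f^! \CG^{\bullet} \cong R\bar{f}_* Rj_* j^* R\Gamma_{Z'} \bar{f}^! \CG^{\bullet} \cong R\bar{f}_* R\Gamma_{Z'} \bar{f}^! \CG^{\bullet}.
\]
I then define $\tr_f$ as the composition of this isomorphism with the canonical map $R\Gamma_{Z'}\bar{f}^!\CG^{\bullet} \to \bar{f}^! \CG^{\bullet}$ followed by the classical Grothendieck trace $R\bar{f}_* \bar{f}^! \CG^{\bullet} \to \CG^{\bullet}$ for the proper morphism $\bar{f}$.

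For the adjunction, given $\CF^{\bullet}$ with cohomology supported on $Z'$, I view $\CF^{\bullet}$ as a complex on $\bar{X}$ via $Rj_*$; this is harmless because $Z' \subseteq \bar{X}$ is closed and $Rj_*$ restricted to complexes supported on $Z'$ is fully faithful and identifies with the inclusion into $D(\CO_{\bar{X}})$. The composition in the statement then becomes
\begin{align*}
Rf_* \RSHom_{\CO_X}^{\bullet}(\CF^{\bullet}, R\Gamma_{Z'} f^! \CG^{\bullet}) &\cong R\bar{f}_* \RSHom_{\CO_{\bar{X}}}^{\bullet}(\CF^{\bullet}, \bar{f}^! \CG^{\bullet}) \\
&\cong \RSHom_{\CO_Y}^{\bullet}(R\bar{f}_* \CF^{\bullet}, \CG^{\bullet}) \\
&= \RSHom_{\CO_Y}^{\bullet}(Rf_* \CF^{\bullet}, \CG^{\bullet}),
\end{align*}
where the first step uses that morphisms out of a complex supported on $Z'$ factor through $R\Gamma_{Z'}$ (by adjunction between $R\Gamma_{Z'}$ and the inclusion of the subcategory supported on $Z'$), and the second step is precisely Grothendieck-Serre duality for the proper morphism $\bar{f}$.

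The main obstacle I expect is proving that $\tr_f$ is independent of the chosen compactification $\bar{X}$. Given two compactifications, Nagata furnishes a common refinement, so it suffices to treat the case of a proper $\bar{X}$-morphism $g\colon \bar{X}_2 \to \bar{X}_1$ restricting to the identity on $X$. The desired compatibility will then follow from the transitivity of the classical trace together with the fact that $g$ is an isomorphism on the open neighborhood $X$ of $Z'$, so that $R\Gamma_{Z'}$ commutes naturally with $g^*$ and $Rg_*$ on complexes supported on $Z'$. Once independence is established, the triangle identities of the adjunction, as well as further naturalities (such as with respect to residually stable base change), reduce once more to the known proper case through the same compactification scheme.
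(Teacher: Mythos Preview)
The paper does not prove this theorem; it is quoted verbatim from the companion paper \cite{Sched.Adj} and used as a black box. So there is no in-paper argument to compare against. That said, your approach via Nagata compactification is essentially the standard route and is almost certainly what \cite{Sched.Adj} does as well: the whole point of the commutative square with $f'$ proper is precisely to guarantee that $Z'$, although only locally closed in a compactification $\bar X$ a priori, is in fact \emph{closed} in $\bar X$, so that classical Grothendieck--Serre duality for $\bar f$ applies.

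Your argument is correct in outline. A few places deserve one extra sentence. First, the step $Rj_*j^*\cong\id$ on complexes supported on $Z'$ is justified by observing that $\bar X\setminus X$ and $Z'$ are disjoint closed subsets, so $Rj_*$ of a complex supported on $Z'$ vanishes on $\bar X\setminus X$; this also gives $j_!\cong Rj_*$ on such complexes, which you implicitly use when you pass from $\RSHom_{\CO_{\bar X}}(\CF^\bullet,\bar f^!\CG^\bullet)$ to $Rj_*\RSHom_{\CO_X}(\CF^\bullet,f^!\CG^\bullet)$. Second, the identification $\RSHom(\CF^\bullet,\bar f^!\CG^\bullet)\cong\RSHom(\CF^\bullet,R\Gamma_{Z'}\bar f^!\CG^\bullet)$ for $\CF^\bullet$ supported on $Z'$ is the sheafified form of the adjunction between $R\Gamma_{Z'}$ and the inclusion of the supported subcategory; this is standard but worth stating. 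Third, for independence of the compactification you will also want to invoke that the very definition of $f^!$ (in the non-proper case) is already known to be independent of the compactification, so only the compatibility of the trace remains; your sketch via a dominating compactification and transitivity of the proper trace is the right one. With these clarifications your proof is complete.
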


\subsection{Generalization of Emerton-Kisin's adjunction}

\begin{proposition}  
Let $f\colon X \to Y$ be a separated and finite type morphism of smooth $k$-schemes and let $i\colon Z \to Y$ and $i'\colon Z' \to X$ be closed immersions with a proper morphism $f'\colon Z' \to Z$ such that $f \circ i' = i \circ f'$. 
\begin{enumerate}
\item There is a natural morphism
\[
	\tr_{F,f}\colon f_+R\Gamma_{Z'}\CO_{F,X}[d_{X/Y}] \to \CO_{F,Y}
\]
of $(\CO_{F,Y},\CO_{F,Y})$-bimodules which, as a morphism of left $\CO_{F,Y}$-modules, is the trace
\[
	\CO_{F,Y} \otimes_{\CO_Y} Rf_*R\Gamma_{Z'}\omega_{X/Y}[d_{X/Y}] \to \CO_{F,Y}
\]
of \autoref{qcohadjunction}. 
\item For every $\CM^{\bullet} \in D_{\qc}^b(\CO_{F,Y})$, the trace map $\tr_{F,f}$ induces a morphism 
\[
	\tr_{F,f}(\CM^{\bullet})\colon f_+R\Gamma_{Z'}f^!\CM^{\bullet} \to \CM^{\bullet}
\]
in $D_{\qc}^b(\CO_{F,Y})$. 
\end{enumerate}
\end{proposition}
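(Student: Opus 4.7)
The plan is to imitate Emerton--Kisin's construction of the trace $f_+\CO_{F,X}[d_{X/Y}] \to \CO_{F,Y}$ from \cite[Section 4.4]{EmKis.Fcrys}, systematically replacing the classical quasi-coherent trace (which requires $f$ proper) by the generalized trace $\tau \colon Rf_*R\Gamma_{Z'}\omega_{X/Y}[d_{X/Y}] \to \CO_Y$ obtained from \autoref{qcohadjunction} with $\CF^{\bullet} = \omega_{X/Y}[d_{X/Y}] = f^!\CO_Y$ and $\CG^{\bullet} = \CO_Y$.

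For part (a), I would first identify
\[
    f_+R\Gamma_{Z'}\CO_{F,X}[d_{X/Y}] \overset{\sim}{\longrightarrow} \CO_{F,Y} \otimes_{\CO_Y} Rf_*R\Gamma_{Z'}\omega_{X/Y}[d_{X/Y}]
\]
by unwinding $\CO_{F,Y \leftarrow X} = f^{-1}\CO_{F,Y} \otimes_{f^{-1}\CO_Y} \omega_{X/Y}$, using that $\omega_{X/Y}$ is locally free so local cohomology passes through the tensor factor, and applying a projection formula in the spirit of \cite[Lemma 3.5.1]{EmKis.Fcrys}. Then $\tr_{F,f}$ is defined as $\id_{\CO_{F,Y}} \otimes \tau$ followed by the canonical multiplication $\CO_{F,Y} \otimes_{\CO_Y} \CO_Y \cong \CO_{F,Y}$. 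By construction this is a morphism of left $\CO_{F,Y}$-modules coinciding with the one prescribed in the statement.

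The substantive content is to verify that $\tr_{F,f}$ is additionally right $\CO_{F,Y}$-linear. On the target the right $F$-action is right multiplication; on the source it is encoded by the right $\CO_{F,X}$-module structure of $\CO_{F,Y \leftarrow X}$ via the relative Cartier operator $C_{X/Y}$. The required compatibility reduces to a commutative square saying that $\tau$ intertwines the induced Frobenius structure on $Rf_*R\Gamma_{Z'}\omega_{X/Y}[d_{X/Y}]$ with $\gamma_Y \colon \CO_Y \to F_Y^*\CO_Y$. In the proper case $Z'=X$, $Z=Y$, this is precisely the Frobenius/trace compatibility that Emerton--Kisin establish through the residual complex $E^{\bullet}$ of $\CO_Y$ in the discussion preceding \autoref{EmKisadj}. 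To extend it, I would rely on the construction of $\tr_f$ in \cite{Sched.Adj}, where the generalized trace is assembled from the classical proper trace together with a natural transformation involving $R\Gamma_{Z'}$ that is manifestly functorial with respect to the Frobenius. Propagating Emerton--Kisin's compatibility through this construction yields the right $\CO_{F,Y}$-linearity of $\tr_{F,f}$.

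Part (b) follows formally from (a) via a projection formula. For $\CM^{\bullet} \in D_{\qc}^b(\CO_{F,Y})$, I would first establish a natural isomorphism
\[
    f_+R\Gamma_{Z'}f^!\CM^{\bullet} \overset{\sim}{\longrightarrow} \bigl(f_+R\Gamma_{Z'}\CO_{F,X}[d_{X/Y}]\bigr) \derotimes_{\CO_{F,Y}} \CM^{\bullet}
\]
extending \cite[Lemma 4.4.7]{EmKis.Fcrys}, using that $R\Gamma_{Z'}$ commutes with derived tensor products of quasi-coherent sheaves. The morphism $\tr_{F,f}(\CM^{\bullet})$ is then $\tr_{F,f} \derotimes_{\CO_{F,Y}} \id_{\CM^{\bullet}}$, composed with $\CO_{F,Y} \derotimes_{\CO_{F,Y}} \CM^{\bullet} \cong \CM^{\bullet}$. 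Here the bimodule statement of (a) is essential: the derived tensor product over the noncommutative ring $\CO_{F,Y}$ makes sense only because $\tr_{F,f}$ is a map of right $\CO_{F,Y}$-modules. The main obstacle throughout is thus the right $\CO_{F,Y}$-linearity in (a), where Emerton--Kisin's Frobenius/trace compatibility must be carried through the explicit construction of the generalized quasi-coherent trace in \cite{Sched.Adj}; everything else is formal unwinding of projection formulas.
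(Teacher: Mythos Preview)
Your proposal is correct and follows essentially the same route as the paper: identify $f_+R\Gamma_{Z'}\CO_{F,X}[d_{X/Y}]$ with $\CO_{F,Y}\otimes_{\CO_Y} Rf_*R\Gamma_{Z'}\omega_{X/Y}[d_{X/Y}]$, define $\tr_{F,f}$ via the generalized quasi-coherent trace, and then check right $\CO_{F,Y}$-linearity; part (b) is then the same projection-formula argument you describe, with the paper inserting an extra step via \cite[Lemma 1.11]{Sched.Adj} to move $R\Gamma_{Z'}$ across the tensor product.

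The one place where the paper is more precise than your sketch is the verification of right $\CO_{F,Y}$-linearity. Rather than appealing abstractly to ``functoriality with respect to Frobenius'' of the construction in \cite{Sched.Adj}, the paper works on the level of residual complexes: it replaces $f^{\Delta}E^{\bullet}$ by the flasque subcomplex $\Gamma_{Z'}f^{\Delta}E^{\bullet}$, introduces a relative Cartier operator with support $C_{X/Y}^Z$, and reduces the desired compatibility to an explicit commutative diagram. That diagram is then checked using two specific properties of the generalized trace established in \cite{Sched.Adj}: compatibility with compositions of morphisms and compatibility with residually stable base change (applied to the flat Frobenius $F_Y$). Your phrase ``propagating Emerton--Kisin's compatibility through the construction'' is the right idea, but in practice it is these two compatibilities that carry the argument; you would do well to name them.
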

\begin{proof}
(a) This is an analogue of \cite[Proposition 4.4.9 (i)]{EmKis.Fcrys}. A careful reading of the proof shows that we can adopt it. Consider the relative Frobenius diagram (diagram \ref{relativeFrobenius} on page \pageref{relativeFrobenius}):
\[
	\xymatrix{
		X \ar[r]^-{F_{X/Y}} \ar[dr]_f & X' \ar[r]^-{F_Y'} \ar[d]^-{f'} & X \ar[d]^-f \\
		& Y \ar[r]^-{F_Y} & Y.
	}
\]
Since $X$ and $Y$ are assumed to be smooth $k$-schemes, we still have flatness of the Frobenius $F_Y$ and therefore of $F_Y'$ because flatness is stable under base change. Note that $F_{X/Y}$ is finite (\cite[A.2]{EmKis.Fcrys}). First Emerton and Kisin explain how the relative Cartier operator 
\[
	C_{X/Y}\colon F_{X/Y*}\omega_{X/Y} \to F_Y'^*\omega_{X/Y} 
\]
is realized for the residual complex $f^{\Delta}E^{\bullet}$. Here $E^{\bullet}$ denotes the Cousin complex $E^{\bullet}(\CO_X)$. For our result we replace $f^{\Delta}E^{\bullet}$ by the subcomplex $\Gamma_{Z'}f^{\Delta}E^{\bullet}$ of flasque sheaves which computes $R\Gamma_{Z'}\omega_{X/Y}$. We obtain the \emph{relative Cartier operator with support on $Z'$}:
\[
	C_{X/Y}^Z\colon F_{X/Y*}\Gamma_{Z'}f^{\Delta}E^{\bullet} \to F_Y'^*\Gamma_{Z'}f^{\Delta}E^{\bullet}. 
\]   
By Proposition-Definition 1.10.1 of \cite{EmKis.Fcrys}, $f^{-1}\CO_{F,Y} \otimes_{f^{-1}\CO_Y} \Gamma_{Z'}f^{\Delta}E^{\bullet}$ is equipped with a $(f^{-1}\CO_{F,Y},\CO_{F,X})$-bimodule structure or, after restricting scalars via the natural map $f^{-1}\CO_Y[F] \to \CO_X[F]$, with a $(f^{-1}\CO_{F,Y},f^{-1}\CO_{F,Y})$-bimodule structure. Finally this endows $f_*(f^{-1}\CO_{F,Y} \otimes_{f^{-1}\CO_Y} \Gamma_{Z'}f^{\Delta}E^{\bullet})$ with the structure of a $(\CO_{F,Y},\CO_{F,Y})$-bimodule, the one from the definition of $f_+R\Gamma_{Z'}\CO_{F,X}$.  

But there is another way to look at this bimodule: The map $C_{X/Y}^Z$ gives rise to a morphism 
\[
	\sigma\colon f_*\Gamma_{Z'}f^{\Delta}E^{\bullet} \to F_Y^*f_*\Gamma_{Z'}f^{\Delta}E^{\bullet}
\]
by the composition 
\[
	f_*\Gamma_{Z'}f^{\Delta}E^{\bullet} \overset{\sim}{\longrightarrow} f'_*{F_{X/Y}}_*\Gamma_{Z'}f^{\Delta}E^{\bullet} \xrightarrow{C_{X/Y}^Z}f'_*F_Y'^*\Gamma_{Z'}f^{\Delta}E^{\bullet} \overset{\bc^{-1}}{\longrightarrow} F_Y^*f_*\Gamma_{Z'}f^{\Delta}E^{\bullet},
\]
where the first isomorphism is deduced from $f=f' \circ F_{X/Y}$ and the last isomorphism is flat base change. Now Proposition-Definition 1.10.1 of ibid.\ in the special case of the morphism $\id_Y$ yields a $(\CO_{F,Y},\CO_{F,Y})$-bimodule structure on $\CO_{F,Y} \otimes_{\CO_Y} f_*\Gamma_{Z'}f^{\Delta}E^{\bullet}$. The isomorphism 
\[
	f_*(f^{-1}\CO_{F,Y} \otimes_{f^{-1}\CO_Y} \Gamma_{Z'}f^{\Delta}E^{\bullet}) \cong \CO_{F,Y} \otimes_{\CO_Y} f_*\Gamma_{Z'}f^{\Delta}E^{\bullet}
\]
stemming from the projection formula is compatible with the constructed bimodule structure for both complexes by Lemma 1.10.6 of ibid. Hence it suffices to show that $\tr_{F,f}$ induces a morphism between the $(\CO_{F,Y},\CO_{F,Y})$-bimodule $\CO_{F,Y} \otimes_{\CO_Y} f_*\Gamma_{Z'}f^{\Delta}E^{\bullet}$ and $E^{\bullet}$ equipped with the structure of a  $(\CO_{F,Y},\CO_{F,Y})$-bimodule via the canonical isomorphism $E^{\bullet} \overset{\sim}{\longrightarrow} F_Y^*E^{\bullet}$ induced from the Frobenius $\CO_Y \to F_Y^* \CO_Y$. Lemma 1.10.2 of ibid.\ applied to the identity morphism on $Y$ reduces to the commutativity of the diagram  
\[
	\xymatrix@C50pt{
		f_*\Gamma_{Z'}f^{\Delta}E^{\bullet} \ar[r]^-{\tr_{F,f}} \ar[d]^{\sigma} & E^{\bullet} \ar[d]^{\sim} \\
		F_Y^*f_*\Gamma_{Z'}f^{\Delta}E^{\bullet} \ar[r]^-{F_Y^*\tr_{F,f}} & F_Y^*E^{\bullet}
	}
\]
of complexes. For this we have to see that the following bigger diagram commutes: 
\[
	\xymatrix@C40pt{
		f_*\Gamma_{Z'}f^{\Delta}E^{\bullet} \ar[rr]^-{\tr_f} \ar[d]^{\sim} & & E^{\bullet} \ar@{=}[d] \\
		f'_*F_{X/Y*}\Gamma_{Z'}F_{X/Y}^{\Delta}f'^{\Delta}E^{\bullet} \ar[r]^-{\tr_{F_{X/Y}}} \ar[d]^{\sim} & f'_*\Gamma_{Z'}f'^{\Delta}E^{\bullet} \ar[r]^-{\tr_{f'}}  \ar[d]^{\sim} & E^{\bullet}\ar[d]^{\sim} \\
		f'_*F_{X/Y*}\Gamma_{Z'}F_{X/Y}^{\Delta}f'^{\Delta}F_Y^*E^{\bullet} \ar[r]^-{\tr_{F_{X/Y}}} \ar[d]_{\beta}^{\sim} & f'_*\Gamma_{Z'}f'^{\Delta}F_Y^*E^{\bullet} \ar[r]^-{\tr_{f'}} \ar[d]_{\beta}^{\sim} & F_Y^*E^{\bullet} \ar@{=}[ddd] \\
		f'_*F_{X/Y*}\Gamma_{Z'}F_{X/Y}^{\Delta}F_Y'^*f^{\Delta}E^{\bullet} \ar[r]^-{\tr_{F_{X/Y}}} & f'_*\Gamma_{Z'}F_Y'^*f^{\Delta}E^{\bullet} \ar[d]^{\sim} & \\
		& f'_*F_Y'^*\Gamma_{Z'}f^{\Delta}E^{\bullet} \ar[d]_{\bc^{-1}}^{\sim} & \\
		& F_Y^*f_*\Gamma_{Z'}f^{\Delta}E^{\bullet} \ar[r]^-{\tr_f} & F_Y^*E^{\bullet}.
	}
\]
The two squares in the middle and the lower left square commute by functoriality of the trace maps $\tr_{F_{X/Y}}$ and $\tr_{f'}$. The other squares are commutative because the generalized trace is compatible with compositions of morphisms and with base change by the residully stable map $F_Y$ (\cite[Propositions 2.10 and 2.11]{Sched.Adj}).   

(b) Once we know that $\tr_f$ is a morphism in $D_{\qc}^b(\CO_{F,Y})$, we can define $\tr_f(\CM^{\bullet})$ as the following composition:
\begin{align*}
	f_+R\Gamma_{Z'}f^!\CM^{\bullet} &\overset{\sim}{\longrightarrow} f_+(\CO_{F,X} \otimes_{\CO_{F,X}} R\Gamma_{Z'}f^!\CM^{\bullet}) \\
	&\longrightarrow f_+(R\Gamma_{Z'}\CO_{F,X} \derotimes_{\CO_{F,X}} f^!\CM^{\bullet}) \\
	&\longrightarrow f_+R\Gamma_{Z'}\CO_{F,X}[d_{X/Y}] \derotimes_{\CO_{F,Y}} \CM^{\bullet} \\
	&\xrightarrow{\tr_f \otimes \id} \CO_{F,Y} \otimes_{\CO_{F,Y}} \CM^{\bullet} \\
	&\overset{\sim}{\longrightarrow} \CM^{\bullet}.
\end{align*}
Here the second morphism is the one of \cite[Lemma 1.11]{Sched.Adj} and the third morphism is the one of \cite[Lemma 4.4.7]{EmKis.Fcrys}.
\end{proof}
\begin{lemma} \label{traceforward}
We keep the notation of the preceding proposition. For an open immersion $j\colon U \to Y$, let $f'$ and $j'$ denote the projections of $U' = U \times_Y X$. Assume that $Z$ and $Z'$ are the closures of the locally closed subsets $Z_U= Z \cap U$ and $Z'_{U'} = Z' \cap U'$ in $Y$ and in $X$.
\begin{enumerate}
\item There is a functorial isomorphism $e_{j,f}\colon f_+R\Gamma_Zf^!j_+ \overset{\sim}{\longrightarrow} j_+f'_+R\Gamma_{Z'}f'^!$ such that the diagram
\[
	\xymatrix{
		f_+R\Gamma_Zf^!j_+ \ar[r]^-{e_{j,f}} \ar[dr]_-{\tr_f j_+} & j_+f'_+R\Gamma_{Z'}f'^! \ar[d]^{j_+\tr_{f'}} \\
		& j_+
	}
\]
commutes.
\item Let $\ctr_f$ denote the unit $\id \to R\Gamma_Zf^!f_+$ of the adjunction. Then there is a functorial isomorphism $e'_{j,f}\colon R\Gamma_Zf^!f_+j'_+ \overset{\sim}{\longrightarrow} j'_+R\Gamma_{Z'}f'^!f'_+$ such that the diagram
\[
	\xymatrix{
		R\Gamma_Zf^!f_+j'_+ \ar[r]^-{e_{j,f}} & j'_+R\Gamma_{Z'}f'^!f'_+ \\
		& j'_+ \ar[ul]^-{\ctr_f j'_+} \ar[u]_{j'_+\ctr_{f'}} 
	}
\]
commutes.
\end{enumerate}
\end{lemma}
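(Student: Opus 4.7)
The plan is to build $e_{j,f}$ as the concatenation of three base-change isomorphisms attached to the Cartesian square with corners $U', U, X, Y$ (horizontal open immersions $j', j$; vertical maps $f, f'$), and then to verify compatibility with the trace by reducing to the corresponding compatibility of the quasi-coherent trace from \cite{Sched.Adj}. First, flat base change along the open immersion $j$ gives $f^! j_+ \overset{\sim}{\longrightarrow} j'_+ f'^!$; this is immediate from the tensor-product formula for $f^!$ in \autoref{EmKisPullBack} together with the example following it, since $j_+ = Rj_*$ for open immersions. Second, the hypothesis that $Z$, $Z'$ are the closures of $Z_U$, $Z'_{U'}$ gives $j^{-1}(Z) = Z_U$ and $j'^{-1}(Z') = Z'_{U'}$, so local cohomology commutes with the open-immersion pull-back: $R\Gamma_{Z'} j'_+ \overset{\sim}{\longrightarrow} j'_+ R\Gamma_{Z'_{U'}}$, matching the abuse of notation in the statement. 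Third, open-immersion base change for push-forward gives $f_+ j'_+ \overset{\sim}{\longrightarrow} j_+ f'_+$, obtained from the quasi-coherent base change $Rf_* j'_* \cong j_* Rf'_*$ and the compatibility of the $\CO_{F,Y\leftarrow X}$ bimodule under open restriction. Composing these three yields $e_{j,f}$.

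Compatibility of $e_{j,f}$ with the traces is the main technical step. Through the construction of $\tr_{F,f}$ out of the underlying quasi-coherent trace, this reduces to compatibility of the generalized quasi-coherent trace from \autoref{qcohadjunction} with the flat base change along $j$, which is precisely what is established in Proposition 2.11 of \cite{Sched.Adj} (for residually stable base change) combined with the composition compatibility of Proposition 2.10 of loc.\ cit. The additional ingredient is that the relative Cartier operator is compatible with open restriction, which is a formal consequence of the analogous property of the Cartier operator for residual complexes. Once these ingredients are in place, the commutativity of the triangle becomes a diagram chase parallel to the one already carried out in the proof of the preceding proposition, where the corresponding compatibility was checked for the composition $f = f' \circ F_{X/Y}$ and for base change along $F_Y$.

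Part (b) is then deduced from (a) by the adjunction $f_+ \dashv R\Gamma_{Z'} f^!$ of \autoref{qcohadjunction} applied both to $(f, Z, Z')$ and to $(f', Z_U, Z'_{U'})$: the unit $\ctr_f$ is uniquely characterized by $\tr_f$ through the triangle identities, so a trace-compatible isomorphism as in (a) automatically determines a unit-compatible isomorphism of the dual composite. Concretely, one defines $e'_{j,f}$ as the chain $R\Gamma_{Z'} f^! f_+ j'_+ \cong R\Gamma_{Z'} f^! j_+ f'_+ \cong R\Gamma_{Z'} j'_+ f'^! f'_+ \cong j'_+ R\Gamma_{Z'_{U'}} f'^! f'_+$, using the three base-change isomorphisms of (a) in a different order. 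The main obstacle across both parts is the careful tracking of the $\CO_F$-bimodule structure through the chain of base-change isomorphisms: each individual isomorphism is standard on quasi-coherent sheaves, but matching up the left and right $\CO_F$-actions --- especially the right action coming from the relative Cartier operator --- requires a long diagram chase analogous to, but more involved than, the one in the proof of the preceding proposition.
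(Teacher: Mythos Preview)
Your outline is correct in spirit and would eventually work, but the paper takes a tidier route that avoids much of the $\CO_F$-bimodule bookkeeping you flag as the main obstacle. Rather than building $e_{j,f}$ directly for $j_+$, the paper first constructs the analogous isomorphism $\tilde{e}_{j,f}\colon f'_+R\Gamma_{Z'}f'^!j^! \to j^!f_+R\Gamma_Zf^!$ on the restriction side, where the relevant base-change isomorphisms are already furnished by \cite[Proposition 3.8]{EmKis.Fcrys} (for $f'_+j'^! \cong j^!f_+$) and by the obvious compatibility $j'^!R\Gamma_{Z'}f^! \cong R\Gamma_{Z'_{U'}}f'^!j^!$. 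Trace compatibility for $\tilde{e}_{j,f}$ is then checked by a short diagram: the first two squares commute by functoriality and by \cite[Lemma 4.4.7 (ii)]{EmKis.Fcrys} (the projection-formula compatibility), and the bottom triangle by residually stable base change of the quasi-coherent trace \cite[Proposition 2.10]{Sched.Adj}. Finally, the paper invokes \cite[Proposition 1.13]{Sched.Adj} to observe that $j_+$ and $j'_+$ are equivalences onto the relevant subcategories, with quasi-inverses $j^!$ and $j'^!$; so both (a) and (b) follow \emph{formally} from the $j^!$-side statement, without any further diagram chase.

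The practical difference is that your direct approach with $j_+$ forces you to verify that all three base-change steps are compatible with the right $\CO_{F,X}$-action coming from the relative Cartier operator --- the ``long diagram chase'' you anticipate. The paper sidesteps this entirely: working with $j^!$ the isomorphisms are already established in \cite{EmKis.Fcrys} as maps of $\CO_F$-modules, and the passage back to $j_+$ is purely categorical. Your deduction of (b) from (a) via the triangle identities is fine and matches the paper's ``formal consequences'' remark.
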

\begin{proof}
Let $Z_U$ and $Z'_{U'}$ denote the closed subsets $U \cap Z$ and $U' \cap Z'$ of $U$ and $U'$. From \cite[Proposition 1.13]{Sched.Adj} we know that the functors $j_+$ and $j'_+$ are equivalences 
\[
	D_{\lfgu}^b(\CO_{F,U})_{Z_U} \overset{\sim}{\longrightarrow} D_{\lfgu}^b(\CO_{F,Y})_Z^{Z \backslash U} \text{ and } D_{\lfgu}^b(\CO_{F,U'})_{Z'_{U'}} \overset{\sim}{\longrightarrow} D_{\lfgu}^b(\CO_{F,X})_{Z'}^{Z' \backslash U'}.
\]
Furthermore, there are natural isomorphisms
\[
	f_+j'_+ \overset{\sim}{\longrightarrow} j_+f'_+ \text{ and } R\Gamma_{Z'}f^!j_+ \overset{\sim}{\longrightarrow} j'_+R\Gamma_{Z'_{U'}}f'^!,
\]
where the second one is obtained from the composition  
\[
	j'^!R\Gamma_{Z'}f^! \overset{\sim}{\longrightarrow} R\Gamma_{Z'_{U'}}j'^!f^! \overset{\sim}{\longrightarrow} R\Gamma_{Z'_{U'}}f'^!j^!
\]
of natural isomorphisms. Moreover, together with the canonical isomorphism $f'_+j'^! \cong j^!f_+$ of \cite[Proposition 3.8]{EmKis.Fcrys}, this composition yields a canonical isomorphism
\[
	\tilde{e}_{j,f}\colon f'_+R\Gamma_{Z'}f'^!j^! \to j^!f_+R\Gamma_Zf^!.
\]
For $\CM^{\bullet} \in D_{\qc}^b(\CO_{F,Y})$, the diagram
\[
	\xymatrix{
		f'_+R\Gamma_{Z'}f'^!j^!\CM^{\bullet} \ar[r]^-{\tilde{e}_{j,f}} \ar[d]^-{\sim} & j^!f_+R\Gamma_Zf^!\CM^{\bullet} \ar[d]^-{\sim} \\
		f'_+(\CO_{F,U'} \otimes_{\CO_{F,U'}} R\Gamma_{Z'}f'^!j^!\CM^{\bullet}) \ar[r]^-{\sim} \ar[d]^-{\sim} & j^!f_+(\CO_{F,X} \otimes_{\CO_{F,X}} R\Gamma_Zf^!\CM^{\bullet}) \ar[d]^-{\sim} \\
		f'_+R\Gamma_{Z'}f'^!\CO_{F,U'} \derotimes_{\CO_{F,U}} j^!\CM^{\bullet} \ar[r]^-{\tilde{e}_{j,f}} \ar[d]_-{\tr_{f'}} & j^!f_+R\Gamma_Zf^!\CO_{F,X} \derotimes_{\CO_{F,U}} j^!\CM^{\bullet} \ar[dl]^-{\tr_f} \\
		j^!\CM^{\bullet} &
	}
\]
of natural isomorphisms and the trace commutes: While the first square commutes simply by functoriality, the commutativity of the second square follows from \cite[Lemma 4.4.7 (ii)]{EmKis.Fcrys}. The commutativity of the lower triangle follows from the compatibility of the trace with residually stable base change (\cite[Proposition 2.10]{Sched.Adj}). In summary the diagram
\[
	\xymatrix{
		f'_+R\Gamma_{Z'}f'^!j^! \ar[r]^-{\tilde{e}_{j,f}} \ar[d]_-{\tr_{f'}j^!} & j^!f_+R\Gamma_Zf^! \ar[dl]^-{j^! \tr_f} \\
			j^! &
	}
\]
is commutative. Since $j^!$ and $j'^!$ are quasi-inverses of $j_+$ and $j'_+$ and $f_+$ and $R\Gamma_{Z'}f^!$ restrict to the functors $f'_+$ and $R\Gamma_{Z'_{U'}}f'^!$ between $D_{\lfgu}^b(\CO_{F,U})_{Z_U}$ and $D_{\lfgu}^b(\CO_{F,U'})_{Z'_{U'}}$ with respect to the equivalences $j^!$ and $j'^!$, the claims of the lemma are formal consequences.  
\end{proof}
\begin{theorem} \label{Theorem}
Let $f\colon X \to Y$ be a separated and finite type morphism of smooth schemes and let $i\colon Z \to Y$ and $i'\colon Z' \to X$ be closed immersions with a morphism $f'\colon Z' \to Z$ such that the diagram
\[
	\xymatrix{
		Z' \ar[r]^-{i'} \ar[d]^-{f'} & X \ar[d]^-f \\
		Z \ar[r]^-i & Y
	}
\]
commutes. Then, for any $\CM^{\bullet} \in D_{\qc}^b(\CO_{F,X})_{Z'}$ and any $\CN^{\bullet} \in D_{\qc}^b(\CO_{F,Y})_Z$, there is a natural isomorphism
\[
	\RSHom_{\CO_{F,Y}}^{\bullet}(f_+\CM^{\bullet},\CN^{\bullet}) \overset{\sim}{\longrightarrow} Rf_*\RSHom_{\CO_{F,X}}^{\bullet}(\CM^{\bullet},R\Gamma_{Z'}f^!\CN^{\bullet}).
\]
In particular, $f_+\colon D_{\qc}^b(\CO_{F,X})_{Z'} \to D_{\qc}^b(\CO_{F,Y})_Z$ is left adjoint to $R\Gamma_{Z'}f^!$.
\end{theorem}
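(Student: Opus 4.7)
The plan is to follow Emerton and Kisin's proof of \autoref{EmKisadj} almost verbatim, with the proper trace map replaced by our generalized trace $\tr_{F,f}$ from the preceding proposition, and with \autoref{qcohadjunction} playing the role that ordinary Grothendieck--Serre duality plays there.

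First, I would construct the natural transformation as the composition
\begin{align*}
	Rf_*\RSHom_{\CO_{F,X}}^{\bullet}(\CM^{\bullet},R\Gamma_{Z'}f^!\CN^{\bullet}) &\longrightarrow \RSHom_{\CO_{F,Y}}^{\bullet}(f_+\CM^{\bullet}, f_+R\Gamma_{Z'}f^!\CN^{\bullet}) \\
	&\xrightarrow{\tr_{F,f}(\CN^{\bullet})} \RSHom_{\CO_{F,Y}}^{\bullet}(f_+\CM^{\bullet}, \CN^{\bullet}),
\end{align*}
where the first arrow is the universal comparison between internal Hom and push-forward, constructed exactly as in \cite[Proposition 4.4.2]{EmKis.Fcrys}, and the second uses the morphism from part (b) of the preceding proposition. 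Naturality in both variables is automatic from the construction.

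Second, I would verify that this natural transformation is an isomorphism. Using the two-term induced resolution of \cite[Proposition 5.3.3]{EmKis.Fcrys}, I would reduce to the case that $\CM^{\bullet}$ has the form $\CO_{F,X} \otimes_{\CO_X} \mathcal{P}^{\bullet}$ for some $\mathcal{P}^{\bullet} \in D_{\qc}^b(\CO_X)_{Z'}$. For such $\CM^{\bullet}$, tensor-Hom adjunction collapses $\RSHom_{\CO_{F,X}}^{\bullet}(\CM^{\bullet},-)$ to $\RSHom_{\CO_X}^{\bullet}(\mathcal{P}^{\bullet},-)$, while the projection formula used in \cite[Theorem 3.5.3]{EmKis.Fcrys} yields $f_+\CM^{\bullet} \cong \CO_{F,Y} \otimes_{\CO_Y} Rf_*\mathcal{P}^{\bullet}$. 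The statement then reduces to
\[
	Rf_*\RSHom_{\CO_X}^{\bullet}(\mathcal{P}^{\bullet}, R\Gamma_{Z'}f^!\CN^{\bullet}) \overset{\sim}{\longrightarrow} \RSHom_{\CO_Y}^{\bullet}(Rf_*\mathcal{P}^{\bullet}, \CN^{\bullet}),
\]
which is precisely the content of \autoref{qcohadjunction}. Part (a) of the preceding proposition guarantees that, on underlying $\CO_Y$-modules, $\tr_{F,f}$ is built from the quasi-coherent trace $\tr_f$, so the natural transformation we constructed agrees under these identifications with the one of \autoref{qcohadjunction}.

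The main obstacle is the bookkeeping in the reduction to induced modules: one must check that the two-term resolution, the projection formula for the induced push-forward, and the trace $\tr_{F,f}$ all assemble into compatible diagrams so that \autoref{qcohadjunction} can be invoked. Part (a) of the preceding proposition, which identifies the restriction of $\tr_{F,f}$ to left $\CO_{F,Y}$-modules with $\CO_{F,Y} \otimes_{\CO_Y} \tr_f$, is the crucial compatibility bridging our setting with the quasi-coherent adjunction; the remaining verifications are formal analogues of those carried out in \cite[\S 4.4]{EmKis.Fcrys}, and the adjunction assertion then follows by taking global sections of the resulting isomorphism of internal Hom complexes.
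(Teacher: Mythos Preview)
Your overall strategy matches the paper's exactly: construct the map via the generalized trace, reduce to induced modules, and invoke \autoref{qcohadjunction}. Two concrete issues need fixing, however.

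First, \cite[Proposition 5.3.3]{EmKis.Fcrys} gives the two-term induced resolution only for \emph{unit} $\CO_{F,X}$-modules, whereas here $\CM^{\bullet}$ lies in $D_{\qc}^b(\CO_{F,X})_{Z'}$ with no unit hypothesis. The paper instead uses \cite[Lemma 1.7.1]{EmKis.Fcrys} (every quasi-coherent $\CO_{F,X}$-module is a quotient of an induced one) to replace $\CM^{\bullet}$ by a bounded-above complex of induced modules, and then the Way-out Lemma \cite[Proposition I.7.1]{HartshorneRD} to reduce to a single $\CM = \CO_{F,X} \otimes_{\CO_X} M$.

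Second, your projection formula is missing the twist by $\omega_{X/Y}$: for an induced module one has $f_+(\CO_{F,X}\otimes_{\CO_X}M)\cong \CO_{F,Y}\otimes_{\CO_Y} Rf_*(\omega_{X/Y}\otimes_{\CO_X} M)$, not $Rf_*M$. Correspondingly, Emerton--Kisin's $f^!$ on $\CO_{F}$-modules differs from the Grothendieck $f^!$ on quasi-coherent sheaves by $\omega_{X/Y}^{-1}$ and a shift; the paper notes this explicitly and factors $f$ through its graph to ensure $f$ is essentially perfect so that these twists behave well. Once these twists are inserted on both sides, the reduced statement is indeed \autoref{qcohadjunction}, but the verification that all the identifications assemble into a commutative diagram is the substance of the proof---the paper devotes a full landscape-format diagram and several paragraphs of resolution arguments (locally free, flasque, injective) to it, so ``formal analogues'' understates the work required.
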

\begin{proof}
The morphism $f$ factors through the graph morphism $X \times_k Y$, which is a closed immersion, followed by the projection $X \times_k Y \to Y$, which is smooth. Therefore, we may assume that $f$ is an essentially perfect morphism. We show that the natural transformation $\tau$ given by the composition 
\[
	\xymatrix{
		Rf_*\RSHom_{\CO_{F,X}}^{\bullet}(\CM^{\bullet},R\Gamma_{Z'}f^!\CN^{\bullet}) \ar[r] & \RSHom_{\CO_{F,Y}}^{\bullet}(f_+\CM^{\bullet},f_+R\Gamma_{Z'}f^!\CN^{\bullet}) \ar[d]^{\tr_{F,f}} \\
		& \RSHom_{\CO_{F,Y}}^{\bullet}(f_+\CM^{\bullet},\CN^{\bullet})
		}
\]
is an isomorphism in $D^+(X,\ZZ/p\ZZ)$. Here the horizontal arrow is the natural morphism of \cite[Proposition 4.4.2]{EmKis.Fcrys}. Let $\CO_{F,f}$ denote the $(f^{-1}\CO_{F,Y},\CO_{F,X})$-bimodule $\CO_{F,Y \leftarrow X}$ and let $\omega_f$ denote the $\CO_X$-module $\omega_{X/Y}$. We set $d = d_{X/Y}$. First we replace $\CM^{\bullet}$ by a bounded above complex of quasi-coherent induced left $\CO_{F,X}$-modules, i.e.\ left $\CO_{F,X}$-modules of the form $\CO_{F,X} \otimes_{\CO_X} M$ with quasi-coherent $\CO_X$-modules $M$, see Definition 1.7 and Lemma 1.7.1 of \cite{EmKis.Fcrys}. Now by the Lemma on Way-out Functors (\cite[Proposition I.7.1]{HartshorneRD}), we reduce to the case of a single sheaf $\CM^{\bullet} = \CO_{F,X} \otimes_{\CO_X} M$. For such an induced module we have an isomorphism
\begin{align} \label{inducedPushforward}
	f_+\CM \overset{\sim}{\longrightarrow} \CO_{F,Y} \otimes_{\CO_Y} Rf_*(\omega_{X/Y} \otimes_{\CO_X} M),
\end{align}
which is based on the projection formula, see the proof of \cite[Theorem 3.5.3]{EmKis.Fcrys}. Note that in this proof $f^!$ always denotes Emerton-Kisin's pull-back of left $\CO_{F,Y}$-modules, sometimes considered as an $\CO_Y$-module. It is connected to the functor $f^!$ for quasi-coherent sheaves by the canonical isomorphisms 
\[
	f^!\CN^{\bullet} \overset{\sim}{\longrightarrow} Lf^*\CN^{\bullet}[d]\overset{\sim}{\longrightarrow} \omega_{X/Y}^{-1} \otimes_{\CO_X} \text{`}{f^!}\text{'} \CN^{\bullet}
\] 
in $D_{\qc}(X)$, where `$f^!$' denotes the classical $f^!$. One can show that there is a commutative diagram 
\begin{align*}
	\xymatrix{
		Rf_*\RSHom_{\CO_X}^{\bullet}(M,R\Gamma_{Z'}f^!\CN^{\bullet}) \ar[d]^{t} \ar[r]^-{\sim} & Rf_*\RSHom_{\CO_{F,X}}^{\bullet}(\CM,R\Gamma_{Z'}f^!\CN^{\bullet}) \ar[d]^{\tau} \\
		\RSHom_{\CO_Y}^{\bullet}(Rf_*(\omega_{X/Y} \otimes_{\CO_X} M), \CN^{\bullet}) \ar[r]^-{\sim} & \RSHom_{\CO_{F,Y}}^{\bullet}(f_+\CM,\CN^{\bullet})
	}
\end{align*}
with an isomorphism $t$ and where the horizontal arrows are the natural isomorphisms induced by the isomorphism 
\[
	\Hom_{\CO_X}(M, \usc) \overset{\sim}{\longrightarrow} \Hom_{\CO_{F,X}}(\CO_{F,X} \otimes_{\CO_X} M, \usc)
\]
of \cite[1.7.2]{EmKis.Fcrys} and (\ref{inducedPushforward}). 
For this we consider the bigger diagram of natural maps on page \pageref{bigdiagram}. Let $t$ be the composition of the left vertical arrows. It is an isomorphism by \cite[Proposition 3.4]{Sched.Adj} and \autoref{qcohadjunction}. Recall that $\CO_{F,Y \leftarrow X}$ is locally free as a right $\CO_X$-module and that $\CO_{F,Y \leftarrow X} \derotimes_{\CO_{F,X}} \CM \cong \CO_{F,Y \leftarrow X} \otimes_{\CO_X} M$, which is computed in the proof of Lemma 3.5.1 of \cite{EmKis.Fcrys}. In particular, induced modules are acyclic for the functor $\CO_{F,Y \leftarrow X} \otimes_{\CO_{F,X}} \usc$. For the first square, we consider the diagram without the outer $Rf_*$, resolve $M$ by a complex $P^{\bullet}$ of locally free $\CO_X$-modules and $R\Gamma_{Z'}f^!\CN^{\bullet}$ by a complex $\CJ^{\bullet}$ of left $\CO_{X,F}$-modules which are acyclic for the functor $\CO_{F,Y \leftarrow X} \derotimes_{\CO_{F,X}} \usc$, as in the proof of Proposition 4.4.2 of ibid.\ Now $\mathcal P^{\bullet}=\CO_{F,X} \otimes_{\CO_X} P^{\bullet}$ is a complex of locally free $\CO_{F,X}$-modules. We obtain a commutative diagram
\[
	\xymatrix@C12pt{
		\RSHom_{\CO_X}^{\bullet}(P^{\bullet},\CJ^{\bullet}) \ar@{.>}[r]^-{\sim} \ar[d]^{\sim} & \RSHom_{\CO_{F,X}}^{\bullet}(\mathcal P^{\bullet},\CJ^{\bullet}) \ar[d]^{\sim} \\
		\SHom_{\CO_X}^{\bullet}(P^{\bullet},\CJ^{\bullet}) \ar[r]^-{\sim} \ar[d]^{\sim} & \SHom_{\CO_{F,X}}^{\bullet}(\mathcal P^{\bullet},\CJ^{\bullet}) \ar[d] \\
		\SHom_{\CO_X}^{\bullet}(\omega_f \otimes_{\CO_X} P^{\bullet},\omega_f \otimes_{\CO_X} \CJ^{\bullet}) \ar[d]^{\sim} \ar[r] & \SHom_{f^{-1}\CO_{F,Y}}^{\bullet}(\CO_{F,f} \otimes_{\CO_{F,X}} \mathcal P^{\bullet},\CO_{F,f} \derotimes_{\CO_{F,X}} \CJ^{\bullet}) \ar[d] \\
		\RSHom_{\CO_X}^{\bullet}(\omega_f \otimes_{\CO_X} P^{\bullet},\omega_f \otimes_{\CO_X} \CJ^{\bullet}) \ar@{.>}[r] & \RSHom_{f^{-1}\CO_{F,Y}}^{\bullet}(\CO_{F,f} \otimes_{\CO_{F,X}} \mathcal P^{\bullet},\CO_{F,f} \derotimes_{\CO_{F,X}} \CJ^{\bullet})
	}
\]
of canonical maps. The last two vertical arrows are the canonical morphisms from a functor to its right derived functor. Here the left one is an isomorphism because $\omega_{X/Y} \otimes P^{\bullet}$ is a locally free $\CO_X$-module. 

For the second square, we check that the natural map
\[
	\xymatrix{
		\RSHom_{\CO_X}^{\bullet}(\omega_f \otimes_{\CO_X} M,\omega_f \otimes_{\CO_X} R\Gamma_{Z'}\CN^{\bullet}) \ar[d] \\
		\RSHom_{f^{-1}\CO_{F,Y}}^{\bullet}(\CO_{F,f} \otimes_{\CO_X} M,\CO_{F,f} \derotimes_{\CO_{F,X}} R\Gamma_{Z'}\CN^{\bullet})
	}
\]
factors through $\RSHom_{f^{-1}\CO_Y}^{\bullet}(\omega_f \otimes_{\CO_X} M,\omega_f \otimes_{\CO_X} R\Gamma_{Z'}\CN^{\bullet})$.
For this we replace $\omega_f \otimes_{\CO_X} R\Gamma_{Z'}f^!\CN^{\bullet}$ by a complex $\CI^{\bullet}$ of injective $f^{-1}\CO_Y$-modules and $\CO_{F,f} \derotimes_{\CO_{F,X}} R\Gamma_{Z'}f^!\CN^{\bullet}$ by a complex $\tilde{\CI}^{\bullet}$ of injective $f^{-1}\CO_{F,Y}$-modules. The functor $f^{-1}\CO_{F,Y} \otimes_{f^{-1}\CO_Y} \usc$ is exact because the right $\CO_Y$-module $\CO_{F,Y}$ is free (\cite[Lemma 1.3.1]{EmKis.Fcrys}). Furthermore, it is left adjoint to the forgetful functor from $f^{-1}\CO_{F,Y}$-modules to $f^{-1}\CO_Y$-modules. Hence the latter functor preserves injectives. This implies that $\tilde{\CI}^{\bullet}$ is a complex of injective $f^{-1}\CO_Y$-modules and the canonical morphism $\omega_f \otimes_{\CO_X} R\Gamma_{Z'}f^!\CN^{\bullet} \to \CO_{F,f} \derotimes_{\CO_{F,X}} R\Gamma_{Z'}f^!\CN^{\bullet}$ yields a map $\CI^{\bullet} \to \tilde{\CI}^{\bullet}$. After replacing  $M$ by a complex $P^{\bullet}$ of locally free $\CO_X$-modules as above we have reduced the three $\RSHom$ to $\SHom$ and the claimed factorization is trivial. 

We return to the second square of the diagram on page \pageref{bigdiagram}, where we replace $M$ by a complex $F^{\bullet}$ of flasque $\CO_X$-sheaves. The complexes $\omega_f \otimes_{\CO_X} F^{\bullet}$ and $\CO_{F,f} \otimes_{\CO_X} F^{\bullet}$ are also flasque because locally they are direct sums of flasque sheaves. Hence $f_*(\omega_f \otimes F^{\bullet})$ and $f_*(\CO_{F,f} \otimes F^{\bullet})$ represent $Rf_*(\omega_f \otimes F^{\bullet})$ and $Rf_*(\CO_{F,f} \otimes F^{\bullet})$. As above, we resolve $\omega_f \otimes_{\CO_X} R\Gamma_{Z'}f^!\CN^{\bullet}$ by $\CI^{\bullet}$ and $\CO_{F,f} \derotimes_{\CO_{F,X}} R\Gamma_{Z'}f^!\CN^{\bullet}$ by $\tilde{\CI}^{\bullet}$. The injectivity of $\CI^{\bullet}$ and $\tilde{\CI}^{\bullet}$ implies that $\SHom_{f^{-1}\CO_Y}^{\bullet}(\omega_f \otimes_{\CO_X} F^{\bullet},\CI^{\bullet})$ and $\SHom_{f^{-1}\CO_{F,Y}}^{\bullet}(\CO_{F,f} \otimes_{\CO_X} F^{\bullet},\tilde{\CI}^{\bullet})$ are flasque (\cite[Lemme II.7.3.2]{Godement}) and hence may be used to compute $Rf_*$. As $f_*$ is right adjoint to the exact functor $f^{-1}$, the complex $f_*\CI^{\bullet}$ is a complex of injective $\CO_Y$-modules and $f_*\tilde{\CI}^{\bullet}$ is a complex of injective $\CO_{F,Y}$-modules. Therefore 
\[
	\RSHom_{\CO_X}^{\bullet}(\usc,f_*\CI^{\bullet}) \cong \SHom_{\CO_X}^{\bullet}(\usc,f_*\CI^{\bullet})
\]
and 
\[
	\RSHom_{\CO_{F,X}}^{\bullet}(\usc,f_*\tilde{\CI}^{\bullet}) \cong \SHom_{\CO_{F,X}}^{\bullet}(\usc,f_*\tilde{\CI}^{\bullet}).
\]
This finishes the proof of the commutativity of the second square because the diagram
\[
	\xymatrix{
		f_*\SHom_{f^{-1}\CO_Y}^{\bullet}(\omega_f \otimes_{\CO_X} F^{\bullet},\CI^{\bullet}) \ar[dd] \ar[r] & f_*\SHom_{f^{-1}\CO_{F,Y}}^{\bullet}(\CO_{F,f} \otimes_{\CO_X} F^{\bullet},\tilde{\CI}^{\bullet}) \ar[d] \\
		 & \SHom_{\CO_{F,Y}}^{\bullet}(f_*(\CO_{F,f} \otimes_{\CO_X} F^{\bullet}),f_*\tilde{\CI}^{\bullet}) \ar[d] \\
		\SHom_{\CO_Y}^{\bullet}(f_*(\omega_f \otimes_{\CO_X} F^{\bullet}),f_*\CI^{\bullet}) \ar[r] & \SHom_{\CO_{F,Y}}^{\bullet}(\CO_{F,Y} \otimes_{\CO_Y} f_*(\omega_f \otimes_{\CO_X} F^{\bullet}),f_*\tilde{\CI}^{\bullet})
	}
\]
of natural morphisms commutes. 

The commutativity of the third and the fifth square can be shown similarly. The fourth square commutes by the functoriality of the corresponding horizontal isomorphisms.

For the adjunction of $f_+$ and $R\Gamma_{Z'}f^!$ we proceed as in the proof of \cite[Theorem 3.2]{Sched.Adj}.
\end{proof}
\begin{landscape} \label{bigdiagram}
\small
\begin{align*} 
	\xymatrix@C10pt{
		Rf_*\RSHom_{\CO_X}^{\bullet}(M,R\Gamma_{Z'}f^!\CN^{\bullet}) \ar[r] \ar[d]^{\sim} & Rf_*\RSHom_{\CO_{F,X}}^{\bullet}(\CM,R\Gamma_{Z'}f^!\CN^{\bullet}) \ar[d] \\
		Rf_*\RSHom_{\CO_X}^{\bullet}(\omega_{X/Y} \otimes_{\CO_X} M,\omega_{X/Y} \otimes_{\CO_X} R\Gamma_{Z'}f^!\CN^{\bullet}) \ar[r] \ar[dd] & Rf_*\RSHom_{f^{-1}\CO_{F,Y}}^{\bullet}(\CO_{F,Y \leftarrow X} \derotimes_{\CO_{F,X}} \CM,\CO_{F,Y \leftarrow X} \derotimes_{\CO_{F,X}} R\Gamma_{Z'}f^!\CN^{\bullet}) \ar[d] \\
		& \RSHom_{\CO_{F,Y}}^{\bullet}(Rf_*(\CO_{F,Y \leftarrow X} \derotimes \CM),Rf_*(\CO_{F,Y \leftarrow X} \derotimes R\Gamma_{Z'}f^!\CN^{\bullet})) \ar[d] \\
		\RSHom_{\CO_Y}^{\bullet}(Rf_*(\omega_{X/Y} \otimes M),Rf_*(\omega_{X/Y} \otimes R\Gamma_{Z'}f^!\CN^{\bullet})) \ar[r] \ar[d] & \RSHom_{\CO_{F,Y}}^{\bullet}(\CO_{F,Y} \otimes_{\CO_Y}Rf_*(\omega_{X/Y} \otimes_{\CO_X} M),Rf_*(\CO_{F,Y \leftarrow X} \derotimes R\Gamma_{Z'}f^!\CN^{\bullet})) \ar[d] \\
		\RSHom_{\CO_Y}^{\bullet}(Rf_*(\omega_{X/Y} \otimes_{\CO_X} M),Rf_*R\Gamma_{Z'}\omega_{X/Y}[d] \derotimes_{\CO_Y} \CN^{\bullet}) \ar[r] \ar[d]^-{\tr} & \RSHom_{\CO_{F,Y}}^{\bullet}(\CO_{F,Y} \otimes_{\CO_Y} Rf_*(\omega_{X/Y} \otimes_{\CO_X} M),f_+R\Gamma_{Z'}\CO_{F,X}[d] \derotimes_{\CO_{F,Y}} \CN^{\bullet}) \ar[d]^-{\tr} \\
		\RSHom_{\CO_Y}^{\bullet}(Rf_*(\omega_{X/Y} \otimes_{\CO_X} M),\CO_Y \otimes_{\CO_Y} \CN^{\bullet}) \ar[r] \ar[d]^-{\sim} & \RSHom_{\CO_{F,Y}}^{\bullet}(\CO_{F,Y} \otimes_{\CO_Y} Rf_*(\omega_{X/Y} \otimes_{\CO_X} M),\CO_{F,Y} \otimes_{\CO_{F,Y}} \CN^{\bullet}) \ar[d]^-{\sim} \\
		\RSHom_{\CO_Y}^{\bullet}(Rf_*(\omega_{X/Y} \otimes_{\CO_X} M),\CN^{\bullet}) \ar[r] & \RSHom_{\CO_{F,Y}}^{\bullet}(f_+\CM,\CN^{\bullet})
	}
\end{align*}
\end{landscape}

\subsection{Definition of lfgu modules on singular schemes}

As mentioned earlier, for a regular scheme $X$, the Frobenius $F_X\colon X \to X$ is a flat morphism and hence $F_X^*$ is exact (\cite[Theorem 2.1]{Kunz}). For varieties, the exactness of $F_X^*$ plays an important role in the definition of (locally finitely generated) unit $\CO_{F,X}$-modules. For example, it implies that the category of unit $\CO_{F,X}$-modules is abelian. In this section we define the abelian category $\mu_{\lfgu}(X)$ of locally finitely generated unit $\CO_{F,X}$-modules for schemes $X$ which admit a closed immersion $i\colon X \to Y$ into a smooth $k$-scheme as a certain subcategory of $\mu_{\lfgu}(Y)$. Note that this definition generally works for unit $\CO_{F,X}$-modules. We restrict to locally finitely generated modules due to our application to Cartier crystals and perverse constructible \'etale $p$-torsion sheaves. 

For the motivation of our approach to $\mu_{\lfgu}(X)$ for embeddable $X$, recall the Kashiwara equivalence:
\begin{theorem} \label{Kashiwaraunit}
Let $i\colon Z \to X$ be a closed immersion of smooth $k$-schemes. If $\CM$ is a unit $\CO_{F,X}$-module supported on $Z$, the adjunction $i_+i^!\CM \to \CM$ is an isomorphism. Consequently, $H^0(i^!\CM) \overset{\sim}{\longrightarrow} i^!\CM$ and the functors $i_+$ and $i^!$ are equivalences between the categories of unit $\CO_{F,Z}$-modules and unit $\CO_{F,X}$-modules supported on $Z$.   
\end{theorem}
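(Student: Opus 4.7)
The statement is a restatement of Emerton--Kisin's \cite[Theorem 5.10.1]{EmKis.Fcrys}, which was already cited immediately after \autoref{EmKisadj} in this excerpt. The plan is therefore to trace through that proof adapted to the present notation $i\colon Z \into X$, and to point out why the three assertions all flow from a single local calculation.

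The adjunction of \autoref{EmKisadj} provides a natural counit $i_+i^!\CM \to \CM$, and the heart of the proof is to show that this counit is an isomorphism whenever $\CM$ is a unit $\CO_{F,X}$-module supported on $Z$. I would verify this locally on $X$: since $i$ is a closed immersion of smooth $k$-schemes, we may reduce to $X=\Spec R$ with $Z=V(I)$ for a regular sequence $\underline{f}=f_1,\dots,f_n$. Combining a Koszul resolution of $i_*\CO_Z$ with the explicit description of $i_+$ on induced modules (as used in the proof of \autoref{alterforward}), the counit is identified with a concrete map between the $I$-torsion submodule of $\CM$ and $\CM$ itself. Since $\CM$ is supported on $Z$, every local section is annihilated by some power of $I$, and since $\CM$ is unit the Frobenius acts invertibly; these two facts together force the counit to be bijective. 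This is precisely the calculation Emerton and Kisin carry out in \cite[§5]{EmKis.Fcrys}.

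Granted the counit isomorphism, the remaining assertions are formal. The identity $H^0(i^!\CM) \overset{\sim}{\longrightarrow} i^!\CM$ holds because $i_+$ is $t$-exact on complexes supported on $Z$ -- the shift $[d_{Z/X}]=[-n]$ built into $i^!$ precisely cancels the Koszul length -- so if $i_+i^!\CM \cong \CM$ is concentrated in degree zero, then so is $i^!\CM$. The equivalence of the two abelian categories then follows from the adjunction once we observe that the unit $\CN \to i^!i_+\CN$ is also an isomorphism for $\CN \in \mu_{\operatorname{u}}(Z)$, and this is again the local Koszul computation applied on $Z$: in the regular local model the composition $i^!i_+$ reduces to the identity on unit $\CO_{F,Z}$-modules.

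The main obstacle will be bookkeeping: the definitions of $i_+$ and $i^!$ involve the relative dualizing sheaf $\omega_{Z/X}$, the relative Cartier operator, and a cohomological shift by $-n$, and one must check that the Koszul identification $i^!\CM \cong \CM/I\CM[-n]$ is compatible with the unit structure and with the bimodule $\CO_{F,Z\to X}$ showing up in the definition of $i^!$. Since this verification is handled in detail by Emerton--Kisin, we invoke their result rather than redo it here.
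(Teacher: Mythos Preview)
Your proposal is correct and takes essentially the same approach as the paper: both simply invoke \cite[Theorem 5.10.1]{EmKis.Fcrys}, the paper doing so in a single sentence while you additionally sketch the underlying local Koszul computation that Emerton and Kisin carry out. One minor quibble: your parenthetical reference to \autoref{alterforward} is not quite apt, since that lemma concerns the identification $f_*(N\otimes\omega_X)\otimes\omega_Y^{-1}\cong f_*(N\otimes\omega_f)$ rather than the behaviour of $i_+$ on induced modules; but this does not affect the substance of your argument.
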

\begin{proof}
This is Theorem 5.10.1 of \cite{EmKis.Fcrys}.
\end{proof}
Hence, keeping the notation of the preceding theorem, we can canonically interpret unit $\CO_{F,Z}$-modules as a certain subcategory of unit $\CO_{F,X}$-modules, namely the subcategory of unit $\CO_{F,X}$-modules with support on (the image of) $Z$. If $Z$ is not smooth this subcategory still exists because it may be characterized as the subcategory of unit $\CO_{F,X}$-modules $\CM$ with $j^!\CM \cong 0$, where $j$ is the immersion of the open complement of $Z$ in $X$. This motivates the definition of unit $\CO_{F,Z}$-modules for $Z$ possibly not smooth but embeddable into a smooth scheme. But first we introduce some notation.
\begin{definition}
We call a $k$-scheme $X$ \emph{embeddable} if there is a closed immersion $i\colon X \to Y$ of $k$-schemes where $Y$ is smooth. 
\end{definition}
\begin{example}
Let $X = \Spec k[x_1,\dots,x_n]/I$ be an affine variety. Then $X$ is embeddable into the affine space $\mathbb A_k^n$ by the closed immersion corresponding to the canonical projection
\[
	k[x_1,\dots,x_n] \to k[x_1,\dots,x_n]/I.
\]
\end{example}
\begin{example}
Let $X$ be a quasi-projective $k$-scheme. By definition, there exists an open immersion $j\colon X \to Z$ and a projective morphism $p\colon Z \to \Spec k$ such that $f = p \circ j$. In turn, the morphism $p$ factors into a closed immersion $i\colon Z \to \mathbb P_k^n$ followed by the natural morphism $\mathbb P_k^n \to \Spec k$. Let $U$ be an open subset of $\mathbb P_k^n$ such that $U \cap i(Z) = i(j(X))$. Then $X \cong U \times_{\mathbb P_n^k} Z$ and the projection $X \to U$ is a closed immersion of $X$ into an open subset of the projective space. Thus $X$ is embeddable.
\end{example}         
\begin{definition}  
Assume that $k$ is perfect. Let $X$ be an embeddable $k$-scheme. Let $i\colon X \to Y$ be a closed immersion into a smooth $k$-scheme $Y$. The category of lfgu $\CO_{F,X}$-modules is defined as the full subcategory of lfgu $\CO_{F,Y}$-modules $\CM$ supported on the image of $X$, i.e.\ $j^!\CM \cong 0$, where $j\colon Y \backslash X \to X$ is the open immersion of the complement of $X$.

The category $D_{\lfgu}^b(\CO_{F,X})$ is the full subcategory $D_{\lfgu}^b(\CO_{F,Y})_X$ of those objects in $D_{\lfgu}^b(\CO_{F,Y})$ whose cohomology sheaves are supported on $X$. 
\end{definition}
\begin{remark} \label{reducedclosed}
With the notation of the preceding definition, let $\CM$ be an lfgu module on $Y$. Whether $\CM$ is supported in $X$ only depends on the closed subset $i(X)$ in $Y$. For example, the preceding definition does not distinguish between the categories $D_{\lfgu}^b(\CO_{F,X})$ and $D_{\lfgu}^b(\CO_{F,X_{\operatorname{red}}})$, where $X_{\operatorname{red}}$ is the unique closed subscheme of $X$ whose underlying topological space equals the one of $X$ and which is reduced.   
\end{remark}
By \autoref{Kashiwaraunit}, it is clear that this definition generalizes the already existing notion of lfgu $\CO_{F,X}$-modules for smooth $X$. Of course the crucial point is to see that the definition for not-necessarily smooth $X$ is -- up to natural equivalence -- independent of a chosen embedding into a smooth scheme. 
\begin{theorem} \label{adjunctionequiv}
Assume that $k$ is a perfect field. Let $f\colon X \to Y$ be a flat morphism between smooth $k$-schemes and let $i_X\colon Z \to X$ and $i_Y\colon Z \to Y$ be closed immersions of $k$-schemes such that the diagram
\[
	\xymatrix{
		Z \ar[r]^{i_X} \ar[dr]_{i_Y} & X \ar[d]^f \\
		& Y
	}
\]
commutes. Then there are natural isomorphisms of functors
\begin{enumerate}[(i)]
	\item $f_+ \circ R\Gamma_Z f^! \cong \id_{D_{\lfgu}^b(\CO_{F,Y})_Z}$,
	\item $R\Gamma_Z f^! \circ f_+ \cong \id_{D_{\lfgu}^b(\CO_{F,X})_Z}$.
\end{enumerate}
\end{theorem}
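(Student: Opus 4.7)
My plan is to use the generalized adjunction of the preceding Theorem to produce the candidate unit and counit, then show they are isomorphisms by factoring $f$ through its graph and treating a closed immersion and a smooth morphism separately.

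First, applying the preceding Theorem with $Z'=i_X(Z)$ and $f'=\id_Z$ (trivially proper) yields an adjoint pair $(f_+, R\Gamma_Z f^!)$ between $D_{\qc}^b(\CO_{F,X})_Z$ and $D_{\qc}^b(\CO_{F,Y})_Z$; I would check it restricts to the lfgu subcategories using \autoref{EmKisPullBack} for $f^!$ and, for $R\Gamma_Z$, the distinguished triangle $R\Gamma_Z\to\id\to Rj_*j^*$ with $j$ the open complement of $Z$, whose right-hand terms are lfgu (cf.\ \autoref{openforward} and the $f_+$-proposition applied to $j$). The resulting unit $\eta$ and counit $\epsilon$ are the candidate natural isomorphisms.

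Since the claim is local on $Y$, I would shrink $Y$ around $i_Y(Z)$ and factor
\[
	X\xrightarrow{\gamma\,=\,(\id_X,f)} X\times_k Y\xrightarrow{\pi} Y,
\]
a closed immersion of smooth $k$-schemes followed by the smooth projection. Setting $i_{XY}=\gamma\circ i_X$, both stages satisfy the hypotheses of the theorem, and $f_+=\pi_+\gamma_+$, $f^!=\gamma^!\pi^!$ reduce the problem to the closed-immersion and smooth cases separately. The closed-immersion case is immediate from Emerton-Kisin's Kashiwara equivalence (Theorem 5.10.1 of \cite{EmKis.Fcrys}): $\gamma_+$ and $\gamma^!$ are mutually inverse equivalences, and for $\CN\in D_{\lfgu}^b(\CO_{F,X\times_k Y})_Z$ the canonical map $R\Gamma_Z\gamma^!\CN\to\gamma^!\CN$ is an isomorphism because $\gamma^!\CN$ is already supported on $\gamma^{-1}(i_{XY}(Z))=Z$.

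The main obstacle is the smooth projection $\pi$ with section $i_{XY}$ over $i_Y$: since $Z$ is not assumed smooth, one cannot apply Kashiwara to $i_{XY}$ directly. I would proceed by Noetherian induction on $Z$, using generic smoothness over the perfect base field $k$ to obtain a dense smooth open $Z_0\subseteq Z_{\operatorname{red}}$ on which the standard Kashiwara equivalence (applied to the smooth closed immersion $i_{XY}|_{Z_0}$) directly forces $\eta$ and $\epsilon$ to be isomorphisms for complexes supported on $Z_0$, and then using the excision triangle $R\Gamma_{Z\setminus Z_0}\to R\Gamma_Z\to R\Gamma_{Z_0}$ together with the triangulated five-lemma to propagate the result from the lower-dimensional complement. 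As an independent sanity check, in the \'etale-local model $\pi\colon\mathbb A^n_Y\to Y$ with $i_{XY}$ the zero section over $i_Y$, a Koszul-type computation in the spirit of \autoref{Koszul} identifies $R\Gamma_{i_{XY}(Z)}\pi^!\CN$ with $i_{XY+}\CN$; the projection formula \cite[Lemma 4.4.7]{EmKis.Fcrys} and the trace of the preceding Theorem then recognize $\epsilon$ as the canonical isomorphism $\pi_+ i_{XY+}\CN\cong\CN$, while a symmetric argument gives $\eta$.
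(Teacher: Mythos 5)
Your proposal is correct in substance and, at its core, follows the same strategy as the paper: produce the unit and counit from the generalized adjunction (\autoref{Theorem}), settle the case of smooth $Z$ via Emerton--Kisin's Kashiwara equivalence (in the paper this is done directly for any flat $f$ using $R\Gamma_Z\cong i_{X+}i_X^!$ and $R\Gamma_Z\cong i_{Y+}i_Y^!$ from \cite[Proposition 5.11.5]{EmKis.Fcrys}, so your graph factorization $f=\pi\circ\gamma$ is an unnecessary detour rather than an error), and then run a Noetherian induction on $\dim Z$ using a smooth dense open $V\subseteq Z_{\operatorname{red}}$, which is where perfectness of $k$ enters. The one place where you are gliding over the technical heart of the argument is the excision step. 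Since $Z_0$ is open in $Z$ but not closed in the ambient smooth scheme, the triangle you want is not literally $R\Gamma_{Z\setminus Z_0}\to R\Gamma_Z\to R\Gamma_{Z_0}$; the paper instead uses the localization triangle $\CN^{\bullet}\to\CM^{\bullet}\to g_+g^!\CM^{\bullet}$ of \cite[Proposition 5.12.1]{EmKis.Fcrys} with $g=u\circ i'$ the locally closed immersion of $V$, where $\CN^{\bullet}$ is supported on the smaller closed set $Z\setminus U$. More importantly, invoking the five lemma presupposes that the counit $\epsilon$ defines a \emph{morphism of triangles}, i.e.\ that the trace is compatible with the open-immersion push-forward $u_+$ and with restriction of supports; this is exactly the content of \autoref{traceforward}, which the paper proves separately (using the compatibility of the generalized trace with residually stable base change and with composition) before the induction can close. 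If you supply that compatibility, your argument goes through; without it the inductive step is only a plausibility claim.
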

\begin{proof}
The proof proceeds by an excision argument, in a similar way as the proof of \cite[Theorem 4.5]{Ohkawa}. In the case of a smooth scheme $Z$ we can use the isomorphism of functors $R\Gamma_Z \cong i_{X+}i_X^!$ from $D_{\lfgu}^b(\CO_{F,X})$ to $D_{\lfgu}^b(\CO_{F,X})_Z$ and $R\Gamma_Z \cong i_{Y+}i_Y^!$ from $D_{\lfgu}^b(\CO_{F,Y})$ to $D_{\lfgu}^b(\CO_{F,Y})_Z$ (\cite[Proposition 5.11.5]{EmKis.Fcrys}):
\[
	f_+R\Gamma_Zf^! \cong f_+i_{X+}i_X^!f^! \cong i_{Y+}i_Y^! \cong R\Gamma_Z \cong \id.
\]
We may assume that $Z$ is reduced, see \autoref{reducedclosed}. Since a finite set of closed points with the reduced scheme structure is always smooth, this verifies the claim if $Z$ is $0$-dimensional. For the general case, i.e.\ $Z$ is not necessarily smooth, let $V$ be a smooth and dense\footnote{In order to guarantee the existence of a smooth, dense subset, we assumed that $k$ is perfect.} open subscheme of $Z$ and assume that the claim holds for all closed subschemes $Z'$ with $\dim Z' < \dim Z$ . Let $g$ denote the immersion $V \into Y$. After choosing an open subset $U \subseteq Y$ with $U \cap Z = V$, we can factor $g$ as $g = u \circ i'$ where $u$ is the open immersion of $U$ into $Y$ and $i'$ is the closed immersion of $V$ into $U$, i.e.\ the base change of $i_Y$. 

For an object $\CM^{\bullet}$ of $D_{\lfgu}^b(\CO_{F,Y})$, there is a natural morphism $\phi\colon \CM^{\bullet} \to g_+g^!\CM^{\bullet}$ whose cone $\CN^{\bullet}$ is supported on $Z \backslash U$ (\cite[Proposition 5.12.1]{EmKis.Fcrys}). This means that there is a distinguished triangle 
\[
	\CN^{\bullet} \longrightarrow \CM^{\bullet} \overset{\phi}{\longrightarrow} g_+g^!\CM^{\bullet} \longrightarrow \CN^{\bullet}[1]
\]
in $D_{\lfgu}^b(\CO_{F,Y})$. Applying $f_+R\Gamma_Zf^!$, the trace yields a morphism of triangles 
\[
	\xymatrix{
		f_+R\Gamma_Zf^!\CN^{\bullet} \ar[r] \ar[d]^{\tr_f(\CN^{\bullet})} & f_+R\Gamma_Zf^!\CM^{\bullet} \ar[r]^-{\phi} \ar[d]^{\tr_f(\CM^{\bullet})} & f_+R\Gamma_Zf^!g_+g^!\CM^{\bullet} \ar[d]^{\tr_f(g_+g^!\CM^{\bullet})} \\
		\CN^{\bullet} \ar[r] & \CM^{\bullet} \ar[r]^-{\phi} & g_+g^!\CM^{\bullet}.
	}
\]
Since $f$ is the identity on $Z$, i.e.\ $i_Y = f \circ i_X$, we have $Z \cap f^{-1}(Z \backslash U) = Z \backslash U$. Therefore, $R\Gamma_Zf^!\CN^{\bullet} \cong R\Gamma_{Z \backslash U}f^!\CN^{\bullet}$ and $\tr_f(\CN^{\bullet})$ factors through $f_+R\Gamma_{Z \backslash U}f^!\CN^{\bullet}$. This means that the diagram
\[
	\xymatrix{
		f_+R\Gamma_Zf^!\CN^{\bullet} \ar[rr]^-{\tr_{f,Z}(\CN^{\bullet})} \ar[dr]^-{\sim} & & \CN^{\bullet} \\
		& f_+R\Gamma_{Z \backslash U}f^!\CN^{\bullet} \ar[ur]^-{\sim}_/8pt/{\quad \tr_{f,Z \backslash U}(\CN^{\bullet})} &
	}
\]
is commutative. The dimension of the support $Z \backslash U$ of $\CN^{\bullet}$ is less than that of $Z$ as $V$ is dense in $Z$. By induction hypothesis, $\tr_{f,Z \backslash U}(\CN^{\bullet})$ is an isomorphism and hence $\tr_{f,Z}(\CN^{\bullet})$ is an isomorphism.

It remains to show that $\tr_f(g_+g^!\CM^{\bullet}) \cong \tr_f(u_+i'_+i'^!u^!\CM^{\bullet})$ is an isomorphism. By the Kashiwara equivalence, the object $\CM_U^{\bullet} := i'_+i'^!u^!\CM^{\bullet}$ of $D_{\lfgu}^b(\CO_{F,U})$ is supported on $V$. Let $f'$ denote the projection $U \times_Y X \to U$. The map $\tr_f(u_+\CM_U^{\bullet})$ equals the composition
\[
	f_+R\Gamma_Zf^!u_+\CM_U^{\bullet} \overset{\sim}{\longrightarrow} u_+f'_+R\Gamma_Vf'^!\CM_U^{\bullet} \xrightarrow{u_+\tr_{f'}(\CM_U^{\bullet})} u_+\CM_U^{\bullet}
\]
(\autoref{traceforward} (a)). Here the second map is an isomorphism because $V$ is smooth. Consequently, the map $\tr_f(\CM^{\bullet})$ is an isomorphism. This proves (i). The isomorphism of (ii) can be constructed similarly, using the unit of the adjunction between $f_+$ and $R\Gamma_Zf^!$ (i.e.\ the \emph{cotrace}) instead of the trace map, and applying \autoref{traceforward} (b).  
\end{proof}
The next corollary shows that the definition of $D_{\lfgu}^b(\CO_{F,X})$ for embeddable varieties $X$ is independent of the chosen embedding. 
\begin{corollary} \label{embeddingindep}
If $i_1\colon X \to Y_1$ and $i_2\colon X \to Y_2$ are two embeddings of a $k$-scheme $X$ into smooth $k$-schemes $Y_1$ and $Y_2$, where $k$ is perfect, then there exists a natural equivalence 
\[
	D_{\lfgu}^b(\CO_{F,Y_1})_X \overset{\sim}{\longrightarrow} D_{\lfgu}^b(\CO_{F,Y_2})_X.
\]
\end{corollary}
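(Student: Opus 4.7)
The plan is to use the product of the two ambient smooth varieties as a common refinement and then invoke \autoref{adjunctionequiv} for the two smooth projections. More precisely, I would set $Y := Y_1 \times_k Y_2$, which is again a smooth $k$-scheme, and consider the diagonal-type embedding $i := (i_1, i_2)\colon X \to Y$. This factors as
\[
	X \xrightarrow{\Delta_X} X \times_k X \xrightarrow{i_1 \times i_2} Y_1 \times_k Y_2.
\]
Since $X$ is separated over $k$ the diagonal $\Delta_X$ is a closed immersion, and since $i_1, i_2$ are closed immersions so is $i_1 \times i_2$; hence $i$ is a closed immersion. The projections $p_1\colon Y \to Y_1$ and $p_2\colon Y \to Y_2$ are smooth (since $Y_2/k$ and $Y_1/k$ are) and in particular flat, and they fit into commutative triangles
\[
	\xymatrix@R12pt@C18pt{
		X \ar[r]^-{i} \ar[dr]_-{i_j} & Y \ar[d]^-{p_j} \\
		& Y_j,
	}
	\quad j = 1,2,
\]
with the same closed subscheme $X$ on the left.

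Now each $p_j$ satisfies the hypotheses of \autoref{adjunctionequiv}: it is a flat morphism of smooth $k$-schemes together with closed immersions of $X$ upstairs and downstairs making the triangle commute. Therefore \autoref{adjunctionequiv}(i) and (ii) supply natural isomorphisms of functors
\[
	(p_j)_+ \circ R\Gamma_X p_j^! \cong \id \quad \text{and} \quad R\Gamma_X p_j^! \circ (p_j)_+ \cong \id
\]
on $D_{\lfgu}^b(\CO_{F,Y_j})_X$ and $D_{\lfgu}^b(\CO_{F,Y})_X$ respectively, so that $(p_j)_+$ and $R\Gamma_X p_j^!$ are quasi-inverse equivalences
\[
	D_{\lfgu}^b(\CO_{F,Y})_X \xrightarrow{\;\sim\;} D_{\lfgu}^b(\CO_{F,Y_j})_X
\]
for both $j = 1, 2$.

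The desired natural equivalence $D_{\lfgu}^b(\CO_{F,Y_1})_X \xrightarrow{\sim} D_{\lfgu}^b(\CO_{F,Y_2})_X$ is then obtained as the composite
\[
	D_{\lfgu}^b(\CO_{F,Y_1})_X \xrightarrow{R\Gamma_X p_1^!} D_{\lfgu}^b(\CO_{F,Y})_X \xrightarrow{(p_2)_+} D_{\lfgu}^b(\CO_{F,Y_2})_X.
\]
The only non-formal input is already packaged in \autoref{adjunctionequiv}; the remaining work is the verification (sketched above) that $(i_1, i_2)$ really is a closed immersion into the smooth scheme $Y_1 \times_k Y_2$ and that the relevant triangles commute, which is routine. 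I expect no substantial obstacle here, as the main technical content---the adjunction between $f_+$ and $R\Gamma_Z f^!$ yielding an equivalence when the source and target both sit over the same closed subscheme $Z$---has been established in the preceding theorem; this corollary is essentially a formal two-out-of-three style consequence.
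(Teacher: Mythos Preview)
Your proposal is correct and essentially identical to the paper's own proof: both pass to the product $Y_1\times_k Y_2$, verify that $(i_1,i_2)$ is a closed immersion via the diagonal and separatedness of $X$, and then invoke \autoref{adjunctionequiv} for the two flat projections to obtain the equivalence as the composite $p_{2+}\circ R\Gamma_X p_1^!$. The only cosmetic difference is that the paper factors $(i_1,i_2)$ in three steps rather than two, which amounts to the same thing.
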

\begin{proof}
The universal property of $Y_1 \times_k Y_2$ yields a morphism $(i_1,i_2)\colon X \to Y_1 \times_k Y_2$. It equals the composition
\[
	X \xrightarrow{(\id,\id)} X \times_k X \xrightarrow{(i_1,\id)} Y_1 \times_k X \xrightarrow{(\id,i_2)} Y_1 \times_k Y_2,
\]
where all maps are closed immersions, the first one because $X$ is assumed to be separated over $k$. Hence $(i_1,i_2)$ is a closed immersion. We obtain a commutative diagram
\[
	\xymatrix@C50pt{
		& Y_1 \\
		X \ar[r]^{(i_1,i_2)} \ar[ur]^-{i_1} \ar[dr]_-{i_2} & Y_1 \times_k Y_2 \ar[u]_{p_1} \ar[d]^{p_2} \\
		& Y_2,
	}
\]
where $p_1$ and $p_2$ are the projections. By \autoref{adjunctionequiv}, the compositions $p_{2+}R\Gamma_Zp_1^!$ and $p_{1+}R\Gamma_Zp_2^!$ are inverse equivalences between $D_{\lfgu}^b(\CO_{F,Y_1})_X$ and $D_{\lfgu}^b(\CO_{F,Y_2})_X$.
\end{proof}

\section{The Riemann-Hilbert correspondence for Cartier crystals}

As its title suggests, one of the main results of Emerton and Kisins ``The Riemann-Hilbert correspondence for unit F-crystals'' (\cite{EmKis.Fcrys}) is a characteristic $p$-analogue of the Riemann-Hilbert correspondence for $D$-modules. More precisely, for a smooth $k$-scheme $X$, the authors construct inverse equivalences of categories 
\[
\xymatrix{
	D_{\lfgu}^b(\CO_{F,X}) \ar@<.5ex>[r]^-{\Solu} & D_c^b(X_{\et},\ZZ/p\ZZ) \, . \ar@<.5ex>[l]^-{\M}
}
\]
Furthermore, $\Solu(D_{\lfgu}^{\leq 0}(\CO_{F,X})) \subseteq {^pD}^{\geq 0}$ and $\Solu(D_{\lfgu}^{\geq 0}(\CO_{F,X})) \subseteq {^p}D^{\leq 0}$ where ${^p}D^{\geq 0}$ and ${^p}D^{\geq 0}$ are two subcategories of $D_c^b(X_{\et},\ZZ/p\ZZ)$ defining the perverse $t$-structure of \cite{Gabber.tStruc}. Hence $\Solu$ establishes an equivalence between the hearts of the corresponding $t$-structures, namely the locally finitely generated unit $\CO_{F,X}$-modules and the so-called perverse constructible $p$-torsion sheaves.

Using this correspondence of Emerton and Kisin, we will establish a Riemann-Hilbert correspondence between Cartier crystals and perverse constructible \'etale $\ZZ/p\ZZ$-sheaves on a scheme which admits an embedding into a smooth scheme. For this we extend the equivalences $\G\colon D_{\crys}^b(\QCrysC(X)) \to D_{\lfgu}^b(\CO_{F,X})$ and $\Solu\colon D_{\lfgu}^b(\CO_{F,X}) \to D_c^b(X_{\et},\ZZ/p\ZZ)$ to singular varieties embeddable into a smooth variety.

\subsection{Review of Emerton and Kisin's Riemann-Hilbert correspondence}

Let $X_\et$ denote the small \'etale site of a scheme $X$. A reference for the \'etale topology is, for example, \cite[Chapter II]{Milne}. A $\ZZ/p\ZZ$-sheaf on $X_{\et}$ is an \'etale sheaf of modules over the constant sheaf $\ZZ/p\ZZ$. Let $D_c^b(X_{\et},\ZZ/p\ZZ)$ denote the derived category of complexes of $\ZZ/p\ZZ$-sheaves on $X_{\et}$ whose cohomology sheaves are \emph{constructible}.
\begin{definition}
	A sheaf $\CL$ of $\ZZ/p\ZZ$-modules on $X_{\et}$ is called \emph{constructible} if there is a stratification $X = \coprod_{i \in I} S_i$ such that the restrictions of $\CL$ to the $S_i$ are locally constant sheaves of $\ZZ/p\ZZ$-modules for the \'etale topology with finite stalks.
\end{definition} 
For $x \in X$, let $i_x\colon x \to X$ be the inclusion, which is the composition of the inclusion of the closed point of $\Spec \CO_{X_{\et},x}$ followed by the canonical morphism $\Spec \CO_{X_{\et},x} \to X$. In \cite{Gabber.tStruc}, Gabber showed that the two subcategories
\begin{align*}
{^p}D^{\leq 0} &= \{ \CL^{\bullet} \in D_c^b(X_{\et},\ZZ/p\ZZ) \, | \, H^i(i_x^*\CL^{\bullet})=0 \text{ for } i > -\dim \overline{ \{x\}} \}, \\
{^p}D^{\geq 0} &= \{ \CL^{\bullet} \in D_c^b(X_{\et},\ZZ/p\ZZ) \, | \, H^i(i_x^!\CL^{\bullet})=0 \text{ for } i < -\dim \overline{ \{x\}} \}
\end{align*}
define a $t$-structure on $D_c^b(X_{\et},\ZZ/p\ZZ)$.
\begin{remark} \label{Gabberrecollement}
	Indeed, Gabber shows that these subcategories define a $t$-structure on the ambient category $D^b(X_{\et},\ZZ/p\ZZ)$. For a closed immersion $i\colon Z \to X$ and the open immersion $j\colon U \to X$ of the complement $U$ of $Z$, it is obtained from the perverse $t$-structures on $D^b(U_{\et},\ZZ/p\ZZ)$ and $D^b(Z_{\et},\ZZ/p\ZZ)$ by \emph{recollement}: 
	\begin{align*}
	{^p}D^{\leq 0} &= \{ \CL^{\bullet} \in D^b(X_{\et},\ZZ/p\ZZ) \, | \, i^*\CL^{\bullet} \in {^p}D^{\leq 0}(Z_{\et},\ZZ/p\ZZ) \text{ and } j^*\CL^{\bullet} \in {^p}D^{\leq 0}(U_{\et},\ZZ/p\ZZ) \}, \\
	{^p}D^{\geq 0} &= \{ \CL^{\bullet} \in D^b(X_{\et},\ZZ/p\ZZ) \, | \, i^!\CL^{\bullet} \in {^p}D^{\geq 0}(Z_{\et},\ZZ/p\ZZ) \text{ and } j^*\CL^{\bullet} \in {^p}D^{\geq 0}(U_{\et},\ZZ/p\ZZ) \}.
	\end{align*}
	This follows directly from the construction of the perverse $t$-structure on $D^b(X_{\et},\ZZ/p\ZZ)$. 
\end{remark}
In this subsection let $X$ be a smooth $k$-scheme. The Riemann-Hilbert correspondence between $D_{\lfgu}^b(\CO_{F,X})$ and $ D_c^b(X_{\et},\ZZ/p\ZZ)$ is realized in two steps: first passing to the \'etale site and then applying a certain duality functor.
\begin{theorem} \label{Solproperties}
	\begin{enumerate}
		\item For every smooth $k$-scheme $X$, the functor 
		\[
		\Solu = \RSHom_{\CO_{F,X_{\et}}}^{\bullet}(\usc_{\et},\CO_{X_{\et}})[d_X]\colon D_{\lfgu}^b(\CO_{F,X}) \to D_c^b(X_{\et},\FF_p)
		\]
		is an equivalence of categories. A quasi-inverse is given by 
		\[
		\operatorname{M} = \RSHom_{\ZZ/p\ZZ}^{\bullet}(\usc,\CO_{X_{\et}})[d_X].
		\]
		\item For a morphism $f\colon X \to Y$ of smooth $k$-schemes, there is a natural isomorphism of functors
		\[
		\Solu \circ f^! \cong f^* \circ \Solu.
		\]
		For an \emph{allowable} morphism $f\colon X \to Y$, i.e.\ a morphism $f$ which factors as $g \circ h$, where $h$ is an immersion and $g$ is a proper smooth morphism, there is also a natural isomorphism of functors
		\[
		\Solu \circ f_+ \cong f_! \circ \Solu.
		\]
		\item The essential image of the full subcategory $D_{\lfgu}^{\geq 0}(\CO_{F,X})$ is equal to the full subcategory ${^p}D^{\leq 0}$ of $D_c^b(X_{\et},\ZZ/p\ZZ)$ while the essential image of $D_{\lfgu}^{\leq 0}(\CO_{F,X})$ is equal to the full subcategory ${^p}D^{\geq 0}$ of $D_c^b(X_{\et},\ZZ/p\ZZ)$.
	\end{enumerate}
\end{theorem}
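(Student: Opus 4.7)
The plan is to mirror the strategy of Emerton and Kisin, whose cornerstone is the Artin-Schreier short exact sequence
\[
	0 \longrightarrow (\ZZ/p\ZZ)_X \longrightarrow \CO_{X_{\et}} \xrightarrow{1-F} \CO_{X_{\et}} \longrightarrow 0
\]
on $X_{\et}$. Combined with the two-term free resolution $\CO_{F,X_{\et}} \xrightarrow{1-F} \CO_{F,X_{\et}}$ of $\CO_{X_{\et}}$, this directly computes $\Solu(\CO_X) \cong (\ZZ/p\ZZ)_X[d_X]$, which will serve as the base case throughout the argument.

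For part (a), I would first observe that on quasi-coherent $\CO_{F,X_{\et}}$-modules the passage from the Zariski to the \'etale topology is harmless (both have the same cohomology), so $\Solu$ and $\M$ are well defined on the respective categories. The natural transformations $\id \to \M \circ \Solu$ and $\Solu \circ \M \to \id$ arise from the tensor-Hom adjunction using the $(\CO_{F,X_{\et}},(\ZZ/p\ZZ)_X)$-bimodule structure on $\CO_{X_{\et}}$. To show they are quasi-isomorphisms, I would proceed by d\'evissage: every lfgu module sits in a sequence of extensions whose simple constituents are pushed forward from smooth locally closed strata, and via the localization triangle from \autoref{Cartiertriangle} (transported to the unit-module side) together with the Kashiwara equivalence \autoref{Kashiwaraunit}, one reduces to the case $\CM = \CO_X$ on a smooth scheme, where the claim is the Artin-Schreier base case.

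For part (b), the isomorphism $\Solu \circ f^! \cong f^* \circ \Solu$ is nearly formal from the explicit formula $f^!\CM^{\bullet} = \CO_{F,X\to Y}\derotimes_{f^{-1}\CO_{F,Y}} f^{-1}\CM^{\bullet}[d_{X/Y}]$ and the identification $f^{-1}\CO_{Y_{\et}} \to \CO_{X_{\et}}$, after matching shifts. For allowable $f = g \circ h$ with $h$ an immersion and $g$ proper smooth, the compatibility $\Solu \circ f_+ \cong f_! \circ \Solu$ splits into two cases. Closed immersions are handled by the Kashiwara equivalence on both sides; open immersions follow from the fact that $j_+$ agrees with the underived direct image on unit modules, matching $j_!$ on the \'etale side; and the proper smooth case is obtained by combining Grothendieck-Serre duality on $X_{\et}$ with the adjunction of \autoref{EmKisadj} (generalized in \autoref{Theorem}) to convert the trace into a counit identifying $f_+$ with the left adjoint of $f^*[\, -2d_{X/Y}](-d_{X/Y})$-type shift encoded in $f_!$.

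For part (c), I would verify the perverse $t$-structure condition pointwise. By \autoref{Gabberrecollement}, the conditions $H^i(i_x^*\CL^{\bullet})=0$ for $i > -\dim\overline{\{x\}}$ and $H^i(i_x^!\CL^{\bullet})=0$ for $i < -\dim\overline{\{x\}}$ can be tested after recollement along strata. Applying the compatibility of $\Solu$ with $f^!$ from (b), together with Grothendieck-Serre duality on $X_{\et}$ to translate $i_x^*$ into a twist of $i_x^!$, the perversity condition becomes a cohomological vanishing condition on the corresponding lfgu module that matches exactly the standard $t$-structure bounds once the shift $[d_X]$ and the codimension offsets from the strata are accounted for. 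The main obstacle in the whole argument is the bookkeeping: ensuring that the shifts in the d\'evissage of (a) and the dimension functions in (c) are tracked consistently across strata of varying codimension, and that the subcategories involved (lfgu, constructible, perverse) are genuinely preserved by all the functors used. Once this organization is in place, each step reduces to a standard homological computation or to an invocation of an earlier result in the paper.
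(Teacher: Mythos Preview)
The paper does not prove this theorem: its proof consists entirely of the citation ``This is \cite[Theorem 11.4.2 and Theorem 11.5.4]{EmKis.Fcrys}.'' The statement is a summary of the main results of Emerton and Kisin's monograph, quoted here as background, so your attempt to reconstruct an argument goes well beyond what the paper itself does.

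That said, your sketch has a few slips worth flagging. In part (b), your handling of open immersions is off: for an open immersion $j$, the functor $j_+$ is the \emph{derived} direct image $Rj_*$ (see \autoref{openforward}), not the underived one, and the identification with $j_!$ on the \'etale side goes through the duality built into $\RSHom$, not through a direct matching of underived functors. Your remark about $f_!$ being ``the left adjoint of $f^*[-2d_{X/Y}](-d_{X/Y})$'' imports the Tate twist from the $\ell$-adic setting, which has no analogue for $\ZZ/p\ZZ$-coefficients in characteristic $p$; the shift bookkeeping here is genuinely different. In part (a), the appeal to \autoref{Cartiertriangle} is misplaced, since that triangle lives on the Cartier-crystal side rather than among unit $\CO_{F,X}$-modules; the relevant localization sequence on the unit side is \cite[Proposition 5.11.5]{EmKis.Fcrys}. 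None of this is fatal to the overall shape of the d\'evissage you describe, but it confirms that a complete argument really does require the full apparatus of \cite{EmKis.Fcrys}, which is why the paper simply cites it.
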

\begin{proof}
	This is \cite[Theorem 11.4.2 and Theorem 11.5.4]{EmKis.Fcrys}. 
\end{proof}

\subsection{Cartier crystals and lfgu modules on singular schemes}

We show that the equivalence 
\[
\G\colon D_{\crys}^b(\QCrysC(X)) \overset{\sim}{\longrightarrow} D_{\lfgu}^b(\CO_{F,X})
\]
for smooth $X$ extends to an equivalence for embeddable $X$. As a consequence, for a morphism $f$ between smooth schemes, the inverse equivalences $f_+$ and $R\Gamma_Zf^!$ between the subcategories of complexes supported on a closed subscheme are $t$-exact.   
\begin{proposition} \label{singularG}
	Let $k$ be a perfect field and let $X$ be an embeddable $k$-scheme. The functor $\G$ induces an equivalence of categories
	\[
	\G\colon D_{\crys}^b(\QCrysC(X)) \to D_{\lfgu}^b(\CO_{F,X}).
	\]
\end{proposition}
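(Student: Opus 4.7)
The plan is to reduce the singular case to the smooth case via an embedding and the Kashiwara-type equivalences available on both sides. Fix a closed immersion $i\colon X \into Y$ into a smooth $k$-scheme $Y$, and let $j\colon U := Y \setminus X \into Y$ be the complementary open immersion (both $Y$ and $U$ are smooth). By \autoref{Kashiwara}, push-forward $i_*$ yields an equivalence
\[
	i_*\colon D_{\crys}^b(\QCrysC(X)) \overset{\sim}{\longrightarrow} D_{\crys}^b(\QCrysC(Y))_X,
\]
so it suffices to construct $\G_X$ as the composition of $i_*$ with the restriction of the already established equivalence $\G_Y\colon D_{\crys}^b(\QCrysC(Y)) \to D_{\lfgu}^b(\CO_{F,Y})$ to the full subcategories of complexes supported on $X$.

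The key verification is thus that $\G_Y$ preserves the support condition. For this I would use the characterization of both $D_{\crys}^b(\QCrysC(Y))_X$ and $D_{\lfgu}^b(\CO_{F,Y})_X$ in terms of vanishing on the complement: an object $\CM^\bullet$ lies in $D_{\crys}^b(\QCrysC(Y))_X$ iff $j^!\CM^\bullet \cong 0$ (\autoref{Cartiersupport}), and analogously an object of $D_{\lfgu}^b(\CO_{F,Y})$ lies in $D_{\lfgu}^b(\CO_{F,Y})_X$ iff its pull-back under $j$ vanishes. Since $j$ is an open immersion of smooth schemes, \autoref{Gopenresult} gives a natural isomorphism $\G_U \circ j^! \cong j^! \circ \G_Y$ (where on the Cartier side $j^!$ is just $j^*$), and as $\G_U$ is an equivalence we conclude that $\G_Y$ sends complexes with $j^!\CM^\bullet \cong 0$ to complexes with $j^!\G_Y(\CM^\bullet) \cong 0$ and vice versa. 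Hence $\G_Y$ restricts to an equivalence $D_{\crys}^b(\QCrysC(Y))_X \overset{\sim}{\longrightarrow} D_{\lfgu}^b(\CO_{F,Y})_X$, and the composition
\[
	\G_X \defeq \G_Y \circ i_*\colon D_{\crys}^b(\QCrysC(X)) \longrightarrow D_{\lfgu}^b(\CO_{F,Y})_X = D_{\lfgu}^b(\CO_{F,X})
\]
is the desired equivalence.

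Finally, one should remark that this construction is canonically independent of the chosen embedding, so that $\G_X$ is well-defined intrinsically. The left-hand side $D_{\crys}^b(\QCrysC(X))$ is intrinsic to $X$, the right-hand side is independent of the embedding by \autoref{embeddingindep}, and the identifications used to prove both invariances (Kashiwara on the Cartier side and the $f_+ \circ R\Gamma_Z f^!$ adjunction equivalences of \autoref{adjunctionequiv} on the lfgu side) intertwine via the already established compatibility of $\G$ with push-forward and shrieks for closed/open immersions of smooth schemes. The only mildly delicate point in the whole argument is arranging the support-preservation cleanly, since $X$ itself is not smooth and one must argue via the smooth complement; once that is isolated and reduced to \autoref{Gopenresult}, the remainder of the proof is essentially formal.
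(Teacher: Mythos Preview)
Your argument is correct and follows the same route as the paper: embed, use Kashiwara on the Cartier side, and use \autoref{Gopenresult} to see that $\G_Y$ preserves the support condition via vanishing of $j^!$ on the smooth complement. The only point where you are slightly less precise than the paper is the independence-of-embedding step: the comparison morphism $f\colon Y_1 \to Y_2$ between two ambient smooth schemes (obtained via the product $Y_1\times_k Y_2$) is in general a smooth projection, not an immersion, so the compatibility you need is $\G_{Y_2}\circ Rf_* \cong f_+\circ \G_{Y_1}$ for an arbitrary morphism of smooth $k$-schemes, i.e.\ \autoref{GammalfguForward}, rather than only the immersion cases you cite.
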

\begin{proof}
	Choose a closed immersion $i\colon X \to Y$ into a smooth $k$-scheme $Y$ and let $j$ denote the open immersion of the complement of $X$ in $Y$. The Kashiwara equivalence (\autoref{Kashiwara}) identifies $D_{\crys}^b(\QCrysC(X))$ with the subcategory $D_{\crys}^b(\QCrysC(Y))_X$ of $D_{\crys}^b(\QCrysC(Y))$. For $\CM^{\bullet} \in D_{\crys}^b(\QCrysC(Y))_X$ we have
	\[
	(j^! \circ \G_Y)\CM^{\bullet} \cong (\G_U \circ j^*)\CM^{\bullet} \cong 0
	\]
	by \autoref{Gopenresult}. As $\G$ is an equivalence of categories, there is also a natural isomorphism of functors $j^* \circ \G_Y^{-1} \cong \G_U^{-1} \circ j^!$ for the inverse $\G^{-1}$ of $\G$. It follows that $\G$ induces an equivalence of subcategories 
	\[
	\G\colon D_{\crys}^b(\QCrysC(Y))_X \to D_{\lfgu}^b(\CO_{F,Y})_X.
	\]
	It remains to show that this equivalence is independent of the choice of the embedding. In the same way as in the proof of \autoref{embeddingindep} we can reduce to the case of two closed immersions $i_1\colon X \to Y_1$ and $i_2\colon X \to Y_2$ into smooth $k$-schemes $Y_1$ and $Y_2$ together with a morphism $f\colon Y_1 \to Y_2$ such that $i_2 = f \circ i_1$. The composition $i_{2*} \circ i_1^!$ is a natural equivalence between $D_{\crys}^b(\QCrysC(Y_1))_X$ and $D_{\crys}^b(\QCrysC(Y_2))_X$. Note that 
	\[
	i_{2*}i_1^! \CM^{\bullet} \cong Rf_*i_{1*}i_1^! \CM^{\bullet} \cong Rf_*\CM^{\bullet}
	\]
	for $\CM^{\bullet} \in D_{\crys}^b(\QCrysC(Y_1))_X$. Hence $Rf_*$ is a natural equivalence of categories
	\[
	Rf_*\colon D_{\crys}^b(\QCrysC(Y_1))_X \to D_{\crys}^b(\QCrysC(Y_2))_X
	\]
	which is compatible with $\G$, i.e.
	\[
	f_+ \circ \G_{Y_1} \cong \G_{Y_2} \circ Rf_*
	\]
	by \autoref{GammalfguForward}.
\end{proof}
\begin{remark}
	This also implies that $f_+$ provides a natural equivalence 
	\[
	f_+\colon D_{\lfgu}^b(\CO_{F,Y_1})_X \to D_{\lfgu}^b(\CO_{F,Y_2})_X
	\] 
	because $f_+ \cong \G_{Y_2} \circ Rf_* \circ \G_{Y_1}^{-1}$.
\end{remark} 
Keeping the notation of the proof of \autoref{singularG}, the canonical $t$-structure of $D_{\lfgu}^b(\CO_{F,Y})$ obviously induces a $t$-structure on the subcategory $D_{\lfgu}^b(\CO_{F,Y})_X$ defined by the two subcategories
\[
D_{\lfgu}^b(\CO_{F,Y})_X \cap D_{\lfgu}^{\geq 0}(\CO_{F,Y}) \text{ and } D_{\lfgu}^b(\CO_{F,Y})_X \cap D_{\lfgu}^{\leq 0}(\CO_{F,Y}). 
\]
\begin{corollary} \label{lfguexact}
	Let $f\colon Y_1 \to Y_2$ be a morphism between smooth schemes over a perfect field $k$. Let $i_1\colon X \to Y_1$ and $i_2\colon X \to Y_2$ be closed immersions such that $i_2 = f \circ i_1$. The equivalence $f_+$ of \autoref{embeddingindep} between $D_{\lfgu}^b(\CO_{F,Y_1})_X$ and $D_{\lfgu}^b(\CO_{F,Y_2})_X$ is $t$-exact for the canonical $t$-structures of both derived categories. In particular, by taking $0$-th cohomology, it gives rise to an equivalence of abelian categories
	\[
	\{\mu_{\lfgu}(Y_1)_X\} \overset{\sim}{\longrightarrow} \{\mu_{\lfgu}(Y_2)_X\}.
	\]
\end{corollary}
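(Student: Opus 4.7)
The plan is to transport $t$-exactness along the equivalence $\G$ established in \autoref{singularG}, and to reduce the statement to the Kashiwara equivalence for Cartier crystals, which is manifestly $t$-exact.

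First I would invoke \autoref{singularG} and its proof to identify the equivalence $f_+\colon D_{\lfgu}^b(\CO_{F,Y_1})_X \overset{\sim}{\to} D_{\lfgu}^b(\CO_{F,Y_2})_X$ with the equivalence
\[
	Rf_*\colon D_{\crys}^b(\QCrysC(Y_1))_X \overset{\sim}{\longrightarrow} D_{\crys}^b(\QCrysC(Y_2))_X
\]
via the natural isomorphism $\G_{Y_2} \circ Rf_* \cong f_+ \circ \G_{Y_1}$ from \autoref{GammalfguForward}. Since $\G$ is the composition of the exact functor $\usc \otimes \omega^{-1}$ and the exact functor $\Gen$, it is $t$-exact for the canonical $t$-structures; hence it suffices to prove that $Rf_*$ is $t$-exact on the subcategories of crystals supported on $X$.

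Next I would use the Kashiwara equivalence \autoref{Kashiwara} to write $D_{\crys}^b(\QCrysC(Y_\ell))_X \cong D_{\crys}^b(\QCrysC(X))$ via the exact functors $i_{\ell*}$ (for $\ell=1,2$); note that $i_{\ell*}$ is $t$-exact because $i_\ell$ is a closed immersion (so $i_{\ell*}$ is an exact functor of abelian categories, no derivation needed), and therefore its quasi-inverse $i_\ell^!$ is also $t$-exact when restricted to the subcategory of complexes supported on $X$. For any object of the form $i_{1*}\CN^{\bullet}$ with $\CN^{\bullet} \in D_{\crys}^b(\QCrysC(X))$, functoriality of push-forward together with $i_2 = f \circ i_1$ gives
\[
	Rf_*(i_{1*}\CN^{\bullet}) \cong R(f \circ i_1)_*\CN^{\bullet} \cong i_{2*}\CN^{\bullet},
\]
since $i_{1*}$ and $i_{2*}$ are already exact. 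Thus on $D_{\crys}^b(\QCrysC(Y_1))_X$ the functor $Rf_*$ agrees with the composition $i_{2*} \circ i_1^!$, which is a composition of two $t$-exact equivalences and is therefore itself $t$-exact. Composing with the $t$-exact equivalence $\G$ on both sides establishes $t$-exactness of $f_+$.

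Finally, taking the heart of the canonical $t$-structure on both derived categories yields $\mu_{\lfgu}(Y_\ell)_X$ in degree $0$, so the $t$-exact equivalence $f_+$ restricts to an equivalence of abelian categories $\mu_{\lfgu}(Y_1)_X \overset{\sim}{\to} \mu_{\lfgu}(Y_2)_X$, as claimed. I do not anticipate a serious obstacle: all nontrivial ingredients (the equivalence $\G$, its compatibility with $Rf_*$, the Kashiwara equivalence on Cartier crystals, and the equivalence in \autoref{adjunctionequiv}) are already available in the preceding sections; the only verification required is the routine identity $Rf_* \circ i_{1*} \cong i_{2*}$, which reduces to $i_2 = f \circ i_1$ together with the exactness of $i_{\ell*}$.
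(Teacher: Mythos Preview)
Your argument is correct and follows essentially the same route as the paper: one writes $f_+ \cong \G_{Y_2} \circ Rf_* \circ \G_{Y_1}^{-1}$, identifies $Rf_*$ on the supported subcategory with $i_{2*} \circ i_1^!$ via the Kashiwara equivalence (using $Rf_* i_{1*} \cong i_{2*}$), and observes that each factor is $t$-exact. The paper's proof is just a more compressed version of exactly this.
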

\begin{proof}
	The functor $f_+$ is a composition of $t$-exact functors:  
	\[
	f_+ \cong \G_{Y_2} \circ Rf_* \circ \G_{Y_1}^{-1} \cong \G_{Y_2} \circ i_{2*} \circ i_1^! \circ \G_{Y_1}^{-1},
	\]
	where $Rf_*$ denotes the restricted functor from $D_{\crys}^b(\QCrysC(Y_1))_X$ to $D_{\crys}^b(\QCrysC(Y_2))_X$. It is exact because $Rf_* \cong Rf_*i_{1*}i_1^! \cong i_{2*} i_1^!$.
\end{proof}

\subsection{A Riemann-Hilbert correspondence on singular schemes}

Now we extend the Riemann-Hilbert correspondence between lfgu modules and constructible \'etale $\ZZ/p\ZZ$-sheaves to embeddable schemes. The corresponding equivalence of categories 
\[
D_{\lfgu}^b(\CO_{F,X}) \overset{\sim}{\longrightarrow} D_c^b(X_{\et},\ZZ/p\ZZ)
\]
for embeddable $X$ will be $t$-exact for the canonical $t$-structure on $D_{\lfgu}^b(\CO_{F,X})$ and Gabber's perverse $t$-structure on $D_c^b(X_{\et},\ZZ/p\ZZ)$. Again, for a closed subscheme $Z$ of $X$, let $j\colon U \to X$ denote the open immersion of the complement of $Z$ into $X$. 

Recall that there are distinguished triangles
\[
j_!j^* \longrightarrow \id \longrightarrow i_*i^* \longrightarrow j_!j^*[1]
\]
and
\[
i_*i^! \longrightarrow \id \longrightarrow j_*j^* \longrightarrow i_*i^![1]
\]
in $D^+(X_{\et},\ZZ/p\ZZ)$ (\cite[1.4.1.1]{BBD}). Defining $\Gamma_Z\colon D(X_{\et},\ZZ/p\ZZ) \to D(X_{\et},\ZZ/p\ZZ)$ as the composition $i_*i^*$ of exact functors we obtain a \emph{fundamental triangle of local cohomology}
\[
j_!j^* \longrightarrow \id \longrightarrow \Gamma_Z \to j_!j^*[1].
\]
Note that $i_!=i_*$ because $i$ is a closed immersion.
\begin{lemma} \label{SolGamma}
	Let $Z$ be a closed subscheme of a smooth $k$-scheme $X$. Then there is a natural isomorphism of functors
	\[
	\Solu \circ R\Gamma_Z \overset{\sim}{\longrightarrow} \Gamma_Z \circ \Solu.
	\]
\end{lemma}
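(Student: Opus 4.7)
The plan is to exploit the fundamental triangle of local cohomology on both sides of the Riemann--Hilbert correspondence and then use that the anti-equivalence $\Solu$ intertwines $Rj_*$ with $j_!$ and $j^*$ with itself. Let $j\colon U = X \setminus Z \to X$ denote the open complement. Under the equivalence $\G\colon D_{\crys}^b(\QCrysC(X)) \to D_{\lfgu}^b(\CO_{F,X})$, the distinguished triangle of \autoref{Cartiertriangle}, combined with its compatibility with $Rj_*$ and $j^*$ established in \autoref{Gopenresult}, translates into a functorial distinguished triangle
\[
R\Gamma_Z \CM^\bullet \to \CM^\bullet \to Rj_* j^* \CM^\bullet \to R\Gamma_Z \CM^\bullet [1]
\]
in $D_{\lfgu}^b(\CO_{F,X})$. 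On the étale side one has the standard fundamental triangle of local cohomology
\[
j_! j^* \CL^\bullet \to \CL^\bullet \to \Gamma_Z \CL^\bullet \to j_! j^* \CL^\bullet [1]
\]
in $D_c^b(X_{\et},\ZZ/p\ZZ)$, which holds by the very definition $\Gamma_Z = i_* i^*$.

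The next step is to apply the contravariant equivalence $\Solu$ to the lfgu fundamental triangle, producing the distinguished triangle
\[
\Solu Rj_* j^* \CM^\bullet \to \Solu \CM^\bullet \to \Solu R\Gamma_Z \CM^\bullet \to \Solu Rj_* j^* \CM^\bullet [1].
\]
Since $j$ is an open immersion of smooth schemes, one has $Rj_* = j_+$ and $j^* = j^!$, and $j$ is trivially allowable as an immersion. By \autoref{Solproperties} there are natural isomorphisms $\Solu \circ j^! \cong j^* \circ \Solu$ and $\Solu \circ j_+ \cong j_! \circ \Solu$, whose composition gives a natural identification $\Solu \circ Rj_* j^* \cong j_! j^* \circ \Solu$. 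With this identification, the triangle above takes the form
\[
j_! j^* \Solu \CM^\bullet \to \Solu \CM^\bullet \to \Solu R\Gamma_Z \CM^\bullet \to j_! j^* \Solu \CM^\bullet [1],
\]
which shares its first two vertices and the connecting arrow with the étale fundamental triangle applied to $\Solu \CM^\bullet$. Completing the identity morphism on the first two vertices to a morphism of distinguished triangles (axiom TR3) and invoking the triangulated five-lemma on the third vertex then produces a natural isomorphism $\Solu R\Gamma_Z \CM^\bullet \overset{\sim}{\to} \Gamma_Z \Solu \CM^\bullet$.

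The main delicate point is checking that, under the identification $\Solu \circ Rj_* j^* \cong j_! j^* \circ \Solu$, the image under $\Solu$ of the adjunction morphism $\CM^\bullet \to Rj_* j^* \CM^\bullet$ agrees with the $j_!$-counit $j_! j^* \Solu \CM^\bullet \to \Solu \CM^\bullet$. This is precisely the statement that the compatibility isomorphisms $\Solu \circ j_+ \cong j_! \circ \Solu$ and $\Solu \circ j^! \cong j^* \circ \Solu$ of Emerton--Kisin intertwine the adjunction morphisms on the two sides, and is built into the construction of those isomorphisms. Functoriality of the resulting isomorphism $\Solu \circ R\Gamma_Z \cong \Gamma_Z \circ \Solu$ in $\CM^\bullet$ is then automatic from the functoriality of each of the two fundamental triangles and of the identifications used.
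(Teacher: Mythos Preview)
Your approach is essentially the same as the paper's: compare the two fundamental triangles of local cohomology through the Riemann--Hilbert equivalence, using that $\Solu$ exchanges $(j_+, j^!)$ with $(j_!, j^*)$. The paper carries this out with the quasi-inverse $\M$ rather than with $\Solu$, but that is an immaterial difference.

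There is, however, a genuine gap in your naturality argument. You produce the isomorphism $\Solu R\Gamma_Z \CM^\bullet \cong \Gamma_Z \Solu \CM^\bullet$ by invoking TR3 to complete the identity on the first two vertices of the triangle, and then assert that ``functoriality\ldots is then automatic from the functoriality of each of the two fundamental triangles.'' This is not correct: TR3 only guarantees the \emph{existence} of a fill-in, not its uniqueness or naturality, so the resulting object-by-object isomorphisms need not assemble into a natural transformation. The paper avoids this problem by constructing the comparison map directly from a universal property: one first observes that $\M(\Gamma_Z\CL^\bullet)$ is supported on $Z$ (because $j^!\M \cong \M j^*$), so the natural map $\M(\Gamma_Z\CL^\bullet) \to \M(\CL^\bullet)$ factors canonically through $R\Gamma_Z\M(\CL^\bullet)$. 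This factoring is manifestly functorial in $\CL^\bullet$, and only \emph{afterwards} is the triangle comparison used to check that this already-natural map is an isomorphism. You can repair your argument the same way: note that $j^*\Solu R\Gamma_Z\CM^\bullet \cong \Solu j^! R\Gamma_Z\CM^\bullet = 0$, so $\Solu R\Gamma_Z\CM^\bullet$ lies in the essential image of $i_*$, and hence the map $\Solu\CM^\bullet \to \Solu R\Gamma_Z\CM^\bullet$ factors uniquely through $\Gamma_Z\Solu\CM^\bullet = i_*i^*\Solu\CM^\bullet$.
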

\begin{proof}
	We show that there is a natural isomorphism 
	\[
	\M \circ \Gamma_Z \overset{\sim}{\longrightarrow} R\Gamma_Z \circ \M,
	\] 
	where $\M$ is the quasi-inverse of $\Solu$, see \autoref{Solproperties}. The natural isomorphism $j^! \circ \M_X \cong \M_U \circ j^*$ implies that $\M(\Gamma_Z\CL^{\bullet})$ is supported on $Z$ for every complex $\CL^{\bullet}$. Consequently, the morphism $\M(\Gamma_Z\CL^{\bullet}) \to \M(\CL^{\bullet})$ induced by the natural map $\CL^{\bullet} \to \Gamma_Z\CL^{\bullet}$, which is defined by the fundamental triangle of local cohomology above, factors through $R\Gamma_Z\M(\CL^{\bullet})$. This gives rise to a morphism of distinguished triangles
	\[
	\xymatrix{
		\M(\Gamma_Z\CL^{\bullet}) \ar[r] \ar[d] & \M(\CL^{\bullet}) \ar@{=}[d] \ar[r] & M(j_!j^*\CL^{\bullet}) \ar[d]^{\sim} \\
		R\Gamma_Z\M(\CL^{\bullet}) \ar[r] & \M(\CL^{\bullet}) \ar[r] & j_+j^!\M(\CL^{\bullet}),
	}
	\]
	where the horizontal arrows are the natural morphisms and the second and third vertical arrow is an isomorphism. Hence the vertical arrow on the left is an isomorphism.
\end{proof} 
\begin{lemma}
	For a closed subscheme $Z$ of a scheme $X$, Gabber's perverse $t$-structure on $D_c^b(X_{\et},\ZZ/p\ZZ)$ induces a $t$-structure on $D_c^b(X_{\et},\ZZ/p\ZZ)_Z$ given by
	\begin{align*}
	{^p}D^{\geq 0}(X_{\et})_Z = D_c^b(X_{\et},\ZZ/p\ZZ)_Z \cap {^p}D^{\geq 0}(X_{\et}), \\
	{^p}D^{\leq 0}(X_{\et})_Z = D_c^b(X_{\et},\ZZ/p\ZZ)_Z \cap {^p}D^{\leq 0}(X_{\et}).
	\end{align*}
\end{lemma}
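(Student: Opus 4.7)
The two axioms concerning shifts, namely $\,{^p}D^{\leq 0}(X_{\et})_Z \subseteq {^p}D^{\leq 1}(X_{\et})_Z$ and $\,{^p}D^{\geq 1}(X_{\et})_Z \subseteq {^p}D^{\geq 0}(X_{\et})_Z$, together with the Hom-vanishing
\[
	\Hom_{D_c^b(X_{\et},\ZZ/p\ZZ)}(A^{\bullet},B^{\bullet}) = 0 \quad \text{for } A^{\bullet} \in {^p}D^{\leq 0}(X_{\et})_Z,\ B^{\bullet} \in {^p}D^{\geq 1}(X_{\et})_Z,
\]
are inherited directly from the ambient perverse $t$-structure on $D_c^b(X_{\et},\ZZ/p\ZZ)$, since the defining conditions are intersected with the triangulated subcategory $D_c^b(X_{\et},\ZZ/p\ZZ)_Z$.

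It remains to verify the truncation axiom: for any $\CL^{\bullet} \in D_c^b(X_{\et},\ZZ/p\ZZ)_Z$ there exists a distinguished triangle
\[
	A^{\bullet} \longrightarrow \CL^{\bullet} \longrightarrow B^{\bullet} \longrightarrow A^{\bullet}[1]
\]
with $A^{\bullet} \in {^p}D^{\leq 0}(X_{\et})_Z$ and $B^{\bullet} \in {^p}D^{\geq 1}(X_{\et})_Z$. The plan is to take the truncation triangle coming from Gabber's $t$-structure on the ambient category,
\[
	{^p}\tau^{\leq 0}\CL^{\bullet} \longrightarrow \CL^{\bullet} \longrightarrow {^p}\tau^{\geq 1}\CL^{\bullet} \longrightarrow {^p}\tau^{\leq 0}\CL^{\bullet}[1],
\]
and show that both outer terms are supported on $Z$.

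For this, let $j\colon U \to X$ denote the open immersion of the complement of $Z$. By the recollement description of Gabber's $t$-structure (\autoref{Gabberrecollement}), the functor $j^*$ preserves both ${^p}D^{\leq 0}$ and ${^p}D^{\geq 0}$, hence is $t$-exact. Applying $j^*$ to the above triangle therefore yields a distinguished triangle whose middle term $j^*\CL^{\bullet}$ vanishes by assumption and whose outer terms are perverse truncations of $j^*\CL^{\bullet} = 0$. By the uniqueness (up to unique isomorphism) of truncation triangles, $j^*\,{^p}\tau^{\leq 0}\CL^{\bullet} \cong 0$ and $j^*\,{^p}\tau^{\geq 1}\CL^{\bullet} \cong 0$, so both $\,{^p}\tau^{\leq 0}\CL^{\bullet}$ and $\,{^p}\tau^{\geq 1}\CL^{\bullet}$ lie in $D_c^b(X_{\et},\ZZ/p\ZZ)_Z$. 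The only nontrivial step in the proof is this use of the $t$-exactness of $j^*$, and it is immediate from the recollement description. This establishes the truncation axiom and completes the proof.
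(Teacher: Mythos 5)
Your proof is correct, but it takes a genuinely different route from the paper's. The paper proves the truncation axiom by unwinding Gabber's explicit construction of the perverse truncation functor ${^p}\tau_{\leq 0}$ --- the iterative system $\tau_{\leq p_d}$ built from Godement resolutions, with $\tau_{\leq p}$ the direct limit --- and checking at each stage that a (sub)complex built from a complex supported on $Z$ is again supported on $Z$. You instead argue formally: the shift and $\Hom$-vanishing axioms pass to any strictly full triangulated subcategory, and for the truncation axiom you apply $j^*$ to the ambient truncation triangle, use the $t$-exactness of $j^*$ (immediate from the pointwise definition of Gabber's $t$-structure, or from the recollement description in \autoref{Gabberrecollement}), and conclude from the uniqueness of truncation triangles that both outer terms of the triangle for $\CL^{\bullet}$ die under $j^*$, hence lie in $D_c^b(X_{\et},\ZZ/p\ZZ)_Z$. (Equivalently: the triangle $j^*{^p}\tau^{\leq 0}\CL^{\bullet} \to 0 \to j^*{^p}\tau^{\geq 1}\CL^{\bullet} \to$ forces $j^*{^p}\tau^{\geq 1}\CL^{\bullet} \cong j^*{^p}\tau^{\leq 0}\CL^{\bullet}[1]$ to lie in ${^p}D^{\leq -1} \cap {^p}D^{\geq 1} = 0$.) Your argument is shorter, independent of the internals of Gabber's construction, and applies verbatim to any $t$-structure on the ambient category for which $j^*$ is $t$-exact; the paper's argument yields the marginally stronger statement that the explicit truncation complex is literally supported on $Z$, but that extra precision is not needed for the lemma.
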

\begin{proof}
	We consider the construction of the perverse truncation functor ${^p}\tau_{\leq 0}$ in \cite{Gabber.tStruc} in more detail. It will turn out that ${^p}\tau_{\leq 0}\CL^{\bullet}$ is supported on $Z$ for all $\CL^{\bullet} \in D_c^b(X_{\et},\ZZ/p\ZZ)_Z$. For simplicity we write $\tau_{\leq p}$ for ${^p}\tau_{\leq 0}$ where $p$ is a perversity function, see the first section of \cite{Gabber.tStruc}. For a complex $\CF^{\bullet}$, $C(\CF^{\bullet})$ denotes the total complex of the double complex $C^{\bullet}(\CF^{\bullet})$, where $C^{\bullet}(\CF^n)$ is the Godement resolution of $\CF^n$.   
	
	Let $c = -\dim X$. It is a lower bound for the perversity function $p(x) = -\dim \overline{\{x\}}$. For a complex $\CL^{\bullet}$, $d \geq c$ and $p_d(x)=\min (d,p(x))$, Gabber iteratively constructs a direct system $\tau_{\leq p_d}\CL^{\bullet}$ and defines $\tau_{\leq p}\CL^{\bullet}$ as the direct limit. We start with $p_c = c$ and the usual truncation $\tau_{p_c}\CL^{\bullet}=\tau_{\leq c}\CL^{\bullet}$. Clearly, if $\CL^{\bullet}$ is supported on $Z$, then so is $\tau_{p_c}\CL^{\bullet}$. Now for $\tau_{\leq p_d}\CF^{\bullet}$ of some complex $\CF^{\bullet}$, we construct $\tau_{^\leq p_{d+1}}\CF^{\bullet}$ as a subcomplex of $C(\CF^{\bullet})$. By the construction of the Godement resolution, $C(\CF^{\bullet})$ is supported on $Z$ if $\CF^{\bullet}$ is supported on $Z$. It follows that for every $d \geq c$, the complex $\tau_{\leq p_d}\CL^{\bullet}$ is supported on $Z$ and therefore the direct limit $\tau_{\leq p}\CL^{\bullet}$ is supported on $Z$. 
\end{proof}      
\begin{proposition} \label{constructibleKashiwara}
	Let $i\colon Z \to X$ be a closed immersion of schemes. 
	\begin{enumerate}
		\item The exact functors $i_*$ and $i^*$ are inverse equivalences of categories
		\[
		\xymatrix{
			D_c^b(Z_{\et},\ZZ/p\ZZ) \ar@<.5ex>[r]^-{i_*} & D_c^b(X_{\et},\ZZ/p\ZZ)_Z. \ar@<.5ex>[l]^-{i^*}
		}
		\]
		\item These functors $i_*\colon D_c^b(Z_{\et},\ZZ/p\ZZ) \to D_c^b(X_{\et},\ZZ/p\ZZ)_Z$ and $i^*\colon D_c^b(X_{\et},\ZZ/p\ZZ)_Z \to D_c^b(Z_{\et},\ZZ/p\ZZ)$ are also $t$-exact with respect to the perverse $t$-structures of both categories.
	\end{enumerate}
\end{proposition}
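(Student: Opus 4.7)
For part (a), the plan is to reduce to standard properties of a closed immersion on the étale site. The functor $i_*$ coincides with $i_!$ on abelian $\ZZ/p\ZZ$-sheaves for a closed immersion and is therefore exact; $i^*$ is exact as a topological inverse image. Both preserve constructibility: $i^*$ simply restricts any stratification of $X$, while $i_*$ extends a stratification $Z = \coprod S_i$ by adjoining the open complement $U$ as a new stratum on which $i_*\CL$ vanishes. The natural identity $i^* i_* \cong \id$ is standard. Finally, for $\CL^\bullet \in D_c^b(X_{\et},\ZZ/p\ZZ)_Z$ one has $j^*\CL^\bullet = 0$, so the distinguished triangle
\[
	j_! j^*\CL^\bullet \longrightarrow \CL^\bullet \longrightarrow i_* i^*\CL^\bullet \longrightarrow j_! j^*\CL^\bullet[1]
\]
collapses to the isomorphism $i_* i^*\CL^\bullet \cong \CL^\bullet$, which establishes the equivalence.

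For part (b), the plan is to exploit the recollement description of Gabber's perverse $t$-structure recorded in \autoref{Gabberrecollement}. Because every $\CL^\bullet \in D_c^b(X_{\et},\ZZ/p\ZZ)_Z$ satisfies $j^*\CL^\bullet = 0$, the conditions on $j^*$ in that description are trivially satisfied in both the $\leq 0$ and $\geq 0$ halves. Hence, after intersecting with $D_c^b(X_{\et},\ZZ/p\ZZ)_Z$, one has $\CL^\bullet \in {^p}D^{\leq 0}(X_{\et})_Z$ iff $i^*\CL^\bullet \in {^p}D^{\leq 0}(Z_{\et})$, and $\CL^\bullet \in {^p}D^{\geq 0}(X_{\et})_Z$ iff $i^!\CL^\bullet \in {^p}D^{\geq 0}(Z_{\et})$. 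Combined with the standard identities $i^* i_* \cong \id$ and $i^! i_* \cong \id$ for a closed immersion in the torsion étale six-functor formalism, this immediately yields $t$-exactness of $i_*$. The $t$-exactness of $i^*$ then follows by transport across the equivalence from part (a): the $\leq 0$ direction is built into the recollement, while for the $\geq 0$ direction, writing $\CL^\bullet \cong i_*\CM^\bullet$ gives $i^*\CL^\bullet \cong \CM^\bullet \cong i^!\CL^\bullet$, so the $i^!$-condition characterizing ${^p}D^{\geq 0}(X_{\et})_Z$ transfers to the desired $i^*$-condition.

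No serious obstacle is anticipated. The two points that merit a brief word of justification are the identities $i^* i_* \cong \id$ and $i^! i_* \cong \id$, which are standard for closed immersions of torsion étale sheaves and can be cited from \cite{Milne}, and the compatibility of the perversity function $x \mapsto -\dim\overline{\{x\}}$ under $i$, which follows because the closure of $\{x\}$ inside $Z$ coincides with its closure in $X$ whenever $x \in Z$.
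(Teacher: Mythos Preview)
Your proof is correct. Part (a) follows the same line as the paper's proof. For part (b) you take a genuinely different, and somewhat slicker, route: you invoke the recollement description of the perverse $t$-structure (the paper's \autoref{Gabberrecollement}) and observe that on $D_c^b(X_{\et},\ZZ/p\ZZ)_Z$ the $j^*$-conditions are vacuous, so membership in ${^p}D^{\leq 0}$ resp.\ ${^p}D^{\geq 0}$ is detected by $i^*$ resp.\ $i^!$ alone; the identities $i^*i_*\cong\id$ and $i^!i_*\cong\id$ then give $t$-exactness of $i_*$ immediately, and $i^*$ follows by the equivalence. The paper instead verifies the pointwise conditions directly: for each $x\in Z$ it computes $i_x^*i_*\CL^\bullet$ and $i_x^!i_*\CL^\bullet$ via the factorization $i_x=i\circ\tilde{i}_x$ together with the isomorphism $i^!i_*\cong i^*i_*$, and checks that points of $U$ contribute nothing. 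Your argument is more structural and avoids unwinding the point-by-point definition; the paper's argument is slightly more self-contained in that it does not rely on the recollement remark.
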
 
\begin{proof}
	(a) This is a formal consequence of the distinguished triangle
	\[
	j_!j^* \longrightarrow \id \longrightarrow i_*i^* \longrightarrow j_!j^*[1]
	\]
	in $D^b(X_{\et},\ZZ/p\ZZ)$ and the fact that the natural map $\id \to i^*i_*$ is always an isomorphism.
	
	(b) It suffices to show that $i_*$ is $t$-exact with respect to the perverse $t$-structures, i.e.\ the essential image of ${^p}D^{\leq 0}(Z_{\et})$ under $i_*$ is contained in ${^p}D^{\leq 0}$ and the essential image of ${^p}D^{\geq 0}(Z_{\et})$ under $i_*$ is contained in ${^p}D^{\geq 0}$. For $x \in Z$ let $\tilde{i}_x$ denote the composition  
	\[
	\{x\} \to \Spec \CO_{Z_{\et},x} \to Z
	\] of canonical morphisms. For $\CL^{\bullet}$ in ${^p}D^{\leq 0}(Z_{\et})$, i.e.\ $H^n(\tilde{i}_x^*\CL^{\bullet})=0$ for every $x \in Z$ and every $n > -\dim \overline{\{x\}}$, we have 
	\begin{align*}
	H^n(i_x^*i_*\CL^{\bullet}) &\cong	H^n(\tilde{i}_x^*i^*i_*\CL^{\bullet}) \\
	&\cong H^n(\tilde{i}_x^*\CL^{\bullet}) \\
	&\cong 0
	\end{align*}
	for every $x \in Z$ and every $n>-\dim \overline{\{x\}}$. For $x \in U=X \backslash Z$, we even have $i_x^*i_*\CL^{\bullet} \cong 0$ because $j^*i_*\CL^{\bullet} \cong 0$ and hence $\tau^*i_*\CL^{\bullet} \cong 0$, where $\tau\colon \Spec \CO_{X_{\et},x} \to X$ is the natural morphism.
	
	Now let $\CL^{\bullet}$ be in ${^p}D^{\geq 0}(Z_{\et})$, i.e.\ $H^n(\tilde{i}_x^!\CL^{\bullet})=0$ for every $x \in Z$ and every $n<-\dim \overline{\{x\}}$. There is a natural isomorphism of functors
	\[
	i^!i_* \cong i^*i_*
	\]
	given by the composition of the natural isomorphisms $i^!i_* \to \id$ and $\id \to i^*i_*$ of \cite[1.4.1.2]{BBD}. Whence 
	\begin{align*}
	H^n(i_x^!i_*\CL^{\bullet}) &\cong H^n(\tilde{i}_x^!i^!i_*\CL^{\bullet}) \\
	&\cong H^n(\tilde{i}_x^!i^*i_*\CL^{\bullet}) \\
	&\cong H^n(\tilde{i}_x^!\CL^{\bullet}) \\
	&\cong 0
	\end{align*}
	for every $x \in Z$ and every $n<-\dim \overline{\{x\}}$. We have already seen that there is nothing to show for $x \in U$.
\end{proof}
\begin{definition}
	For a perfect field $k$ and a $k$-scheme $X$ which admits an embedding into a smooth $k$-scheme $Y$, we define
	\begin{align*}
	D_{\lfgu}^{\geq 0}(\CO_{F,X}) &= D_{\lfgu}^b(\CO_{F,Y})_X \cap D_{\lfgu}^{\geq 0}(\CO_{F,Y}), \\
	D_{\lfgu}^{\leq 0}(\CO_{F,X}) &= D_{\lfgu}^b(\CO_{F,Y})_X \cap D_{\lfgu}^{\leq 0}(\CO_{F,Y}). 
	\end{align*}
	These subcategories of $D_{\lfgu}^b(\CO_{F,X})$ form a natural $t$-structure. 
\end{definition}
The independence of these subcategories of the embedding into a smooth scheme follows from the fact that for a morphism $f\colon Y_1 \to Y_2$ between two smooth schemes over $k$, together with closed immersions $i_1\colon X \to Y_1$ and $i_2\colon X \to Y_2$ with $i_2 = f \circ i_1$, the equivalence
\begin{align*}
R\Gamma_Xf^! &\cong \operatorname{M} \circ \Sol \circ R\Gamma_Xf^!\\
&\cong \operatorname{M} \circ \Gamma_Xf^* \circ \Sol \\
&\cong \operatorname{M} \circ i_{1*}i_1^*f^*i_{2*}i_2^* \circ \Sol \\
&\cong \operatorname{M} \circ i_{1*}i_2^* \circ \Sol
\end{align*}
is a composition of $t$-exact functors, where $D_c^b(Y_{1,\et},\ZZ/p\ZZ)$ and $D_c^b(Y_{2,\et},\ZZ/p\ZZ)$ are equipped with the perverse $t$-structures (\autoref{Solproperties} and \autoref{constructibleKashiwara}). Therefore, $R\Gamma_Xf^!$ is $t$-exact.
\begin{theorem} \label{singularSol}
	Let $k$ be a perfect field and $X$ an embeddable $k$-scheme.
	\begin{enumerate}
		\item The equivalence $\Solu$ for smooth schemes induces an anti-equivalence of categories 
		\[
		\Solu\colon D_{\lfgu}^b(\CO_{F,X}) \to D_c^b(X_{\et},\ZZ/p\ZZ)
		\]
		\item The essential image of $D_{\lfgu}^{\geq 0}(\CO_{F,X})$ under this equivalence equals ${^p}D^{\leq 0}$ and the essential image of $D_{\lfgu}^{\leq 0}(\CO_{F,X})$ equals ${^p}D^{\geq 0}$. 
	\end{enumerate}
\end{theorem}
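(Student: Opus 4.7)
The plan is to transfer Emerton and Kisin's equivalence from a smooth ambient scheme along the Kashiwara-type equivalence for constructible sheaves, then verify that the resulting functor is independent of the embedding and $t$-exact in the desired sense.

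Fix a closed immersion $i\colon X \hookrightarrow Y$ into a smooth $k$-scheme, with open complement $j\colon U \hookrightarrow Y$. Since $j^!=j^*$ for an open immersion, \autoref{Solproperties}(b) gives a natural isomorphism $j^*\Solu_Y \cong \Solu_U j^!$, so $\CM^{\bullet}\in D_{\lfgu}^b(\CO_{F,Y})$ is supported on $X$ if and only if $\Solu_Y(\CM^{\bullet})$ is supported on $X$. Hence $\Solu_Y$ restricts to an anti-equivalence $D_{\lfgu}^b(\CO_{F,Y})_X \to D_c^b(Y_{\et},\ZZ/p\ZZ)_X$, and composing with the quasi-inverse $i^*$ of the constructible Kashiwara equivalence (\autoref{constructibleKashiwara}(a)) yields the desired anti-equivalence
\[
	\Solu_X := i^*\circ \Solu_Y\colon D_{\lfgu}^b(\CO_{F,X}) \longrightarrow D_c^b(X_{\et},\ZZ/p\ZZ).
\]

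To establish independence of the embedding, I would follow the reduction in the proof of \autoref{embeddingindep}: by passing through the closed immersion $(i_1,i_2)\colon X \to Y_1\times_k Y_2$ and its two projections, it suffices to treat the case of a morphism $f\colon Y_1 \to Y_2$ of smooth $k$-schemes satisfying $f\circ i_1 = i_2$. Combining $\Solu_{Y_1}\circ f^!\cong f^*\circ\Solu_{Y_2}$ from \autoref{Solproperties}(b) with $\Solu\circ R\Gamma_X\cong \Gamma_X\circ \Solu$ from \autoref{SolGamma}, for $\CM^{\bullet}\in D_{\lfgu}^b(\CO_{F,Y_2})_X$ one obtains
\[
	\Solu_{Y_1}(R\Gamma_X f^!\CM^{\bullet}) \cong \Gamma_X f^*\Solu_{Y_2}(\CM^{\bullet}) \cong f^*\Solu_{Y_2}(\CM^{\bullet}),
\]
the last step using that $\Solu_{Y_2}(\CM^{\bullet})$ is already supported on $X$. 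Applying $i_1^*$ then gives
\[
	i_1^*\Solu_{Y_1}\circ R\Gamma_X f^! \cong (fi_1)^*\Solu_{Y_2} = i_2^*\Solu_{Y_2},
\]
and since $R\Gamma_X f^!$ is the inverse of the equivalence $f_+$ supplied by \autoref{adjunctionequiv}, the two definitions of $\Solu_X$ agree up to natural isomorphism.

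For part (b), the $t$-structure on $D_{\lfgu}^b(\CO_{F,X})$ is by definition the restriction of the canonical $t$-structure on $D_{\lfgu}^b(\CO_{F,Y})$, while the perverse $t$-structure on $D_c^b(X_{\et},\ZZ/p\ZZ)$ corresponds via $i^*$ to the restricted perverse $t$-structure on $D_c^b(Y_{\et},\ZZ/p\ZZ)_X$ by \autoref{constructibleKashiwara}(b). Since $\Solu_Y$ swaps $D_{\lfgu}^{\geq 0}$ with ${}^p\!D^{\leq 0}$ and $D_{\lfgu}^{\leq 0}$ with ${}^p\!D^{\geq 0}$ on the smooth scheme $Y$ by \autoref{Solproperties}(c), and these $t$-exactness properties are respected by both restrictions, the corresponding exchange of subcategories holds for $\Solu_X$. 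The principal technical point is the embedding-independence above; once the compatibility of $\Solu\circ f^!$ with $f^*\circ\Solu$ is threaded through local cohomology and the generalized adjunction of Section 4, the $t$-exactness statement of (b) is a formal consequence of results already recorded in the preceding sections.
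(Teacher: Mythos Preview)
Your argument is correct and follows essentially the same route as the paper: restrict $\Solu_Y$ to the support-on-$X$ subcategories, then verify embedding-independence via $\Solu\circ R\Gamma_X f^! \cong \Gamma_X f^*\circ \Solu$ (combining \autoref{Solproperties}(b) and \autoref{SolGamma}), and finally read off the $t$-exactness from \autoref{Solproperties}(c) together with \autoref{constructibleKashiwara}(b).

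One small imprecision: the step ``$\Gamma_X f^*\Solu_{Y_2}(\CM^{\bullet}) \cong f^*\Solu_{Y_2}(\CM^{\bullet})$'' is not justified as written, since $f^*$ of a sheaf supported on $i_2(X)\subseteq Y_2$ is supported on $f^{-1}(i_2(X))\subseteq Y_1$, which may be strictly larger than $i_1(X)$. However this does not affect your conclusion: once you apply $i_1^*$ you have $i_1^*\Gamma_X = i_1^*i_{1*}i_1^* \cong i_1^*$ anyway, so the displayed chain terminating in $i_2^*\Solu_{Y_2}$ is valid regardless. You could simply drop that intermediate isomorphism and pass directly from $i_1^*\Gamma_X f^*\Solu_{Y_2}$ to $i_1^*f^*\Solu_{Y_2} = i_2^*\Solu_{Y_2}$.
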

\begin{proof}
	After choosing a closed immersion $i\colon X \to Y$ into a smooth $k$-scheme $Y$, we see that $\Solu$ restricts to an anti-equivalence
	\[
	\Solu\colon D_{\lfgu}^b(\CO_{F,Y})_X \to D_c^b(Y_{\et},\ZZ/p\ZZ)_X
	\] 
	in the same way as in the proof of \autoref{singularG}. For the proof of the independence of the choice of an embedding we again reduce to the situation of two closed immersions $i_1\colon X \to Y_1$ and $i_2\colon X \to Y_2$ together with a morphism $f\colon Y_1 \to Y_2$ such that $i_2=f \circ i_1$. We obtain natural equivalences of categories
	\[
	R\Gamma_Xf^!\colon D_{\lfgu}^b(\CO_{F,Y_2})_X \to D_{\lfgu}^b(\CO_{F,Y_1})_X
	\]
	and
	\[
	f^{-1}\Gamma_X\colon D_c^b(Y_{2,\et},\ZZ/p\ZZ)_X \to  D_c^b(Y_{1,\et},\ZZ/p\ZZ)_X,
	\]
	which are compatible with $\Solu$ because
	\[
	\Solu \circ R\Gamma_Xf^! \overset{\sim}{\longrightarrow} \Gamma_X \circ \Solu \circ f^! \overset{\sim}{\longrightarrow} \Gamma_X f^* \circ \Solu
	\]
	by \autoref{Solproperties} and \autoref{SolGamma}. This proves (a).
	
	The equivalence $\Solu$ not only restricts to an equivalence between $D_{\lfgu}^b(\CO_{F,Y})_X$ and $D_c^b(Y_{\et},\ZZ/p\ZZ)_X$ but also between $D_{\lfgu}^{\geq 0}(\CO_{F,Y})$ and ${^p}D^{\leq 0}(Y_{\et})$ (\autoref{Solproperties}). Therefore, $\Solu$ induces an equivalence
	\[
	D_{\lfgu}^b(\CO_{F,Y})_X \cap D_{\lfgu}^{\geq 0}(\CO_{F,Y}) \overset{\sim}{\longrightarrow} D_c^b(Y_{\et},\ZZ/p\ZZ)_X \cap {^p}D^{\leq 0}(Y_{\et}).
	\]
	By \autoref{constructibleKashiwara}, $D_c^b(Y_{\et},\ZZ/p\ZZ)_X \cap {^p}D^{\leq 0}(Y_{\et})$ is canonically equivalent to ${^p}D^{\leq 0}(X_{\et})$. 
	
	Analogously, one can show that the essential image of $D_{\lfgu}^{\leq 0}(\CO_{F,X})$ equals ${^p}D^{\geq 0}$.
\end{proof}
\begin{theorem} \label{RHCorrespondenceCrystals}
	Let $X$ be an embeddable $k$-scheme and assume that $k$ is perfect.
	\begin{enumerate}
		\item The equivalences $\G$ and $\Solu$ for smooth schemes induce equivalences 
		\[
		\G\colon D_{\crys}^b(\QCrysC(X)) \to D_{\lfgu}^b(\CO_{F,X}) \text{ and } \Solu\colon D_{\lfgu}^b(\CO_{F,X}) \to D_c^b(X_{\et},\ZZ/p\ZZ).
		\]
		This means that for a closed immersion $i\colon X \to Y$ with $Y$ smooth, there is a commutative diagram
		\[
		\xymatrix{
			& D_c^b(Y_{\et},\ZZ/p\ZZ) & D_c^b(X_{\et},\ZZ/p\ZZ) \ar@_{(->}[l] \\
			D_{\crys}^b(\QCrysC(Y)) \ar[r]^-{\G_Y} & D_{\lfgu}^b(\CO_{F,Y}) \ar[u]^{\Solu_Y} & \\
			D_{\crys}^b(\QCrysC(X)) \ar@^{(->}[u] \ar[rr]^-{\G_X} & & D_{\lfgu}^b(\CO_{F,X}). \ar@_{(->}[ul] \ar[uu]^{\Solu_X}
		}
		\]
		\item Let $\Sol$ be the composition $\Solu \circ \G$. The essential image of $D_{\crys}^{\geq 0}(\QCrysC(X))$ under $\Sol$ equals the subcategory ${^p}D^{\leq 0}$, while the essential image of $D_{\crys}^{\leq 0}(\QCrysC(X))$ equals the subcategory ${^p}D^{\geq 0}$.
		\item If $h\colon W \to X$ is an open or a closed immersion, then there are natural isomorphisms of functors
		\[
		\Sol \circ Rh_* \overset{\sim}{\longrightarrow} h_! \circ \Sol \text{ and } \Sol \circ h^! \overset{\sim}{\longrightarrow} h^* \circ \Sol
		\]
		where $h^!$ denotes the functor $h^*$ if $h$ is an open immersion and $Rh_* = h_*$ for a closed immersion. 
	\end{enumerate}
\end{theorem}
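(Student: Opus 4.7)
Part (a) is essentially the assembly of previously established results. The equivalence $\G_X$ is the content of \autoref{singularG}, and the equivalence $\Solu_X$ is \autoref{singularSol}(a). For the commutativity of the big diagram, the vertical crystal-side inclusion $D_{\crys}^b(\QCrysC(X)) \hookrightarrow D_{\crys}^b(\QCrysC(Y))$ is the Kashiwara equivalence $i_+$ of \autoref{Kashiwara}, the middle vertical is the tautological inclusion $D_{\lfgu}^b(\CO_{F,Y})_X \hookrightarrow D_{\lfgu}^b(\CO_{F,Y})$ used to \emph{define} the left-hand object, and the top-right vertical is $i_*$, which is fully faithful with essential image $D_c^b(Y_{\et},\ZZ/p\ZZ)_X$ by \autoref{constructibleKashiwara}(a). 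Since the equivalences $\G_X$ and $\Solu_X$ for singular $X$ were constructed precisely as the restrictions of $\G_Y$ and $\Solu_Y$ along these three identifications, commutativity is automatic.

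For part (b), the canonical $t$-structure on $D_{\lfgu}^b(\CO_{F,X})$ is by construction the restriction of the canonical $t$-structure on $D_{\lfgu}^b(\CO_{F,Y})$ to complexes supported on $X$. On the \'etale side, \autoref{constructibleKashiwara}(b) identifies the perverse $t$-structure on $D_c^b(X_{\et},\ZZ/p\ZZ)$ with the restriction to $D_c^b(Y_{\et},\ZZ/p\ZZ)_X$ of the perverse $t$-structure on $D_c^b(Y_{\et},\ZZ/p\ZZ)$. The claim then follows by intersecting the smooth-scheme $t$-exactness of \autoref{singularSol}(b) with the respective support conditions, and then transporting through $\G$.

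For part (c), the strategy is to reduce each compatibility to one in the smooth ambient scheme $Y$. For a closed immersion $h\colon W \to X$, the composition $i \circ h\colon W \to Y$ is again a closed immersion into a smooth scheme, so $W$ is embeddable. Under the equivalences of part (a), the functor $Rh_* = h_*$ on $D_{\crys}^b(\QCrysC(W))$ corresponds to the natural inclusion $D_{\lfgu}^b(\CO_{F,Y})_W \hookrightarrow D_{\lfgu}^b(\CO_{F,Y})_X$, and on the \'etale side $h_! = h_*$ is the inclusion coming from \autoref{constructibleKashiwara}(a); so the isomorphism $\Sol \circ h_* \cong h_! \circ \Sol$ reduces to part (a), and $\Sol \circ h^! \cong h^* \circ \Sol$ follows by adjunction. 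For an open immersion $h\colon W \to X$, pick an open subset $U \subseteq Y$ with $U \cap X = W$ (for instance $U = Y \setminus (X \setminus W)$), and factor the inclusion as $W \xrightarrow{i_W} U \xrightarrow{\tilde h} Y$, where $i_W$ is a closed immersion into a smooth scheme and $\tilde h$ is an open immersion of smooth schemes. Then $Rh_*$ on crystals (respectively $h_!$ on constructible sheaves) is expressed via $R\tilde h_* \circ i_{W,+}$ (respectively $\tilde h_! \circ i_{W,*}$), and the desired isomorphism follows from the open-immersion compatibility of $\Sol$ in the smooth case (\autoref{Gopenresult} and \autoref{Solproperties}) combined with the closed-immersion case already handled and part (a). The compatibility for $h^!$ in the open case is just $h^*$, and follows from the corresponding smooth-case statement by restriction.

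The main obstacle will be the open-immersion case of part (c): one must carefully identify how the functor $Rh_*\colon D_{\crys}^b(\QCrysC(W)) \to D_{\crys}^b(\QCrysC(X))$ interacts with the embeddings into $U$ and into $Y$, and check that after transport through $\G$ this matches the \'etale-side $h_!$. Concretely, this requires juggling several commutative diagrams involving the interlocking immersions of $W,\,U,\,X,\,Y$, together with the compatibilities of $R\Gamma_X$ and $R\Gamma_W$ with pushforward, and verifying that the inevitable comparison isomorphisms indeed agree with those produced by $\Solu$ on smooth schemes.
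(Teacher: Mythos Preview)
Your proposal is correct and follows essentially the same route as the paper. Parts (a) and (b) are handled identically, and for part (c) both you and the paper reduce the closed-immersion case to the tautological inclusion of support categories, and the open-immersion case to choosing an ambient open $U\subseteq Y$ with $U\cap X=W$ and using the base-change isomorphism $i_*Rh_*\cong R\tilde h_*\,i_{W*}$ together with the smooth-case compatibilities (\autoref{Gopenresult}, \autoref{GammalfguForward}, \autoref{Solproperties}, \autoref{SolGamma}). The only point where the paper is more explicit is the independence-of-embedding check in the open case: it sets up two embeddings $Y_1,Y_2$ and a cube of natural equivalences from the outset, whereas you flag this as ``the main obstacle'' without spelling it out---but your diagnosis of what needs to be verified is accurate.
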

\begin{proof}
	(a) and (b) follow from \autoref{singularG} and \autoref{singularSol}. 
	
	It remains to prove (c). Let $h\colon W \to X$ be an open immersion. We will construct the natural transformations by choosing embeddings of $X$. Since we have to make sure that this construction is independent of the embedding, we will consider two closed immersions $i_1\colon X \to Y_1$ and $i_2\colon X \to Y_2$ of $X$ into smooth $k$-schemes ab initio. We may assume that there is a morphism $f\colon Y_1 \to Y_2$ with $i_2 = f \circ i_1$, see the proof of \autoref{embeddingindep}. Let $h_2'\colon V_2 \to Y_2$ be an open immersion such that $(i_2 \circ h)(W) = h_2'(V_2) \cap i_2(X)$ and hence $W \cong X \times_{Y_2} V_2$. Let $i_2'\colon W \to V_2$ be the closed immersion induced by $i_2$, i.e.\ the projection $X \times_{Y_2} V_2 \to V_2$. We have a natural equivalence $Rh_{2*}' i_{2*}' \cong i_{2*} Rh_*$ of functors from $D_{\crys}^b(\QCrysC(W))$ to $D_{\crys}^b(\QCrysC(Y_2))_X$. Moreover, we have a natural equivalence $h_{2+}' \G_{V_2} \cong \G_{Y_2} Rh_{2*}'$ of functors from $D_{\crys}^b(\QCrysC(V_2))_W$ to $D_{\lfgu}^b(\CO_{F,Y_2})_X$. Therefore, $Rh_*\colon D_{\crys}^b(\QCrysC(W)) \to D_{\crys}^b(\QCrysC(X))$ induces a functor $h_+\colon D_{\lfgu}^b(\CO_{F,W}) \to D_{\lfgu}^b(\CO_{F,X})$ such that $\G_X h_* \cong h_+ \G_W$. Note that this functor $h_+$ does not depend on the choice of $V_2$. In particular, to show the independence of the embedding of $X$, we may choose an open immersion $h_1'\colon V_1 \to Y_1$ with $(i_1 \circ h)(W) = h_1'(V_1) \cap i_1(X)$ and such that $(f \circ h_1')(V_1) \subseteq h_2'(V_2)$. Here we set $V_1 = f^{-1}(V_2)$, which means $V_1 \cong Y_1 \times_{Y_2} V_2$. Let $f'\colon V_1 \to V_2$ be the projection and let $i_1'$ be the projection of $W \cong  X \times_{Y_2} V_2 \cong X \times_{Y_1} (Y_1 \times_{Y_2} V_2)$ to $V_1 \cong Y_1 \times_{Y_2} V_2$. It is a closed immersion because it is the base change of the morphism $i_1$. We obtain the following commutative diagram: 
	\[
	\xymatrix{
		W \ar[d]^-h \ar[r]^-{i_1'}  & V_1 \ar[d]^-{h_1'} \ar[r]^-{f'} & V_2 \ar[d]^-{h_2'} \\
		X \ar[r]^-{i_1} & Y_1 \ar[r]^f & Y_2.
	}
	\]
	The following cube demonstrates the natural equivalences, which we have by \autoref{GammalfguForward}:
	\[
	\xymatrix@C15pt{
		&  D_{\crys}^b(\QCrysC(V_2))_W \ar[rr]^-{\G_{V_2}} \ar[dd]^/20pt/{Rh_{2*}'}  &  &  D_{\lfgu}^b(\CO_{F,V_2})_W \ar[dd]^-{h_{2+}'}  \\
		D_{\crys}^b(\QCrysC(V_1))_W \ar[rr]^/40pt/{\G_{V_1}} \ar[dd]^-{Rh_{1*}'} \ar[ru]^-{Rf'_*}  &  &  D_{\lfgu}^b(\CO_{F,V_1})_W \ar[dd]_/20pt/{h_{1+}'} \ar[ru]^-{f'_+}  &  \\ 
		&   D_{\crys}^b(\QCrysC(Y_2))_X \ar[rr]^/40pt/{\G_{Y_2}}  &  &  D_{\lfgu}^b(\CO_{F,Y_2})_X  \\
		D_{\crys}^b(\QCrysC(Y_1))_X \ar[rr]^-{\G_{Y_1}} \ar[ru]^-{Rf_*}  &  &  D_{\lfgu}^b(\CO_{F,Y_1})_X \ar[ru]^-{f_+}  &
	}
	\]
	Here every cube face indicates a natural equivalence, for example, the front refers to the isomorphism of functors $\G_{Y_1} \circ Rh_{1*}' \cong h_{1+}' \circ \G_{V_1}$. This shows the independence of the isomorphism of functors $\G_X h_* \cong h_+ \G_W$ from the chosen embedding of $X$. By adjunction, we obtain a canonical isomorphism $\G_W h^* \cong h^! G_X$ as well. 
	
	Similarly, one shows that we have a natural isomorphism $\Solu h^! \cong h^* \Solu$, using the fact that $\Solu$ commutes with the local cohomology functors (\autoref{SolGamma}) and with pull-backs for morphisms between smooth schemes (\autoref{Solproperties}). Again, by adjunction, we obtain a natural isomorphism $\Solu h_+ \cong h_! \Solu$. Composing these isomorphisms of functors yields the desired one:
	\[
	\Sol \circ Rh_* \cong \Solu \circ \G_X \circ Rh_* \cong \Solu \circ h_+ \circ \G_W \cong h_! \circ \Solu \circ \G_W \cong h_! \circ \Sol
	\]
	and analogously for $\Sol \circ h^* \cong h^* \circ \Sol$. 
	
	If $h\colon W \to X$ is a closed immersion, we proceed similarly, but the proof is simpler because a closed immersion $i\colon X \to Y$ of $X$ into a smooth $k$-scheme $Y$ yields a closed immersion of $W$ into $Y$ by composing $h$ and $i$.
\end{proof}
\begin{definition}
	The abelian category $\Perv_c(X_{\et},\ZZ/p\ZZ)$ of \emph{perverse constructible \'etale $p$-torsion sheaves} is the heart ${^p}D^{\leq 0} \cap {^p}D^{\geq 0}$ of the perverse $t$-structure on $D_c^b(X_{\et},\ZZ/p\ZZ)$.
\end{definition}
\begin{corollary}
	For a perfect field $k$ and an embeddable $k$-scheme $X$, the functor $\Sol$ induces an anti-equivalence
	\[
	\CrysC(X) \to \Perv_c(X_{\et},\ZZ/p\ZZ)
	\]
	between the abelian categories of Cartier crystals on $X$ and perverse constructible $\ZZ/p\ZZ$-sheaves on $X_ {\et}$. 
\end{corollary}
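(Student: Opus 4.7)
The corollary is essentially a formal consequence of \autoref{RHCorrespondenceCrystals}, whose statements (a) and (b) already do all the real work. The plan is to extract the equivalence between the hearts of the two $t$-structures that the theorem identifies.

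First I would note that by \autoref{RHCorrespondenceCrystals}(a), the composition $\Sol = \Solu \circ \G$ is a contravariant equivalence of triangulated categories
\[
	\Sol \colon D_{\crys}^b(\QCrysC(X)) \longrightarrow D_c^b(X_{\et},\ZZ/p\ZZ).
\]
By construction the standard $t$-structure on $D_{\crys}^b(\QCrysC(X))$ (with subcategories $D_{\crys}^{\geq 0}(\QCrysC(X))$ and $D_{\crys}^{\leq 0}(\QCrysC(X))$) has heart the abelian category $\CrysC(X)$ of Cartier crystals, and by definition the perverse $t$-structure on $D_c^b(X_{\et},\ZZ/p\ZZ)$ has heart $\Perv_c(X_{\et},\ZZ/p\ZZ)$.

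Next I would invoke \autoref{RHCorrespondenceCrystals}(b), which says precisely that under $\Sol$ the essential image of $D_{\crys}^{\geq 0}(\QCrysC(X))$ equals ${}^pD^{\leq 0}$ and the essential image of $D_{\crys}^{\leq 0}(\QCrysC(X))$ equals ${}^pD^{\geq 0}$. Since $\Sol$ is contravariant, this is exactly the condition that it is $t$-exact as a contravariant equivalence with respect to the two $t$-structures. Intersecting the two conditions, $\Sol$ sends the heart
\[
	\CrysC(X) = D_{\crys}^{\geq 0}(\QCrysC(X)) \cap D_{\crys}^{\leq 0}(\QCrysC(X))
\]
into the heart
\[
	\Perv_c(X_{\et},\ZZ/p\ZZ) = {}^pD^{\leq 0} \cap {}^pD^{\geq 0},
\]
and the analogous statement for a quasi-inverse of $\Sol$ shows that the restriction is essentially surjective. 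Fully faithfulness of the restriction to the hearts is inherited from the fully faithfulness of $\Sol$ on the ambient derived categories.

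There is essentially no obstacle here: the only thing to check is the purely formal lemma that a $t$-exact (contravariant) equivalence of triangulated categories restricts to an equivalence on hearts, which is standard. Hence $\Sol$ induces the desired anti-equivalence $\CrysC(X) \to \Perv_c(X_{\et},\ZZ/p\ZZ)$, completing the proof.
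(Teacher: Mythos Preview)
Your proposal is correct and matches the paper's approach: the paper states the corollary without proof, treating it as an immediate consequence of \autoref{RHCorrespondenceCrystals}(a) and (b), and your argument spells out exactly the standard step of passing from a $t$-exact (contravariant) equivalence to an equivalence on hearts.
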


\bibliographystyle{amsalpha}
\bibliography{Bibliography}
\end{document}